\documentclass[12pt, twoside]{article}

\usepackage{tocloft}
\renewcommand{\cftsecleader}{\cftdotfill{\cftdotsep}}
\usepackage{titletoc}
\usepackage{pdfpages}
\usepackage{amsmath,amssymb}
\usepackage{amsthm}
\usepackage{epsfig}
\usepackage{graphics}
\usepackage{graphicx}
\usepackage{colordvi}
\usepackage{mathrsfs} 
\usepackage{stmaryrd}
\usepackage{geometry}
\usepackage{dsfont}
\usepackage{tikz}
\usetikzlibrary{cd}
\usetikzlibrary{automata}
\usepackage{url}
\usepackage[hidelinks]
{hyperref}
\hypersetup{
    colorlinks=false,
   linkcolor=blue!75!black,
    filecolor=blue!75!black,      
   urlcolor=blue!75!black, 
   citecolor=blue!75!black,
}
\usepackage{enumitem}
\usepackage[textsize=small]{todonotes}
\usepackage[margin=1cm, size=small]{caption}
\usepackage{multirow}
\allowdisplaybreaks[4]
\topmargin -1.5cm \textwidth 6.6in \textheight 9in
\oddsidemargin
-.2in \evensidemargin -.2in\marginparwidth 0.4in
\usepackage{color,soul}
\usepackage{xcolor}

\usepackage{etoolbox}
\patchcmd{\thebibliography}{\section*}{\section}{}{}
\usepackage{fancyvrb}
\usepackage{fvextra}
\usepackage{fancyhdr}
\usepackage{todonotes}

\usepackage{chngcntr}
\counterwithin{table}{section}
\counterwithin{figure}{section}
 
\usepackage{empheq}
\usetikzlibrary{automata}
\definecolor{blue2}{cmyk}{.94,.11,0,0}
\usepackage{enumitem}
\setitemize{itemsep=0pt}
\definecolor{myblue}{rgb}{.8, .8, 1}
\newlength\mytemplen
\newsavebox\mytempbox

\renewcommand{\Re}{{\rm Re}}
\newcommand{\BES}{{\rm BES}}

\renewcommand{\Im}{{\rm Im}}

\newcommand{\less}{\lesssim}
\newcommand{\more}{\gtrsim}
\newcommand{\e}{{\mathrm e}}
\newcommand{\1}{\mathds 1}

\newcommand{\C}{{\mathscr C}}
\newcommand{\F}{\mathscr F}
\newcommand{\B}{\mathscr B}

\newcommand{\vep}{{\varepsilon}}

\newcommand{\da}{{\downarrow}}

\newcommand{\lra}{\longrightarrow}

\newcommand{\la}{{\langle}}
\newcommand{\ra}{{\rangle}}

\newcommand{\dint}{{\int\!\!\!\int}}
\newcommand{\ind}{{\perp\!\!\!\perp}}

\newcommand{\supp}{{\rm supp}}

\newcommand{\ms}{\mathscr}
\renewcommand{\P}{{\mathbb P}}
\newcommand{\E}{{\mathbb E}}

\newcommand{\mc}{\mathcal}
\renewcommand{\d}{{\mathrm d}}

\newcommand{\al}{(-\alpha)}

\newcommand{\albe}{(-\alpha),\beta \da }
\newcommand{\zbe}{(0),\beta \da }
\newcommand{\R}{{\Bbb R}}

\renewcommand{\i}{{\mathtt i}}
\newcommand{\defeq}{{\stackrel{\rm def}{=}}}

\renewenvironment{proof}[1][\proofname]{\noindent {\bfseries #1.}\;}{\hfill\ensuremath{\blacksquare}\\}

\newcommand{\s}{{\mathfrak s}}

\newcommand{\cc}{{^\circ}}

\newcommand{\eqspace}{{\quad\;}}

\newcommand{\bi}{\mathbf i}

\newcommand{\two}{{\sqrt{2}}}

\tikzcdset{scale cd/.style={every label/.append style={scale=#1},
    cells={nodes={scale=#1}}}}

\newtheoremstyle{slantthm}{10pt}{10pt}{\slshape}{}{\bfseries}{}{.5em}{\thmname{#1}\thmnumber{ #2}\thmnote{ (#3)}.}
\newtheoremstyle{slantrmk}{10pt}{10pt}{\rmfamily}{}{\bfseries}{}{.5em}{\thmname{#1}\thmnumber{ #2}\thmnote{ (#3)}.}

\begin{document}
\theoremstyle{slantthm}
\newtheorem{thm}{Theorem}[section]
\newtheorem{prop}[thm]{Proposition}
\newtheorem{lem}[thm]{Lemma}
\newtheorem{cor}[thm]{Corollary}
\newtheorem{defi}[thm]{Definition}
\newtheorem{disc}[thm]{Discussion}
\newtheorem{conj}[thm]{Conjecture}

\theoremstyle{slantrmk}
\newtheorem{ass}[thm]{Assumption}
\newtheorem{hypothesis}[thm]{Hypothesis}
\newtheorem{rmk}[thm]{Remark}
\newtheorem{eg}[thm]{Example}
\newtheorem{que}[thm]{Question}
\numberwithin{equation}{section}
\newtheorem{outline}[thm]{Outline}
\newtheorem{quest}[thm]{Quest}
\newtheorem{prob}[thm]{Problem}
 \newtheorem{nota}[thm]{Notation}
 \newtheorem{cond}[thm]{Condition}
 
\newcommand{\thetitle}{Two-dimensional delta-Bose gas: skew-product relative motions}

\title{\vspace{-1.2cm}
\bf \thetitle\footnote{Support from an NSERC Discovery grant is gratefully acknowledged.}}

\author{Yu-Ting Chen\,\footnote{Department of Mathematics and Statistics, University of Victoria, British Columbia, Canada.}\,\,\footnote{Email: \url{chenyuting@uvic.ca}}}

\date{\today \vspace{-1.1cm}
}

\maketitle
\abstract{We prove a Feynman--Kac-type formula for the relative motion of the two-body delta-Bose gas in two dimensions. The multiplicative functional is not exponential, and the process is a skew-product diffusion uniquely extended in law, in the sense of Erickson~\cite{Erickson}, from ${\rm BES}(0,\beta{\downarrow})$ of Donati-Martin and Yor~\cite{DY:Krein} as the radial part. We give two different proofs of the formula. The first uses the original excursion characterization of $\BES(0,\beta\da)$, and the second is via the lower-dimensional Bessel processes at the expectation level. The latter proof contrasts the long-standing approach for delta-function interactions
by adding mollifiers to the Laplacians since the present approximations are from ``lower, fractional dimensions.''  Moreover, the second proof conducts a new study of $\BES(0,\beta\da)$ as an SDE since we handle the drift via certain ratios of the Macdonald functions.  The properties proven include the strong well-posedness and comparison of the SDE of $\BES(0,\beta\da)$ for all initial conditions. In particular, this well-posedness contrasts the fact that the skew-product diffusion for the Feynman--Kac-type formula has a singular drift of $L^p_{\rm\tiny loc}$-integrability only for $ p\leq 2$. \smallskip

\noindent \emph{Keywords:} Delta-Bose gas; Schr\"odinger operators; Bessel processes; Krein's string; distorted Brownian motion; stochastic heat equation.\smallskip

\noindent \emph{Mathematics Subject Classification (2000):} 60J55, 60J65, 60H30.}

\setlength{\cftbeforesecskip}{5pt}
\renewcommand{\cftsecleader}{\cftdotfill{\cftdotsep}}

\setlength{\cftbeforesecskip}{0pt}
\setlength\cftaftertoctitleskip{0pt}
\renewcommand{\cftsecleader}{\cftdotfill{\cftdotsep}}
\setcounter{tocdepth}{1}
\tableofcontents

\section{Introduction}\label{sec:intro}
The main objective of this paper is to establish and describe a stochastic path integral of the following formal Hamiltonian of a quantum mechanical model: 
\begin{align}\label{def:L}
\ms L\,\defeq -\Delta_z-\Lambda\delta(z),\quad z\in \Bbb C,
\end{align}
where $\Delta_{z}=\Delta\,\defeq\,\partial_x^2+\partial_y^2$ is the Laplacian in $z=x+\i y$. The coupling constant $\Lambda$ and the delta function $\delta(\cdot)$ all together are meant to define a multiplication operator and model analytically how two particles are coupled by a delta-like interaction. Despite its formal simplicity, our motivation for studying the above Hamiltonian considers the applications.
The operator $\ms L$ defines the delta-like interaction of any two particles in the two-dimensional delta-Bose gas, exactly in the two-body case and locally in the many-body case. It is also an essential part of moments of the two-dimensional stochastic heat equation (SHE) at criticality. Below, we will discuss $\ms L$ in terms of these applications and then overview the paper's main results. See also the monograph of Albeverio, Gesztesy, H\o{}egh-Krohn and Holden~\cite{AGHH:Solvable} for reviews of many-center extensions of the Hamiltonian $\ms L$ as in the Kronig--Penney model \cite{KP} from solid state physics. We do not pursue any extension to this direction here, though.

\subsection{The two-dimensional many-body delta-Bose gas}\label{sec:intro_DBG}
The model has a formal Hamiltonian given by the following operator:
\begin{align}\label{def:H}
\ms H\,\defeq
 -\frac{1}{2}\sum_{j=1}^N\Delta_{z_j}-\Lambda\sum_{j<k}\delta(z_k-z_j),
\end{align}
where $z_j\in \Bbb C$ is the position of the $j$-th particle. The simplest relation between \eqref{def:L} and \eqref{def:H} occurs when $N=2$. In this case, a straightforward change of variables shows that $\ms L$ defines the dynamic of the relative motion $z=z_2-z_1$, and the complementary dynamic is given by $-\Delta_{z'}/4$ for the center of mass $z'=(z_2+z_1)/2$ (cf. \cite[Proposition~2.5]{C:DBG3+}). Henceforth, the {\bf relative motion} will also refer to the operator $\ms L$. 

More precise relations between the relative motion and the delta-Bose gas for $N\geq 3$ also exist, though in a much more involved manner. In the original quantum mechanical context, these relations have appeared more or less in the functional analytic constructions going back to Dell'Antonio, Figari and Teta~\cite{DFT:Schrodinger} and then Dimock and Rajeev \cite{DR:Schrodinger}. See \cite{GH:Short} for a recent extension of these functional analytic results.
Moreover, via certain diagrammatic expansions, the Schr\"odinger semigroups of the delta-Bose gas have been proven explicitly, analytically reducible to the relative motions and some free motions obeying Gaussian dynamics~\cite{CSZ:Mom,GQT,C:DBG3+}. In more detail, these diagrammatic expansions allow an interpretation of how the relative motions enter locally in the many-body delta-Bose gas, although this local property may be heuristically read off from \eqref{def:H}.

The construction of the relative motions is obtained by Albeverio, Gesztesy, H\o{}egh-Krohn and Holden~\cite{AGHH:2D}, extending the construction by Berezin and Faddeev~\cite{BF:Schrodinger} for
 the analogous Hamiltonians in three dimensions. 
 One of the main results in \cite{AGHH:2D} identifies realizations of $\ms L$ as a one-parameter family of self-adjoint  operators $\Delta_\beta$ with domain $\ms D(\Delta_\beta)\subset L^2(\Bbb C)$, where $\beta$ ranges over $[0,\infty)$ and $\Delta_0$ is just the Laplacian $\Delta_z$. These operators can be characterized by the resolvents $(-\Delta_\beta+q)^{-1}$ such that the kernels of the semigroup $P^\beta_t=\exp\{t\Delta_\beta\}$ satisfy the following formula \cite[Theorem~2.2]{AGHH:2D}:
\begin{align}
\begin{split}\label{def:DBG}
\int_0^\infty \e^{-q t}P^\beta_t(z,\tilde{z})\d t&=G_q(z-\tilde{z})+\frac{4\pi}{\log (q/\beta)}G_q(z)G_{q}(\tilde{z}),\quad\Re(q)>\beta.
\end{split}
\end{align}
Here, $G_q(z)$ denotes the integral density of $(-\Delta_z+q)^{-1}$:
\[
G_q(z)\,\defeq\int_0^\infty \e^{-q t}P_{2t}(z)\d t,
\]
which is defined by the two-dimensional Brownian transition density:
\begin{align}\label{def:Pt}
P_t(z,\tilde{z})=P_t(z-\tilde{z})\,\defeq\,\frac{1}{2\pi t}\exp\left(-\frac{\lvert z-\tilde{z}\rvert^2}{2t}\right).
\end{align}
See the first paragraph of Section~\ref{sec:FKformula} on the Laplace inversions of \eqref{def:DBG}.

The discrepancy between the obvious probabilistic interpretation of the relative motion
and the strong functional analytic flavor from \eqref{def:DBG} should raise the question of whether the Feynman--Kac formula for $\{P^\beta_t\}$ may exist. ($\{P^\beta_t\}$ is not Markovian or sub-Markovian since $P^\beta_t\1>1$ for $t>0$.) On the other hand, the physical interpretation of the Hamiltonian $\ms L$ is most natural to start with. This suggests mollifying the delta-function potential, which is an approximation scheme for $\{P^\beta_t\}$ established in \cite{AGHH:2D}. Specifically, \cite{AGHH:2D} considers the \emph{weakly attractive} Hamiltonians 
\begin{align}\label{def:approxH}
-\Delta-\Lambda_{(\vep)}\phi_\vep=-\Delta_z-\Lambda_{(\vep)}\phi_\vep(z)
\end{align}
in the limit of $\vep\searrow 0$. Here,
\begin{align}\label{def:Lambdaphi}
 \Lambda_{(\vep)}=\Lambda_{(\vep)}(\lambda)\,\defeq\,\frac{2\pi}{\log \vep^{-1}}+
\frac{2\pi\lambda}{(\log \vep^{-1})^2},\quad \phi_\vep(z)\,\defeq \,\vep^{-2}\phi(\vep^{-1}z),
\end{align}
$\lambda\in \R$, and $\phi$ is a $\C_c^\infty$-probability density, among other cases handled in \cite{AGHH:2D}. For the case in \eqref{def:Lambdaphi}, the relationship between $(\lambda,\phi)$ and the parameter $\beta$ can be restated as follows:
\begin{align}\label{def:beta}
\frac{\log \beta}{2}=-\dint \phi(z)\phi(\tilde{z})\log \lvert z-\tilde{z}\rvert \d z\d \tilde{z}+\log 2+\lambda-\gamma_{\sf  EM},
\end{align}
where $\gamma_{\sf  EM}=0.57721...$ denotes the Euler--Mascheroni constant. 
Also, the Feynman--Kac formula is available by classical theorems under \eqref{def:Lambdaphi}. It holds precisely that
\begin{align}\label{FK:approx}
\e^{(\Delta+\Lambda_{(\vep)}\phi_\vep)t}f(z_0)=\E\left[\exp\left(\Lambda_{(\vep)}\int_0^t \phi_\vep(\two B(s))\d s\right)f(\two B(t))\right],
\end{align} 
where $B$ is a two-dimensional standard Brownian motion such that $\sqrt{2}B(0)=z_0$.

Despite its appealing accessibility, a Brownian approximation starting with \eqref{FK:approx} is quite problematic for proving an \emph{exact} form of the Feynman--Kac formula of the semigroup $\{P^\beta_t\}$. The obstruction arises as soon as one attempts to pass a limit of the expectation
in \eqref{FK:approx} by a Portmanteau-theorem type argument relying on weak convergence of the random variable. This is because a nonzero distributional limit of the additive functionals $\Lambda_{(\vep)}\int_0^t \phi_\vep(\two B(s))\d s$ occurs only when $B(0)=0$. The limit is the standard exponential, say $\mathbf e$, according to the Kallianpur--Robbins law for scaling limits of additive functionals of two-dimensional Brownian motions~\cite{KR:1}. But then, even for 
$f\equiv 1$, the  limiting expectation $\E[\exp\{\mathbf e\}]=\infty$ from \eqref{FK:approx} is the uninformative conclusion. For the $N$-body delta-Bose gas with $N\geq 3$, attempting the Feynman--Kac formula 
 by this method is faced with not only the same issue but also the question of correlated interactions.

\begin{rmk}\label{rmk:FK1D}
(1$\cc$) The above approximation by \eqref{def:Lambdaphi} uses \emph{general mollifcations} for potentially broad applications and physical interpretations. Otherwise, the purpose of mollification alone can be met just by taking $\phi$ as the indicator function of a centered disc, since this allows exact solutions of the Schr\"odinger semigroups of $-\Delta-\Lambda \phi$. See \cite{Chan} for the exact formula if $\Lambda\leq 0$.  Note that the three-dimensional case under the choice of radially symmetric indicator mollifiers is discussed in \cite{AFH:SPNA,Nelson}. \smallskip

\noindent (2$\cc$) In stark contrast to the two-dimensional case,
the many-body delta-Bose gas in one dimension allows a precise Feynman--Kac formula as an expected Brownian exponential functional \cite{BC:1D}. In particular, the formula for the relative motion is given by
\begin{align}\label{sgp:1D}
\E\Bigg[\exp\left(\Lambda\int_0^t \delta(\two W(s))\d s\right)f(\two W(t))\Bigg].
\end{align}
Here, $W$ is a one-dimensional standard Brownian motion, and the additive functional $\int_0^t \delta(\two W(s))\d s$ is mathematically realizable as the local time of $ \two W$ at the point 0.  Note that the formula in \eqref{sgp:1D} takes the limiting form of the one-dimensional counterpart of the expectation in \eqref{FK:approx}, but any value of order $\mathcal O(1)$ for the coupling constant $\Lambda$ is now allowed. These characteristics are shared by the many-body formula in \cite{BC:1D}, and a Portmanteau-theorem type argument by approximations with mollified delta-function potentials
 can indeed be carried out. \qed 
\end{rmk}

\subsection{Statistical moments of the two-dimensional SHE at criticality}\label{sec:intro_SHE}
Our motivation for representing the relative motion probabilistically also considers the statistical properties of the two-dimensional stochastic heat equation (SHE). The equation is given by the following \emph{purely formal} SPDE:
\begin{align}\label{def:SHE}
\frac{\partial X}{\partial t}(z,t)=\frac{\Delta_z}{2}X(z,t)+\Lambda^{1/2}X(z,t)\xi (z,t),\quad z\in \Bbb C,
\end{align}
where $\xi(z,t)$ is space-time white noise, namely a  
centered distribution-valued Gaussian field with the correlation function 
\[
\E[\xi(z,t)\xi(\tilde{z},\tilde{t})]=\delta(z-\tilde{z})\delta(t-\tilde{t}). 
\]

In general, the SHE of all (spatial) dimensions has received a long-standing interest in the physics and probability literature. A formal solution of the SHE can be physically interpreted as the Cole--Hopf transformation of the Kardar--Parisi--Zhang equation for growing interfaces~\cite{KPZ}. Another interpretation is called the continuum directed random polymer by extending the original one-dimensional model \cite{AKQ} to all dimensions. See \cite{Comets, Quastel} for reviews of the physical relations in one dimension and \cite{Toninelli} for the case of two dimensions. On the other hand, 
 above one dimension as in the case of \eqref{def:SHE}, solving the SHE mathematically has to resolve
 the highly nontrivial issue of making sense of products of distributions in the noise term since the solution is expected to be distribution-valued. This situation is very different from the one-dimensional case, where the solutions are function-valued by extending It\^{o}'s theory~\cite{Walsh}. Solving the distributions of solutions is different, though.

Bertini and Cancrini~\cite{BC:2D} prove the existence of solutions to the two-dimensional SHE ``at criticality'' by relating its two-point moments to the relative motion. The approach in \cite{BC:2D} begins with the following approximate equations subject to regularized noise such that function-valued solutions also exist in It\^{o}'s sense~\cite{Walsh}:
\begin{align}\label{def:Xvep}
\frac{\partial X_\vep}{\partial t}(z,t)=\frac{\Delta_z}{2}X_\vep(z,t)+\Lambda_{(\vep)}^{1/2}X_\vep(z,t)\xi_\vep(z,t).
\end{align}
Here, $\Lambda_{(\vep)}$ is defined by \eqref{def:Lambdaphi}, and the noise is only regularized in space so that \eqref{def:Xvep} resembles short-range discrete models:
\[
\xi_\vep(z,t)\,\defeq \int_{\Bbb C}\varrho_\vep(z-\tilde{z})\xi(\tilde{z},t)\d \tilde{z},
\]
where $\varrho_\vep(z)\,\defeq \,\vep^{-2}\varrho(\vep^{-1}z)$ for a given probability density $\varrho\in \C_c^\infty$. Then, by duality, the two-point moments of $X_\vep$ are expressible as the semigroups of the approximate relative motion \eqref{def:Lambdaphi}, hence
as the Feynman--Kac formula in \eqref{FK:approx}. Specifically, we have
\begin{align}\label{SHE:2mom}
\E\Bigg[\prod_{j=1}^2X_\vep(z_j,t)\Bigg]=\E\Bigg[\exp\left(\Lambda_{(\vep)}\int_0^t \tilde{\phi}_\vep(B_2(s)-B_1(s))\d s\right)\prod_{j=1}^2X_\vep(B_j(t),0)\Bigg],
\end{align}
where $\tilde{\phi}_\vep(z)\,\defeq\int \varrho_\vep(z+\tilde{z})\varrho_\vep(\tilde{z})\d \tilde{z}$, and $B_1,B_2$  are independent two-dimensional standard Brownian motions such that
 $B_j(0)=z_j$. Accordingly, it is proven in \cite{BC:2D} that the family of laws of the approximate solutions $\{X_\vep\}$ is tight, and the two-point moments of any subsequential $\vep\to 0$ limit can be explicitly solved by using $\{P^\beta_t\}$ from \eqref{def:DBG}.  

 These results in \cite{BC:2D} on the \emph{approximate} moments and their applications have been extended in several ways. The extensions include some observations of non-Gaussian fluctuations of the SHE (in the form of random polymers) \cite{Feng}
 and the convergences of the approximate moments $\E[\prod_{j=1}^NX_\vep(z_j,t)]$ for general $N$ via approximations of the $N$-body delta-Bose gas and diagrammatic expansions
\cite{CSZ:Mom,GQT,C:DBG3+}.  Moreover, by studying some of these approximate moments,
Caravenna, Sun and Zygouras~\cite{CSZ:Flow} offer a rigorous meaning to the two-dimensional SHE at criticality in the sense of proving unique distributional limits of the discrete analogues. An extension of these unique distributional limits to \eqref{def:Xvep} as $\vep\to 0$ is outlined in \cite[Remark~1.4]{CSZ:Flow}. 

For the \emph{limiting} case, the very fast growth of the $N$-point moments in $N$ has been expected and will make using moment-generating functions difficult for determining the probability distribution of the two-dimensional SHE at criticality \cite{GQT,CSZ:Flow,CSZ:Mut}.
On the other hand, the moments are basic statistical objects and give some of the very few explicit characteristics for the two-dimensional SHE at criticality. Thus, it should be reasonable to expect their usefulness in proving more properties of the SHE. For example, Caravenna, Sun and Zygouras~\cite{CSZ:Mut} show that by proving some particular bounds for the limiting moments, the solution of the two-dimensional SHE at criticality is not a Gaussian multiplicative chaos. 
For more applications of the limiting $N$-point moments, 
stochastic path integrals 
 of the $N$-body delta-Bose gas are natural candidates of a useful tool 
if they exist. But this possible approach
requires solving the case of the relative motion at the very least, and then the difficulties discussed in Section~\ref{sec:intro_DBG} for proving the Feynman--Kac formula have to be resolved.

\subsection{Overview of main results}\label{sec:intro_overview}
We will prove the first form of stochastic path integral for the two-dimensional delta-Bose gas. Specifically, we obtain a Feynman--Kac-type formula such that the semigroup of the relative motion from \eqref{def:DBG} is stochastically represented as follows:
 \begin{align}\label{main:PR}
P^\beta_tf(z_0)=\E\left[\frac{\e^{\beta t}K_0(\sqrt{\beta}\lvert z_0 \rvert)}{K_0(\sqrt{\beta}\lvert \two Z_t\rvert)}f(\two Z_t)\right],\quad \forall\;f\geq 0,\;z_0\in \Bbb C\setminus\{0\},
\end{align}
and an extension to $z_0=0$ is also obtained (Theorem~\ref{thm:1}).
In \eqref{main:PR}, $K_\nu$ is the Macdonald function of index $\nu$  to be recalled in \eqref{def:K},
and the underlying diffusion $\{\two Z_t\}$, which we will call the {\bf stochastic relative motion} in the sequel,
 starts from $z_0$ and is a complex-valued skew-product diffusion. Additionally, the process $\{\two |Z_t|\}$ will be called the {\bf stochastic relative position}, since 
the {\bf relative position} refers to the radial part of the relative motion.  
We stress that \eqref{main:PR} is not a standard ``exponential form'' of the Feynman--Kac formula as in the one-dimensional case recalled in Remark~\ref{rmk:FK1D} (2$\cc$). See Hypothesis~\ref{hyp:FK1} and Corollary~\ref{cor:FK} for details. 

We will give two different proofs of the Feynman--Kac-type formula \eqref{main:PR} and its extension to $z_0=0$. See Section~\ref{sec:firstdr}, especially Outline~\ref{outline:main}. Each proof suggests the interpretation that the stochastic relative motion $\{\two Z_t\}$ is a ``two-minus dimensional'' diffusion, and these proofs rely on the fact that
$\{\lvert Z_t \rvert\}$ is chosen as $\BES(0,\beta\da)$, a \emph{singular} transformation of Bessel processes due to Donati-Martin and Yor~\cite{DY:Krein}. New properties of $\BES(0,\beta\da)$ and the lower-dimensional counterparts $\BES(-\alpha,\beta\da)$ will be obtained in the second proof.

The other basic characteristics of $\{\two Z_t\}$ are as follows. The generator of $\{ \two Z_t\}$ is given by
\begin{align}\label{def:gen}
\ms A^{\beta\da}_0\,\defeq\,\Delta_z+b_\beta(z)\cdot \nabla_z,\quad z\in \Bbb C\setminus\{0\},
\end{align}
where, with $\nabla_z$ denoting the gradient in $z$,
\begin{align}\label{def:bbeta}
b_\beta(z)\,\defeq\, -2\sqrt{\beta}\frac{K_1}{K_0}(\sqrt{\beta}|z|)\frac{z}{|z|}=\nabla_z \log K_0(\sqrt{\beta}|z|)^2.
\end{align}
{\bf Here and in what follows, dot products and differentiations with respect to complex variables are understood by identifying $\Bbb C$ with $\R^2$ as in (\ref{def:L}).} Also, in contrast to planar Brownian motion, $0$ \emph{can} be reached by $\{ \two Z_t\}$ so that it realizes the heuristic attractive interactions due to the negative potential from \eqref{def:approxH}. (The classical Feynman--Kac formula does not reflect the potential in the associated process.)
In particular, $\{\two Z_t\}$ is neither Gaussian nor absolutely continuous to planar Brownian motion. See Sections~\ref{sec:aug} and \ref{sec:FKformula} for details.

The singular drift $b_\beta$ \eqref{def:bbeta} of $\{\two Z_t\}$ appears to make the process special. 
Naturally, the process $\{\two Z_t\}$  can be viewed in terms of the SDE with singular drift, a subject dating at least back to \cite{Ver}. Moreover, by the second equality in \eqref{def:bbeta}, the process is also a distorted Brownian motion in the sense of Ezawa, Klauder and Shepp~\cite{EKS:DBM}, a relationship brought to our attention by an anonymous reviewer after we announced the first version of this paper. Here, a distorted Brownian motion in $\R^d$ is a Markov process with generator 
\begin{align}\label{def:DBM}
\Delta_z+\nabla_z \log \rho\cdot \nabla_z,\quad z=(z_1,\cdots,z_d)\in \R^d,
\end{align}
where $\rho=\rho(z,t)$ is positive a.e. Specifically,  to relate \eqref{def:gen} to \eqref{def:DBM}, we take 
\begin{align}\label{choice:DBM}
\mbox{$d=2$\quad and \quad $\rho(z,t)\equiv K_0(\sqrt{\beta} |z|)^2$.}
\end{align}
Nevertheless, it seems that no earlier existence or uniqueness results for SDEs with singular drift or distorted Brownian motions apply to $\ms A^{\beta\da}_0$, since the drift is quite singular around the origin. See Section~\ref{sec:Markovgen} for various ways to quantify the singularity, and Eberle~\cite{Eberle} for versions of uniqueness for distorted Brownian motions. In particular, we notice that
\cite{AHS:DBM} established a general theorem on the existence of distorted Brownian motions (Theorem~4.1 \emph{ibid.}), and some connections between delta-function potentials and distorted Brownian motions for \emph{$d=3$} (Examples~3 and 4 and Lemma~2.6 \emph{ibid.}). (Another result related to \cite{AHS:DBM} on the three-dimensional case can be found in \cite[Example~2.1]{Streit}.)
But the conditions of \cite[Theorem~4.1]{AHS:DBM} are not satisfied by $b_\beta$. See the second part of 
Proposition~\ref{prop:DBM} (2$\cc$). In this paper,
we shall not digress further to the question of 
 well-posedness in general situations 
including $\ms A^{\beta\da}_{0}$ as a special case, however.

Let us close this introduction by revisiting the relation in Section~\ref{sec:intro_SHE} to the two-dimensional SHE at criticality. Note that according to those properties mentioned below \eqref{def:bbeta}, \eqref{main:PR} also shows a non-Gaussian representation of the two-point moments of the SHE. Recall the approximations of these moments discussed below \eqref{SHE:2mom}. We leave to future investigation the central mechanism of the SHE that generates this non-Gaussianity.
\smallskip 

\noindent {\bf Organization of the remaining of this paper.} Section~\ref{sec:MAINRESULTS} gives the precise formulations of the main results. Section~\ref{sec:EXCURSION} gives the first proof of \eqref{main:PR} and the extension to $z_0=0$. The second proof is divided into Sections~\ref{sec:SDE} and~\ref{sec:DBG}. Section~\ref{sec:BES} collects several related properties of the lower-dimensional Bessel processes. Finally, Section~\ref{sec:other} gives the proofs of Corollary~\ref{cor:FK}, and Propositions~\ref{prop:esp}, \ref{prop:GEN}, \ref{prop:singular}, \ref{prop:DBM} and \ref{prop:osgood}; these are auxiliary properties to be discussed in Section~\ref{sec:MAINRESULTS}.
 \smallskip

\noindent {\bf Frequently used notation.} $C(T)\in(0,\infty)$ is a constant depending only on $T$ and may change from inequality to inequality. Other constants are defined analogously, but constants labelled by the same equation number are always equal to each other. We write $A\less B$ or $B\more A$ if $A\leq CB$ for a universal constant $C\in (0,\infty)$. $A\asymp B$ means both $A\less B$ and $B\less A$. For a process $Y$, the expectations $\E^Y_y$ and $\E^Y_\nu$ and the probabilities $\P^Y_y$ and $\P^Y_\nu$ mean that the initial conditions of $Y$ are the point $x$ and the probability distribution $\nu$, respectively. Also, $\log $ is defined with base `$\e$' throughout this paper, and we continue to work with $\Bbb C$ for the two-dimensional space, only to save notations.\smallskip

\section{Formulations of main results}\label{sec:MAINRESULTS}
\subsection{The stochastic relative position}
The process $\BES(0,\beta\da)$, $\beta\in(0,\infty)$, found by Donati-Martin and Yor~\cite{DY:Krein} is at the heart of our main results. Under this diffusion, the point $0$ is instantaneously reflecting, and
 the infinitesimal generator on $(0,\infty)$ is given by  
\begin{align}\label{gen:BESab}
\frac{1}{2}\frac{\d^2}{\d x^2}+\left(\frac{1}{2x}-\sqrt{2\beta}\frac{K_1}{K_0}(\sqrt{2\beta}x)\right)\frac{\d}{\d x}.
\end{align}
It shall become clear why we think of $\BES(0,\beta\da)$ as a Bessel process of index $(0-)$, equivalently of dimension $(2-)$, conditioned to hit zero with strength $\beta$.

Now, given the presence of the Macdonald functions in \eqref{gen:BESab} and the earlier Brownian approximations discussed in Section~\ref{sec:intro}, it may not be immediately clear why $\BES(0,\beta\da)$ can be related to $\{P^\beta_t\}$ in \eqref{def:DBG}. The paper \cite{DY:Krein} nevertheless grabbed our attention because of the following two observations obtained in \cite{C:DBG0,C:DBG3+}:

\begin{itemize}[leftmargin=3\labelsep]
\item \emph{The unusual prefactor $4\pi /\log (q/\beta)$ in the characterization \eqref{def:DBG} of $\{P^\beta_t\}$ can be identified as a double-Laplace transform of the transition density of the gamma subordinator.} See \cite[the proof of Proposition~5.1]{C:DBG3+} and \eqref{Inv:sbeta}.

On the other hand, the diffusion $\BES(0,\beta\da)$ is precisely what Donati-Martin and Yor choose to explicitly represent the gamma subordinator in  \cite{DY:Krein}. The inverse local time of $\BES(0,\beta\da )$ for the point $0$ is the gamma subordinator. This representation goes back to the It\^o--McKean problem \cite[p.217]{IM:Diffusion}, which inquires a description of the class of L\'evy measures of subordinators such that they are also the L\'evy measures of inverse local times of diffusions. See Knight~\cite{Knight:Krein} and Kotani and Watanabe~\cite{KW:Krein} for complete, theoretical solutions of the It\^{o}--McKean problem by Krein's theory of strings. A review of the processes $\BES(-\alpha,\beta\da)$, to be recalled in \eqref{def:BESab} for the case of $\alpha>0$, and the background of the It\^{o}--McKean problem can be found in 
 \cite[Chapter~15, especially pp.284--285]{SSV:Bern}.

\item \emph{Even if $\phi_\vep$ is not radially symmetric,
the two-dimensional Bessel process alone via \eqref{FK:approx} is enough to approximate the nontrivial part of $P^\beta_tf$, defined by 
\[
[4\pi/\log(q/\beta)] G_q(z)G_q(\tilde{z})
\]
in \eqref{def:DBG}, where $\beta$ relates to the approximation by \eqref{def:beta}.} See \cite[Section~2]{C:DBG0}.

This reduction in \cite{C:DBG0}, effectively only at the expectation level, applies the excursion theory of the two-dimensional Bessel process and is motivated by the pathwise approach of Kasahara and Kotani~\cite{KK} for the Kallianpur--Robbins law \cite{KR:1} and a generalization to the fluctuations of the additive functionals. See Cs\'aki, F\"oldes and Hu \cite{CFH} for further extensions and also \cite[Section~2]{AGHH:2D} for some analysis of $\{P^\beta_t\}$ by radial symmetry. 
\end{itemize}
These two observations suggest to us that $\BES(0,\beta\da)$ be a stochastic counterpart of the relative position.

The technical motivation explained in \cite{DY:Krein} for the construction of $\BES(0,\beta\da )$ spurs on our further investigation. It shows how this diffusion arises from exponentials of local times, which is reminiscent of \eqref{FK:approx} and Remark~\ref{rmk:FK1D} (2$\cc$). 
To describe precisely how these exponentials enter, first,
given $\alpha\in [0,1)$ and $x_0\in \R_+$, let $\{\rho_t\}$ under $\P^{(-\alpha)}_{x_0}$ start from $x_0$ and be a Bessel process of index $-\alpha$, denoted by $\BES(-\alpha)$. Equivalently, $\{\rho_t\}\sim \BES(-\alpha)$ is of dimension 
\begin{align}\label{def:delta}
\delta=2(1-\alpha).
\end{align}
Then $\BES(0,\beta\da)$ is considered  in \cite{DY:Krein} as the limit of the diffusion  $\BES(-\alpha,\beta\da )$ as $\alpha\searrow 0$, where $\BES(-\alpha,\beta\da )$ has a law defined by
\begin{align}\label{def:BESab}
\d\P_{x_0}^{(-\alpha),\beta \da}\rvert_{\sigma(\rho_s;s\leq t)}\,\defeq\; \frac{\widehat{K}_\alpha(\sqrt{2\beta}\rho_t)}{\widehat{K}_\alpha(\sqrt{2\beta}x_0)}
\e^{\Lambda_\alpha(\beta) L_t-\beta t}\d \P_{x_0}^{(-\alpha)}\rvert_{\sigma(\rho_s;s\leq t)},\quad\alpha\in (0,1).
\end{align}
Here in \eqref{def:BESab}, $\e^{\Lambda_\alpha(\beta)L_t}$ is the exponential of a local time that draws our attention, and the Radon--Nikod\'ym derivative process in \eqref{def:BESab} uses the following definitions:
\begin{itemize}[leftmargin=3\labelsep]
\item For all $\nu\in \R$, 
\begin{align}\label{def:hK}
\widehat{K}_\nu(x)\,\defeq\,
x^\nu K_{\nu}(x),\quad x\in (0,\infty),
\end{align}
and $K_\nu$ is the Macdonald function of index $\nu$ satisfying 
\begin{align}\label{def:K}
K_\nu(x)=\frac{x^\nu}{2^{\nu+1}}\int_0^\infty \e^{-t-\frac{x^2}{4t}}t^{-\nu-1}\d t.
\end{align}
Moreover, for $\nu\in (0,\infty)$, $\widehat{K}_\nu$ can be continuously extended to $x=0$:
\begin{align}\label{def:hK0}
\widehat{K}_\nu(0)=\lim_{x\searrow 0}\widehat{K}_\nu(x)=2^{\nu-1} \Gamma (\nu).
\end{align}
See \cite[(5.10.25) on p.119]{Lebedev} for \eqref{def:K}, and \eqref{asymp:K0} for more details of \eqref{def:hK0}. 
\item For $\alpha\in (0,1)$, set
\begin{align}\label{choice}
C_\alpha^\star\,\defeq \, \pi \cdot \frac{2^{1-\alpha}}{\Gamma(\alpha)},\quad \Lambda_\alpha=\Lambda_\alpha(\beta)\,\defeq\, C^\star_\alpha \beta^\alpha=\frac{\pi}{ \widehat{K}_\alpha(0)}\beta^\alpha,
\end{align}
and let $\{L_t\}$ denote the diffusion local time of $\{\rho_t\}$ at zero such that 
\begin{align}\label{def:Lt}
\E^{(-\alpha)}_0\left[\int_0^\infty \e^{-q t}\d L_t\right]=\frac{1}{C^\star_\alpha q^\alpha},\quad \forall\;q\in(0,\infty).
\end{align} 
See Section~\ref{sec:BES} for a collection of properties of the diffusion local times of Bessel processes, especially \eqref{eq:LTLT} where the case of $x_0=0$ recovers \eqref{def:Lt} by taking $C'_\alpha=C^\star_\alpha$. 
\end{itemize}
See
 Watanabe~\cite{Watanabe:TimeInversion} and Pitman and Yor~\cite{PY:BESINF} for the earlier studies of transformations of Bessel processes on which \eqref{def:BESab} is based. A closely related discussion will appear in Remark~\ref{rmk:beta}.

\begin{rmk}
The probability measure of $\BES(-\alpha,\beta\da)$ in \cite[(2.1) on p.882]{DY:Krein} and 
 the probability measure in \eqref{def:BESab} are the same, although the former 
is defined with a different local time and a different definition of $\Lambda_\alpha(\beta)$, which we denote by $\{L^{\rm DY}_t\}$ and $\Lambda_\alpha^{\rm DY}(\beta)$, respectively. 
 This identity of probability measures follows since
\begin{align}\label{LT:scaling}
\Lambda_\alpha(\beta) L_t=\Lambda_\alpha^{\rm DY}(\beta) L^{\rm DY}_t.
\end{align}
To see \eqref{LT:scaling}, note that
\[
C_\alpha^{\rm DY}\;\defeq\;\frac{\pi 2^\alpha}{\sin (\pi \alpha)},\quad 
\Lambda_\alpha^{\rm DY}(\beta)\,\defeq\, C_{\alpha}^{\rm DY}\beta^\alpha,
\]
and the local time $\{L^{\rm DY}_t\}$ satisfies 
\eqref{def:Lt} with $(L_t,C_\alpha^\star)$ replaced by $(L^{\rm DY}_t,C_\alpha^{\rm DY})$. See \cite[(2.2) on p.882 and the formula of $C_\alpha$ on p.883]{DY:Krein}. Identity \eqref{LT:scaling} can also be seen by using Proposition~\ref{prop:key}. \qed 
\end{rmk}

{\bf In the remaining of this paper, we only work with the local time $\{L_t\}$ satisfying (\ref{def:Lt})}. This normalization is a natural choice for our arguments. See Section~\ref{sec:DBG} for the applications. 

\subsection{First stochastic path integral representation}\label{sec:firstdr}
\begin{figure}
\begin{center}
\begin{tikzcd}[scale cd=.8, column sep=100pt,row sep=80pt]
\mbox{Radial part in $\{P^\beta_t\}$}
\arrow[drr, bend left, leftarrow, "\substack{\mbox{Lower-dimensional}\\ 
\mbox{approximations}\\
\mbox{(Theorem~\ref{thm:3})}}
" {anchor=north, xshift=-1.2cm}, "\mbox{\Large $\alpha
\searrow\, 0$}" {anchor=south, xshift=.5cm}]
\arrow[ddr, leftrightarrow, bend right, "\substack{\mbox{Feynman--Kac-type formula}\\ \mbox{(Theorem~\ref{thm:1})}}" description]
\arrow[dr, leftarrow, "\mbox{}" {anchor=north, xshift=-1.2cm}, "{\substack{\mbox{Approximations via}\\ \mbox{radial mollifications \cite{C:DBG0}}}}" description 
] & & \\
&\quad\quad  \BES(0)\hspace{-.2cm} \arrow[r, leftarrow, "\substack{\mbox{Convergences as a standard}\\ \mbox{property of Bessel processes}}" {anchor=north}, "\mbox{\Large $\alpha
\searrow\, 0$}" {anchor=south}] \arrow[d, leftrightarrow, dotted, "\substack{\mbox{No absolute continuity }\\ \mbox{between them}}" description]
&\quad  \BES(-\alpha) \arrow[d, leftrightarrow, "\substack{\mbox{Esscher--Girsanov}\\ \mbox{transformation (\cite{DY:Krein}}\\ \mbox{and Proposition~\ref{prop:key})}}" description] \\
&\quad \;\; \BES(0,\beta\da) \hspace{-.4cm}
&\quad \quad  \BES(-\alpha,\beta\da) \arrow[l, "\substack{\;\;\mbox{ 
Convergences as}\\ \mbox{\;\; diffusions \cite{DY:Krein} or as}
\\ \mbox{\;\; SDEs (Theorem~\ref{thm:2})}
}", "\mbox{\Large $\alpha
\searrow\, 0$}" {anchor=south}]
\end{tikzcd}
\end{center}
\caption{Relationships among the main radial objects in this paper}
\label{fig:BESab}
\vspace{-.2cm}
\end{figure}
Up to this point, the heuristic picture can be summarized as a chain of relationships among several objects: the radial part of $\{P^\beta_t\}$, $\BES(-\alpha)$, $\BES(-\alpha,\beta\da)$, and finally, $\BES(0,\beta\da )$. Recall \eqref{def:DBG} for $\{P^\beta_t\}$, and see Figure~\ref{fig:BESab} for a diagram including these relationships. Specifically, the first relationship we propose is that for all radial functions $f$, as $\alpha\searrow 0$,
\begin{align}\label{conv:BESab1}
P^\beta_tf(z_0)\simeq  \E^{(-\alpha)}_{\lvert z_0 \rvert}[\e^{\Lambda_\alpha L_t}f( \rho_t)],\quad \forall\; z_0\in \Bbb C\setminus\{0\}.
\end{align}
Here, the approximation ``$\simeq$'' is up to a change of multiplicative constants for space and time variables for the right-hand side, since the ``non-It\^{o}'' Laplacian $\Delta_z$, rather than $\Delta_z/2$, defines the relative motion in \eqref{def:L}. The other relationships are from \eqref{def:BESab}:
\begin{align}
\lim_{\alpha\searrow 0}\E^{(-\alpha)}_{\lvert z_0 \rvert}[\e^{\Lambda_\alpha L_t}f(\rho_t)]&=\lim_{\alpha\searrow 0}\E^{\albe}_{\lvert z_0 \rvert}\left[\frac{\e^{\beta t}\widehat{K}_\alpha(\sqrt{2\beta}\lvert z_0 \rvert)}{\widehat{K}_{\alpha}(\sqrt{2\beta}\rho_t)}f( \rho_t)\right]\notag\\
&=\E^{(0),\beta\da }_{\lvert z_0 \rvert}\left[\frac{\e^{\beta t} K_0(\sqrt{2\beta}\lvert z_0 \rvert)}{K_0(\sqrt{2\beta}\rho_t)}f( \rho_t)\right],\label{conv:BESab2}
\end{align}
where $\P^{\zbe}$ denotes the probability measure of $\BES(0,\beta\da )$. To verify these relationships, we cannot expect a proof of \eqref{conv:BESab1} by weak convergences as $\alpha\searrow 0$ since $0$ is polar under $\P^{(0)}$. This difficulty is similar to the one from dealing with the Brownian approximations via \eqref{FK:approx}. Verifying the last equality by the Portmanteau theorem is another issue mainly because $K_0$ decays exponentially to zero at infinity, but this issue is much milder than the ``wild singularity'' of local times that \eqref{conv:BESab1} is faced with. Note that $K_0$ also has a logarithmic singularity at $0$. See \eqref{asymp:K0} and \eqref{asymp:Kinfty} for the asymptotic representations of $K_0$ at 0 and at infinity. Accordingly, we consider the following outline:
 
\begin{outline}\label{outline:main}
The main results of this paper are developed in two different directions:
\begin{itemize}[leftmargin=3\labelsep]
\item A direct verification that $P_t^\beta f(z_0)$ for any radial function $f$ coincides with the limiting expectation in \eqref{conv:BESab2} up to suitable multiplicative constants for space and time variables, and an extension to $z_0=0$ and general, not necessarily radial, functions $f$. 
\item A second proof by two steps:

\begin{itemize}[leftmargin=3\labelsep]
\item {The validity of \eqref{conv:BESab2} by proving the convergence of $\BES(-\alpha,\beta\da)$ to $\BES(0,\beta\da )$ via the weak convergence of the SDEs.} 
\item {The validity of \eqref{conv:BESab1} and an extension to $z_0=0$ and general $f$, which takes the form of ``lower-dimensional approximations.'' \qed 
} 
\end{itemize}
\end{itemize}
\end{outline}

More specifically, the direct verification is to apply the original characterization from \cite{DY:Krein} of $\BES(0,\beta\da)$ by excursion theory. See Section~\ref{sec:EXCURSION} for the proof. 
 Note that the one-dimensional nature of this characterization is shared by the construction of $\BES(0,\beta\da )$ in \cite{DY:Krein} in the proofs of Theorems 2.1 and 2.2 \emph{ibid}. It specifies explicit formulas of the scale function and speed measure of $\BES(0,\beta\da )$. Therefore, applying the corresponding time change of a one-dimensional Brownian motion and scale transformation gives $\BES(0,\beta\da )$. 

The second proof will be summarized as Theorem~\ref{thm:4}. Along the way, we will unfold several new properties of 
 $\BES(-\alpha,\beta\da )$, and we will introduce a new ``lower-dimensional approximation'' of the relative motion operator $\ms L$.
See Sections~\ref{sec:SRMSDE} and \ref{sec:lda} for the overviews. This approximation method may be of independent interest for generalizations to other models with delta-function interactions. It not only provides a new construction of two-dimensional delta-function potentials by approximations of local times from lower-dimensional Bessel processes. It is also compatible with probabilistic purposes.

\begin{rmk}[Many-body delta-Bose gas]\label{rmk:DBG3+}
One of our earlier motivations of the second proof was to prepare the extension to a Feynman--Kac-type formula for the many-body delta-Bose gas \eqref{def:H}. We expected that the method proposed below would be feasible: 
\begin{quote}
\footnotesize
Roughly speaking, the extension begins by coupling the distance dynamics among all of the $N$ particles as a multi-dimensional SDE for $N\geq 3$. The driving Brownian motions are appropriately chosen such that in the solution of this SDE, say
$\{\rho_{\bi}(t);\bi=(i\prime,i),1\leq i<i\prime\leq N \}$ under $\P^{N,\al}$,
each $\rho_{\bi}$ is a version of $\BES(-\alpha)$. Moreover, these solutions converge to distances among $N$ independent planar Brownian motions as $\alpha\searrow 0$. 
Then the solution allows an extension of \eqref{def:BESab} as the following probability measure on $\bigvee_{\bi}\sigma(\rho_{\bi}(s);s\leq t)$:
\begin{align}\label{def:BESabN}
\d \P^{N,\albe}\,\defeq\,\prod_{1\leq i<i\prime\leq N}\frac{\widehat{K}_\alpha(\sqrt{2\beta}\rho_{\bi}(t))}{\widehat{K}_\alpha(\sqrt{2\beta}\rho_{\bi}(0))}\e^{\sum_{1\leq i<i\prime\leq N}[\Lambda_\alpha(\beta) L_\bi(t)-\beta t]+A_t}\d \P^{N,\al}.
\end{align}
Here, $L_\bi(t)$ denotes the local time of $\{\rho_{\bi}(t)\}$ at the point $0$, and $\{A_t\}$ is a correction term defined by a sum of quadratic covariations. More specifically, the formula of $\{A_t\}$ can be read off from the explicit stochastic logarithms of the following martingales:
\[
\frac{\widehat{K}_\alpha(\sqrt{2\beta}\rho_{\bi}(0))}{\widehat{K}_\alpha(\sqrt{2\beta}\rho_{\bi}(t))}\e^{\Lambda_\alpha(\beta) L_\bi(t)-\beta t}.
\]
See Proposition~\ref{prop:key} for this stochastic logarithm in the case of two particles. 
Among several things to be handled under this setting, the convergence of $\{\rho_{\bi}(t)\}$ under $\d \P^{N,\albe}$ will again be approached by considering the SDE dynamics; proving the analogue of \eqref{conv:BESab1} can use an extension of the diagrammatic expansion in \cite{C:DBG0} which constructs the many-body Schr\"odinger semigroups by mollifying the delta-function potentials. 
\end{quote}

After the submission of this paper, a Feynman--Kac-type formula for the many-body delta-Bose gas is obtained in \cite{C:SBG3+}. That formula uses a formulation different from the one implied by the approximation method proposed above, but the proof in \cite{C:SBG3+} still builds on some of the results and methods from the second proof in Outline~\ref{outline:main}. Presently, we do not have a definitive answer of whether the approximation method of using \eqref{def:BESabN} is also feasible.
 \qed 
\end{rmk}

\subsection{The stochastic relative motion}\label{sec:aug}
The complete choice of stochastic relative motion emerges from 
the extension to $z_0=0$ and general $f$ in both directions of Outline~\ref{outline:main}. 
This choice will be motivated in part by the fact that $\BES(0,\beta\da)$ relates to $\BES(0)$ via the following identity \cite[(2.4) on p.883]{DY:Krein}:
\begin{align}\label{T0-1}
\begin{split}
\P^{(0),\beta\da }_{x_0}(\{t<T_0\}\cap A)&=\E^{(0)}_{x_0}\left[\frac{\e^{-\beta t}K_0(\sqrt{2\beta}\rho_t)}{K_0(\sqrt{2\beta}x_0)};A\right],\;  \forall\; A\in \sigma(\rho_s;s\leq t),\;x_0>0,
\end{split}
\end{align}
where $T_0=T_0(\rho)$, $T_a(X)\defeq\inf\{t\geq 0;X_t=a\}$ for any $\{X_t\}$, and $\E[U;\!A]\,\defeq\,\E[U\1_A]$.

\begin{rmk}\label{rmk:beta}
The right-hand side of \eqref{T0-1} allows the following interpretation of the parameter $\beta$: for $0<t<\infty$ and an independent exponential $\mathbf e_\beta$ with mean $1/\beta$, 
\begin{align}\label{lim:interpretation}
\lim_{\vep\searrow 0}\P^{(0)}_{x_0}(A\cap \{t<T_\vep(\rho)\}|T_\vep(\rho)<\mathbf e_\beta)=\E^{(0)}_{x_0}\left[\frac{\e^{-\beta t}K_0(\sqrt{2\beta}\rho_t)}{K_0(\sqrt{2\beta}x_0)};A\right].
\end{align}
This approximation can be seen by  first noting that the left-hand side equals
\begin{align*}
\lim_{\vep\searrow 0}\frac{\E^{(0)}_{x_0}[\1_A\e^{-\beta T_{\vep}(\rho)};t<T_{\vep}(\rho)]}{ \E^{(0)}_{x_0}[\e^{-\beta T_{\vep}(\rho)}]}
=\lim_{\vep\searrow 0}\frac{\E^{(0)}_{x_0}[\1_A\e^{-\beta t}
\E^{(0)}_{\rho_{t}}[\e^{-\beta T_{\vep}(\rho)}];t<T_{\vep}(\rho)]}{\E^{(0)}_{x_0}[\e^{-\beta T_{\vep}(\rho)}]}.
\end{align*}
Then the right-hand side of \eqref{lim:interpretation} can be reached by 
 using the formula $\E^{(0)}_{x}[\e^{-q T_y(\rho)}]=K_0(\sqrt{2q}x)/K_0(\sqrt{2q}y)$ \cite[Eq.~(2.5)]{HM}, $0<y\leq x$, and the polarity of $0$ for $\BES(0)$.
See \cite[Section~3 on pp.299+]{PY:BESINF} for more on \eqref{lim:interpretation}. 

For general $A$, \eqref{lim:interpretation} without using $\{t<T_\vep(\rho)\}$ cannot hold since by using the probability density in \eqref{def:GIG} of $T_0(\rho)$, the left-hand side of \eqref{T0-1} depends on $t$ nontrivially, and hence, so does the right-hand side of \eqref{T0-1}. For $\alpha>0$, see \cite[Section~3.1]{DY:Krein} for a similar situation, but with a resolution.
\qed 
\end{rmk}

Now, we specify the stochastic relative motion $\{ \two Z_t\}$ mentioned in Section~\ref{sec:intro_overview}.

\begin{cond}\label{cond:sp}
We choose a complex-valued diffusion process $\{Z_t\}$ such that the following two conditions hold:
\begin{itemize}[leftmargin=3\labelsep]
\item [(1$\cc$)] The radial process $\{\lvert Z_t \rvert\}=\{\rho_t\}$ is a version of $\BES(0,\beta\da )$.  
\item [(2$\cc$)] For all $z_0\in \Bbb C\setminus\{0\}$, the law of $\{Z_t\}$ conditioned on $Z_0=z_0$ satisfies 
\begin{align}\label{Z:skewproduct}
Z_t=\lvert Z_t \rvert\exp\big\{\i \dot\gamma_{\int_0^t\d s/\lvert Z_s\rvert^2}\big\},\quad \forall\; t<T_0(Z),
\end{align}
where $\{\dot\gamma_t\}$ is a circular Brownian motion (i.e. a one-dimensional Brownian motion mod $2\pi$) independent of $\{\lvert Z_t \rvert\}$.\qed 
\end{itemize}
\end{cond}

In particular, the clock process  $\int_0^t\d s/\lvert Z_s\rvert^2$ of the angular part in \eqref{Z:skewproduct} takes the same form as the clock process in the skew-product representation of planar Brownian motion \cite[(2.11) Theorem on p.193]{RY}. Also, \eqref{T0-1} allows inverting $\{Z_t;t<T_0(Z)\}$ back to planar Brownian motion, so the above setting is \emph{heuristically} enough similar to the Brownian approximations discussed in Section~\ref{sec:intro} for $\{P^\beta_t\}$. Furthermore, we expect nothing else from $\{Z_t\}$ in the limit of $\alpha\searrow 0$ due to the functional analytic \emph{universality} suggested by \cite[Theorem~2.2]{AGHH:2D}. This is so because that theorem from \cite{AGHH:2D} constructs the Hamiltonian of the relative motion as \emph{all} the self-adjoint extensions of the Laplacian restricted to $\C_0^\infty(\Bbb C\setminus\{0\})$, the set of $\C^\infty$-functions with compact support in $\Bbb C\setminus\{0\}$. Here, the self-adjoint extensions from  \cite[Theorem~2.2]{AGHH:2D} range over exactly the Laplacian $\Delta_z$ and the operators $\Delta_\beta$ described before \eqref{def:DBG} for $\{P^\beta_t\}$. 

In this paper, we will work with $\{Z_t\}$ such that $\{\lvert Z_t \rvert\}$ is 
$\BES(-\alpha)$ and $\BES(-\alpha,\beta\da )$, $\alpha \in [0,1/2)$, by requiring Condition~\ref{cond:sp} and its analogues for the other cases. Here, $\{Z_t\}$ for $\{|Z_t|\}\sim \BES(0)$ is the two-dimensional Brownian motion.
Except this case, the \emph{weak existence and uniqueness of $\{Z_t\}$} for $t$ ranging over the \emph{entire} $ \R_+$ is a particular consequence of Erickson's continuous extensions of skew-product diffusions \cite[Theorem~1 on pp.75--76]{Erickson}. The main condition thus required is
\begin{align}\label{Zt:explosion}
\textstyle \int_0^{T_0(Z)} \d s/\lvert Z_s\rvert^2=\infty\quad\mbox{a.s.}
\end{align}
 Informally, \eqref{Zt:explosion} has the meaning of driving ``rapid spinning'' of $\{Z_t\}$ via \eqref{Z:skewproduct} as $t\nearrow T_0(Z)$. This leads to the use of the equilibrium distribution of the circular Brownian motion as $t\nearrow T_0(Z)$, and so, to the weak uniqueness of $\{Z_t\}$. See also It\^{o} and McKean~\cite[§7.16 on pp.274+]{IM:Diffusion} on the spinning of skew-product diffusions.

\begin{prop}\label{prop:esp}
For any $\alpha \in (0,1/2)$, there exists a $\Bbb C$-valued continuous strong Markov process $\{Z_t\}_{t\geq 0}$ such that $\{\lvert Z_t \rvert\}\sim \BES(-\alpha)$, and  Condition~\ref{cond:sp} with $\BES(0,\beta\da)$ replaced by $\BES(-\alpha)$ holds. The analogous existence result holds if the radial process is distributed as $\BES(-\alpha,\beta\da)$, for any $\alpha\in [0,1/2)$. In any of these cases, the process $\{Z_t\}$ is uniqueness in law under the corresponding conditions.
\end{prop}

The proof of this proposition will be given in Section~\ref{sec:esp}.

\begin{nota}\label{nota:Z}
We will continue to denote the probability measure for $\{Z_t\}$ by $\P^{(-\alpha)}$ or $\P^{\albe}$ if its radial part is distributed as $\BES(-\alpha)$ or $\BES(-\alpha,\beta\da)$. Also, we denote complex states with the letter $z$ and real states with the letters $x$, $y$, $a$ and $b$. For example, $\P^{(-\alpha)}_{z_0}$ means that $\{Z_t\}$ starts at $z_0$, and $\P^{(-\alpha)}_{x_0}$ only refers to $\BES(-\alpha)$ starting from $x_0$.\qed
\end{nota}

The next two propositions specify some of the basic properties of the stochastic relative motion. See Section~\ref{sec:singular} for the proofs.

\begin{prop}
\label{prop:GEN}
Given any $\beta\in (0,\infty)$, the infinitesimal generator of $\{\two Z_t\}$ under $\P^{\zbe}_{z/\two}$ is given by $\ms A^{\beta\da}_0$ in \eqref{def:gen}. Also, the following decomposition of the two-dimensional Laplacian $\Delta_z$ holds: for all $f\in \C^\infty$ and $z=x+\i y\neq 0$, 
\[
\Delta_z f(x,y)=K_0(\sqrt{\beta }\lvert z\rvert)\ms A^{\beta\da}_0 \left\{(\tilde{x},\tilde{y})\mapsto \frac{f(\tilde{x},\tilde{y})}{K_0(\sqrt{\beta}\lvert\tilde{z}\rvert)} \right\}(x,y)+\beta f(x,y)\quad(\tilde{z}=\tilde{x}+\i \tilde{y}).
\]
\end{prop}

\begin{prop}\label{prop:singular}
For $t_1\in (0,\infty)$, the law of $\{Z_t\}_{t\in [0,t_1]}$ has a nonzero singular part in the Lebesgue decomposition with respect to the law of planar Brownian motion restricted to $[0,t_1]$. Moreover, the law of $\{Z_t\}_{t\in [0,\infty)}$ is singular to the law of planar Brownian motion.
\end{prop}

Proposition~\ref{prop:singular} shows that by refining \eqref{T0-1},
the stochastic relative motion is non-Gaussian to the degree that it cannot become a Gaussian process after some Girsanov transformation, since such a Gaussian process could only be a planar Brownian motion. 

In the following, we will proceed to the first theorem of this paper and then resume the discussion of the stochastic relative motion in Section~\ref{sec:Markovgen} for the analytical properties of the drift coefficient of $\ms A^{\beta\da}_0$.

\subsection{The Feynman--Kac-type formula for the relative motion}\label{sec:FKformula}
We are ready to state the first main theorem of this paper, which proves a Feynman--Kac-type formula of $\{P^\beta_t\}$. For the statement, we use the following explicit form of the kernel $P^\beta_t(z,\tilde{z})$ from inverting the Laplace transforms in \eqref{def:DBG}: 
\begin{align}
\begin{split}
P_t^\beta (z,\tilde{z})
=P_{2t}(z,\tilde{z}) + \int_0^t\d sP_{2s}(z)\mathring{P}_{t-s}^\beta(\tilde{z}),\quad z,\tilde{z}\in \Bbb C,
\end{split}
\label{def:Pbeta2}
\end{align}
where $\mathring{P}^\beta_t$ are linear functionals on $\B_+(\Bbb C)$ with density
\begin{align}\label{def:ringP}
\mathring{P}^\beta_t(z)\,\defeq\, \int_0^{t}\d  \tau \s^\beta(\tau) P_{2(t-\tau)}(z),
\end{align}
and we set
\begin{align}\label{def:s}
 \mathfrak s^\beta(\tau)\,\defeq \, 4\pi\int_0^\infty \d u\frac{\beta^u \tau^{u-1}}{\Gamma(u)}.
\end{align}
Here, \eqref{def:Pbeta2} holds because of
the formula 
\begin{align}\label{Inv:sbeta}
\int_0^\infty \e^{-q \tau}\left(4\pi\int_0^\infty \frac{\beta^u\tau^{u-1}}{\Gamma(u)}\d u\right) \d \tau=\frac{4\pi}{\log (q/\beta)},\quad q\in (\beta,\infty)
\end{align}
\cite[Eq. (5.6) on p.176]{C:DBG3+}. 
More details of \eqref{def:Pbeta2} can be found in \cite[the proof of Proposition~5.1 on p.176]{C:DBG3+} and Remark~\ref{rmk:lap}. Note that another formula for $P^\beta_t(z,\tilde{z})$ by inverting the Laplace transform in \eqref{def:DBG} appears in \cite[Eq.~(3.11)]{ABD:Schrodinger}. 

We will give two different proofs of the following theorem, as in Outline~\ref{outline:main}. 

\begin{thm}\label{thm:1}
The semigroup $\{P^\beta_t\}$ defined by \eqref{def:Pbeta2} and the linear functionals $\{\mathring{P}^\beta_t\}$ with density in \eqref{def:ringP} admit the following representations: 
\begin{align}
P^\beta_t f(z_0)&=\E^{(0),\beta\da }_{z_0/\two}\left[\frac{\e^{\beta t}K_0(\sqrt{\beta}\lvert z_0 \rvert)}{K_0(\sqrt{\beta}\lvert \two Z_t\rvert)}f(\two Z_t)\right],\quad z_0\in \Bbb C\setminus\{0\} ,\label{id:2-1}\\
\mathring{P}^\beta_t f&=\E^{(0),\beta\da }_0\left[\frac{\e^{\beta t}\cdot 2\pi}{K_0(\sqrt{\beta}\lvert \two Z_t\rvert)}f(\two Z_t)\right]\label{id:2-2}
\end{align}
for all $f\in \B_+(\Bbb C)$.
Equivalently, the marginals of $\{Z_t\}$ under $\P_{z_0}^{(0),\beta\da }$ are given by
\begin{align}\label{BESab:density}
& \E^{(0),\beta\da}_{z_0}\left[f(Z_t)\right]\notag\\
& =
\begin{cases}
\displaystyle \frac{\e^{-\beta t}P_{2t}f_\beta(\two z_0)}{K_0(\sqrt{\beta}\lvert \two z_0 \rvert)}\\
\displaystyle  +\frac{ \e^{-\beta t}}{K_0(\sqrt{\beta}\lvert \two z_0 \rvert)}\int_0^t \d s P_{2s}(\two z_0)\int_0^{t-s}\d \tau \s^\beta(\tau)P_{2(t-s-\tau)}f_\beta(0),&z_0\neq 0,\\
\vspace{-.2cm}\\
\displaystyle \frac{\e^{-\beta t}}{2\pi}\int_0^{t}\d \tau \s^\beta(\tau)P_{2(t-\tau)}f_\beta(0),& z_0=0,
\end{cases}
\end{align}
where $f_\beta(z)\,\defeq \,f(z/\two)K_0(\sqrt{\beta}\lvert  z\rvert)$ for $z\in \Bbb C$.
\end{thm}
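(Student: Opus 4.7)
The strategy is to prove the marginal density formula \eqref{BESab:density}, from which \eqref{id:2-1} and \eqref{id:2-2} follow by direct substitution using \eqref{def:Pbeta2}--\eqref{def:ringP} together with the cancellation between the $K_0$ factors in $f_\beta$ and in the weight $\e^{\beta t}K_0(\sqrt{\beta}|z_0|)/K_0(\sqrt{\beta}|\two Z_t|)$ (with the case $z_0 = 0$ giving \eqref{id:2-2}). Computing the marginal of $Z_t$ under $\P^{(0),\beta\da}_{z_0}$ is a purely distributional statement about the skew-product extension, and the argument splits naturally according to whether $\{|Z_s|\}$ has hit $0$ by time $t$.

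\textbf{Pre-$T_0$ contribution.} For $z_0 \neq 0$, decompose
\[
\E^{(0),\beta\da}_{z_0}[f(Z_t)] = \E^{(0),\beta\da}_{z_0}[f(Z_t);\,t<T_0(Z)] + \E^{(0),\beta\da}_{z_0}[f(Z_t);\,t \geq T_0(Z)].
\]
On $\{t<T_0(Z)\}$, the local time of $|Z_\cdot|$ at $0$ vanishes and $\widehat{K}_\alpha \to K_0$, so the $\alpha\da 0$ limit of \eqref{def:BESab} (either directly from the excursion description of $\BES(0,\beta\da)$ in \cite{DY:Krein}, or through the SDE approximations developed in Section~\ref{sec:SDE}) gives the Radon--Nikod\'ym derivative of $\P^{(0),\beta\da}_{z_0}$ against $\P^{(0)}_{z_0}$ on this event proportional to $K_0(\sqrt{2\beta}|Z_t|)/K_0(\sqrt{2\beta}|z_0|)\cdot\e^{-\beta t}$. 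Since under $\P^{(0)}_{z_0}$ the skew-product \eqref{Z:skewproduct} identifies $\{Z_s\}$ with a planar Brownian motion starting from $z_0$, integrating this density against the planar Brownian transition kernel and applying the change of variables built into $f_\beta$ reproduces exactly the first summand of \eqref{BESab:density}.

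\textbf{Post-$T_0$ contribution via excursion theory.} On $\{t \geq T_0(Z)\}$, set $g_t = \sup\{s\leq t:\,Z_s = 0\}$. By Erickson's continuous extension \cite{Erickson}, the clock divergence \eqref{Zt:explosion} at $g_t$ forces the circular Brownian motion to equilibrate, so the angle of $Z_t$ is independent of $|Z_t|$ and uniform on $[0,2\pi)$. The joint law of $(g_t, t - g_t, |Z_t|)$ under $\P^{(0),\beta\da}_{z_0}$ is then extracted from the excursion theory of $\BES(0,\beta\da)$ as characterized in \cite{DY:Krein}: the inverse local time at $0$ is a $\beta$-exponentially tilted gamma subordinator whose L\'evy density is (up to the $4\pi$ normalization from \eqref{def:Lt} and \eqref{choice}) precisely the kernel $\s^\beta$ defined in \eqref{def:s}, and the transition density of an excursion of length $t-g_t$ is explicit. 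Convolving these ingredients with the Brownian kernel $P_{2s}(\two z_0)$ that governs the radial reach-to-zero at time $s=g_t$ reproduces the second summand of \eqref{BESab:density}. The case $z_0 = 0$ then follows at once: starting from the reflecting boundary one has $T_0(Z) = 0$ almost surely, only the excursion contribution survives, and this produces the $z_0 = 0$ line of \eqref{BESab:density} and hence \eqref{id:2-2}.

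\textbf{Main obstacle.} The hardest step is the post-$T_0$ analysis: verifying that the gamma-subordinator L\'evy density of the inverse local time of $\BES(0,\beta\da)$ combined with the excursion transition kernel gives \emph{exactly} the kernel $\s^\beta$ with the precise constant $4\pi$ in \eqref{def:s} requires the detailed excursion characterization of \cite{DY:Krein} and careful bookkeeping of the normalization fixed by \eqref{def:Lt}--\eqref{choice}. The angular equilibration at $g_t$ must also be justified rigorously despite the singular behavior of $A_t = \int_0^t ds/|Z_s|^2$ near zeros of $|Z_\cdot|$, and the passage from \eqref{def:BESab} with $\alpha > 0$ to its $\alpha\da 0$ restriction on $\{t<T_0(Z)\}$ needs to be controlled uniformly in $t$.
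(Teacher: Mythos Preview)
Your overall plan matches the paper's first proof (Section~\ref{sec:EXCURSION}): split at $T_0(Z)$, handle the pre-$T_0$ piece via the absolute continuity \eqref{T0-1} against planar Brownian motion, and treat the post-$T_0$ piece by excursion theory. The paper also reverses your order of implications, proving \eqref{id:2-1}--\eqref{id:2-2} first (via Laplace transforms) and reading off \eqref{BESab:density}, but this is cosmetic.

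There are, however, two genuine confusions in your post-$T_0$ analysis that would have to be repaired. First, $\s^\beta$ is \emph{not} the L\'evy density of the inverse local time: by item~(i) below \eqref{Lap:lim} the L\'evy density is $v^{-1}\e^{-\beta v}$, whereas $\s^\beta(\tau)$ has Laplace transform $4\pi/\log(\lambda/\beta)$ and is (up to the $\e^{-\beta\tau}$ tilt and the constant $4\pi$) the \emph{potential} (renewal) density of the subordinator---compare \eqref{Lap:lim}. Second, you write ``$s=g_t$'' for the variable in the kernel $P_{2s}(\two z_0)$, but that kernel is the density of the \emph{first} hitting time $T_0$ (cf.\ \eqref{def:GIG}), not of the last zero $g_t$. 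The convolution structure in \eqref{BESab:density} is a first-hit decomposition at $T_0$ followed, from $0$, by a last-exit decomposition at $g_{t-T_0}$; the three pieces are $T_0$, then $g_{t-T_0}$, then the straddling excursion. If you keep $g_t$ and $T_0$ straight and use the potential density (not the L\'evy density) in the last-exit formula, your route goes through.

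The paper sidesteps both issues by staying in Laplace transform throughout: after the strong Markov property at $T_0$, it applies \eqref{T0-2}, \eqref{Lap:lim} and the compensation formula to obtain \eqref{formula1-2}, computes the excursion integral explicitly via the Bessel-bridge density \eqref{N:density} to get \eqref{formula1-3}, and then matches against \eqref{def:DBG}. This avoids having to identify $\s^\beta$ at the time level until the very end.
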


To contrast the formula in Theorem~\ref{thm:1} with the classical exponential form of the Feynman--Kac formula, let us introduce a corollary. For the statement, we work with a slightly different setting. This uses $\{Z_t\}$ whenever appropriate (only to avoid the cumbersome $\two$ here and there), and set up the following hypothesis that assumes an additive functional $\{A_t\}$ subject to a minimal measurability condition and allows a killing time $\zeta$: 

\begin{hypothesis}\label{hyp:FK1}
There exists $(\Bbb Q,A,\zeta)$ such that all of the following conditions hold:

\begin{itemize}[leftmargin=3\labelsep]
\item $\Bbb Q$ is a probability measure on the Borel $\sigma$-field of $C(\R_+,\Bbb C)$ under which $\{Z_t\}$ is understood as the coordinate process (i.e. $Z_t(\mathrm w)=\mathrm w_t$) and is a Markov process. We stress that the strong Markov property of $\{Z_t\}$ under $\Bbb Q$ is \emph{not} imposed. 
\item $(t,\mathrm w)\mapsto A_t(\mathrm w_r;r\leq t):\R_+\times C(\R_+,\Bbb C)\to (-\infty,\infty]$  is Borel measurable such that $t\mapsto A_t(\mathrm w_r;r\leq t)$ is c\`adl\`ag for all $\mathrm w$, and if $\theta_t(\mathrm w)_r\,\defeq\,\mathrm w_{t+r}$ denotes the shift operator,
\begin{align}\label{def:A}
A_{t+s}(\mathrm w_r;r\leq t+s)-A_t(\mathrm w_r;r\leq t)=A_s((\mathrm w\circ \theta_t)_r;r\leq s).
\end{align}
In other words, $A_\cdot$ is an additive functional.
\item $\zeta(\mathrm w)$ is a stopping time with respect to $\F_t\,\defeq\,\sigma(Z_s;s\leq t)$.
\item The following {\bf exponential form} of the Feynman--Kac formula holds:
\begin{align}\label{FK:char}
\begin{split}
&\E_{z_0}^\Bbb Q[\e^{A_t}\phi(Z_t);\zeta>t]=\E_{z_0}^{(0),\beta\da}\left[D^{z_0}_t\phi(Z_t)\right],\\
&\hspace{2cm} \forall\;z_0\in \Bbb C\setminus\{0\},\;t\in (0,\infty),\;\phi\in \B_+(\Bbb C).
\end{split}
\end{align}
Here in \eqref{FK:char}, we use the shorthand notation:
\begin{align}\label{def:AD}
A_t\,\defeq\,A_t(Z_s;s\leq t),\quad D^z_t\,\defeq\,\frac{\e^{\beta t} K_0(\sqrt{2\beta}\lvert z\rvert)}{K_0(\sqrt{2\beta}\lvert Z_t \rvert)},
\end{align}
and $\E[U;\!A]\,\defeq\,\E[U\1_A]$.\qed
\end{itemize}
\end{hypothesis}

For \eqref{FK:char}, the assumption that the additive functional $A_t$ is $(-\infty,\infty]$-valued gives $\e^{A_t}\neq 0$.
The following corollary refutes Hypothesis~\ref{hyp:FK1}. See Section~\ref{sec:FK} for the proof.

\begin{cor}
\label{cor:FK}
For all $\beta\in (0,\infty)$, no triplet $(\Bbb Q,A,\zeta)$ in Hypothesis~\ref{hyp:FK1} exists. 
\end{cor}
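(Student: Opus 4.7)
The plan is to derive a contradiction by matching the generator of the hypothesized Feynman--Kac semigroup $T_t\phi(z)\defeq\E^\Bbb Q_z[e^{A_t}\phi(Z_t);\zeta>t]$ against that of the right-hand side of \eqref{FK:char}, and then exploiting the logarithmic divergence of $\widetilde T_t 1$ at $z=0$. First, using the additivity \eqref{def:A}, the Markov property of $\{Z_t\}$ under $\Bbb Q$, and a standard argument involving the stopping time $\zeta$, I would verify that $\{T_t\}$ is a semigroup on bounded Borel functions on $\Bbb C\setminus\{0\}$. By \eqref{FK:char}, $T_t=\widetilde T_t$, where $\widetilde T_t\phi(z)\defeq\E^{(0),\beta\da}_z[D^z_t\phi(Z_t)]$; and by \eqref{id:2-1}, $\widetilde T_t\phi(z)=P^\beta_t[\phi(\cdot/\two)](\two z)$. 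Since \cite[Theorem~2.2]{AGHH:2D} realizes $\Delta_\beta$ as a self-adjoint extension of $\Delta\upharpoonright_{C^\infty_c(\Bbb C\setminus\{0\})}$, the generator of $\{\widetilde T_t\}$ restricted to $C^\infty_c(\Bbb C\setminus\{0\})$ is exactly $\frac{1}{2}\Delta$.

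Next I would analyse the left-hand side. Continuity of paths of $Z$ under $\Bbb Q$ makes $Z$ a diffusion on $\Bbb C$ with a local second-order generator $\mathcal L^\Bbb Q$, and $A$ admits a semimartingale decomposition $A_t=\int_0^t\alpha(Z_s)\,\d s+M_t$ with $M$ a continuous local martingale. Applying It\^o's formula to $e^{A_t}\phi(Z_t)\1_{\{\zeta>t\}}$ for $\phi\in C^\infty_c(\Bbb C\setminus\{0\})$ and comparing with the generator of $\widetilde T_t$ yields the pointwise identity
\[
\mathcal L^\Bbb Q\phi+\Gamma\cdot\nabla\phi+(\alpha-q)\phi=\tfrac{1}{2}\Delta\phi,
\]
where $\Gamma$ is the density of $\d\langle M,Z\rangle/\d t$ and $q$ the killing rate of $\zeta$. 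Writing $\mathcal L^\Bbb Q=\frac{1}{2}\Delta+b\cdot\nabla+c$ and testing against $\phi$ locally constant near a given $z\neq 0$, and against $\phi$ with $\phi(z)=0$ and $\nabla\phi(z)$ in arbitrary directions, forces $c+(\alpha-q)\equiv 0$ and $b+\Gamma\equiv 0$. A Girsanov change of measure then recasts $T_t 1(z)=\E^{\tilde{\Bbb Q}}_z[\exp(\int_0^t V(Z_s)\,\d s)]$, with $Z$ a standard planar Brownian motion under $\tilde{\Bbb Q}$ and $V$ a locally bounded real potential on $\Bbb C\setminus\{0\}$.

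Finally, from \eqref{def:Pbeta2},
\[
\widetilde T_t 1(z)=P^\beta_t 1(\two z)=1+\int_0^t P_{2s}(\two z)\left(\int_0^{t-s}\s^\beta(\tau)\,\d\tau\right)\d s,
\]
which diverges like $\log(1/|z|)$ as $z\to 0$, since $P_{2s}(\two z)\to(4\pi s)^{-1}$ pointwise and $\int^\cdot\d s/s=\infty$ near $0$, cut off only for $s\lesssim|z|^2$. Since planar Brownian motion under $\tilde{\Bbb Q}$ never hits $0$ when started from $z\neq 0$, the potential $V$ matters only on $\Bbb C\setminus\{0\}$; standard bounds on Feynman--Kac integrals with such real-valued potentials preclude $T_t 1(z)$ from acquiring a logarithmic divergence at $z=0$, which at the resolvent level is the rank-one perturbation $\frac{4\pi}{\log(\lambda/\beta)}G_\lambda(z)G_\lambda(\cdot)$ of the free Green's function in \eqref{def:DBG}. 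This contradicts $T_t=\widetilde T_t$. The main obstacle is the careful justification of the generator identification and the Girsanov transformation from the minimal regularity afforded by Hypothesis~\ref{hyp:FK1}; a cleaner alternative is to argue entirely at the resolvent level, showing that the rank-one perturbation in \eqref{def:DBG} cannot arise from any non-singular Feynman--Kac representation of a real-valued additive functional, which is the analytic counterpart of the classical fact that $\Delta_\beta$ is not a Schr\"odinger operator $\frac{1}{2}\Delta-V$ for any real-valued, locally integrable $V$ on $\Bbb C\setminus\{0\}$.
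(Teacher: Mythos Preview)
Your approach has a genuine gap at exactly the point you flag as an obstacle: Hypothesis~\ref{hyp:FK1} gives you almost no structure. Under $\Bbb Q$ the coordinate process is only assumed to be a continuous Markov process --- not a diffusion, not a semimartingale --- and $A$ is only a c\`adl\`ag additive functional in the sense of \eqref{def:A}, with no semimartingale decomposition assumed or implied. So the step ``$Z$ is a diffusion with a local second-order generator $\mathcal L^{\Bbb Q}$, and $A_t=\int_0^t\alpha(Z_s)\,\d s+M_t$'' is simply unavailable, and with it the It\^o/Girsanov argument collapses. Your ``cleaner alternative'' at the resolvent level likewise presupposes that the hypothetical representation is Schr\"odinger-type, which is precisely what you cannot extract from the minimal hypothesis.

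The paper's proof bypasses all of this by a short supermartingale argument that uses only what Hypothesis~\ref{hyp:FK1} provides. First one extends \eqref{FK:char} from one-point functionals to path functionals $\Phi_t=\Phi(Z_r;r\leq t)$ by a routine Markov iteration (this uses only the Markov property of $Z$ under $\Bbb Q$ and additivity of $A$). From this, $M^{z_0}_t\defeq e^{-A_t}D^{z_0}_t$ is a nonnegative c\`adl\`ag $\P^{(0),\beta\da}_{z_0}$-supermartingale. Such a process is absorbed at zero after its first zero, and because $e^{A_t}$ never vanishes, the first zero of $M^{z_0}$ is exactly $T_0(Z)$. Hence
\[
\E_{z_0}^{(0),\beta\da}\Bigl[\tfrac{1}{K_0(\sqrt{2\beta}|Z_t|)};\,t\geq T_0(Z)\Bigr]=0.
\]
But by the strong Markov property at $T_0(Z)$, together with the explicit GIG density of $T_0(Z)$ in \eqref{def:GIG} and the strict positivity of $\E_0^{(0),\beta\da}[K_0(\sqrt{2\beta}|Z_t|)^{-1}]$ from \eqref{BESab:density}, the left-hand side is strictly positive --- a contradiction. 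The key idea you were missing is that the \emph{vanishing} of $D^{z_0}_t$ at $T_0(Z)$, combined with supermartingale absorption, is already enough; no generator analysis is needed.
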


\subsection{Second look at the stochastic relative motion: regularity of the drift}\label{sec:Markovgen}
We now resume the discussion of the stochastic relative motion for the regularity of the drift. The ill-behaved analytical properties in the following proposition may clarify the necessity of the skew-product construction in Section~\ref{sec:firstdr} in comparison with general theories of SDEs and Markov processes. See Section~\ref{sec:singular} for the proof.

\begin{prop}\label{prop:DBM}
We have the following properties of $b_\beta$ in \eqref{def:bbeta}, where $\psi\in \C_{+}(\Bbb C)$ takes values in $[0,1]$ with $\psi(0)=1$, $p\in [1,\infty]$, $\delta\in (0,1)$, and $s\in \R$, and ${\rm Leb}$ is the Lebesgue measure. Recall the convention of dot products and differentiations stated below \eqref{def:bbeta}.  
\smallskip 
\begin{enumerate}[label={\rm ({\arabic*}$\cc$)}]
\item For some constant $C(\beta)\geq 1$,
\begin{gather}
 C(\beta)^{-1}/(|z|\log |z|^{-1})\leq |b_\beta(z)|\leq C(\beta)/(|z|\log |z|^{-1}),\quad \forall\;0<|z|\leq 1/2,\label{asymp0:drift}\\
  C(\beta)^{-1}/|z|\leq |K_0(\sqrt{\beta}|z|)b_\beta(z)|\leq C(\beta)/|z|,\quad \forall\;0<|z|\leq 1/2.\label{asymp0:drift1}
 \end{gather}
 Also, the divergence of $b_\beta$ satisfies:
\begin{align}\label{asymp:drift}
\begin{split}
\nabla_z\cdot b_\beta( z)&=\Delta_z\log K_0(\sqrt{\beta}|z|)^2=2\beta-2\beta\left(\frac{K_1}{K_0}\right)^2(\sqrt{\beta}|z|),\\ 
\nabla_z\cdot b_\beta( z)
&\sim \frac{-1}{(|z|\log |z|)^2},\quad |z|\to 0.
\end{split}
\end{align}
\smallskip 
\item $b_\beta\psi\in L^{p}$ if and only if $p\in[1,2]$, and $K_0(\sqrt{\beta}|\cdot|)b_\beta \psi\in L^p$ if and only if $p\in [1,2)$. \smallskip 
\item Suppose that $\supp(\psi)$ is a bounded set. Then
$b_\beta\psi\in L^{p,\infty}$ if and only if $p\in[1,2]$, where $L^{p,\infty}$ denotes the weak $L^p$ space with finite quasi-norm defined by
\[
\|f\|_{L^{p,\infty}}\defeq\, \sup_{\ell>0}\ell\cdot {\rm Leb}\{z:|f(z)|>\ell\}^{1/p}.
\]
\item 
$\Re (b_\beta \psi)\in B^{s}_{\infty,\infty}$ if and only if $s\leq -1$, where $B^s_{\infty,\infty}$ is the nonhomogeneous Besov space \cite[p.99]{BCD:FPDE}. The same statement with $\Re(b_\beta \psi)$ replaced by $\Im(b_\beta\psi)$ holds.  
\end{enumerate}
\end{prop}

Proposition~\ref{prop:DBM} (2$\cc$) extends to a Ladyzhenskaya--Prodi--Serrin (LPS)
type condition on the value of $d/p+2/q$ for $b(z,t)\in L^q([0,T];L^p_{\rm loc}(\R^d))$. Such a condition was originally from the literature of fluid dynamics and was introduced to study SDEs with singular drift in \cite{KrRo}. The following also considers the two-body dynamics of the two-body delta-Bose gas.

\begin{cor}
The coefficient $b_\beta(z,t)\,\defeq\,b_\beta(z)$ is {\bf critical} in the sense of the following LPS type condition:
\begin{align}\label{LPS2}
\left\{\frac{2}{p}+\frac{2}{q}; b_\beta(z,t)\in L^q([0,T];L^p_{\rm loc}(\Bbb C))\mbox{ for }\,p,q\in [1,\infty]\right\}=[1,4],
\end{align}
where $0<T<\infty$, and the minimal value $1$ in \eqref{LPS2} is attained only at $(p,q)=(2,\infty)$. Also,
for an independent two-dimensional standard Brownian motion $\{B_t\}$, the drift coefficient of $\{(Z_t+B_t)/\two,(Z_t-B_t)/\two)\}$ is {\bf supercritical} in the sense that
\begin{align}\label{LPS4}
\left\{\frac{4}{p}+\frac{2}{q};
\begin{bmatrix}
b_\beta(z'-z,t)/2\\
b_\beta(z-z',t)/2
\end{bmatrix}
\in L^q([0,T];L^p_{\rm loc}(\Bbb C^2))\mbox{ for }p,q\in [1,\infty]\right\}=[2,6],
\end{align}
and the minimal value $2$ in \eqref{LPS4} is attained only at $(p,q)=(2,\infty)$.
\end{cor}
\begin{proof}
By Proposition~\ref{prop:DBM} (2$\cc$), \eqref{LPS2} follows immediately. To see \eqref{LPS4}, we note that $(z',z)\mapsto (z'-z,z'+z)$ is a homeomorphism, and so, the $L^p_{\rm loc}$-integrability of $(z',z)\mapsto (b_\beta(z'-z),b_\beta(z-z'))$ is equivalent to the $L^p_{\rm loc}$-integrability of $z\mapsto b_\beta(z)$. Hence, \eqref{LPS4} is also implied by Proposition~\ref{prop:DBM} (2$\cc$).
\end{proof}

\subsection{Second look at the stochastic relative position}\label{sec:SRMSDE}
The first step of the second proof in Outline~\ref{outline:main} can be completed by proving some properties of the processes $\BES(-\alpha,\beta\da)$ as SDEs and the clock processes $\{\int_0^t\d s/\rho_s^{2}\}$. These properties are summarized in the next theorem. 

\begin{thm}\label{thm:2}
Fix $x_0\in\R_+$ and $\beta\in (0,\infty)$.
\begin{enumerate}[label={\rm ({\arabic*}$\cc$)}]
\item For all $\alpha\in [0,1/2)$, $\{\rho_t^2\}$ under $\P^{\albe}_{x_0}$ obeys the following SDE with $X_t=\rho_t^2$:
\begin{align}\label{SDE:DMY}
\begin{split}
X_t&=x_0^2+\int_0^t \mu^{\beta\da }_\alpha(X_s) \d s +2\int_0^t \lvert X_s\rvert^{1/2}\d \widetilde{W}_s,
\end{split}
\end{align}
where $\{\widetilde{W}_t\}$ is a one-dimensional standard Brownian motion, $\mu^{\beta\da}_\alpha\in \C(\R)$ is defined by 
\begin{align}\label{def:mu}
\mu^{\beta\da }_\alpha(x)\,\defeq \;2\left(1-\alpha-\sqrt{2\beta\lvert x\rvert}\frac{K_{1-\alpha}}{K_\alpha}(\sqrt{2\beta \lvert x\rvert})\right),\quad x\in \R,
\end{align}
with $0(K_{1-\alpha}/K_\alpha)(0)=0$ (Lemma~\ref{lem:lim0K0}),
and $K_\nu$ is the Macdonald function \eqref{def:K}. \smallskip

\item For all $\alpha\in [0,1/2)$, both nonnegativity of solutions and pathwise uniqueness in \eqref{SDE:DMY} hold.
Moreover, pathwise comparison of solutions in \eqref{SDE:DMY} subject to the same Brownian motion holds: if $\{X_\alpha(t)\}$ denotes the solution, then $\P(X_{\alpha_1}(t)\geq X_{\alpha_2}(t),\;\forall\;t\geq 0)=1$, whenever $0\leq \alpha_1\leq \alpha_2<1/2$ and $X_{\alpha_1}(0)\geq X_{\alpha_2}(0)$. \\

\item For all $\alpha\in [0,1/2)$, $\{\rho_t\}$ under $\P^{\albe}_{x_0}$ obeys the following SDE:
\begin{align}\label{SDE:DMYsqrt}
\rho_t=x_0+\int_0^t \left(\frac{1-2\alpha}{2\rho_s}-\sqrt{2\beta}\frac{K_{1-\alpha}}{K_\alpha}(\sqrt{2\beta}\rho_s)\right)\d s+\widetilde{W}_t,
\end{align}
where $\int_0^t \d s/\rho_s$ and $\int_0^t (K_{1-\alpha}/K_\alpha)(\sqrt{2\beta}\rho_s)\d s$ are finite for all $t$ almost surely. \smallskip

\item Suppose that $x_0>0$. Then for all $\alpha\in [0,1/2)$, $\{(\rho_t,\int_0^t \d s/\rho_s^2)\}$ under $\P^{\albe}_{x_0}$ is continuous as a process taking values in $\R_+\times [0,\infty]$ such that $\int_0^{T_0(\rho)}\d s/\rho_s^2=\infty$ and $T_0(\rho)<\infty$. Moreover, the family of probability distributions of $\{(\rho_t,\int_0^t \d s/\rho_s^2)\}$ under $\P^{\albe}_{x_0}$ is continuous at $\alpha=0$ with respect to the weak topology on the set of probability measures on $C(\R_+,\R_+\times [0,\infty])$.
\end{enumerate}
\end{thm}

The SDEs in \eqref{SDE:DMY} and \eqref{SDE:DMYsqrt} may be compared with the SDE of the square of $\BES(-\alpha)$, namely $\BES Q(-\alpha)$, and the SDE of $\BES(-\alpha)$. Here, recall that $\{\rho_t^2\}\sim\BES Q(-\alpha)$ and $\{\rho_t\}\sim \BES(-\alpha)$ obey the following two equations: 
\begin{align}
\d \rho_t^2&=2(1-\alpha) \d t+2\rho_t \d W_t,\label{def:BESQ}\\
\d\rho_t&=\frac{1-2\alpha}{2\rho_t}\d t+\d W_t,\label{def:BES}
\end{align}
for a one-dimensional standard Brownian motion $\{W_t\}$. Henceforth, $\BES Q(-\alpha,\beta\da)$ denotes the process $\{\rho_t^2\}$ under $\P^{\albe}$.
Note that the SDE \eqref{SDE:DMYsqrt} of $\BES (-\alpha,\beta\da)$ does not call for a local time to compensate for the boundary behavior at $0$. The reader may also compare \eqref{gen:BESab} with \eqref{SDE:DMYsqrt} for $\alpha=0$ to see that the latter is a ``trivial'' SDE realization.

We do not see the comparison of solutions and pathwise uniqueness in Theorem~\ref{thm:2} (2$\cc$) directly covered by the general theories of SDEs. The comparison requires the monotonicity property of $\alpha\mapsto \mu^{\beta\da}_{\alpha}(x)$, and one step of the proof of the pathwise uniqueness for $\alpha=0$ uses the monotonicity of $x\mapsto \mu^{\beta\da}_{\alpha}(x)$ (Lemma~\ref{lem:continuity}). But these coefficients involve the ratios $K_{1-\alpha}(x)/K_\alpha(x)$ that make the monotonicity properties \emph{analytically} not obvious, although {\sc Mathematica} plots give us some clues. See Figure~\ref{fig:drift}. We are not able to find these monotonicity properties in the literature of special functions either. As for the pathwise uniqueness, the drift coefficients $x\mapsto \mu^{\beta\da}_{\alpha}(x)$ fail to satisfy the concave Osgood condition of Yamada and Watanabe \cite[Theorem~1 on p.164]{YW:71} for pathwise uniqueness in SDEs. This fact is made precise in the following proposition, which may be viewed an extension of the ill-behaved analytic properties of $b_\beta$ in Proposition~\ref{prop:DBM}. See Section~\ref{sec:Osgood} for the proof.

\begin{figure}
\begin{center}
\includegraphics[width=12cm]{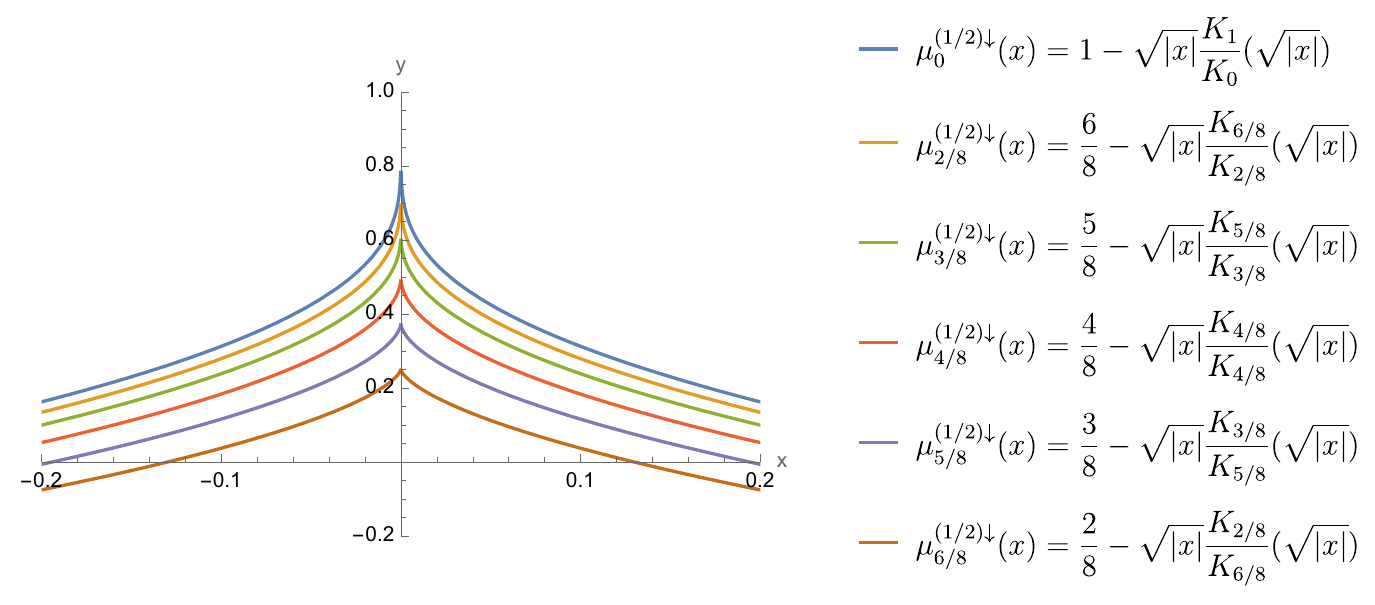}
\end{center}
\caption{A {\sc Mathematica} plot of the drift coefficients $\mu^{\beta\da}_\alpha$ in \eqref{SDE:DMY} for various values of $\alpha$ and $\beta=1/2$ --- the plot for $\mu^{(1/2)\da}_{0}(x)$ illustrates a very sharp cusp so that the value at $x=0$ should be understood as $1$}
\label{fig:drift}
\vspace{-.2cm}
\end{figure} 

\begin{prop}[Concave Osgood condition fails]\label{prop:osgood}
For all $\alpha\in [0,1/2)$, it holds that 
\begin{align}\label{Osgood}
|\mu^{\beta\da}_\alpha(y)-\mu^{\beta\da}_\alpha(x)|\leq \kappa_\alpha(2\beta |y-x|),\quad \forall\;x,y\in \R,
\end{align}
where, for $u\geq 0$,
\begin{align}\label{def:kappao}
\kappa_\alpha(u)\,\defeq\,
\begin{cases}
 C_{\ref{Osgood}}(\alpha)u^{\alpha}+ C_{\ref{Osgood}}(\alpha)u,&\alpha\in (0,1/2),\\
\displaystyle
-
 \frac{C_{\ref{Osgood}}}{\log (u\wedge \frac{\e^{-2}}{2})}+C_{\ref{Osgood}} u,&\alpha=0,
 \end{cases}
\end{align}
is an increasing, concave function defined on $\R_+$ such that $\kappa_\alpha(0)=0$, but 
\begin{align}\label{integral:kappa}
\textstyle \int_{0+}\kappa_\alpha(u)^{-1}\d u<\infty.
\end{align}
 Moreover, the orders of modulus of continuity in \eqref{Osgood} are sharp since
\begin{align}\label{Osgood1}
\begin{split}
&\lim_{x\searrow 0}|\mu^{\beta\da}_\alpha(x)-\mu^{\beta\da}_\alpha(0)|\cdot(2\beta x)^{-\alpha}= \frac{\Gamma(1+\alpha)\Gamma(1-\alpha)}{2^{2\alpha-2}\Gamma(\alpha)^2\alpha},\quad \alpha\in (0,1/2),\\
&\lim_{x\searrow 0}|\mu^{\beta\da}_0(x)-\mu^{\beta\da}_0(0)|\cdot |\log (2\beta x)|=4.
\end{split}
\end{align}
\end{prop}

The weak moduli of continuity of the coefficients $\mu^{\beta\da}_{\alpha}$ are due to the following functions:
\begin{align}\label{singular}
x\mapsto 2\sqrt{2\beta \lvert x\rvert}(K_{1-\alpha}/K_\alpha)(\sqrt{2\beta \lvert x\rvert}).
\end{align}
Another way to see the significance of these functions is to consider an attempt to convert $\BES Q(-\alpha,\beta\da)$ to $\BES Q(-\alpha)$. This attempt would aim at the removal of \eqref{singular} in $\mu^{\beta\da}_\alpha$, and so, by Girsanov's transformation, the use of the stochastic exponential of
\begin{align}\label{singular:INT}
-\sqrt{2\beta}\int_0^t \frac{K_{1-\alpha}}{K_\alpha}(\sqrt{2\beta}\rho_s)\d W_s.
\end{align}
See Proposition~\ref{prop:key} and \eqref{rep:Palbe} for the corresponding change of measures under $\P^{\al}$ when $\alpha\in (0,1/2)$. In this direction, note that for $\alpha\in [0,1/2)$, $x\mapsto -\sqrt{2\beta}(K_{1-\alpha}/K_\alpha)(\sqrt{2\beta}x)$ in the integrand of \eqref{singular:INT} blows up at $x=0$. For $\alpha=0$, this singularity is strong enough to forbid the global conversion of $\BES(0,\beta\da)$ to $\BES(0)$, as evidenced by \eqref{T0-1} and Proposition~\ref{prop:singular}.

The proof of Theorem~\ref{thm:2} is obtained in Section~\ref{sec:SDE}. The whole argument
depends heavily on special properties of the ratios of the Macdonald functions in $\mu^{\beta\da}_\alpha$ and the functions in \eqref{singular}, or more specifically, properties given by the various monotonicities and the uniform continuity at $\alpha= 0$. See Section~\ref{sec:RATIO}, especially
Propositions~\ref{prop:BESmon} and ~\ref{prop:Fxunif}.

\subsection{The lower-dimensional approximations}\label{sec:lda}
To carry out the second step of the second proof in Outline~\ref{outline:main}, we focus on the convergences of the Feynman--Kac semigroups defined by the right-hand side of \eqref{conv:BESab1} and their extensions $\E^{\al}_{z_0}[\e^{\Lambda_\alpha L_{t}}f(Z_{t})]$ as $\alpha\searrow 0$. The argument does not use $\BES(-\alpha,\beta\da)$ and essentially only operates at the expectation level.
The following proposition is a key step of the method (Theorem~\ref{thm:3}) and is taken from its proof.

\begin{prop}\label{prop:LTexplicit}
For all $\alpha\in (0,1/2)$, $0<\beta<q<\infty$, $f\in \B_+(\Bbb C)$ and $z_0\in \Bbb C$,
\begin{align}\label{resolvent:al}
\begin{split}
&\int_0^\infty \e^{-q t}\E^{\al}_{z_0}[\e^{\Lambda_\alpha L_{t}}f(Z_{t})]\d t \\
&\eqspace=U^{\al}_q f(z_0) +\frac{1}{\pi}\widehat{K}_{\alpha}(\sqrt{2q}\lvert z_0 \rvert)\cdot \frac{\pi\cdot  \frac{2^{1-\alpha}}{\Gamma(\alpha)}\cdot \beta^\alpha }{q^\alpha-\beta^\alpha}\cdot U^{\al}_q f(0),
\end{split}
\end{align}
where $\widehat{K}_\alpha$ is defined in \eqref{def:hK}, $\Lambda_\alpha=\Lambda_\alpha(\beta)$ is defined in \eqref{choice}, and
\begin{align}\label{def:resolvent}
U_q^{(-\alpha)} f(z)\,\defeq
\int_0^\infty \e^{-q t}\E^{\al}_{z}[f(Z_{t})]\d t.
\end{align}
Moreover,
$U^{\al}_q f(0)=\int_0^\infty \e^{-q t}\E_{0}^{\al}[\overline{f}(\rho_t)]\d t$, 
where $\overline{f}$ denotes the radialization of $f$:
\begin{align}\label{def:fbar}
\overline{f}(r)\,\defeq\, \frac{1}{2\pi}\int_{-\pi}^\pi f(r\e^{\i \theta})\d \theta.
\end{align}
See \eqref{density} for the transition densities of $\BES(-\alpha)$. 
\end{prop}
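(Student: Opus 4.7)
Let $V(z_0)$ denote the left-hand side of \eqref{resolvent:al}. The plan is to expand $\e^{\Lambda_\alpha L_t}$ by elementary integration by parts, then exploit the strong Markov property of $\{Z_t\}$ under $\P^{(-\alpha)}$ at the zero set of the radial part in order to factor the expression into three classical pieces: the Laplace transform of the first hitting time $T_0(Z)$ by $\BES(-\alpha)$, the Laplace transform of an exponential-in-local-time integral reduced via the inverse local time to a stable subordinator, and the resolvent $U^{(-\alpha)}_\lambda f(0)$.

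First, since $\{L_t\}$ is continuous and of bounded variation, $\e^{\Lambda_\alpha L_t}=1+\Lambda_\alpha\int_0^t \e^{\Lambda_\alpha L_s}\d L_s$. I would multiply by $\e^{-\lambda t}f(Z_t)$, take $\E^{(-\alpha)}_{z_0}$, integrate in $t\in \R_+$, and swap integrals via Fubini to get
\[
V(z_0)-U^{(-\alpha)}_\lambda f(z_0)=\Lambda_\alpha\,\E^{(-\alpha)}_{z_0}\!\left[\int_0^\infty \e^{\Lambda_\alpha L_s}\d L_s \int_s^\infty \e^{-\lambda t}f(Z_t)\d t\right].
\]
Because $\d L_s$ is supported on $\{s:Z_s=0\}$, conditioning on $\F_s$ and applying the strong Markov property replaces the inner time integral by $\e^{-\lambda s}U^{(-\alpha)}_\lambda f(0)$, yielding
\[
V(z_0)-U^{(-\alpha)}_\lambda f(z_0)=\Lambda_\alpha\,U^{(-\alpha)}_\lambda f(0)\cdot \E^{(-\alpha)}_{z_0}\!\left[\int_0^\infty \e^{-\lambda s}\e^{\Lambda_\alpha L_s}\d L_s\right].
\]

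Next, for $z_0\neq 0$ one has $L_s\equiv 0$ on $[0,T_0(Z)]$, so applying the strong Markov property again at $T_0(Z)$ factorizes the remaining expectation into $\E^{(-\alpha)}_{z_0}[\e^{-\lambda T_0(Z)}]$ times the same expectation with $z_0=0$. For the first factor I would cite the classical identity $\E^{(-\alpha)}_{x_0}[\e^{-\lambda T_0}]=\widehat{K}_\alpha(\sqrt{2\lambda}x_0)/\widehat{K}_\alpha(0)$, which follows from the fact that $x\mapsto \widehat{K}_\alpha(\sqrt{2\lambda}x)$ is the unique bounded $\lambda$-harmonic function for $\BES(-\alpha)$ on $\R_+$ normalized to vanish at $\infty$. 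For the second factor, change variables via the inverse local time $\tau_\ell=\inf\{s:L_s>\ell\}$; the characterization \eqref{def:Lt} identifies $\{\tau_\ell\}$ under $\P^{(-\alpha)}_0$ as a stable subordinator with $\E^{(-\alpha)}_0[\e^{-\lambda\tau_\ell}]=\e^{-\ell C^\star_\alpha\lambda^\alpha}$. Hence for $\lambda>\beta$,
\[
\E^{(-\alpha)}_0\!\left[\int_0^\infty \e^{-\lambda u}\e^{\Lambda_\alpha L_u}\d L_u\right]=\int_0^\infty \e^{\ell(\Lambda_\alpha-C^\star_\alpha\lambda^\alpha)}\d\ell=\frac{1}{C^\star_\alpha(\lambda^\alpha-\beta^\alpha)}.
\]
Combining the pieces and using $C^\star_\alpha\cdot \widehat{K}_\alpha(0)=\pi$ and $\Lambda_\alpha=C^\star_\alpha\beta^\alpha$ from \eqref{choice} yields precisely \eqref{resolvent:al}. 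For the ``moreover'' part, when $Z_0=0$ the skew-product construction makes the law of $\{Z_t\}$ rotationally invariant (the angular equilibrium of $\dot\gamma$ on each excursion away from $0$ is uniform), so $\E^{(-\alpha)}_0[f(Z_t)]=\E^{(-\alpha)}_0[\overline f(\rho_t)]$, giving the stated identity after integration against $\e^{-\lambda t}$ (reading the paper's display as containing this exponential).

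The main technical obstacle I expect is the careful justification of the factorization against the random measure $\d L_s$: one needs to replace the $\F_s$-conditional expectation of the post-$s$ piece by $\e^{-\lambda s}U^{(-\alpha)}_\lambda f(0)$ $\d L$-almost everywhere, using that $\d L$ charges only $\{Z_s=0\}$ together with the strong Markov property of $\{Z_t\}$ under $\P^{(-\alpha)}$. Once the strong Markov property of $Z$ is in hand (which follows from the skew-product / Erickson construction used in Section~\ref{sec:aug}), the factorization can be made rigorous by a standard monotone-class or dyadic-approximation argument passing from simple conditional integrands to general nonnegative ones, and the remaining computations are explicit.
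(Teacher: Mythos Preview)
Your proposal is correct and follows essentially the same route as the paper: expand $\e^{\Lambda_\alpha L_t}$ once, use the Markov property on the support of $\d L_s$ to factor out $U^{(-\alpha)}_\lambda f(0)$, and then evaluate the remaining Laplace-in-local-time integral. The only real difference is that the paper computes the last factor by iterating the expansion once more and solving the renewal equation \eqref{eq:LFalpha} for $\ms LF_\alpha(\lambda)$, whereas you compute it directly by the change of variables $s\mapsto \tau_\ell$ and the stable-subordinator identity $\E_0^{(-\alpha)}[\e^{-\lambda\tau_\ell}]=\e^{-\ell C^\star_\alpha\lambda^\alpha}$ implied by \eqref{def:Lt}; your version is slightly more streamlined but equivalent.
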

\begin{proof}
To get \eqref{resolvent:al}, combine \eqref{Lap:asymp}, \eqref{LT:norm} and \eqref{F0:exp1}. See \eqref{def:Falpha} for the definition of $F_\alpha$. Also, the alternative form of $U^{\al}_q f(0)$ by $\BES(-\alpha)$ restates Erickson's characterization \cite[Eq. (2.3) on p.75]{Erickson} specialized to the case of $\{Z_t\}$ under $\P^{\al}$.
\end{proof}

In particular, \eqref{resolvent:al} can be used to recover \eqref{def:DBG} by noting  
 that the special prefactor $4\pi/\log (q/\beta)$ arises explicitly from the following simple limit: 
\begin{align}\label{lim:log}
\frac{2\pi\cdot \frac{2^{1-\alpha}}{\Gamma(\alpha)}\cdot \beta^\alpha }{q^\alpha-\beta^\alpha}\xrightarrow[\alpha\searrow0]{}\frac{4\pi}{\log(q/\beta)}.
\end{align}
We remark that a calculation very similar to \eqref{lim:log} appears in \cite[pp.882--883]{DY:Krein}, but that calculation from \cite{DY:Krein} does not seem to be directly applicable for the present purpose.

\section{Excursions of the stochastic relative position}\label{sec:EXCURSION}
In this section, we give the first proof of Theorem~\ref{thm:1} by applying the original characterization of $\BES(0,\beta\da)$ in \cite{DY:Krein}, which uses excursion theory. The reader may wish to consult \cite[Chapter~IV]{Bertoin} and \cite{Blumenthal} for the excursion theory of general Markov processes. 

As before, let $\{\rho_t\}$ under $\P^{\zbe}$ denote a version of $\BES(0,\beta\da)$. Then the probability distribution of $\{\rho_t;t<T_0(\rho)\}$ can be characterized by \eqref{T0-1}, and we have
\begin{align}\label{T0-2}
\E^{(0),\beta\da }_{x_0}[\e^{-qT_0(\rho)}]=\frac{K_0(\sqrt{2(\beta+q)}x_0)}{K_0(\sqrt{2\beta}x_0)},\quad \forall\; q,x_0\in [0,\infty)
\end{align}
\cite[(2.9) on p.884]{DY:Krein}. [Alternatively, one may derive \eqref{T0-2} from \eqref{T0-1} by considering $\int_0^\infty \e^{-qt}\P^{\zbe}_{x_0}(T_0(\rho)>t)\d t$ and using the explicit probability densities of $\BES(0)$ in \eqref{density}.] Besides,
the probability distribution of $\{\rho_t;t>T_0(\rho)\}$ can be characterized by the local time at level $0$ and the associated excursion measure, denoted by $\{L_t\}$ and $\mathbf N^{\beta \da }_0$, respectively, which satisfy the following properties. We have
\begin{align}\label{Lap:lim}
\E^{(0),\beta\da }_0\left[\int_0^\infty \e^{-qt}\d L_t\right]=\frac{1}{\log (1+q/\beta)},\quad\forall\; q\in (0,\infty),
\end{align}
and, with $\zeta(\epsilon)$ denoting the lifetime of an excursion path $\epsilon=\{\epsilon_t\}\in C(\R_+,\R_+)$, 
\begin{enumerate}[label={\rm ({\roman*})}]
\item $\mathbf N_0^{\beta \da }(\zeta(\epsilon)\in \d v)=v^{-1}\e^{-\beta v}\d v$, $0<v<\infty$, and
\item under $\mathbf N_0^{\beta\da} (\,\cdot\mid \zeta(\epsilon)=v)$, $\{\epsilon_t;0\leq t\leq v\}$ is a two-dimensional Bessel bridge from $0$ to $0$ over the time interval $[0,v]$. 
\end{enumerate}
See \cite[Corollary~2.3 on p.884]{DY:Krein} for \eqref{Lap:lim}, and \cite[Theorem~2.1 on p.883]{DY:Krein} for (i) and (ii). Basic properties of Bessel bridges can be found in  \cite[Section~XI.3 on pp.463+]{RY}. To use (ii) in the following proof, note that the explicit probability densities of $\BES(0)$ in \eqref{density} give, for $t\in (0,v)$ and $y\in (0,\infty)$, 
\begin{align}
\mathbf N^{\beta\da }_0(\epsilon_t\in \d y\mid \zeta(\epsilon)=v)&=\lim_{y_0\searrow 0}\frac{p^{(0)}_{t}(0,y)p^{(0)}_{v-t}(y,y_0)}{p^{(0)}_v(0,y_0)}\d y\notag\\
&=\lim_{y_0\searrow 0}\frac{\frac{y}{t}\exp(-\frac{y^2}{2t})\frac{y_0}{v-t}\exp(-\frac{y^2+y_0^2}{2(v-t)})I_0(\frac{yy_0}{v-t})
}{\frac{y_0}{v}\exp(-\frac{y_0^2}{2v}) }\d y\notag\\
&=\frac{vy}{t(v-t)}\exp\left(-\frac{y^2}{2t}-\frac{y^2}{2(v-t)}\right)\d y,\label{N:density}
\end{align}
where the last equality holds since the modified Bessel function of the first kind $I_0(x)$ converges to $1$ as $x\searrow 0$ by \eqref{asymp:I}. 

\begin{proof}[First proof of Theorem~\ref{thm:1}]
To prove \eqref{id:2-1}, we first consider an equivalent of the right-hand side of \eqref{id:2-1} for the convenience of computations: for $z_0\in \Bbb C\setminus\{0\}$,
\begin{align}
&\eqspace\E_{z_0}^{(0),\beta\da}\left[\frac{\e^{\beta t}K_0(\sqrt{2\beta}\lvert z_0 \rvert)}{K_0(\sqrt{2\beta}\lvert Z_t \rvert)}f(Z_t)\right]\notag\\
&=\E_{z_0}^{(0),\beta\da}\left[\frac{\e^{\beta t}K_0(\sqrt{2\beta}\lvert z_0 \rvert)}{K_0(\sqrt{2\beta}\lvert Z_t \rvert)}f(Z_t);t<T_0(Z)\right]\notag\\
&\quad+\E^{(0),\beta\da}_{z_0}\left[\frac{\e^{\beta t}K_0(\sqrt{2\beta}\lvert z_0 \rvert)}{K_0(\sqrt{2\beta}\lvert Z_t \rvert)}f(Z_t);t\geq T_0(Z)\right]\notag\\
&=\E^{(0)}_{z_0}\left[f(Z_t)\right]+\E^{(0),\beta\da}_{z_0}\left[\frac{\e^{\beta t}K_0(\sqrt{2\beta}\lvert z_0 \rvert)}{K_0(\sqrt{2\beta}\lvert Z_t \rvert)}f(Z_t);t\geq T_0(Z)\right].\label{formula1-1}
\end{align} 
To justify the first term in \eqref{formula1-1}, recall that by \eqref{Z:skewproduct}, $Z_t$ for $t<T_0(Z)=T_0(\rho)$ takes the same form of skew-product representation as planar Brownian motion \cite[(2.11) Theorem on p.193]{RY}, and then we use \eqref{T0-1} and the independence $\{\lvert Z_t \rvert\}\ind \{\gamma_t\}$.  

To evaluate the last expectation in \eqref{formula1-1}, recall the radialization of $f$ defined by \eqref{def:fbar}. Then the Laplace transform of that expectation in \eqref{formula1-1} satisfies, for all $q\in (\beta,\infty)$,
\begin{align}
&\quad \int_0^\infty\e^{-q t} \E^{(0),\beta\da }_{z_0}\left[\frac{\e^{\beta t}K_0(\sqrt{2\beta}\lvert z_0 \rvert)}{K_0(\sqrt{2\beta}\lvert Z_t \rvert)}f(Z_t);t\geq T_0(Z)\right]\d t\notag\\
&=\E^{(0),\beta\da }_{z_0}[\e^{-(q-\beta) T_0(Z)}]\cdot \int_0^\infty \e^{-(q-\beta) t }\E^{(0),\beta\da }_0\left[\frac{K_0(\sqrt{2\beta}\lvert z_0 \rvert)}{
K_0(\sqrt{2\beta}\lvert Z_t \rvert)}\overline{f}(\lvert Z_t \rvert)
\right]\d t\label{formula1-1-1}
\end{align}
by the strong Markov property of $\{Z_t\}$ at time $T_0(Z)$ and Erickson's characterization of the resolvent of $\{Z_t\}$ starting from the origin \cite[Eq. (2.3) on p.75]{Erickson}. The last equality allows for the characterization of $\BES(0,\beta\da)$ recalled before the present proof. By \eqref{T0-2}, \eqref{Lap:lim}, and the compensation formula of excursion theory \cite[Eq. (7) on p.120]{Bertoin}, \eqref{formula1-1-1} gives
\begin{align}
&\quad \int_0^\infty\e^{-q t} \E^{(0),\beta\da }_{z_0}\left[\frac{\e^{\beta t}K_0(\sqrt{2\beta}\lvert z_0 \rvert)}{K_0(\sqrt{2\beta}\lvert Z_t \rvert)}f(Z_t);t\geq T_0(Z)\right]\d t\notag\\
&=\frac{K_0(\sqrt{2q}\lvert z_0 \rvert)}{K_0(\sqrt{2\beta}\lvert z_0 \rvert)}\cdot \frac{1}{\log [1+(q-\beta)/\beta]}\mathbf N_0^{\beta\da}\biggl(\int_0^{\zeta(\epsilon)} \e^{-(q-\beta) t}\frac{K_0(\sqrt{2\beta}\lvert z_0 \rvert)}{
K_0(\sqrt{2\beta}\epsilon_t)}\overline{f}(\epsilon_t)\d t\biggr)\label{formula1-1-0}\\
&=\frac{\pi}{\log(q/\beta)}\int_0^\infty \e^{-q t}P_{t}(z_0)\d t\mathbf N_0^{\beta\da}\biggl(\int_0^{\zeta(\epsilon)} \e^{-(q-\beta) t}\frac{\overline{f}(\epsilon_t)}{
K_0(\sqrt{2\beta}\epsilon_t)}\d t\biggr),\label{formula1-2}
\end{align}
where the last equality holds since \eqref{def:Pt} and \eqref{def:K} imply
\begin{align}\label{LT:norm0}
\int_0^\infty \e^{-q t}P_{t}(z)\d t=\frac{1}{\pi}K_0(\sqrt{2q}\lvert z\rvert).
\end{align}

Next, we compute for an explicit form of the $\mathbf N^{\beta\da }_0$-expectation in \eqref{formula1-2}. By
the description of $\mathbf N^{\beta\da}_0$ recalled below \eqref{Lap:lim} and by \eqref{N:density},
it holds that for all nonnegative $g$,
\begin{align*}
&\eqspace\mathbf N^{\beta\da }_0\biggl(\int_0^{\zeta(\epsilon)} \e^{-(q-\beta) t}g(\epsilon_t)\d t\biggr)\\
&=\int_0^\infty \frac{\e^{-\beta v}}{v}\int_0^v\e^{-(q-\beta) t}\int_0^\infty \frac{vy}{t(v-t)}\exp\left(-\frac{y^2}{2t}-\frac{y^2}{2(v-t)}\right)
g(y)\d y\d t\d v\\
&=\int_0^\infty \e^{-q t } \int_0^\infty  \frac{1}{t}\exp\left(-\frac{y^2}{2t}\right)yg(y) \int_t^\infty\e^{-\beta(v-t)} \frac{1}{(v-t)}\exp\left(-\frac{y^2}{2(v-t)}\right)\d v\d y\d t\\
&=\int_0^\infty \e^{-q t} \int_0^\infty \frac{1}{t}\exp\left(-\frac{y^2}{2t}\right)yg(y)\cdot 2K_0(\sqrt{2\beta}y)\d y\d t,
\end{align*}
where the last equality uses \eqref{def:K} for $\nu=0$ after a change of variable that replaces $\beta (v-t)$ with $v$. Hence, by the polar coordinates and the definition \eqref{def:fbar} of $\bar{f}$, we get
\begin{align}
\mathbf N^{\beta\da }_0\biggl(\int_0^{\zeta(\epsilon)} \e^{-(q-\beta) t}\frac{\overline{f}(\epsilon_t)}{
K_0(\sqrt{2\beta}\epsilon_t)}\d t\biggr)=2\int_0^\infty \e^{-q t}P_tf(0)\d t.\label{formula1-3}
\end{align}

Now, combining \eqref{formula1-1}, \eqref{formula1-2} and \eqref{formula1-3}, we get
\begin{align}
&\quad  \int_0^\infty \e^{-q t} \E^{(0),\beta\da }_{z_0}\left[\frac{\e^{\beta t}K_0(\sqrt{2\beta}\lvert z_0 \rvert)}{K_0(\sqrt{2\beta}\lvert Z_t \rvert)}f(Z_t)\right]\d t\notag\\
&=\int_0^\infty \e^{-q t}P_tf(z_0)\d t+\frac{2\pi}{\log (q/\beta)}\int_0^\infty \e^{-q t}P_t(z_0)\d t\int_0^\infty \e^{-q t}P_tf(0)\d t.\label{formula1-4}
\end{align}
To reconcile \eqref{formula1-4} and the Laplace transform of $\{P^\beta_t\}$ in \eqref{def:DBG}, note
\begin{align}
P_tf(z)=\E^{(0)}_{\two z}[f(Z_{2t}/\two)]=P_{2t}\{\tilde{z}\mapsto f(\tilde{z}/\two)\}(\two z),\quad
P_t(z)=2P_{2t}(\two z),\label{two}
\end{align}
where $P_t\{\tilde{z}\mapsto g(\tilde{z})\}(z)$ denotes $P_tg(z)$. Then \eqref{formula1-4} can be restated as follows:
\begin{align}
&\quad  \int_0^\infty \e^{-q t} \E^{(0),\beta\da }_{z_0/\two}\left[\frac{\e^{\beta t}K_0(\sqrt{\beta}\lvert z_0 \rvert)}{K_0(\sqrt{2\beta}\lvert Z_t \rvert)}f(\two Z_t)\right]\d t\label{formula1-5-1}\\
&=\int_0^\infty \e^{-q t}P_t\{\tilde{z}\mapsto f(\two \tilde{z})\}(z_0/\two)\d t\notag\\
&\quad\;+\frac{2\pi}{\log (q/\beta)}\int_0^\infty \e^{-q t}P_t(z_0/\two)\d t\int_0^\infty \e^{-q t}P_t\{\tilde{z}\mapsto f(\two \tilde{z})\}(0)\d t\notag\\
&=\int_0^\infty \e^{-q t}P_{2t}f(z_0)\d t+\frac{4\pi}{\log (q/\beta)}\int_0^\infty \e^{-q t}P_{2t}(z_0)\d t\int_0^\infty \e^{-q t}P_{2t}f(0)\d t\label{formula1-5}
\end{align}
by \eqref{two}. Up to this point, 
we have two functions of $t\in \R_+$, denoted by $F_1$ and $F_2$, that have the same Laplace transform given by the right-hand side of \eqref{formula1-5}. Specifically, $F_1$ refers to the expectation in the integral of \eqref{formula1-5-1}. 
The second function $F_2$ refers to the function of $t$ on the right-hand side of \eqref{def:Pbeta2} when we integrate it against $f(\tilde{z})\d\tilde{z}$ and set $z=z_0$. Note that $F_2$ has a Laplace transform given by the right-hand side of \eqref{formula1-5} by \eqref{Inv:sbeta}.
On the other hand, when $f$ is also bounded continuous, Lemma~\ref{lem:contexp} shows that  $F_1$ and $F_2$  are continuous and of exponential order, and so, $F_1$ and $F_2$ can be inverted from their Laplace transforms. We conclude \eqref{id:2-1} for all bounded continuous $f\geq 0$, hence, for all $f\in \B_+(\Bbb C)$.

\begin{rmk}\label{rmk:lap}
In the above inversion argument to complete the proof of \eqref{id:2-1}, we have chosen to work with an integrated form on the right-hand side of \eqref{def:Pbeta2}, rather than $P^\beta_t f(z_0)$. By this choice, we have also clarified the step of the proof of \cite[Proposition~5.1 on p.176]{C:DBG3+} on inverting the right-hand side of \eqref{def:DBG}
 to the right-hand side of \eqref{def:Pbeta2}. See \cite[Section 3.2 on pp.231+]{ABD:Schrodinger} on obtaining $P^\beta_t(z,\tilde{z})$ from inverting its Laplace transform. \qed 
\end{rmk}

The proof of \eqref{id:2-2} is similar. As in \eqref{formula1-1-0}, we obtain from
 \eqref{Lap:lim} and the compensation formula of excursion theory \cite[Eq. (7) on p.120]{Bertoin} that
\begin{align}
&\quad\;\int_0^\infty \e^{-q t} \E^{(0),\beta\da }_0\left[\frac{\e^{\beta t}\cdot 2\pi}{K_0(\sqrt{2\beta}\lvert Z_t \rvert)}f(\two Z_t)\right]\d t\notag\\
&=\frac{2\pi}{\log (q/\beta)}\mathbf N^{\beta\da }_0\biggl(\int_0^{\zeta(\epsilon)} \e^{-(q-\beta)t}\frac{\bar{f}(\two\epsilon_t)}{K_0(\sqrt{2\beta}\epsilon_t)}\d t\biggr)\notag\\
&=\frac{4\pi}{\log (q/\beta)}\int_0^\infty \e^{-q t}P_t\{\tilde{z}\mapsto f(\two \tilde{z})\}(0)\d t\notag\\
&=\frac{4\pi}{\log (q/\beta)}\int_0^\infty \e^{-q t}P_{2t}f(0)\d t
=\int_0^\infty \e^{-q t}\mathring{P}^\beta_tf\d t,\notag
\end{align}
where the second equality uses \eqref{formula1-3}, the third equality uses the first equality in \eqref{two},
and the last equality can be seen by comparing \eqref{def:DBG} and \eqref{def:Pbeta2}. The last equality proves \eqref{id:2-2}, again using Lemma~\ref{lem:contexp} to justify the Laplace inversions.

Finally, \eqref{BESab:density} with $f$ replaced by $\tilde{f}$ 
follows by using \eqref{id:2-1} with $z_0/\two$ replaced by $z_0$ and then choosing, with $H\in \{K_0(\sqrt{\beta}|\two z_0|),2\pi\}$, 
\begin{align*}
\tilde{f}(z)=\frac{\e^{\beta t}H}{K_0(\sqrt{\beta}|\two z|)}f(\two z)&\Longleftrightarrow f(z)=\frac{\e^{-\beta t}}{H}\tilde{f}(z/\two)K_0(\sqrt{\beta}|z|).
\end{align*}
The first proof of Theorem~\ref{thm:1} is complete.
\end{proof}

The following proof uses the SDE of $\BES Q(0,\beta\da)$, which will be derived as a preparation of the second proof of Theorem~\ref{thm:1}.

\begin{lem}
\label{lem:contexp}
Write $\widetilde{P}^\beta_tf(z_0)$ for the right-hand side of \eqref{def:Pbeta2} integrated against $f(\tilde{z})\d \tilde{z}$ with $z=z_0$.  Then for  bounded $f\in \C_+(\Bbb C)$,
$\widetilde{P}^\beta_tf(z_0)$ and the right-hand side of \eqref{id:2-1} as functions of $t\in \R_+$
can be bounded by $C(\beta,f,z_0)\e^{C'(\beta,f,z_0)t}$
and are continuous. The same properties, with $z_0=0$ for constants in the bound, are satisfied by both sides of \eqref{id:2-2}.
\end{lem}
\begin{proof}
We consider $\widetilde{P}^\beta_tf(z_0)$ and the left-hand side of \eqref{id:2-2} first. 
To see the required bounds of these functions, it suffices to note that  for $\s^\beta$ defined in \eqref{def:s},
$\int_0^t \s^\beta(\tau)\d \tau\leq C(\beta)\e^{C'(\beta)t}$ for all $t\geq 0$. This bound is implied by the second inequality below:
\begin{align}
 \int_0^t\s^\beta(\tau)\d \tau&=4\pi \int_0^t\int_0^\infty \frac{\beta^u\tau^{u-1}}{\Gamma(u)}\d u\d \tau=4\pi \int_0^\infty \frac{(\beta t)^u}{\Gamma(u+1)} \d u\label{sbeta1}\\
 &\leq 4\pi \int_0^{\lceil u_0\rceil } \frac{(\beta t)^u}{\Gamma(u+1)} \d u+ 4\pi \sum_{n=\lceil u_0\rceil+1}^\infty \frac{\max\{(\beta t)^n,1\}}{\Gamma(n)},\label{sbeta2}
\end{align}
where $u_0$ is the unique zero of $\Gamma'(\cdot)$ in $(0,\infty)$ since $\Gamma''(x)=\int_0^\infty t^{x-1}(\log t)^2\e^{-t}\d t>0$ for all $x>0$. Note that we use an integral comparison to get \eqref{sbeta2}. 

Next, we prove the continuity of $t\mapsto \widetilde{P}^\beta_tf(z_0)$ and the left-hand side of \eqref{id:2-2}. 
For $t\mapsto \widetilde{P}^\beta_tf(z_0)$, note that $t\mapsto P_{2t}f(z_0)$ is continuous on $\R_+$ by dominated convergence. Hence, it is enough to show the continuity of the left-hand side of \eqref{id:2-2} on $\R_+$. 
To this end, we consider, for $t'>t$,
\begin{align}
|\mathring{P}^\beta_{t'}f-\mathring{P}^\beta_{t}f|
&\leq C(f) \int_t^{t'}\s^\beta(t'-\tau)\d \tau +C(f)\int_{0}^t |\s^\beta(t'-\tau)-\s^\beta(t-\tau)|\d \tau\notag\\
&= C(f) \int_0^{t'-t}\s^\beta(\tau)\d \tau +C(f)\int_{0}^t |\s^\beta(t'-t+\tau)-\s^\beta(\tau)|\d \tau\label{Lapinv1}\\
\begin{split}
&\leq C(f)\int_0^\infty \frac{[\beta (t'-t)]^u}{\Gamma(u+1)} \d u+C(f)\int_0^1\frac{(\beta t)^u-(\beta t')^u+(\beta t'-\beta t)^u}{\Gamma(u+1)}\d u\\
&\quad\;+C(f)\int_1^\infty \frac{(\beta t')^u+(\beta t'-\beta t)^u-(\beta t)^u}{\Gamma(u+1)}\d u\label{Lapinv2}
\end{split}
\end{align}
by using the computation in \eqref{sbeta1}. More specifically,
the sum of the last two terms of \eqref{Lapinv2} bounds the last term of \eqref{Lapinv1} since the definition \eqref{def:s} of $\s^\beta$ implies
\begin{align*}
|\s^\beta(t'-t+\tau)-\s^\beta(\tau)|&\leq \int_0^1\frac{\beta^u[\tau^{u-1}-(t'-t+\tau)^{u-1}]}{\Gamma(u)}\d u\\
&\quad\;+\int_1^\infty\frac{\beta^u[(t'-t+\tau)^{u-1}-\tau^{u-1}]}{\Gamma(u)}\d u.
\end{align*}
By dominated convergence, \eqref{Lapinv2} implies $\mathring{P}^\beta_{t'}f-\mathring{P}^\beta_{t}f\to 0$ as $t'-t\searrow 0$ whenever $t'$ and $t$ are in a fixed compact set of $\R_+$. We have proved the required continuity of $t\mapsto  \mathring{P}^\beta_{t}f$. 

To handle the right-hand sides of \eqref{id:2-1} and \eqref{id:2-2}, our key tool is the fact that $\{|Z_t|\}=\{\rho_t\}$ under $\P^{\zbe}$ can be pathwise dominated by a version of $\BES(0)$ with the same initial condition $\rho_0$. This domination uses the SDE of $\{\rho_t^2\}$ in \eqref{SDE:DMY} and the comparison theorem of SDEs \cite[Theorem~VI.1.1 on pp.437--438]{IW:SDE}. 
Given this pathwise domination, the required growth properties of the right-hand sides of \eqref{id:2-1} and \eqref{id:2-2} follow immediately since $\E[\e^{q |B_t|}]\leq 4 \e^{q^2 t}$ for a two-dimensional standard Brownian motion $\{B_t\}$ with $B_0=0$ and all $q\in \R$ 
by the following bound for $Z\sim \mathcal N(0,1)$: $\E[\e^{a|Z|}]\leq 2\E[\e^{aZ}]=2\e^{a^2/2}$. In particular, the exponential moments of any fixed order of $|Z_t|$ are bounded on compacts in $t$, so by \eqref{asymp:Kinfty}, the family of $K_0(\sqrt{2\beta}|Z_t|)^{-1}$ for $t$ in a fixed compact set is uniformly integrable.

It remains to obtain the required continuity of the right-hand sides of \eqref{id:2-1} and \eqref{id:2-2}. Note that $K_0^{-1}(\cdot)$ is continuous on $\R_+$ by setting $K^{-1}_0(0)\,\defeq\,0$ and using \eqref{asymp:K0}. The required continuity of the right-hand sides of \eqref{id:2-1} and \eqref{id:2-2} thus follows from the path continuity of $\{Z_t\}$ since we can pass limits under the expectations by using a standard theorem under uniform integrability. Recall that we have the form of uniform integrability of 
$K_0(\sqrt{2\beta}|Z_t|)^{-1}$ from the end of the previous paragraph. The proof of Lemma~\ref{lem:contexp} is complete.
\end{proof}

\section{The stochastic relative position: the SDE and the clock for time change}\label{sec:SDE}
Our main goal in this section is to prove Theorem~\ref{thm:2}. In Sections~\ref{sec:thm2-1} and~\ref{sec:thm2-2}, we will first prove Theorem~\ref{thm:2} (1$\cc$) for $\alpha>0$ and Theorem~\ref{thm:2} (2$\cc$) for $\alpha,\alpha_1>0$, respectively. Section~\ref{sec:thm2-12} will give the extensions to $\alpha=\alpha_1=0$ by continuity since $\BES(0,\beta\da)$ will be approached as the $(\alpha\searrow0)$-distributional limit of $\BES(-\alpha,\beta\da)$ as in \cite{DY:Krein}.  Theorem~\ref{thm:2} (3$\cc$) and (4$\cc$) will be proven in Sections~\ref{sec:thm2-3} and~\ref{sec:thm2-4}.

In giving the proofs for Theorem~\ref{thm:2}, we will use several supporting properties. The proofs of such properties of $\mu^{\beta\da }_\alpha(x)$, defined in \eqref{def:mu}, are postponed to 
 Section~\ref{sec:RATIO}. There we will study the corresponding properties of the functions $R_\alpha(x)$ satisfying
\begin{align}\label{eq:mu}
\mu^{\beta\da }_\alpha(x)=2-2R_\alpha(\sqrt{2\beta \lvert x\rvert}).
\end{align}
See \eqref{def:Ralpha} for the precise definition of $R_\alpha$. The other supporting properties are about the Bessel processes. The proofs are postponed to Section~\ref{sec:BES}.

\subsection{Proof of Theorem~\ref{thm:2} (1$\cc$) for $\alpha\in (0,1/2)$}\label{sec:thm2-1}
The main step of the proof is to represent $\P^{\albe}_{x_0}$ as a probability measure defined by Girsanov's transformation of $\P^{\al}_{x_0}$. Recall the probability measure $\P^{\albe}_{x_0}$ defined in \eqref{def:BESab}. We stress that the assumption $\alpha>0$ is crucial for the following arguments. For example, Proposition~\ref{prop:mombdd} (2$\cc$) will be applied repeatedly for the case of $\eta=-2+4\alpha$. 

\begin{prop}\label{prop:key}
For all $\alpha\in (0,1/2)$, $\beta\in (0,\infty)$ and $x_0\in \R_+$, it holds that 
\begin{align}\label{eq:key}
\log \frac{\widehat{K}_\alpha(\sqrt{2\beta}\rho_t)}{\widehat{K}_\alpha(\sqrt{2\beta}x_0)}=-\Lambda_\alpha(\beta) L_t+\beta t+N_t -\frac{1}{2}\langle N,N\rangle_t\quad\mbox{under $\P^{\al}_{x_0}$,}
\end{align}
where $\Lambda_\alpha(\beta)$ is defined in \eqref{choice}, $\{L_t\}$ is the diffusion local time of $\{\rho_t\}$ satisfying \eqref{def:Lt}, and 
\begin{align}
\begin{split}\label{def:Nt}
N_t\,\defeq 
-(2\beta)^\alpha\int_0^t\frac{1}{\rho_s^{1-2\alpha} } \frac{\widehat{K}_{1-\alpha}}{\widehat{K}_\alpha}(\sqrt{2\beta}\rho_s) \d W_s
= -\sqrt{2\beta}\int_0^t \frac{K_{1-\alpha}}{K_\alpha}(\sqrt{2\beta}\rho_s) \d W_s
\end{split}
\end{align}
is a continuous $\P^{\al}_{x_0}$-martingale using $\widehat{K}_\nu$ defined in \eqref{def:hK}.
\end{prop}

\begin{rmk}\label{rmk:Ito}
It is not clear to us that the martingale $\{N_t\}$ can be derived from \eqref{def:BESab} by applying  It\^{o}'s formula to $x\mapsto \log\widehat{K}_\alpha(x)$ directly.
More specifically,  we consider the version of It\^{o}'s formula in \cite[6.22 Theorem on p.214]{KS:BM}. Since $\BES(-\alpha,\beta\da)$ can hit zero, an attempt to apply this version of It\^{o}'s formula will require an extension of $x\mapsto \log\widehat{K}_\alpha(x)$, $x\geq 0$, to 
a function on $\R$ given by a linear combination of convex functions. But  when $\alpha\in (0,1/2)$,
 this extension does not exist since every convex function defined on $\R$ allows finite one-sided derivatives at all points \cite[p.544]{RY}, but
\[
\frac{\mathrm d}{\mathrm d x}\log \widehat{K}_{\alpha}(x)=-\frac{K_{1-\alpha}}{K_\alpha}(x)
\to -\infty\quad\mbox{as }x\to 0+.
\]
See \eqref{d1logG} for the computation of the derivative and \eqref{asymp:K0} for the limit.
\qed 
\end{rmk}

To circumvent the difficulty discussed in Remark~\ref{rmk:Ito}, we will follow a method from
 \cite[Theorem~2.1]{DRVY} to prove Proposition~\ref{prop:key}. That method from \cite{DRVY} originally derives the Doob--Meyer decomposition of $\{\rho_t^{2\alpha}\}$ for $\BES(-\alpha)$, $0<\alpha<1 $, by using $(\vep+\rho_t^2)^\alpha$ as $\vep\searrow 0$ and a special approximation of the Markovian local times $\{L_t\}$. See Proposition~\ref{prop:LTapprox} for a relation between this approximation and the speed measure of $\BES(-\alpha)$. The use of Markovian local times is necessary since the local times at level $0$ from Tanaka's formula vanish; see \eqref{DLT:otf}.

\begin{cor}\label{cor:SDE>0}
Under $\P_{x_0}^{\al,\beta\da }$ for $\alpha\in (0,1/2)$, \eqref{SDE:DMY} holds with $X_t=\rho_t^2$ and
\begin{align}\label{def:tildeW}
\widetilde{W}_t=W_t+ \sqrt{2\beta} \int_0^t\frac{K_{1-\alpha}}{K_\alpha}(\sqrt{2\beta}\rho_s)\d s.
\end{align}
\end{cor}
\begin{proof}
By \eqref{eq:key}, the definition in \eqref{def:BESab} can be rewritten as
\begin{align}\label{rep:Palbe}
\d\P^{\albe}_{x_0}\rvert_{\sigma(\rho_s;s\leq t)}=\ms E(N)_t\d \P^{\al}_{x_0}\rvert_{\sigma(\rho_s;s\leq t)}.
\end{align}
Here, $\ms E(N)_t=\exp\{N_t-\langle N,N\rangle_t/2\}$ is the stochastic exponential of the martingale $\{N_t\}$ in \eqref{def:Nt}. Note that $\ms E(N)$ is a martingale under $\P^{\al}_{x_0}$ since Novikov's criterion \cite[(1.15) Proposition on p.332]{RY} is satisfied by using the first equality of \eqref{def:Nt}, \eqref{asymp:K0}, \eqref{asymp:Kinfty}, and Proposition~\ref{prop:mombdd} (2$\cc$)  with $\eta=-2+4\alpha$. The required property of Corollary~\ref{cor:SDE>0} thus follows from Girsanov's theorem \cite[(1.7) Theorem on p.329]{RY}, \eqref{def:BESQ}, and the second equality of \eqref{def:Nt}.
\end{proof}

We now begin the proof of Proposition~\ref{prop:key}. 
The first lemma obtains an approximate form of \eqref{eq:key} with the shorthand notation
\begin{align}\label{F:shorthand}
G_\nu(b)\,\defeq \,\widehat{K}_\nu(\sqrt{b}\,)=b^{\nu/2}K_\nu(\sqrt{b}).
\end{align}
\begin{lem}\label{lem:key}
Fix $\alpha\in (0,1/2)$, $\beta\in (0,\infty)$ and $x_0\in \R_+$. Under $\P^{(-\alpha)}_{x_0}$, it holds that 
\begin{align}\begin{split}\label{hK:Ito0}
\log \frac{\widehat{K}_\alpha(\sqrt{2\beta}(\vep+\rho_t^2)^{1/2})}{ \widehat{K}_\alpha(\sqrt{2\beta}(\vep+x_0^2)^{1/2})}
=
I^{1,\vep}_t+I^{2,\vep}_t+N^\vep_t-\frac{1}{2}\langle N^\vep,N^\vep\rangle_t,\quad \vep\in(0,\infty).
\end{split}
\end{align}
Here, with $G_\nu$ defined by \eqref{F:shorthand}, 
\begin{align}
N^\vep_t&\,\defeq  \int_0^t \left.\frac{-2\beta G_{1-\alpha}(b)}{b^{1-\alpha} G_\alpha(b)}\right\rvert_{b=2\beta(\vep+\rho_s^2)}\cdot \rho_s \d W_s,\label{def:N}\\
I^{1,\vep}_t&\,\defeq \left.\int_0^t\frac{-(2\beta)^\alpha G_{1-\alpha}(b) }{ G_\alpha(b)}\right\rvert_{b=2\beta(\vep+\rho_s^2)}\cdot \frac{(1-\alpha)\vep}{(\vep+\rho_s^2)^{2-\alpha}}
\d s,\label{def:I2}\\
I^{2,\vep}_t&\,\defeq \int_0^t \beta\cdot \frac{\rho_s^2}{\vep+ \rho_s^2}\d s.\label{def:I3}
\end{align}
\end{lem}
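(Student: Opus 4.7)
The plan is to apply It\^o's formula to $\Phi(y)\defeq \log G_\alpha(2\beta(\vep+y))$ evaluated at $y=\rho_t^2$, using the $\BESQ(-\alpha)$ SDE \eqref{def:BESQ} for $\{\rho_t^2\}$. Working with $y=\rho_t^2$ rather than $\rho_t$ itself avoids the drift singularity of \eqref{def:BES} at $0$, and the shift by $\vep>0$ keeps the argument of $G_\alpha$ bounded away from zero (recall that $G_0=K_0\circ\sqrt{\cdot}$ has a logarithmic singularity at the origin), so the standard It\^o formula is directly applicable with no localisation needed.

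The first step is to identify the martingale part. By It\^o's formula and \eqref{def:BESQ},
\begin{align*}
\d\Phi(\rho_t^2) = \Phi'(\rho_t^2)\bigl[2(1-\alpha)\d t + 2\rho_t\d W_t\bigr] + 2\rho_t^2\Phi''(\rho_t^2)\d t.
\end{align*}
Applying \eqref{F3} to compute $\Phi'$, the stochastic integral part becomes exactly $\d N^\vep_t$ from \eqref{def:N}. Squaring the integrand also shows
\begin{align*}
\tfrac{1}{2}\d\langle N^\vep,N^\vep\rangle_s = \frac{(2\beta)^{2\alpha} G_{1-\alpha}(b)^2\rho_s^2}{2(\vep+\rho_s^2)^{2-2\alpha}G_\alpha(b)^2}\,\d s\Big|_{b=2\beta(\vep+\rho_s^2)}.
\end{align*}

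The second step is to simplify the drift. Using \eqref{F4} for $\Phi''$ and adding back $\tfrac12\langle N^\vep,N^\vep\rangle$ so as to put the semimartingale in It\^o-exponential form, the $G_{1-\alpha}^2/G_\alpha^2$ term in $\Phi''$ is cancelled exactly, leaving
\begin{align*}
\int_0^t\left[-\frac{(1-\alpha)(2\beta)^\alpha G_{1-\alpha}(b)}{(\vep+\rho_s^2)^{1-\alpha}G_\alpha(b)} + \frac{\rho_s^2(2\beta)^\alpha G_{2-\alpha}(b)}{2(\vep+\rho_s^2)^{2-\alpha}G_\alpha(b)}\right]\d s
\end{align*}
as the candidate expression for $I^{1,\vep}_t+I^{2,\vep}_t$. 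To match this with the stated forms, I invoke the standard three-term recurrence $K_{2-\alpha}(z)=K_{-\alpha}(z)+\tfrac{2(1-\alpha)}{z}K_{1-\alpha}(z)$ (combined with $K_{-\alpha}=K_\alpha$), which in the present notation translates to
\begin{align*}
G_{2-\alpha}(b) = b^{1-\alpha}G_\alpha(b) + 2(1-\alpha)G_{1-\alpha}(b).
\end{align*}
Substituting, the $b^{1-\alpha}G_\alpha$-contribution collapses to $\beta\rho_s^2/(\vep+\rho_s^2)$, which is the $I^{2,\vep}$ integrand. The remaining $G_{1-\alpha}$-contributions combine, via the algebraic identity $-1/(\vep+\rho_s^2)^{1-\alpha}+\rho_s^2/(\vep+\rho_s^2)^{2-\alpha}=-\vep/(\vep+\rho_s^2)^{2-\alpha}$, into the $I^{1,\vep}$ integrand. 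This yields \eqref{hK:Ito0}.

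There is no serious obstacle: the entire argument is a direct It\^o expansion combined with the identities \eqref{F3}--\eqref{F4} from the previous lemma and one application of the Macdonald three-term recurrence. The only point that needs care is the constant-tracking at the final step, where the splitting of $G_{2-\alpha}$ into a ``Lebesgue-regular'' piece (producing $I^{2,\vep}$) and a ``singular as $\vep\da 0$'' piece (producing $I^{1,\vep}$) is what enables the next stage of the proof of \eqref{eq:key}: in the limit $\vep\da 0$, the factor $\vep/(\vep+\rho_s^2)^{2-\alpha}$ concentrates on $\{\rho_s=0\}$ and will, after an independent analysis of Bessel local times (cf.\ Section~\ref{sec:BES}), produce the local-time term $-\Lambda_\alpha(\beta)L_t$ in \eqref{eq:key}, while $I^{2,\vep}_t\to\beta t$ by dominated convergence on $\{\rho_s>0\}$.
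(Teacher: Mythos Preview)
Your proof is correct and follows essentially the same approach as the paper: apply It\^o's formula to $\log G_\alpha(2\beta(\vep+\rho_t^2))$ via the $\BESQ(-\alpha)$ SDE, use \eqref{F3}--\eqref{F4} for the derivatives of $\log G_\alpha$, and invoke the Macdonald three-term recurrence (the paper's \eqref{I3-1} is exactly your identity $G_{2-\alpha}(b)=b^{1-\alpha}G_\alpha(b)+2(1-\alpha)G_{1-\alpha}(b)$) to separate the drift into the $I^{1,\vep}$ and $I^{2,\vep}$ pieces. Your organization---adding back $\tfrac12\langle N^\vep,N^\vep\rangle$ first and then splitting $G_{2-\alpha}$ directly---is a slightly more streamlined packaging of the same algebra that the paper carries out in its Steps~1 and~2.
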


We will  pass $\vep\to 0$ and prove that $N^\vep_t$, $\la N^\vep,N^\vep\ra_t$, $I^{1,\vep}_t$, and $I^{2,\vep}_t$ converge to $N_t$, $\la N,N\ra_t$, $-\Lambda_\alpha(\beta)L_t$, and $\beta t$, respectively. 
See \eqref{conv:N}, \eqref{conv:I2}, and \eqref{conv:I3}.

\begin{proof}[Proof of Lemma~\ref{lem:key}]
To find the first two derivatives of $\log G_\alpha(b)$, our main tools are
\begin{align}\label{hK:der}
(\d/\d x)[x^\nu K_\nu(x)]=-x^\nu K_{\nu-1}(x)\quad\forall\;\nu\in \R
\end{align}
\cite[the third identity in (5.7.9) on p.110]{Lebedev} and the even parity of $\nu\mapsto K_{\nu}(x)$ \cite[(5.7.10) on p.110]{Lebedev}. Hence, by the rules $(\d /\d b)F(f(b))=F'(f(b))f'(b)$ and $(\d^2 /\d b^2)F(f(b))=F''(f(b))f'(b)^2+F'(f(b))f''(b)$ for $F(x)\equiv \log \widehat{K}_\alpha(x)$ and $f(b)\equiv\sqrt{b}$, we  obtain, for $b>0$,
\begin{align}
\frac{\d}{\d b}\log G_\alpha(b)&=\left.\left(\frac{-x^{2\alpha-1}\widehat{K}_{1-\alpha}(x)}{\widehat{K}_\alpha(x)}\right)\right|_{x=\sqrt{b}}\left(\frac{1}{2\sqrt{b}}\right)=-\frac{G_{1-\alpha}(b)}{2b^{1-\alpha}G_\alpha(b)},\label{d1logG}\\
\frac{\d^2}{\d b^2}\log G_\alpha(b)&=\left(\frac{-(2\alpha-1)x^{2\alpha-2}\widehat{K}_{1-\alpha}(x)+x^{2\alpha-1} x^{1-\alpha}K_{-\alpha}(x)}{\widehat{K}_\alpha(x)}\right.\notag\\
&\quad \left.+\frac{x^{2\alpha-1}\widehat{K}_{1-\alpha}(x)[-x^{2\alpha-1}\widehat{K}_{1-\alpha}(x)]
}{\widehat{K}_\alpha(x)^2}\right)\Bigg|_{x=\sqrt{b}}\left(\frac{1}{2\sqrt{b}}\right)^2\notag\\
&\quad+\left.\left(\frac{-x^{2\alpha-1}\widehat{K}_{1-\alpha}(x)}{\widehat{K}_\alpha(x)}\right)\right|_{x=\sqrt{b}}\left(\frac{1}{-4b^{3/2}}\right)\label{d2logG}\\
&= \frac{(1-\alpha)G_{1-\alpha}(b)}{2b^{2-\alpha}G_\alpha(b)}+\frac{1}{4b}
 -\frac{G_{1-\alpha}(b)^2}{4b^{2-2\alpha}G_\alpha(b)^2}.\notag
\end{align}
\mbox{}\\
Next, recall $\d \rho_t^2=2(1-\alpha)\d t+2\rho_t\d W_t$, as mentioned in \eqref{def:BESQ}. Hence,
with the shorthand notation $b_s\,\defeq\,2\beta(\vep+\rho_s^2)$, It\^{o}'s formula gives
\begin{align*}
\log \frac{\widehat{K}_\alpha(\sqrt{2\beta}(\vep+\rho^2_t)^{1/2})}{\widehat{K}_\alpha(\sqrt{2\beta}(\vep+x_0^2)^{1/2})}
&=2\beta\int_0^t -\frac{G_{1-\alpha}(b_s)}{2b_s^{1-\alpha}G_\alpha(b_s)}[2(1-\alpha)\d s+2\rho_s\d W_s]\\
&\quad +\frac{(2\beta)^2}{2}\int_0^t\frac{(1-\alpha)G_{1-\alpha}(b_s)}{2b_s^{2-\alpha}G_\alpha(b_s)}4\rho_s^2\d s\\
&\quad+\frac{(2\beta)^2}{2}\int_0^t\frac{1}{4b_s}4\rho_s^2\d s -\frac{(2\beta)^2}{2}\int_0^t\frac{G_{1-\alpha}(b_s)^2}{4b_s^{2-2\alpha}G_\alpha(b_s)^2}4\rho_s^2\d s,
\end{align*}
which leads to \eqref{hK:Ito0} after some elementary algebra.
\end{proof}

\begin{proof}[Proof of Proposition~\ref{prop:key}]
First, the second equality in \eqref{def:Nt} holds upon recalling \eqref{def:hK}. To see that $\{N_t\}$ is a martingale, first, note that by
 the asymptotic representations of $K_{1-\alpha}$ and $K_\alpha$ at zero and infinity from \eqref{asymp:K0} and \eqref{asymp:Kinfty} for $\alpha\in (0,1/2)$, 
 \begin{align}\label{Kratio:asymp}
\frac{\widehat{K}_{1-\alpha}(x)}{x^{1-2\alpha}\widehat{K}_\alpha(x)}=\frac{x^{1-\alpha}K_{1-\alpha}(x)}{x^{1-2\alpha}x^\alpha K_{\alpha}(x)}\leq 
\begin{cases}
C(\alpha),& \forall\;x\in [1,\infty);\\
C(\alpha)/x^{1-2\alpha},& \forall\;x\in (0,1].
\end{cases}
 \end{align}
Since $\E^{\al}_{x_0}[\int_0^t \d s/\rho_s^{2-4\alpha}]<\infty$ by Proposition~\ref{prop:mombdd} (2$\cc$) with $\eta=-2+4\alpha$, the required martingale property of $\{N_t\}$ follows by using the first equality of \eqref{def:Nt}. 
 
In the remaining of this proof, we focus on the justification of \eqref{eq:key} by identifying the limits of the terms on the right-hand side of \eqref{hK:Ito0}. We stress that \emph{all of these limits are considered under $\P^{\al}_{x_0}$}. The left-hand side of \eqref{hK:Ito0} obviously converges to the left-hand side of \eqref{eq:key} as $\vep\searrow 0$.  \smallskip 

\noindent \hypertarget{prop:key-1}{{\sc Step 1.}}\; We show that 
\begin{align}\label{conv:N}
N^\vep_t\xrightarrow[\vep \to 0]{\P}N_t,\quad \langle N^\vep,N^\vep\rangle_t\xrightarrow[\vep \to 0]{\rm a.s.}\langle N,N\rangle_t,
\end{align}
where the mode of convergence of the first limit refers to convergence in probability, and the second is almost-sure convergence.
By \eqref{F:shorthand} and \eqref{def:N}, we can write
\begin{align*}
 N^\vep_t&= -2\beta\int_0^t \left.\frac{\widehat{K}_{1-\alpha}(x)}{x^{2-2\alpha} \widehat{K}_\alpha(x)}\right\rvert_{x=\sqrt{2\beta}(\vep+\rho_s^2)^{1/2}}\cdot \rho_s \d W_s.
 \end{align*}
The first limit in \eqref{conv:N} then follows from \eqref{Kratio:asymp}, the property $\E^{\al}_{x_0}[\int_0^t \d s/\rho_s^{2-4\alpha}]<\infty$ mentioned below \eqref{Kratio:asymp}, 
 and the dominated convergence theorem for stochastic integrals. For this theorem, see \cite[(2.12) Theorem on p.142]{RY} and its proof. The second limit in \eqref{conv:N} follows similarly by the usual dominated convergence theorem. \smallskip 
  
\noindent \hypertarget{prop:key-2}{{\sc Step 2.}}\;
Next, we show that
\begin{align}\label{conv:I2}
I^{1,\vep}_t\xrightarrow[\vep\to 0]{\rm a.s.} -\Lambda_\alpha(\beta)L_t.
\end{align}

First, write the integrand in \eqref{def:I2} for $I^{1,\vep}_t$ as
\begin{align}\label{function:I1vep}
\left.-\frac{(2\beta)^\alpha G_{1-\alpha}(b) }{2C_\alpha G_\alpha(b)}\right\rvert_{b=2\beta(\vep+\rho_s^2)}\cdot \kappa_\vep(\rho_s),
\end{align}
where $C_\alpha$ is chosen according to $C'_\alpha$ via \eqref{def:Cpalpha}, with $C'_\alpha=C^\star_\alpha$ specified below \eqref{def:Lt},
and $\kappa_\vep$ is a special approximation to the identity defined by \eqref{def:kappa}. To pass the limit of
\eqref{function:I1vep}, we use the fact that as functions of $x$,
\begin{align}\label{lim:G}
\lim_{\vep\searrow 0}\frac{(2\beta)^\alpha G_{1-\alpha}(2\beta(\vep+x)) }{2C_\alpha G_\alpha(2\beta(\vep+x))}
=\frac{(2\beta)^\alpha G_{1-\alpha}(2\beta x) }{2C_\alpha G_\alpha(2\beta x)}\quad\mbox{uniformly on compacts in $\R_+$}.
\end{align}
This uniform convergence on compacts in \eqref{lim:G} holds because for all $\nu_0\in \{\alpha,1-\alpha\}$, 
\begin{align}\label{Gvep:conv}
 G_{\nu_0}(2\beta(\vep+x))\nearrow  G_{\nu_0}(2\beta x)\quad\mbox{ as $\vep\searrow 0$}
\end{align}
and this convergence is uniform on compacts in $\R_+$ by Dini's theorem. In more detail, the monotonicity in \eqref{Gvep:conv} is due to \eqref{hK:der}, and the applicability of Dini's theorem is due to the continuity of $x\mapsto G_{\nu_0}(2\beta x)$ on $\R_+$, where the continuity at $0$ is by \eqref{asymp:K0}.

We deduce from \eqref{lim:G} and \eqref{char2:LT} ($L^0_t\equiv L_t$ under $C'_\alpha=C^\star_\alpha$) that
\[
I^{1,\vep}_t\xrightarrow[\vep\to 0]{\rm a.s.}-\int_0^t \frac{(2\beta)^\alpha G_{1-\alpha}(2\beta \rho_s^2) }{2C_\alpha G_\alpha(2\beta \rho_s^2)} \d L_s =-\frac{(2\beta)^\alpha 2^{1-\alpha-1}\Gamma(1-\alpha)}{2C_\alpha 2^{\alpha-1}\Gamma(\alpha) }L_t=-\Lambda_\alpha (\beta) L_t,
\]
where the first equality uses \eqref{def:hK0} and the property that $\supp(\d L_s)\subseteq \{s;\rho_s=0\}$, and the second equality uses  the relationship between $C_\alpha$ and $C'_\alpha=C^\star_\alpha$ in \eqref{def:Cpalpha} and the definition \eqref{choice} of $\Lambda_\alpha(\beta)$. We have proved \eqref{conv:I2}.\smallskip 

\noindent {\sc Step~3.} \; The last limit we show is that by \eqref{def:I3} and dominated convergence, 
\begin{align}\label{conv:I3}
\quad I^{2,\vep}_t\xrightarrow[\vep\to 0]{\rm a.s.}  \beta\int_0^t \1_{(\rho_s>0)}\d s=\beta t.
\end{align}
This equality holds since the time spent by $\BES Q(-\alpha)$ in $0$ has a zero Lebesgue measure \cite[(1.5)~Proposition on p.442]{RY}. \medskip

\noindent {\sc Step~4.} \; Finally, applying \eqref{conv:N}, \eqref{conv:I2} and \eqref{conv:I3} to the right-hand side of \eqref{hK:Ito0} is enough to get the required identity in \eqref{eq:key}, since the left-hand side of \eqref{hK:Ito0} converges almost surely as $\vep\searrow 0$. The proof is complete.
\end{proof}

\subsection{Proof of Theorem~\ref{thm:2} (2$\cc$) for $\alpha,\alpha_1\in (0,1/2)$}\label{sec:thm2-2}
The following proposition restates the corresponding part of Theorem~\ref{thm:2} (2$\cc$). Note that by Theorem~\ref{thm:2} (1$\cc$), the weak existence of solutions of \eqref{SDE:DMY} holds. Also, the condition of $\alpha,\alpha_1>0$ continues to be crucial for the following arguments. 

\begin{prop}\label{prop:nonnegative}
Fix $\beta\in(0,\infty)$. 
\begin{enumerate}[label={\rm ({\arabic*}$\cc$)}]
\item Given any $\alpha\in (0,1/2)$ and $x_0\geq 0$, any solution to \eqref{SDE:DMY} is nonnegative.\smallskip

\item Given any $\alpha\in(0,1/2)$ and $x_0\geq 0$, there is pathwise uniqueness in \eqref{SDE:DMY}.\smallskip

\item Fix $\alpha_1,\alpha_2\in (0,1/2)$ with $\alpha_1\leq \alpha_2$, and $x_1,x_2\in \R_+$ with $x_1\geq x_2$. Any two solutions to \eqref{SDE:DMY} with $(x_0,\alpha)=(x_1,\alpha_1)$ and $(x_0,\alpha)=(x_2,\alpha_2)$, denoted by $X_{1}(t)$ and $X_{2}(t)$, respectively, 
satisfy $\P(X_{1}(t)\geq X_{2}(t),\,\forall\;t\in \R_+)=1$. 
\end{enumerate}
\end{prop}
\begin{proof}
(1$\cc$) \; We proceed with a comparison of SDEs. By \eqref{def:mu} and Corollary~\ref{cor:BESmon}. 
\begin{align}\label{dom:mu}
\mu^{\beta\da}_{\alpha}(x)\geq -2(\sqrt{2\beta |x|})^2+\frac{1}{2}>-4\beta |x|,\quad \forall\;x\in \R,\;\alpha\in [0,1/2).
\end{align}
Therefore, we compare \eqref{SDE:DMY} with the following SDE: 
\begin{align}\label{SDE:main1}
Y_t=x_0^2-\int_0^t 4\beta |Y_s|\d s+2\int_0^t \lvert Y_s\rvert^{1/2}\d \widetilde{W}_s.
\end{align}
Note that the coefficients of this SDE satisfy the following properties:
\begin{align}\label{cond:coeff}
\mbox{$y\mapsto -4\beta |y|$ is Lipschitz, and $\forall\;y_1,y_2\in \R$, $|2|y_1|^{1/2}-2|y_2|^{1/2}|\leq 2|y_1-y_2|^{1/2}$}.
\end{align}

Before using the SDE in \eqref{SDE:main1}, we show that it is strongly well-posed. First, the weak existence of solutions holds since the drift and noise coefficients are continuous and have at most linear growth \cite[Theorems~IV~2.3 and IV~2.4 on p.159 and p.163]{IW:SDE}. Also, the pathwise uniqueness in \eqref{SDE:main1} holds by \cite[2.13 Proposition on p.291]{KS:BM} and \eqref{cond:coeff}.
Then by \cite[3.23 Corollary on pp.310--311]{KS:BM}, the weak existence extends to the strong existence.  

To complete the proof of (1$\cc$), apply the comparison theorem of SDEs twice. (See  \cite[2.18 Proposition on p.293]{KS:BM}  for the version of the comparison theorem we use here.) Therefore, $X_t\geq Y_t$ for all $t$ with probability one by \eqref{dom:mu} and \eqref{cond:coeff}. Second, $Y_t\geq 0$ for all $t$ with probability one by \eqref{cond:coeff} since  \eqref{SDE:main1} with $x_0$ replaced by $0$ is solved by the zero process. We have proved (1$\cc$). \smallskip

\noindent (2$\cc$)\;  Fix $\alpha\in (0,1/2)$ and $x_0\geq 0$.
Let us begin with an observation. Given any two solutions $X,\widetilde{X}$ to \eqref{SDE:DMY}, the square-root noise coefficient implies that $X-\widetilde{X}$ has a zero semimartingale local time at level $0$ \cite[(3.4) Corollary on p.390]{RY}. Hence, by \cite[(3.2) Proposition on p.389]{RY}, the pathwise uniqueness in \eqref{SDE:DMY} is implied by the uniqueness in law in \eqref{SDE:DMY}.

It remains to prove the uniqueness in law in \eqref{SDE:DMY}. This is divided into two steps given below. Step~\hyperlink{prop:nonnegative-3-1}{1} will show the uniqueness in law, assuming that any solution $\{X_t\}$, which is necessarily nonnegative by (1$\cc$), satisfies the following property: 
\begin{align}\label{BESab:t<infty}
\textstyle \int_0^t\d s/X_s^{1-2\alpha}<\infty,\quad\forall\;t\in(0,\infty).
\end{align}
Step~\hyperlink{prop:nonnegative-3-2}{2} will complete the proof of (3$\cc$) by verifying \eqref{BESab:t<infty}. \smallskip

\noindent \hypertarget{prop:nonnegative-3-1}{{\sc Step 1.}}\; Our main goal of this step is to show that under the assumption of \eqref{BESab:t<infty},
\begin{align}
\E[G(X_{t\wedge \tau_n};t\leq T)]=\E^{\albe}_{x_0}[G(\rho^2_{t\wedge \tau_n'};t\leq T)]\label{Gir-inverse2}
\end{align}
for all $n\in \Bbb N$, $T\in(0,\infty)$ and bounded continuous $G\in \B_+(C([0,T],\R_+))$, where 
\begin{align}\label{def:tauntaun'}
\tau_n\textstyle\defeq\inf\{t\geq 0;\int_0^t\d s/X_s^{1-2\alpha}\geq n\},\quad 
\tau_n'\textstyle\defeq\inf\{t\geq 0;\int_0^t \d s/\rho_s^{2-4\alpha}\geq n\}.
\end{align}
To see why \eqref{Gir-inverse2} suffices for the required uniqueness in law, first,
note that by \eqref{BESab:t<infty}, $\P(\lim_n\tau_n\wedge T=T)=1$. Also, $\P^{\albe}_{x_0}(\lim_n\tau_n'\wedge T= T)=1$ since
$\P^{\al}_{x_0}(\lim_n\tau_n'=\infty)=1$ by Proposition~\ref{prop:mombdd} (2$\cc$) with $\eta=-2+4\alpha$,
and $\P^{\al},\P^{\albe}$ are locally equivalent to each other as probability measures by \eqref{def:BESab}. 
Hence, given the validity of \eqref{Gir-inverse2} for all $n$, we obtain upon passing $n\to\infty$ on both sides of \eqref{Gir-inverse2} that $\{X_t;0\leq t\leq T\}$ has the same law as $\{\rho^2_t;0\leq t\leq T\}$ under $\P^{\albe}_{x_0}$, as required.  

To prove \eqref{Gir-inverse2}, the idea is to postulate that $\{X_t\}$ arises from a change of measures in the same way that $\BES(-\alpha,\beta\da)$ arises from changing the measure of  $\BES(-\alpha)$ with the exponential martingale $\ms E(N)$. Recall \eqref{def:BESab} and Proposition~\ref{prop:key}. Accordingly, we consider 
\begin{align}\label{def:Ntaun}
\widetilde{N}_t\,\defeq\,\sqrt{2\beta}\int_0^t \frac{K_{1-\alpha}}{K_\alpha}(\sqrt{2\beta X_s})\d \widetilde{W}_s,
\end{align}
and  then define a probability measure $\Bbb Q_{n,T}$ by 
\begin{align}\label{def:QnT}
\d \Bbb Q_{n,T}\,\defeq\, \ms E(\widetilde{N})_{T\wedge \tau_n} \d \P.
\end{align}
Here, $\widetilde{N}$ is a well-defined continuous local martingale under $\P$ by \eqref{BESab:t<infty} and \eqref{asymp:K0}. Also,
$\Bbb Q_{n,T}$ is indeed a probability measure since by \eqref{asymp:K0} and \eqref{asymp:Kinfty}, $\langle\widetilde{N},\widetilde{N}\rangle_{\tau_n}\leq C(\alpha,\beta,n)$ so that Novikov's criterion \cite[(1.15) Proposition on p.332]{RY} is satisfied. 

We show that inverting the Girsanov transformation in \eqref{def:QnT} leads to \eqref{Gir-inverse2}. First, by \eqref{def:QnT}, we have the following for all $F\geq 0$:
\begin{align}
\E[F(X_{t\wedge \tau_n},\widetilde{W}_{t\wedge \tau_n};t\leq T)]&=\E^{\Bbb Q_{n,T}}\left[\frac{F(X_{t\wedge \tau_n},\widetilde{W}_{t\wedge \tau_n};t\leq T)}{\ms E(\widetilde{N})_{T\wedge \tau_n}}\right]\notag\\
&=\E^{\Bbb Q_{n,T}}[\ms E(N^{(n)})_{T\wedge \tau_n}F(X_{t\wedge \tau_n},\widetilde{W}_{t\wedge \tau_n};t\leq T)],\label{Gir-inverse}
\end{align}
where
\begin{align}
N^{(n)}_t&\,\defeq -\sqrt{2\beta}\int_0^{t} \frac{K_{1-\alpha}}{K_\alpha}(\sqrt{2\beta X_s})\d W^{(n)}_s,\notag\\
W^{(n)}_t&\,\defeq \,\widetilde{W}_t-\sqrt{2\beta}\int_0^{t}\frac{K_{1-\alpha}}{K_\alpha}(\sqrt{2\beta X_s})\d s.\label{def:Wnaux}
\end{align}
By Girsanov's theorem \cite[(1.7) Theorem on p.329]{RY}, $\{W^{(n)}_{t\wedge \tau_n};0\leq t\leq T\}$ under $\Bbb Q_{n,T}$ is a standard Brownian motion stopped at time $T\wedge \tau_n$, which justifies \eqref{Gir-inverse}. Also, by \eqref{def:BESQ} and \eqref{SDE:DMY},
$\{X_{t\wedge \tau_n};0\leq t\leq T\}$  under $\Bbb Q_{n,T}$ obeys the SDE of $\BES Q(-\alpha)$ stopped at time $T\wedge \tau_n$ and driven by $W_{\cdot\wedge T\wedge \tau_n}^{(n)}$. 
By the pathwise uniqueness in the SDE \eqref{def:BESQ} of $\BES Q(-\alpha)$,  we deduce that $X_{\cdot \wedge T\wedge \tau_n}$ is a.s. equal to a version of $\BES Q(-\alpha)$ stopped at $T\wedge \tau_n$. It follows from the definition \eqref{def:Nt} of $\{N_t\}$ and the definition of $\tau_n$ in \eqref{def:tauntaun'} that
\begin{align}
\E^{\Bbb Q_{n,T}}[\ms E(N^{(n)})_{T\wedge \tau_n}F(X_{t\wedge \tau_n},\widetilde{W}_{t\wedge \tau_n};t\leq T)]&=\E^{\al}_{x_0}[\ms E(N)_{T\wedge \tau_n'}F(\rho_{t\wedge \tau_n'}^2,\widetilde{W}_{t\wedge \tau_n'};t\leq T)]\notag\\
&=\E^{\albe}_{x_0}[F(\rho_{t\wedge \tau_n'}^2,\widetilde{W}_{t\wedge \tau_n'};t\leq T)],\label{Gir-inverse1}
\end{align}
Here, by \eqref{def:Wnaux}, $\widetilde{W}$ under $\P^{\al}$ relates to the driving noise $W$ of $\{\rho_t\}$ in \eqref{def:BES} via the equation in \eqref{def:tildeW}. Also, the last equality holds by \eqref{def:BESab} and Proposition~\ref{prop:key}. Combining \eqref{Gir-inverse} and \eqref{Gir-inverse1} proves \eqref{Gir-inverse2}.\smallskip

\noindent \hypertarget{prop:nonnegative-3-2}{{\sc Step~2.}}\; The idea for the verification of \eqref{BESab:t<infty}
 is to compare $\{X_t\}$ with $\BES Q(-(\alpha+\delta))$ for small enough $\delta>0$ whenever $\{X_t\}$ is nearly zero, and apply Proposition~\ref{prop:mombdd} (2$\cc$). 
 
 To carry this out, we first choose $\delta>0$ such that both of the following conditions hold:
\begin{gather}
\alpha+\delta\in (0,1/2),\label{perturbation:delta0}\\
1-2(\alpha+\delta)+(-2+4\alpha)>-1.\label{perturbation:delta}
\end{gather}
(There is no room to choose such $\delta$ if $\alpha=0$.)
Then Lemma~\ref{lem:lim0K0} allows $\gamma>0$ such that 
\begin{align}\label{mu:dom}
\sup_{x\in (0,2\gamma]}\sqrt{2\beta x} \frac{ K_{1-\alpha}}{K_{\alpha}}(\sqrt{2\beta x} )<\frac{\delta}{2}.
\end{align}

To proceed, fix $T\in (0,\infty)$, and choose subintervals of $[0,T]$ for the forthcoming comparison of $\{X_t\}$ and $\BES Q(-(\alpha+\delta))$ as $[S_n, T_n]$ for $n\geq 1$, where 
\begin{align}
S_1&\defeq \inf\{t\geq 0;X_t\leq \gamma\}\wedge T,\; 
&T_1&\defeq \inf\{t>S_1;X_t=2\gamma\}\wedge T,\label{Stopping:1}\\
S_{n+1}&\defeq \inf\{t>T_n;X_t=\gamma\}\wedge T,\;
&T_{n+1}&\defeq \inf\{t>S_{n+1};X_t=2\gamma\}\wedge T,\quad n\in \Bbb N.\label{Stopping:2}
\end{align}
That is, the stopping times in \eqref{Stopping:2} are defined inductively on $n\in \Bbb N$ starting with the stopping times in \eqref{Stopping:1}. ($S_1$ is defined with the weaker condition ``$X_t\leq \gamma$'' so that $S_1=0$ whenever $X_0\leq \gamma$.) Note that since $\{X_t\}$ is continuous, $T_n=T$ for all large $n$. This implies
\begin{gather}\label{ST:set}
\{t\in [0,T];X_t\leq \gamma\}\subset \bigcup_{n=1}^\infty [S_n,T_n].
\end{gather}
Additionally, we have the following property:
\begin{gather}
\mu^{\beta\da}_\alpha(X_t)=2-2R_\alpha(\sqrt{2\beta X_t})\geq 2(1-\alpha -\delta),\; \forall\;S_n\leq t\leq T_n,\mbox{ if }T_n-S_n>0,\label{ST:compare}
\end{gather}
by using \eqref{eq:mu}, \eqref{mu:dom}, and the fact that $X_t\leq 2\gamma$ for all $t\in [S_n,T_n]$ if $T_n-S_n>0$.

We now prove \eqref{BESab:t<infty}. By the strong well-posedness of the SDE of $\BES Q(-(\alpha+\delta))$, we can construct a version of $\BES Q(-(\alpha+\delta))$ starting from $X_{S_n}$ at time $S_n$ subject to the Brownian motion $\{\widetilde{W}_{t+S_n}-\widetilde{W}_t;t\geq 0\}$. Denote this version by $\{Y_n(t);t\in [S_n,\infty)\}$. By \eqref{ST:compare} and the comparison theorem of SDEs \cite[2.18~Proposition on p.293]{KS:BM}, $X_t\geq Y_n(t)$ for all $S_n\leq t\leq T_n$. It follows from this comparison and \eqref{ST:set} that
\begin{align}\label{Compar:sum}
\int_0^T\frac{\d r}{X_r^{1-2\alpha}}\leq \sum_{n=1}^\infty \int_{S_n}^{T_n}\frac{\d r}{Y_n(r)^{1-2\alpha}}+\int_{[0,T]\backslash \bigcup_{n=1}^\infty [S_n,T_n]}\frac{\d r}{\gamma^{1-2\alpha}}.
\end{align}
Note that the right-hand side is finite by the following two properties: (i) 
\[
\textstyle \int_{S_n}^T\d r/Y_n(r)^{1-2\alpha}<\infty
\]
by Proposition~\ref{prop:mombdd} (2$\cc$), \eqref{perturbation:delta0}, and \eqref{perturbation:delta}; (ii) the infinite series in \eqref{Compar:sum} is a only finite sum by the fact mentioned above \eqref{ST:set} that $T_n=T$ for all large $n$. We have proved \eqref{BESab:t<infty}.
The proof of the pathwise uniqueness in \eqref{SDE:DMY} for all $\alpha\in (0,1/2)$ and $x_0\geq 0$
 is complete. 
\smallskip  
 
 \noindent (3$\cc$)\; By \eqref{eq:mu} and Proposition~\ref{prop:BESmon} (3$\cc$), $\mu^{\beta\da}_\alpha(x)$ is strictly decreasing in $\alpha\in [0,1/2)$ for all fixed $x\in \R$. Hence, the required result follows immediately from another version of the comparison theorem of SDEs \cite[Theorem~VI.1.1 on pp.437--438]{IW:SDE}. Note that we use the pathwise uniqueness obtained in (2$\cc$) for this comparison of solutions when $\alpha_1=\alpha_2$.\mbox{\quad}
\end{proof}

\subsection{Proofs of Theorem~\ref{thm:2} (1$\cc$) for $\alpha=0$ and (2$\cc$) for $\alpha=\alpha_1=0$}\label{sec:thm2-12}
We first prove the following lemma, which sets up an approximation of \eqref{SDE:DMY} for $\alpha=0$.
Recall that the existence of strong solutions to \eqref{SDE:DMY} for any $\alpha\in(0,1/2)$ follows from Theorem~\ref{thm:2} (1$\cc$) and Proposition~\ref{prop:nonnegative} (2$\cc$). By Proposition~\ref{prop:nonnegative} (1$\cc$), these strong solutions are nonnegative.  

\begin{lem}\label{lem:continuity}
Fix $\beta\in (0,\infty)$ and $x_0\in \R_+$. For any $\alpha\in (0,1/2)$, 
 denote by $\{X_\alpha(t)\}$ the strong solution to \eqref{SDE:DMY} for a given one-dimensional standard Brownian motion $\{\widetilde{W}_t\}$. 
\begin{enumerate}[label={\rm ({\arabic*}$\cc$)}]
\item For all $T\in (0,\infty)$ and $0<\alpha<\alpha'\leq \alpha_0<1/2$, 
\begin{align}\label{approx:X}
\E\left[\sup_{0\leq t\leq T}\lvert X_{\alpha}(t)-X_{\alpha'}(t)\rvert^2\right]\leq C(\alpha_0,T)\alpha'.
\end{align}
\item For any $\{\alpha_n\}\subset (0,1/2)$ with $\alpha_n\searrow 0$, $\{X_{\alpha_n}(t)\}$ converges uniformly on compacts to a continuous process, say $\{X_0(t)\}$, as $n\to\infty$ with probability one. Also, \eqref{approx:X} extends to the case that $\alpha=0$ is allowed, and $\{X_0(t)\}$ is a weak solution to \eqref{SDE:DMY} for $\alpha=0$.  \smallskip 
\item Suppose that $\{\widetilde{W}_t\}$ is the driving Brownian motion of a given weak solution $\{\widetilde{X}_0(t)\}$ to \eqref{SDE:DMY} with $\alpha=0$.
 Then $\P(\widetilde{X}_0(t)=X_0(t),\;\forall\;t)=1$, where $\{X_0(t)\}$ is the limit of $\{X_{\alpha_n}(t)\}$ as $n\to\infty$ from (2$\cc$). 
\end{enumerate} 
\end{lem}
\begin{proof}
(1$\cc$)\; We begin by showing a few auxiliary properties. 
First, we have
\begin{align}
\begin{split}
0&\leq X_{\alpha}(t)-X_{\alpha'}(t)\\
&\leq \int_0^t \{\mu^{\beta\da}_{\alpha}[X_{\alpha}(s)]-\mu^{\beta\da }_{\alpha'}[X_{\alpha}(s)]\}\d s +2\int_0^t [X_{\alpha}(s)^{1/2}-X_{\alpha'}(s)^{1/2}]\d \widetilde{W}_s.\label{compare:X}
\end{split}
\end{align}
Here, the first inequality uses Proposition~\ref{prop:nonnegative} (3$\cc$). To get the second inequality, note that the equations of $\{X_\alpha(t)\}$ and $\{X_{\alpha'}(t)\}$ from \eqref{SDE:DMY} give 
\begin{align}
X_{\alpha}(t)-X_{\alpha'}(t)&=\int_0^t \{\mu_{\alpha}^{\beta\da}[X_{\alpha}(s)]-\mu^{\beta\da }_{\alpha'}[X_{\alpha}(s)]+\mu^{\beta\da }_{\alpha'}[X_{\alpha}(s)]-\mu_{\alpha'}^{\beta\da}[X_{\alpha'}(s)]\}\d s\notag\\
&\quad+2\int_0^t [X_{\alpha}(s)^{1/2}-X_{\alpha'}(s)^{1/2}]\d \widetilde{W}_s.\notag
\end{align}
The second inequality of \eqref{compare:X} follows by using the first inequality of \eqref{compare:X} and the fact that for $\alpha'\in [0,1/2)$, $x\mapsto \mu_{\alpha'}^{\beta\da}(x)$ is decreasing on $[0,\infty)$ by \eqref{eq:mu}
and Proposition~\ref{prop:BESmon} (4$\cc$).

Second, we show the following auxiliary properties:
\begin{gather}
\|\mu_0^{\beta\da}-\mu^{\beta\da}_{\alpha''}\|_\infty\less C(\alpha_0)\alpha'',\quad \forall\;0<\alpha''\leq \alpha_0<1/2,\label{unifbdd:mu}\\
\E\left[\left(\sup_{0\leq s\leq t}X_{\alpha''}(s)\right)^n\right]<\infty,\quad \forall\;n\in \Bbb N,\;t\in (0,\infty),\;\alpha''\in [0,1/2).\label{unifbdd:mom}
\end{gather}
To get \eqref{unifbdd:mu}, use \eqref{eq:mu}  and Proposition~\ref{prop:Fxunif}.
Also, \eqref{unifbdd:mom} holds because $\{X_{\alpha''}(t)\}$ can be pathwise dominated by a version of $\BES Q(-\alpha'')$. In more detail, this domination uses the strong existence of the SDE \eqref{def:BESQ} of $\BES Q(-\alpha'')$ \cite[3.23 Corollary on p.310--311]{KS:BM} and applies the comparison theorem of SDEs
\cite[2.18 Proposition on p.293]{KS:BM} to \eqref{def:BESQ} and \eqref{SDE:DMY}. Recall \eqref{def:mu} for the definition of $\mu^{\beta\da}_{\alpha''}$.

Third, we have
\begin{equation}\label{unifbdd:momd}
\E[\lvert X_{\alpha}(t)-X_{\alpha'}(t)\rvert ]=
\E[ X_{\alpha}(t)-X_{\alpha'}(t) ]\less C(\alpha_0)\alpha' t, \quad\forall\;0<t<\infty.
\end{equation}
Here, the first equality just follows from \eqref{compare:X}. To get the inequality of \eqref{unifbdd:momd}, note that by \eqref{unifbdd:mom}, the last term in \eqref{compare:X} is an $L^2$-martingale. By this property and \eqref{unifbdd:mu}, taking the expectations of both sides of the second inequality of \eqref{compare:X} yields the required result.

Finally, we obtain \eqref{approx:X} from the second inequality of  \eqref{compare:X} and combine several things: the inequality $(|x|+|y|)^2\less x^2+y^2$ for all $x,y\in \R$, \eqref{unifbdd:mu}, Doob's $L^2$-inequality, the inequality $\lvert \sqrt{|x|}-\sqrt{|y|}\rvert \leq \sqrt{\lvert x-y\rvert }$ for all $x,y\in \R$, and \eqref{unifbdd:momd}. \smallskip

\noindent {\rm (2$\cc$)}\; The required mode of convergence of $\{X_{\alpha_n}(t)\}$ follows from (1$\cc$) and the pathwise monotonicity of $\{X_\alpha(t)\}$ with respect to $\alpha$, as mentioned in the proof of (1$\cc$). Also, the extension of \eqref{approx:X} to $0\leq \alpha<\alpha'\leq \alpha_0<1/2$ follows from Fatou's lemma.

To show that $\{X_0(t)\}$ is a weak solution of \eqref{SDE:DMY} with $\alpha=0$, we pass the limit of both sides of \eqref{SDE:DMY} for $\alpha=\alpha_n\searrow 0$. To find the limiting equation, note that for all $t\in (0,\infty)$,
\begin{gather}
\left|\int_0^t\mu^{\beta\da}_{\alpha_n}[X_{\alpha_n}(s)]\d s-\int_0^t\mu^{\beta\da}_{0}[X_{0}(s)]\d s\right|\xrightarrow [n\to\infty]{\rm a.s.}0,\label{LIM:alpha1}\\
\left|2\int_0^t|X_{\alpha_n}(s)|^{1/2}\d\widetilde{W}_s-2\int_0^t|X_{0}(s)|^{1/2}\d\widetilde{W}_s\right|\xrightarrow [n\to\infty]{\Bbb P}0,\label{LIM:alpha2}
\end{gather}
in the sense of a.s. convergence and convergence in probability, respectively. Here, \eqref{LIM:alpha1} uses \eqref{unifbdd:mu}, the a.s. uniform convergence of $\{X_{\alpha_n}(t)\}$ to $\{X_0(t)\}$ on compacts from the previous paragraph, and the continuity of $\mu^{\beta\da}_0$. See Lemma~\ref{lem:lim0K0} for the continuity of $\mu^{\beta\da}_{0}$ at $0$. Also, \eqref{LIM:alpha2} uses the inequality $\lvert \sqrt{|x|}-\sqrt{|y|}\rvert \leq \sqrt{\lvert x-y\rvert }$ for all $x,y\in \R$ and the extension of \eqref{approx:X} to $0\leq \alpha<\alpha'\leq \alpha_0<1/2$ from the previous paragraph. Since $X_{\alpha_n}(t)\to X_0(t)$ a.s., \eqref{LIM:alpha1} and \eqref{LIM:alpha2} suffice to prove that  $\{X_0(t)\}$ solves \eqref{SDE:DMY} for $\alpha=0$.
\smallskip

\noindent {\rm (3$\cc$)} \; The proof follows upon noting that 
the proof of (1$\cc$) of the present lemma still applies in the case of $0=\alpha<\alpha'\leq \alpha_0<1/2$ with $X_0(t)$ replaced by $\widetilde{X}_0(t)$ so that \eqref{approx:X} extends to this range of $\alpha,\alpha'$. In more detail, this extension holds since the proof of Proposition~\ref{prop:nonnegative} (3$\cc$) can be extended to give $\widetilde{X}_0(t)\geq X_{\alpha_n}(t)$ for all $n$ and $t\geq 0$, now that the pathwise uniqueness in the SDE for $\{X_{\alpha_n}(t)\}$, $\alpha_n\in (0,1/2)$, holds by Proposition~\ref{prop:nonnegative} (2$\cc$).
\end{proof}

The following proposition completes the proof of Theorem~\ref{thm:2} (1$\cc$) and (2$\cc$).

\begin{prop}
Theorem~\ref{thm:2} (1$\cc$) for $\alpha=0$ and (2$\cc$) for $\alpha=\alpha_1=0$ are valid. 
\end{prop}
\begin{proof}
To obtain Theorem~\ref{thm:2} (1$\cc$) for $\alpha=0$, we first recall that $\BES (0,\beta\da)$ is the distributional limit of $\BES(-\alpha,\beta\da)$ as $\alpha\searrow 0$ \cite{DY:Krein}. Therefore, Theorem~\ref{thm:2} (1$\cc$) for $\alpha=0$ follows immediately from Lemma~\ref{lem:continuity} (2$\cc$) and the fact that $\BES Q(-\alpha,\beta\da)$ for $\alpha\in (0,1/2)$ satisfies \eqref{SDE:DMY}.

Regarding Theorem~\ref{thm:2} (2$\cc$) for $\alpha=\alpha_1=0$, first, note that the proof of Proposition~\ref{prop:nonnegative} (1$\cc$) also applies to $\alpha=0$. To obtain the pathwise uniqueness in \eqref{SDE:DMY} for $\alpha=0$, by the proof of Proposition~\ref{prop:nonnegative} (2$\cc$), again it is enough to show the uniqueness in law in \eqref{SDE:DMY} for $\alpha=0$. To get this property, we use Lemma~\ref{lem:continuity} (3$\cc$) and approximate the law of any solution $\{\widetilde{X}_0(t)\}$ to \eqref{SDE:DMY} for $\alpha=0$ by a sequence of laws of solutions to \eqref{SDE:DMY} for $\alpha=\alpha_n$. By the uniqueness in law in \eqref{SDE:DMY} for $\alpha\in (0,1/2)$, the law of $\{\widetilde{X}_0(t)\}$ therefore must be uniquely determined. 
Finally, regarding the comparison of solutions, note that the proof of Proposition~\ref{prop:nonnegative} (3$\cc$) still applies to $0=\alpha_1\leq \alpha_2<1/2$. The proof is complete.
\end{proof}

\subsection{Proof of Theorem~\ref{thm:2} (3$\cc$)}\label{sec:thm2-3}
For $\alpha\in (0,1/2)$, \eqref{SDE:DMYsqrt}  follows readily from Corollary~\ref{cor:SDE>0} and the SDE \eqref{def:BES} obeyed by $\BES(-\alpha)$. It remains to consider the case $\alpha=0$.

To prove \eqref{SDE:DMYsqrt} for $\alpha=0$, we use \eqref{SDE:DMYsqrt} for $\alpha_0=\alpha\in (0,1/2)$ and passing $\alpha_0\searrow 0$. Specifically, let $\{X_\alpha(t)\}$, $\alpha\in (0,1/2)$, be defined in Lemma~\ref{lem:continuity}, such that $\{\widetilde{W}_t\}$ is the driving Brownian motion of the SDE of $\{X_0(t)\}$ as in \eqref{SDE:DMY} for $X_0(t)\,\defeq\,\rho_t^2$, and then set
\begin{align}\label{rhoalpha:special}
\rho_{\alpha}(t)\,\defeq\, X_\alpha(t)^{1/2},\quad \alpha\in [0,1/2).
\end{align}
Hence, \eqref{SDE:DMYsqrt} for $\alpha=0$ follows from the case of $\alpha\in (0,1/2)$ already proven if for any sequence $\{\alpha_n\}\subset (0,1/2)$,
\begin{align}
\lim_{n\to\infty}\textstyle \int_0^t 1/\rho_{\alpha_n}(s)\d s&=\textstyle \int_0^t 1/\rho_0(s)\d s<\infty,\label{additive:nonexplosion1}\\
\lim_{n\to\infty}\textstyle \int_0^t(K_{1-\alpha_n}/K_{\alpha_n})(\sqrt{2\beta}\rho_{\alpha_n}(s))\d s&=\textstyle \int_0^t(K_{1}/K_{0})(\sqrt{2\beta}\rho_{0}(s))\d s<\infty. \label{additive:nonexplosion2}
\end{align}
Then note that \eqref{additive:nonexplosion1} can be justified by dominated convergence and (1)--(3), and \eqref{additive:nonexplosion2} can be justified by dominated convergence and (1)--(4), where (1)--(4) are observations made below: 
\begin{itemize}
\item [(1)] For all $\alpha\in (0,1/2)$, 
\begin{align}
\textstyle \int_0^t\d s/\rho_\alpha(s)<\infty,\quad \P^{\albe}_{x_0}\mbox{-a.s.}
\end{align}
This bound follows immediately from
\eqref{def:BESab} since $\int_0^t \d s/\rho_s<\infty$ a.s. under $\BES(-\alpha)$ by Lemma~\ref{lem:BESint} with $\eta=-1$.
 \item [(2)] $\rho_{\alpha_1}(t)\geq \rho_{\alpha_2}(t)$ a.s. for any given $0\leq \alpha_1\leq\alpha_2<1/2$ by Theorem~\ref{thm:2} (2$\cc$).
\item [(3)] $\rho_\alpha(\cdot)\to \rho_0(\cdot)$ uniformly on compacts a.s. as $\alpha\searrow 0$ by Lemma~\ref{lem:continuity} (3$\cc$).
\item [(4)] $(K_{1-\alpha}/K_\alpha)(x)\to (K_1/K_0)(x)$ uniformly on compacts in $x\in (0,\infty)$ by \eqref{def:K}, and
\[
(K_{1-\alpha}/K_\alpha)(x)\leq R_\alpha(x)/x\leq R_{1/4}(x)/x=1/(4x)+(K_{3/4}/K_{1/4})(x)\less 1+1/x
\]
for all $\alpha\in [0,1/4]$ and $x\in (0,\infty)$. 
Here, the second inequality
follows from Proposition~\ref{prop:BESmon} (3$\cc$) for $R_\alpha(\cdot)$ defined in \eqref{def:Ralpha}
and the last inequality uses \eqref{asymp:K0} and \eqref{asymp:Kinfty}.
\end{itemize} 
The proof of  Theorem~\ref{thm:2} (3$\cc$) is complete.
 
\subsection{Proof of Theorem~\ref{thm:2} (4$\cc$)}\label{sec:thm2-4}
The following lemma obtains a more detailed version of Theorem~\ref{thm:2} (4$\cc$). Recall that $\BES Q(-\alpha,\beta\da)$ is characterized by the SDE in \eqref{SDE:DMY} thanks to Theorem~\ref{thm:2} (1$\cc$) and (2$\cc$).

\begin{lem}\label{lem:explosion}
For all $\beta\in (0,\infty)$ and $x_0\in (0,\infty)$, the following holds:
\begin{enumerate}[label={\rm ({\arabic*}$\cc$)}]
\item For all $\alpha\in [0,1/2)$, $\int_0^{T_0(\rho)}\d r/\rho_r^2=\infty$ a.s. under $\P_{x_0}^{\albe }$. In particular, $t\mapsto \int_0^t \d r/\rho_r^2$ is a $[0,\infty]$-valued continuous function.\smallskip 
\item For $\alpha\in [0,1/2)$, let $\{X_{\alpha}(t)\}$ be as in Lemma~\ref{lem:continuity}, and let $\{\rho_{\alpha}(t)\}$ be defined by \eqref{rhoalpha:special}. Then for any $\{\alpha_n\}\subset(0,1/2)$ with $\alpha_n\searrow 0$, $T_0(\rho_{\alpha_n})\nearrow T_0(\rho_0)<\infty$ a.s.\smallskip 
\item In the same setting of (2$\cc$), 
\[
\{(\rho_{\alpha_n}(t),\textstyle \int_0^t \d r/\rho_{\alpha_n}(r)^2)\}\xrightarrow[n\to\infty]{\rm a.s.} \{(\rho_0(t),\int_0^t \d r/\rho_{0}(r)^2)\}\mbox{ in } C(\R_+,\R_+\times [0,\infty]),
\]
where $C(\R_+,\R_+\times [0,\infty])$ is equipped the topology of uniform convergence on compacts.  
\end{enumerate}
\end{lem}
\begin{proof}
We will use the fact that $T_0(\rho)<\infty$ a.s. under $\P^{\albe}_{x_0}$ for all $\alpha\in [0,1/2)$. This property in the case of $\alpha=0$ has been obtained in \cite[Theorem~2.1 on p.883]{DY:Krein}, and for $\alpha\in (0,1/2)$, it is immediate by the following identity \cite[(2) in (3.2) Examples on p.886]{DY:Krein} :
\begin{align}\label{preGIG}
\E^{\albe}_{x_1}[\e^{-qT_0(\rho)}]=\widehat{K}_\alpha(\sqrt{2(\beta+q)}x_1)/\widehat{K}_\alpha(\sqrt{2\beta}x_1),\quad \forall\;x_1,q\in \R_+.
\end{align}

\begin{rmk}
Identity \eqref{preGIG} can be derived from \eqref{eq:T0alpha} and the following identity explained at the end of this remark:
\begin{align}
\begin{split}\label{preGIG1}
\E^{\albe}_{x_0}[\e^{-q(T_0(\rho)\wedge t)}]
&=\E^{\al}_{x_0}\left[\frac{\widehat{K}_\alpha(\sqrt{2\beta}\rho_{t})}{\widehat{K}_\alpha(\sqrt{2\beta}x_0)}\e^{-(q+\beta) t}\1_{\{t\leq T_0(\rho)\}}\right]\\
&\quad +\E^{\al}_{x_0}\left[\frac{\e^{-(q+\beta) T_0(\rho)}\1_{\{t>T_0(\rho)\}}}{\E_{x_0}^{\al}[\e^{-\beta T_0(\rho)}]}\right],\quad \forall\;0<t<\infty.
\end{split}
\end{align}
Indeed, to get  \eqref{preGIG}, take $q=0$ to see that the first term on the right-hand side of \eqref{preGIG1} vanishes as $t\to\infty$. Then apply \eqref{eq:T0alpha} to the limit of the right-hand side of \eqref{preGIG1} as $t\to\infty$.

To see \eqref{preGIG1}, use \eqref{def:BESab} to convert $\E^{\albe}$ to $\E^{\al}$, and the last term of \eqref{preGIG1} follows by conditioning on $\sigma(\rho_s;s\leq T_0(\rho))\cap \{t>T_0(\rho)\}$ and using \eqref{def:BESab}, \eqref{def:hK0} and \eqref{eq:T0alpha}.
\qed
\end{rmk}

\noindent {\rm (1$\cc$)}\; The a.s. explosion $\int_0^{T_0(\rho)}\d r/\rho_r^2=\infty$ under $\P^{\albe}_{x_0}$ for all $\alpha\in [0,1/2)$ can be obtained by verifying an analytic criterion in \cite[Theorem~1 on pp.75--76]{Erickson}. For completeness, below, we include a proof based on the method of \cite[Section~6, on pp.10--11]{MU:Int}. This method uses the first Ray--Knight theorem
and a criterion equivalent to the following one with respect to a one-dimensional standard Brownian motion $\{W(y);y\geq 0\}$ starting from $0$:
\begin{itemize}
\item [(A)] $y\cdot f(y)\in L^1_{\rm loc}(0+)$ implies $\int_{0+}|f(y)|W(y)^2\d y<\infty$ a.s.
\item [(B)] $y\cdot f(y)\notin L^1_{\rm loc}(0+)$ implies $\int_{0+}|f(y)|W(y)^2\d y=\infty$ a.s.
\end{itemize}

We will prove the required property $\int_0^{T_0(\rho)}\d r/\rho_r^2=\infty$ for $\{\rho_t\}\sim\BES(-\alpha,\beta\da)$, $\alpha\in [0,1/2)$, via an alternative representation of $\int_0^{T_0(\rho)}\d r/\rho_r^2$. To obtain this representation, first, note that the scale function of $\{\rho_t\}$ can be chosen as
\begin{align}\label{def:scale}
\textstyle s(x)=s^{\beta\da }_\alpha(x)\,\defeq\,\int_0^x \d y/[y^{1-2\alpha}\widehat{K}_\alpha(\sqrt{2\beta}y)^2].
\end{align}
This formula can be deduced from the SDE \eqref{SDE:DMYsqrt}  of $\{\rho_t\}$ in Theorem~\ref{thm:2} (3$\cc$), a slight modification of  \cite[1$\cc$) of (3.20) Exercise on p.311]{RY}, and the observation that 
\begin{align}
\frac{1-2\alpha}{2x}-\sqrt{2\beta}\frac{K_{1-\alpha}}{K_\alpha}(\sqrt{2\beta}x)=\frac{1}{2}\frac{\d}{\d x}\log x^{1-2\alpha}+\frac{1}{2}\frac{\d}{\d x}\log \widehat{K}_\alpha(\sqrt{2\beta}x)^2.\label{drift:rule}
\end{align}
Here, with $\widehat{K}_\nu(x)$ from \eqref{def:hK}, \eqref{drift:rule} uses the rule $(\d/ \d x)\widehat{K}_\nu(x)=-x^\nu K_{\nu-1}(x)$
 \cite[the third identity in (5.7.9) on p.110]{Lebedev} and the even parity of $\nu\mapsto K_\nu(\cdot)$ \cite[(5.7.10) on p.110]{Lebedev}.

Now, to represent $\int_0^{T_0(\rho)}\d r/\rho_r^2$, 
we use the effect that $Y_{t}\,\defeq s(\rho_{t\wedge T_0(\rho)})$ is a continuous local martingale with $T_0(Y)=T_0(\rho)$. By \eqref{SDE:DMYsqrt} and the Dambis--Dubins--Schwarz theorem \cite[(1.6) Theorem on p.181]{RY}, there exists a one-dimensional standard Brownian motion $B$ such that $Y_t=B_{A_t}$, where $A_t\,\defeq\,\int_0^t[s'(\rho_r)]^2\d r$. Hence, by using the bi-continuous Bronwian local time $\{\ell^y_{t}\}$ of $B$ from Tanaka's formula,
\begin{align}
\int_0^{T_0(\rho)}\frac{\d r}{\rho_r^2}&=\int_0^{T_0(Y)}\frac{\d r}{[s^{-1}(Y_r)]^2}
=\int_0^{T_0(Y)}\frac{1}{[s^{-1}(B_{A_r})]^2}\cdot \frac{[s'(\rho_r)]^2}{[s'(s^{-1}(B_{A_r}))]^2}\d r\notag\\
&=\int_0^{T_0(B)}\frac{1}{[s^{-1}(B_v)]^2} \cdot\frac{1}{ [s'(s^{-1}(B_v))]^{2}}\d v\label{T0infty-00}\\
&=\int_0^\infty\frac{1}{[s^{-1}(y)]^2} \cdot \frac{1}{[s'(s^{-1}(y))]^{2}}\cdot\ell^y_{T_0(B)}\d y\label{T0infty-0}.
\end{align}
Here, the usual change of variables formula gives \eqref{T0infty-00}, and \eqref{T0infty-0} uses the occupation times formula. More specifically, the upper limit $T_0(B)$ of the integral in \eqref{T0infty-00} follows since $t\mapsto A_t$ is bijective up to $t=T_0(Y)$ and $A_{T_0(Y)}=T_0(B)$. This property   $A_{T_0(Y)}=T_0(B)$  is implied by the definition $Y_t=B_{A_t}$ and the fact that $T_0(Y)=T_0(\rho)<\infty$.

To complete the proof of (1$\cc$), we show the explosion of the integral in \eqref{T0infty-0}. 
By the first Ray--Knight theorem for Brownian motion \cite[(2.2) Theorem on p.455]{RY}, $\{\ell^{y}_{T_0(B)};0\leq y\leq x_0\}$ is a version of $\BES Q(0)$ starting from $0$ and restricted to the time interval $[0,x_0]$. Hence, $\{\ell^y_{T_0(B)};0\leq y\leq x_0\}$ has the same law as $\{W_1(y)^2+W_2(y)^2;0\leq y\leq x_0\}$, for independent one-dimensional standard Brownian motions $W_1(\cdot),W_2(\cdot)$ starting from $0$. This leads to the criterion by (A) and (B) stated before \eqref{def:scale} for the convergence or divergence of the integral in \eqref{T0infty-0}; see also \cite{Spitzer}.
To use that criterion by (A) and (B), note that, with \eqref{def:hK}, we have the following bounds for $0<x\leq 1/2$ by \eqref{def:scale}  and \eqref{asymp:K0}:
\begin{align}
s'(x)\less 1/(x\lvert \log x\rvert^2) &\quad \mbox{if $\alpha=0$},&& \quad s'(x)\less C(\alpha,\beta)/x^{1-2\alpha}&& \mbox{if  $\alpha\in (0,1/2)$},\label{s':asymp}\\
s(x)\more 1/\lvert \log x\rvert &\quad \mbox{if $\alpha=0$},&&\quad s(x)\more C(\alpha,\beta) x^{2\alpha}&& \mbox{if  $\alpha\in (0,1/2)$},\label{s:asymp}
\end{align}
where the bound in \eqref{s:asymp} for $\alpha=0$ uses the formula $\int 1/[y(\log y)^2]=-1/\log y+C$ for $0<y<1$. Then condition (B) holds since the corresponding integral $\int_{0+}y\cdot |f(y)|\d y$ is
\begin{align}
&\quad\;\int_{0+}y\cdot \frac{1}{[s^{-1}(y)]^2} \cdot \frac{1}{[s'(s^{-1}(y))]^{2}}\d y=\int_{0+}s(x)\cdot \frac{1}{x^2}\cdot \frac{1}{s'(x)}\d x\notag\\
&\more 
\begin{cases}
\displaystyle C(\beta) \int_{0+} \frac{1}{\lvert \log x\rvert }\cdot \frac{1}{x^2}\cdot x \lvert \log x\rvert ^2\d x=\infty&\mbox{if }\alpha=0,\\
\vspace{-.4cm}\\
\displaystyle C(\alpha,\beta)\int_{0+}x^{2\alpha}\cdot \frac{1}{x^2}\cdot x^{1-2\alpha} \d x=\infty&\mbox{if }\alpha\in (0,1/2).
\end{cases}\label{T0infty-2}
\end{align}
We have proved the divergence of the integral in \eqref{T0infty-0}, and hence, $\int_0^{T_0(\rho)}\d r/\rho_r^2=\infty$. \smallskip

\noindent {\rm (2$\cc$)}\;  The pathwise increasing monotonicity of $\{\rho_{\alpha_n}(t)\}$ in $n$ from Theorem~\ref{thm:2} (2$\cc$) yields that $T_0(\rho_{\alpha_n})$ is increasing in $n$ and bounded by $ T_0(\rho_0)$. This gives $\lim_n T_0(\rho_{\alpha_n})\leq T_0(\rho_0)$. To obtain the converse inequality, note that since $\rho_{\alpha_n}(\cdot)\nearrow \rho_{0}(\cdot)$ uniformly on compacts with probability one by Lemma~\ref{lem:continuity} (3$\cc$), we have $0=\rho_m(T_0(\rho_{\alpha_m}))\to \rho_0(\lim_{n}T_0(\rho_{\alpha_n}))$ as $m\to\infty$, and so, $\lim_{n}T_0(\rho_{\alpha_n})\geq T_0(\rho_0)$ must hold. We conclude that $\lim_{n}T_0(\rho_{\alpha_n})= T_0(\rho_0)$.\smallskip

\noindent {\rm (3$\cc$)}\; The topology of $\R\cup\{\pm\infty\}$ admits the metric of $(x,y)\mapsto |\tan^{-1}(x)-\tan^{-1}(y)|$.
Hence, by the continuity of $t\mapsto \int_0^t \d r/\rho_0(r)^2$ from (1$\cc$),
the required a.s. uniform convergence on compacts follows 
if, whenever $t_n\to t_0<\infty$ as $n\to \infty$, we have $\int_0^{t_n}\d r/\rho_{\alpha_n}(r)^2\to \int_0^{t_0}\d r/\rho_{0}(r)^2$. To verify this property, consider three possibilities of $t_0$: $t_0>T_0(\rho_0)$, $t_0<T_0(\rho_0)$ and $t_0=T_0(\rho_0)$. For the first case of $t_0>T_0(\rho_0)$, the required limit holds trivially since $\int_0^{t_0}\d r/\rho_{0}(r)^2=\infty$ and $\infty=\int_0^{t_n}\d r/\rho_{\alpha_n}(r)^2$ for all large $n$ by (1$\cc$) and (2$\cc$) proven above. Second, the case of $t_0<T_0(\rho_0)$ can be handled easily by (2$\cc$) and dominated convergence. Finally, for $t_0=T_0(\rho_0)$, it suffices to note that $\int_0^{t_n}\d r/\rho_{0}(r)^2\leq \int_0^{t_n}\d r/\rho_{\alpha_n}(r)^2$ and $\int_0^{t_0}\d r/\rho_{0}(r)^2=\infty$.  The proof of Lemma~\ref{lem:explosion} is complete.
\end{proof}

\subsection{Ratios of the Macdonald functions}\label{sec:RATIO}
In this subsection, we consider the drift coefficients $\mu^{\beta\da}_\alpha(x)$ of the SDEs in \eqref{SDE:DMY} for $\BES(-\alpha,\beta \da)$. The properties we need are the monotonicity in $\alpha$ and $x$ and the modulus of continuity at $\alpha=0$ for fixed $x$. So
we will prove such properties of the following functions in the more general context of $\alpha\in [0,\infty)$:
\begin{align}\label{def:Ralpha}
R_\alpha(x)\,\defeq \, 
\begin{cases}
\displaystyle \alpha+x\frac{K_{1-\alpha}(x)}{K_\alpha(x)},& x\in (0,\infty),\\
\alpha,&x=0,
\end{cases}
\end{align}
now that  in the particular case of $\alpha\in [0,1/2)$, the definition \eqref{def:mu} of $\mu^{\beta\da}_\alpha(x)$ entails \eqref{eq:mu}.
See Propositions~\ref{prop:BESmon} and~\ref{prop:Fxunif} for the required properties of $R_\alpha(x)$.
Note that by Lemma~\ref{lem:lim0K0} proven below, setting $R_\alpha(0)$ to be $\alpha$  in \eqref{def:Ralpha} ensures the continuity  of $R_\alpha(\cdot)$ at $x=0$.

 The technicality of the proofs of this subsection is in the scope of special functions. We will
 use the following basic properties of the Macdonald functions $K_\nu(x)$: for $\nu\in \R$,
\begin{align}
&K_\nu(x)=K_{-\nu}(x),\label{K:sym}\\
&K_{\nu-1}(x)+K_{\nu+1}(x)=-2K'_\nu(x),\label{K:rec1}\\
&K_{\nu-1}(x)-K_{\nu+1}(x)=-\frac{2\nu}{x}K_\nu(x),\label{K:rec2}\\
&K_\nu(x)\sim 
\begin{cases}
-\log x\hspace{1.89cm} \mbox{ as $x\searrow 0$, }&\nu=0,\\
2^{\lvert \nu\rvert-1}\Gamma(\lvert \nu\rvert)/x^{\lvert \nu\rvert}\quad \mbox{ as $x\searrow 0$}, &\nu\neq 0,
\end{cases}\label{asymp:K0}\\
&K_\nu(x)\sim \sqrt{\frac{\pi}{2 x}}\e^{-x}\quad \mbox{ as }x\to \infty.\label{asymp:Kinfty}
\end{align}
See \cite[(5.7.2) and (5.7.3) on pp.108--109]{Lebedev} for \eqref{K:sym},
\cite[the last two equations of (5.7.9) on p.110]{Lebedev} for \eqref{K:rec1} and \eqref{K:rec2}, and
\cite[(5.16.4)  and (5.16.5) on p.136]{Lebedev} for \eqref{asymp:K0} and \eqref{asymp:Kinfty}. 
Additionally, we will use the following alternative integral representations of the Macdonald functions $K_\nu(x)$:  for $x\in (0,\infty)$, 
\begin{align}
K_\nu(x)&=\int_0^\infty \e^{-x\cosh(t)}\cosh(\nu t)\d t,\quad \forall\;\nu\in \R,\label{K:1}\\
K_\nu(x)&=\sqrt{\frac{\pi}{2x}}\frac{\e^{-x}}{\Gamma(\nu+1/2)}\int_0^\infty \e^{-t}t^{\nu-1/2}\left(1+\frac{t}{2x}\right)^{\nu-1/2}\d t, \quad\forall\;\nu\in (-1/2,\infty),\label{K:3}\\
K_\nu(x)&=\frac{2^{\nu-1}}{x^\nu}\int_0^\infty \e^{-s}\int_{x^2/(4s)}^\infty \e^{-u}u^{\nu-1}\d u\d s,\quad\forall\;\nu\in \R.\label{Knu:int}
\end{align}
See \cite[(5.10.23) on p.119]{Lebedev} for \eqref{K:1}, and \cite[(14.131) on p.691]{AWH:MP} for \eqref{K:3}. To get \eqref{Knu:int}, we rewrite \eqref{def:K} as
\begin{align*}
K_{\nu}(x)&=\frac{x^\nu}{2^{\nu+1}}\int_0^\infty \e^{-s}\int_0^s \e^{-x^2/(4t)}t^{-\nu-1}\d t\d s,
\end{align*}
and then the substitution $u=x^2/(4t)$ for the right-hand side leads to \eqref{Knu:int}.

\begin{lem}\label{lem:lim0K0}
For all $\alpha\in[0,\infty)$, $ \lim_{x\searrow 0}xK_{1-\alpha}(x)/K_\alpha(x)=0$. 
Accordingly,  the value of $0K_{1-\alpha}(0)/K_\alpha(0)$ is understood as $0$. 
\end{lem}
\begin{proof}
 The limit holds since by \eqref{asymp:K0},
\begin{align*}
\lim_{x\searrow 0}x\frac{K_{1-\alpha}(x)}{K_\alpha(x)}&=\lim_{x\searrow 0}\left.\left(x\frac{2^{\lvert 1-\alpha\rvert-1}\Gamma(\lvert 1-\alpha\rvert)}{x^{\lvert 1-\alpha\rvert}}\right)\right/\left(\frac{2^{\alpha-1}\Gamma(\alpha)}{x^{\alpha}}\right)=0,\;\;\; \alpha\notin \{0,1\},\\
\lim_{x\searrow 0}x\frac{K_{1}(x)}{K_0(x)}&=\lim_{x\searrow 0}\left.\left(x\frac{2^{1-1}\Gamma(1)}{x^{1}}\right)\right/-\log x=0,\\
\lim_{x\searrow 0}x\frac{K_{0}(x)}{K_1(x)}&=\lim_{x\searrow 0}-x\log x\left/\left(\frac{2^{1-1}\Gamma(1)}{x^{1}}\right)\right.=0.
\end{align*}
In more detail, the limit in the first line hold since $x^{1-|1-\alpha|+\alpha}\to 0$ as $x\to 0$.
\end{proof}

We now begin the proof of the required monotonicity of $R_\alpha(x)$ in $\alpha$ and $x$ by the following proposition. Proposition~\ref{prop:BESmon} (3$\cc$) is crucial to the proof and will be used again later on when we deal with the modulus of continuity of $\alpha\mapsto R_\alpha(x)$ at $\alpha=0$.
 See \cite{IM:BES} for related results. 
 
\begin{prop}\label{prop:BESmon}
The following properties of $R_\alpha(x)$ defined by \eqref{def:Ralpha} hold:
\begin{enumerate}[label={\rm ({\arabic*}$\cc$)}]
\item For all $\alpha\in [0,\infty)$ and $x\in (0,\infty)$, 
\begin{align}\label{Ralpha:alt}
R_\alpha(x)=x\frac{K_{\alpha-1}(x)+K_{\alpha+1}(x)}{2K_\alpha(x)}.
\end{align} 
\item For all $\alpha\in [0,\infty)$ and $x,\vep\in (0,\infty)$,
\begin{align}
\begin{split}
R_{\alpha+\vep}(x)-R_{\alpha}(x) =2(2\alpha+\vep)\int_0^\infty \frac{K_{2\alpha+\vep}[2x\cosh (t)]}{K_{\alpha+\vep}(x)K_\alpha(x)}
\sinh(\vep t)\tanh(t)\d t.\label{diff:Ralpha}
\end{split}
\end{align}

\item For all $x\in [0,\infty)$, $\alpha\mapsto R_\alpha(x)$ is strictly increasing to infinity on $[0,\infty)$.\smallskip 
\item For all $\alpha\in [0,\infty)$, $x\mapsto R_\alpha(x)$ is strictly increasing  to infinity on $[0,\infty)$.  
\end{enumerate}
\end{prop}
\begin{proof}
\noindent {\rm (1$\cc$)}\;
Take the arithmetic averages of both sides of \eqref{K:rec1} and \eqref{K:rec2} with $\nu=\alpha$, and use \eqref{K:sym} to write $K_{\alpha-1}(x)$ as $K_{1-\alpha}(x)$. It follows that
\[
K_{1-\alpha}(x)=-K'_\alpha(x)-\frac{\alpha}{x}K_\alpha(x).
\]
Hence, by the definition \eqref{def:Ralpha} of $R_\alpha(x)$ for $x\in (0,\infty)$,
\begin{align*}
R_\alpha(x)=\frac{\alpha K_\alpha(x)-xK_\alpha'(x)-\alpha K_\alpha(x)}{K_\alpha(x)}
=\frac{x\cdot\frac{1}{2}[K_{\alpha-1}(x)+K_{\alpha+1}(x)]}{K_\alpha(x)},
\end{align*}
where the last equality holds by \eqref{K:rec1} with $\nu=\alpha$. We have proved \eqref{Ralpha:alt}. \medskip

\noindent {\rm (2$\cc$)}\; To obtain \eqref{diff:Ralpha}, we use a Nicholson-type formula given by
\begin{align}\label{Nik}
K_\mu(x)K_\nu(x)=2\int_0^\infty K_{\mu+\nu}[2x\cosh(t)]\cosh[(\mu-\nu)t]\d t,\quad \forall\;\mu,\nu\in \R,
\end{align}
which follows by applying \cite[Problem~9 on p.140]{Lebedev} to $K_\mu(x)K_{-\nu}(x)$, and then using \eqref{K:sym} and
a substitution of $t$ by $t/2$. By using \eqref{Ralpha:alt} and then \eqref{Nik} in the next two equalities, 
\begin{align}
&\quad \frac{K_{\alpha+\vep}(x)K_{\alpha}(x)}{x}[R_{\alpha+\vep}(x)-R_\alpha(x)]\notag\\
&=\frac{1}{2}\big[K_{\alpha}(x)K_{\alpha+\vep-1}(x)+K_{\alpha}(x)K_{\alpha+\vep+1}(x)-K_{\alpha+\vep}(x)K_{\alpha-1}(x)-K_{\alpha+\vep}(x)K_{\alpha+1}(x)\big]\notag\\
&=\int_0^\infty \big\{K_{2\alpha+\vep-1}[2x\cosh (t)]\cosh[(1-\vep)t]+ K_{2\alpha+\vep+1}[2x\cosh (t)]\cosh[(1+\vep)t]\notag\\
&\quad - K_{2\alpha+\vep-1}[2x\cosh (t)]\cosh[(1+\vep)t] - K_{2\alpha+\vep+1}[2x\cosh (t)]\cosh[(1-\vep)t]\big\}\d t\notag\\
&=\int_0^\infty \{K_{2\alpha+\vep-1}[2x\cosh(t)]-K_{2\alpha+\vep+1}[2x\cosh (t)]\}\{\cosh[(1-\vep)t]-\cosh[(1+\vep)t]\}\d t\notag\\
\begin{split}
&=\frac{2(2\alpha+\vep)}{x}\int_0^\infty K_{2\alpha+\vep}[2x\cosh (t)]\sinh(\vep t)\tanh(t)\d t.
\label{coeff:ineq1}
\end{split}
\end{align}
The last equality uses \eqref{K:rec2} with $\nu=2\alpha+\vep$ and the identity $\cosh(a+b)-\cosh(a-b)=2\sinh(a)\sinh(b)$ for $a=t$ and $b=-\vep t$. Multiplying both sides of \eqref{coeff:ineq1} by $x/[K_{\alpha+\vep}(x)K_{\alpha}(x)]$ proves \eqref{diff:Ralpha}. \smallskip 

\noindent {\rm (3$\cc$)}\; By \eqref{diff:Ralpha}, $R_{\alpha+\vep}(x)>R_\alpha(x)$ for $\vep,x>0$. 
Hence, $\alpha\mapsto R_\alpha(x)$ is strictly increasing on $\in [0,\infty)$ for any $x\in (0,\infty)$. This extends to $x=0$ by $R_\alpha(0)=\alpha$. The property $R_\alpha(x)\to\infty$ as $\alpha\to\infty$ is obvious. \smallskip

\noindent {\rm (4$\cc$)}\; The proof of the monotonicity is a similar application of \eqref{Nik}. First, write
\begin{align*}
&\quad K_\alpha(x)^2\frac{\d}{\d x}\left(x\frac{K_{1-\alpha}(x)}{K_\alpha(x)}\right)
=K_{1-\alpha}(x)K_\alpha(x)+xK_\alpha(x)^2\frac{\d}{\d x}\left(\frac{K_{1-\alpha}(x)}{K_\alpha(x)}\right)\\
&=K_{1-\alpha}(x)K_\alpha(x)+x[K_\alpha(x)K'_{1-\alpha}(x)-K_{1-\alpha}(x)K'_\alpha(x)]\\
&=K_{1-\alpha}(x)K_\alpha(x)-\frac{x}{2}\{K_\alpha(x)[K_{-\alpha}(x)+K_{2-\alpha}(x)]-K_{1-\alpha}(x)[K_{\alpha-1}(x)+K_{\alpha+1}(x)]\},
\end{align*}
where the last equality uses \eqref{K:rec1}. By \eqref{Nik}, the last equality gives
\begin{align*}
&\quad K_\alpha(x)^2\frac{\d}{\d x}\left(x\frac{K_{1-\alpha}(x)}{K_\alpha(x)}\right)\\
&=2\int_0^\infty K_{1}[2x \cosh(t)]\cosh[(1-2\alpha)t]\d t\\
&\quad -x\int_0^\infty \big\{K_{0}[2x\cosh (t)]\cosh[(-2\alpha)t]+ K_2[2x\cosh(t)]\cosh(2-2\alpha)t]\\
&\quad - K_0[2x\cosh (t)]\cosh[(2-2\alpha)t] - K_2[2x\cosh(t)]\cosh[(-2\alpha)t]\big\}\d t\\
&=2\int_0^\infty K_{1}[2x \cosh(t)]\cosh[(1-2\alpha)t]\d t\\
&\quad -x\int_0^\infty \{[K_{0}[2x\cosh (t)]-K_2[2x\cosh(t)]\}\{\cosh[(-2\alpha)t]-\cosh(2-2\alpha)t]\}\d t\\
&=2\int_0^\infty \{K_{1}[2x \cosh(t)]\cosh[(1-2\alpha)t]- K_{1}[2x\cosh(t)]\sinh[(1-2\alpha)t]\tanh(t)\}\d t,
\end{align*}
where the last equality is obtained by algebra similar to the algebra for getting \eqref{coeff:ineq1}. Note that the integrand of the last integral for any $t>0$ is strictly positive since $K_1(x')>0$ for $x'>0$ and the angle subtraction formula of the hyperbolic cosine gives
\begin{align*}
\cosh[(1-2\alpha)t]-\sinh[(1-2\alpha)t]\tanh(t)
=\frac{\cosh(2\alpha t)}{\cosh (t)}.
\end{align*}
We have proved that 
$\frac{\d}{\d x}(x\frac{K_{1-\alpha}(x)}{K_\alpha(x)})>0$,
and hence, the required strict monotonicity of $x\mapsto R_\alpha(x)$ for all $\alpha\in [0,\infty)$. Finally, $R_\alpha(x)\to\infty$ as $x\to\infty$ by \eqref{asymp:Kinfty}.
\end{proof}

\begin{cor}\label{cor:BESmon}
With the convention that $0K_{1-\alpha}(0)/K_\alpha(0)=0$,
\begin{align}\label{Kx:1}
1-\alpha-x\frac{K_{1-\alpha}(x)}{K_\alpha(x)}>   -x^2+\frac{1}{4},\quad \forall\; \alpha\in [0,1/2),\; x\in[ 0,\infty).
\end{align}
\end{cor}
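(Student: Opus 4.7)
The plan is to reduce \eqref{Kx:1} to a bound on the function $R_\alpha$ from \eqref{def:Ralpha} and then exploit both the strict monotonicity in $\alpha$ proved in Proposition~\ref{prop:BESmon}~(2$\cc$) and the explicit closed form available at the boundary $\alpha = 1/2$. By the definition of $R_\alpha(x)$, the quantity $x K_{1-\alpha}(x)/K_\alpha(x)$ equals $R_\alpha(x) - \alpha$, so \eqref{Kx:1} is equivalent to the assertion
\[
R_\alpha(x) < \tfrac{3}{4} + x^2, \quad \forall\; \alpha\in[0,1/2),\; x\in[0,\infty).
\]

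The first key step will be to identify the explicit form of $R_{1/2}(x)$. Since $K_{1/2}(x) = \sqrt{\pi/(2x)}\,\e^{-x}$ and $1-1/2 = 1/2$, the ratio $K_{1-\alpha}(x)/K_\alpha(x)$ evaluates to $1$ at $\alpha = 1/2$, giving
\[
R_{1/2}(x) = \tfrac{1}{2} + x, \quad x\in [0,\infty).
\]
Next I would invoke Proposition~\ref{prop:BESmon}~(2$\cc$), which yields the strict inequality $R_\alpha(x) < R_{1/2}(x) = \tfrac{1}{2} + x$ for every $\alpha\in[0,1/2)$ and every $x\in[0,\infty)$, including $x=0$ since $R_\alpha(0) = \alpha < 1/2$.

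Finally, one observes the elementary inequality
\[
\tfrac{1}{2} + x \;\leq\; \tfrac{3}{4} + x^2,
\]
which is just the completion of the square $(x - \tfrac{1}{2})^2 \geq 0$. Chaining this with the strict bound above yields
\[
R_\alpha(x) < \tfrac{1}{2} + x \leq \tfrac{3}{4} + x^2
\]
for every $\alpha\in[0,1/2)$ and $x\in[0,\infty)$, as required.

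There is no real obstacle here; the work was done in Proposition~\ref{prop:BESmon}. The only point worth noting is that the bound $R_{1/2}(x) = \tfrac{1}{2}+x$ is essentially sharp (the quadratic majorant touches the line $\tfrac{1}{2}+x$ at $x=1/2$), so the constant $\tfrac{1}{4}$ in \eqref{Kx:1} is natural and one should not hope for a cleaner right-hand side via this route.
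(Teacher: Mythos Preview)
Your proof is correct and follows essentially the same route as the paper: reduce to $R_\alpha(x)<R_{1/2}(x)=\tfrac12+x$ via Proposition~\ref{prop:BESmon}~(2$\cc$) and then use the elementary quadratic bound $(x-\tfrac12)^2\ge 0$. In fact you have the correct value $R_{1/2}(x)=\tfrac12+x$; the paper's printed line ``$R_{1/2}(x)=1/2-x$'' is a typo, though the subsequent inequality $1/2-x\ge -x^2+1/4$ there is the one actually needed once you pass to $1-R_\alpha(x)$.
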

\begin{proof}
By Proposition~\ref{prop:BESmon} (3$\cc$), $R_\alpha(x)<R_{1/2}(x)=1/2+x$ for all $\alpha\in [0,1/2)$ and $x\in [0,\infty)$.  We also have $1/2-x\geq -x^2+1/4$ for all $x\in [0,\infty)$.
\end{proof}

The next proposition bounds the modulus of continuity of $\alpha\mapsto R_\alpha(x)$ at $\alpha=0$.

\begin{prop}\label{prop:Fxunif}
Recall the function $R_\alpha$ defined in \eqref{def:Ralpha}.
For all $\vep_0\in (0,1/2)$,
\begin{align}\label{Fx:unif}
\sup_{x\in [0,\infty)}\lvert R_{\vep}(x)-R_0(x)\rvert\leq C(\vep_0) \vep,\quad \forall\; \vep\in (0,\vep_0].
\end{align}
\end{prop}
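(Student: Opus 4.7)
The plan is to apply the identity (\ref{diff:Ralpha}) from Proposition~\ref{prop:BESmon}~(1$\cc$) with $\alpha=0$, which gives, for $x\in(0,\infty)$,
\[
R_\vep(x)-R_0(x)=2\vep\int_0^\infty\frac{K_\vep[2x\cosh(t)]}{K_\vep(x)K_0(x)}\sinh(\vep t)\tanh(t)\,dt.
\]
The strategy is then to bound the integrand using Nicholson's formula (\ref{Nik}) applied with $\mu=0$ and $\nu=\vep$:
\[
K_0(x)K_\vep(x)=2\int_0^\infty K_\vep[2x\cosh(t)]\cosh(\vep t)\,dt.
\]

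The key elementary inequality I will use is
\[
\sinh(\vep t)\tanh(t)\le\cosh(\vep t),\qquad\forall\,t\ge 0,\;\vep\ge 0,
\]
which follows because $\cosh(\vep t)-\sinh(\vep t)\tanh(t)\ge\cosh(\vep t)-\sinh(\vep t)=e^{-\vep t}\ge 0$ since $\tanh(t)\le 1$. Combining this with the nonnegativity of $K_\vep$ and the above Nicholson identity yields, for any $x\in(0,\infty)$ and $\vep\in(0,\vep_0]$,
\[
0\le R_\vep(x)-R_0(x)\le 2\vep\cdot\frac{1}{K_\vep(x)K_0(x)}\int_0^\infty K_\vep[2x\cosh(t)]\cosh(\vep t)\,dt=\vep.
\]
The lower bound on the left uses Proposition~\ref{prop:BESmon}~(2$\cc$).

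Finally, the endpoint $x=0$ is handled separately using the convention $R_\alpha(0)=\alpha$ noted below (\ref{ratioK:asymp}), which gives $R_\vep(0)-R_0(0)=\vep$, consistent with the bound. Putting these together, I obtain (\ref{Fx:unif}) with constant $C(\vep_0)=1$, which is in fact independent of $\vep_0$. The main ``obstacle'' is really just identifying the right pointwise comparison between $\sinh(\vep t)\tanh(t)$ and $\cosh(\vep t)$ so that Nicholson's formula can be invoked to cancel the denominator $K_\vep(x)K_0(x)$ uniformly in $x$; once that is in place, no case analysis in $x$ (neither the logarithmic singularity of $K_0$ at $0$ nor the exponential decay of the Macdonald functions at infinity) needs to be carried out.
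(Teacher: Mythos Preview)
Your proof is correct and is genuinely more elegant than the paper's argument. Both proofs start from the same identity \eqref{diff:Ralpha} at $\alpha=0$, but then diverge sharply.

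The paper proceeds by a direct estimation of the integral: it introduces pointwise upper and lower bounds on $K_\vep$ from \eqref{K:3} and \eqref{Knu:int}, defines a threshold $Y(x)=\max\{t:2x\cosh t\le M\}\vee 0$, and splits the $t$-integration at $Y(x)$; it then treats the ranges $x\in(0,M]$ and $x\in[M,\infty)$ separately, using the logarithmic asymptotics of $K_0$ near the origin and the exponential decay of $K_\nu$ at infinity. The resulting constant $C(\vep_0)$ genuinely depends on $\vep_0$.

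Your idea --- bounding $\sinh(\vep t)\tanh t\le\cosh(\vep t)$ and then recognising the resulting integral as exactly the Nicholson representation \eqref{Nik} of $\tfrac12 K_0(x)K_\vep(x)$ --- bypasses all of that analysis: the denominator cancels identically, no case split in $x$ is needed, and you obtain the sharp constant $C(\vep_0)=1$ (indeed the endpoint $x=0$ shows the bound $R_\vep-R_0\le\vep$ cannot be improved). What the paper's approach buys is a template of hard estimates on $K_\vep(z)/[K_\vep(x)K_0(x)]$ that could in principle be reused for other kernels where no product identity like Nicholson's is available; what your approach buys is brevity, a better constant, and the insight that the structure of \eqref{diff:Ralpha} is exactly matched by Nicholson's formula itself.
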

\begin{proof}
We consider \eqref{diff:Ralpha} for $\alpha=0$: 
\begin{align}\label{Fx:int}
R_\vep(x)-R_0(x)=2\vep\int_0^\infty \frac{K_{\vep}[2x\cosh (t)]}{K_\vep(x)K_0(x)}
\sinh(\vep t)\tanh(t)\d t.
\end{align}
The goal of this proof is to show that the integral on the right-hand side is uniformly bounded over all $x\in (0,\infty)$ and $\vep\in (0,\vep_0]$. This extends to $x=0$ since $R_\alpha(0)$ is defined by continuity. 
Below, we show some technical bounds (Steps~1 and~2) before handling the integral in \eqref{Fx:int} (Step~3).\smallskip 

\noindent {\sc Step~1.}
We show the following three bounds:
for all fixed $\vep\in [0,1/2]$ and $M\in(0,\infty)$, 
\begin{align}
&K_\vep(x)\less \frac{1}{\sqrt{x}}\e^{-x},\quad\forall\;
0<x<\infty, \label{Kratio:bdd2-1}\\
&K_\vep(x)\geq C(M) \frac{1}{\sqrt{x}}\e^{-x},\quad\forall\;
M\leq x<\infty, \label{Kratio:bdd2}\\
&\frac{K_\vep(z)}{K_\vep(x)}\leq \frac{x^\vep}{z^\vep},\quad \forall\; 0<x\leq z<\infty.\label{Kratio:bdd1}
\end{align}
The bounds \eqref{Kratio:bdd2-1} and \eqref{Kratio:bdd2}
are obtained from \eqref{K:3}, and \eqref{Kratio:bdd1} is from \eqref{Knu:int}.
In more detail, \eqref{Kratio:bdd2-1} holds since $[1+t/(2x)]^{\vep-1/2}\leq 1$ for all $x,t\in (0,\infty)$ and $\vep\in [0,1/2]$; \eqref{Kratio:bdd2} holds since $[1+t/(2x)]^{\vep-1/2}\geq C(M)(1\vee t)^{\vep-1/2}$ for all $x\in [M,\infty)$, $t\in (0,\infty)$ and $\vep\in [0,1/2]$.
{\bf In the remaining of this proof, we fix a universal constant $M>2$ when using (\ref{Kratio:bdd2}).} \smallskip

\noindent {\sc Step~2.}
We claim that
\begin{align}\label{def:Y}
Y(x)\,\defeq \max\{t\in \R_+;2x\cosh(t)\leq M\}\vee 0,
\end{align}
with the convention that $\max\varnothing=-\infty$, satisfies 
\begin{align}
 Y(x)&\leq (\log M-\log x)\vee 0,\quad \forall\;x\in (0,\infty),\label{Y(x):bdd-1}\\
Y(x)&\geq (-\log x)\vee 0,\quad \forall\;x\in (0,M].\label{Y(x):bdd-2}
\end{align}
The use of $Y(x)$ is that we will bound the integral on the right-hand side of \eqref{Fx:int} according to $t\in [0,Y(x)]$ and $t\in [Y(x),\infty)$ separately.

To see \eqref{Y(x):bdd-1} and \eqref{Y(x):bdd-2}, it suffices to consider $x\in (0,M]$. For $M/2\geq x>0$, recall ${\rm arccosh}(z)=\log (z+\sqrt{z^2-1})$ for all $z\geq 1$, and so,
\begin{align}\label{M:asymp}
Y(x)= \log  \big\{[M/(2x)]+\sqrt{[M/(2x)]^2-1}\,\big\}.
\end{align} 
Then by \eqref{M:asymp} and the choice of $M>2$, the inequalities in \eqref{Y(x):bdd-1} and \eqref{Y(x):bdd-2} follow. For $M\geq x>M/2$, the inequalities in \eqref{Y(x):bdd-1} and \eqref{Y(x):bdd-2} hold trivially since $Y(x)=0$ and $-\log x<0$ again by the choice of $M>2$. We have proved both \eqref{Y(x):bdd-1} and \eqref{Y(x):bdd-2}.\\

\noindent {\sc Step~3.} We bound the integral in \eqref{Fx:int} in this step.
First, restrict the domain of integration to $[0, Y(x)]$. The associated integral for $x\in (0,\infty)$ and $\vep\in (0,\vep_0]$ satisfies the first inequality below by \eqref{Kratio:bdd1}:
\begin{align}
&\quad \int_0^{Y(x)} \frac{K_{\vep}[2x\cosh (t)]}{K_\vep(x)K_0(x)}
\sinh(\vep t)\tanh(t)\d t
\less \int_0^{Y(x)}\frac{\sinh(\vep t)\tanh(t)}{[\cosh(t)]^{\vep}K_0(x)} \d t\less 1.\label{MK:asymp1}
\end{align}
To see the last inequality, note that by \eqref{def:Y}, the last integral is nonzero only if $x\in (0,M]$, in which case the integral can be $\less$-bounded by $Y(x)/K_0(x)$ thanks to the following bounds:
\begin{align}\label{bdd:hyper}
\cosh (t)\geq \e^t/2, \quad \sinh (\vep t)\leq \e^{\vep t}/2, \quad |\tanh(t)|\leq 1,\quad \forall\;\vep>0,\;0<t<\infty.
\end{align}
Also, for $x\in [1,M]$, $Y(x)/K_0(x)\less 1$ since 
$1/K_0(x)\leq 1/K_0(M)$ by 
 \eqref{K:1} and $Y(x)\less 1$ by \eqref{Y(x):bdd-1}; for $x\in (0,1]$, $Y(x)/K_0(x)\less 1$ by \eqref{asymp:K0} for $\nu=0$ and \eqref{Y(x):bdd-1}. 

Next, use the domain of integration $[Y(x),\infty)$.
For $x\in (0,M]$, the associated integral is
\begin{align}
&\quad \int_{Y(x)}^\infty  \frac{K_\vep[2x\cosh(t)]}{K_\vep(x)K_0(x)}\sinh(\vep t)\tanh(t)\d t\notag\\
&\less \int_{Y(x)}^\infty \frac{1}{K_\vep(x)K_0(x)} \frac{1}{\sqrt{2x\cosh t}}\e^{-2x\cosh (t)}\sinh(\vep t)\tanh(t)\d t\label{MK:asymp2--1}\\
&\leq \frac{x^\vep}{M^\vep K_\vep(M)K_0(x)}\cdot \frac{1}{x^{1/2}} \int_{Y(x)}^\infty  \frac{\sinh(\vep t)}{\sqrt{\cosh t}}\tanh(t) \d t\label{MK:asymp2-0}\\
&\leq \frac{x^\vep}{M^\vep K_\vep(M)K_0(x)}\cdot \frac{1}{ x^{1/2}} \int_{Y(x)}^\infty \e^{-(1/2-\vep)t}\d t\label{MK:asymp2-1}\\
&\leq \frac{ x^\vep}{M^\vep K_0(M)K_0(M)}
\cdot \frac{1}{x^{1/2}}\cdot \frac{1}{1/2-\vep_0}\e^{-(1/2-\vep)[(-\log x)\vee 0]},\quad\forall\; \vep\in (0,\vep_0].\label{MK:asymp2-000}
\end{align}
Here, \eqref{MK:asymp2--1} applies \eqref{Kratio:bdd2-1};  
\eqref{MK:asymp2-0} applies \eqref{Kratio:bdd1}; \eqref{MK:asymp2-1} applies \eqref{bdd:hyper}; \eqref{MK:asymp2-000} applies the property that by \eqref{K:1}, $\vep\mapsto K_\vep(x)$ is increasing and $x\mapsto K_0(x)$ is decreasing, in addition to the bound \eqref{Y(x):bdd-2}. 
The right-hand side of \eqref{MK:asymp2-000} simplifies to $C(\vep_0)/[M^\vep K_0(M)K_0(M)]$ if we further require $x\leq 1$. Hence, the right-hand side of \eqref{MK:asymp2-000} is uniformly bounded over $\vep\in (0,\vep_0]$ and $x\in (0,M]$.
For $x\in [M,\infty)$, we have
\begin{align}
&\quad \int_{Y(x)}^\infty  \frac{K_\vep[2x\cosh(t)]}{K_\vep(x)K_0(x)}\sinh(\vep t)\tanh(t)\d t\notag\\
&\less C(M) \e^{2x}\sqrt{x}\int_{Y(x)}^\infty  \sqrt{\frac{x}{2x\cosh t}}\e^{-2x\cosh (t)}\sinh(\vep t)\tanh(t)\d t\label{MK:asymp2}\\
&\leq C(M)\e^{2x}\sqrt{x}\int_{0}^\infty \e^{-2x\cosh(t)}\e^{-(1/2-\vep)t}\d t\label{MK:asymp3-0}\\
&\less C(M)\e^{2x}\sqrt{x}K_{1/2}(2x)\label{MK:asymp3}.
\end{align}
Here, \eqref{MK:asymp2} uses \eqref{Kratio:bdd2-1} and \eqref{Kratio:bdd2};  
\eqref{MK:asymp3-0} uses \eqref{bdd:hyper}; \eqref{MK:asymp3} uses the bound $\e^{-(1/2-\vep)t}\leq 2\cosh (t/2)$, for
$\vep\leq \vep_0<1/2$ and $0<t<\infty$, and \eqref{K:1} for $\nu=1/2$. Note that the right-hand side of \eqref{MK:asymp3} is independent of $\vep\in (0,\vep_0]$ and remains bounded as $x\to\infty$ by \eqref{asymp:Kinfty}. 

In summary, for $\vep\in (0,\vep_0]$, \eqref{MK:asymp1} holds for $x\in (0,\infty)$, and the right-hand sides of \eqref{MK:asymp2-000} and \eqref{MK:asymp3} for $x\in (0,M]$ and $x\in [M,\infty)$, respectively,
are bounded by $C(\vep_0)$. So the integral in \eqref{Fx:int} for $\vep\in (0,\vep_0]$ and $x\in (0,\infty)$ is bounded by $C(\vep_0)$. This suffices for \eqref{Fx:unif}. The proof of Proposition~\ref{prop:Fxunif} is complete.
\end{proof}

\section{The lower-dimensional approximations}\label{sec:DBG}
Our goal in this section is to give the second proof of Theorem~\ref{thm:1}. To this end, the main step is to prove Theorem~\ref{thm:3} stated below. 
It formalizes \eqref{conv:BESab1} and gives extensions to $z_0=0$ and general $f$. The proof is based on Theorem~\ref{thm:2}.
After the statement of Theorem~\ref{thm:3}, Theorem~\ref{thm:1} will be obtained in a more general form as Theorem~\ref{thm:4}. 

To state Theorem~\ref{thm:3}, first, recall that with respect to $\P^{(-\alpha)}_{z_0}$ for $\alpha\in (0,1/2)$, we set $Z_0=z_0$ (Notation~\ref{nota:Z}), the local time $\{L_t\}$ at the point $0$ is chosen to satisfy the normalization in \eqref{def:Lt}, and the $q$-resolvents are denoted by $U_{q}^{\al} f(z)$ in \eqref{def:resolvent}. Now we introduce the following notation which is understood pointwise in $z$:
\begin{align}\label{def:U0}
U_q^{(0)} f(z)\;\defeq\, \lim_{\alpha\searrow 0}U_q^{(-\alpha)} f(z)\quad\mbox{whenever the limit exists in $[-\infty,\infty]$.}
\end{align}
It will be shown that this limit in \eqref{def:U0} exists in $\R$ for (real-valued) $f\in \C_b(\Bbb C)$ and recovers the $q$-resolvent of two-dimensional Brownian motion as the notation suggests. See \eqref{rep:U0}.

\begin{thm}\label{thm:3}
Let $\beta\in (0,\infty)$ and $z_0\in \Bbb C\setminus\{0\}$. The following holds with respect to $\Lambda_\alpha=\Lambda_\alpha(\beta)$ and $\{L_t\}$ chosen in \eqref{choice} and \eqref{def:Lt}.
\begin{enumerate}[label={\rm ({\arabic*}$\cc$)}]
\item For all $q\in (\beta,\infty)$ and $f\in \B_+(\Bbb C)$ such that $U_{q}^{(0)}f(z_0)$ and $U_{q}^{(0)}f(0)$ exist in $\R_+$,
\begin{align}
\begin{split}\label{main:1-1-1}
& \lim_{\alpha\searrow 0}\int_0^\infty \e^{-q t}\E^{\al}_{z_0}[\e^{\Lambda_\alpha L_{t}}f(Z_{t})]\d t\\
&\hspace{1cm} =U_{q}^{(0)}f(z_0)+\left(\int_0^\infty \e^{-q t}P_{t}(z_0)\d t\right) \left(\frac{2\pi }{\log (q/\beta)}\right)U_{q}^{(0)} f(0).
\end{split}
\end{align}
Also, for all $q\in (\beta,\infty)$ and $f\in \B_+(\Bbb C)$ such that $U_{q}^{(0)}f(0)$ exists in $\R_+$,  
\begin{align}
\begin{split}\label{main:1-1-2}
&\lim_{\alpha\searrow 0}\int_0^\infty \e^{-q t}\Lambda_\alpha\E^{\al}_0[\e^{\Lambda_\alpha L_{t}}f(Z_{t})]\d t =\frac{2\pi }{ \log (q/\beta)}U^{(0)}_q f(0).
\end{split}
\end{align}
\mbox{}

\item For all $f\in \C_b(\Bbb C)$,
\begin{align}
\begin{split}
\label{main:1-2-1}
&\lim_{\alpha\searrow 0}\E^{\al}_{z_0}[\e^{\Lambda_\alpha L_{t}}f(Z_{t})]\\
&\hspace{1cm} =P_tf(z_0) +\int_0^t P_s(z_0)\int_0^{t-s}\left(2\pi \int_0^\infty\frac{\beta^u \tau^{u-1}}{\Gamma(u)} \d u\right)  P_{t-s-\tau}f(0)\d \tau\d s,
\end{split}\\
\begin{split}
&\lim_{\alpha\searrow 0}\Lambda_\alpha\E^{\al}_0[\e^{\Lambda_\alpha L_{t}}f(Z_{t})]=
\int_0^{t}\left(2\pi \int_0^\infty \frac{\beta^u \tau^{u-1}}{\Gamma(u)}\d u\right)  P_{t-\tau}f(0)\d \tau.\label{main:1-2-2}
\end{split}
\end{align}
\end{enumerate}
\end{thm}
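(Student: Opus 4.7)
The engine is Proposition~\ref{prop:LTexplicit}, which writes $\int_0^\infty \e^{-\lambda t}\E^{(-\alpha)}_{z_0}[\e^{\Lambda_\alpha L_t}f(Z_t)]\d t$ explicitly as $U^{(-\alpha)}_\lambda f(z_0)$ plus a product involving $\widehat K_\alpha(\sqrt{2\lambda}\lvert z_0\rvert)$, the coefficient $\tfrac{2^{1-\alpha}\beta^\alpha}{\Gamma(\alpha)(\lambda^\alpha-\beta^\alpha)}$, and $U^{(-\alpha)}_\lambda f(0)$. The strategy is to pass $\alpha\searrow 0$ on the Laplace-transform side first, yielding part~(1$\cc$), then invert the resulting Laplace transforms to obtain the pointwise-in-$t$ statements of part~(2$\cc$).

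For part~(1$\cc$), three asymptotics as $\alpha\searrow 0$ suffice: (i) $\widehat K_\alpha(\sqrt{2\lambda}\lvert z_0\rvert)\to K_0(\sqrt{2\lambda}\lvert z_0\rvert)=\pi\int_0^\infty \e^{-\lambda t}P_t(z_0)\d t$ by continuity of $K_\nu$ in $\nu$ combined with \eqref{LT:norm0}; (ii) $\tfrac{2^{1-\alpha}\beta^\alpha}{\Gamma(\alpha)(\lambda^\alpha-\beta^\alpha)}\to\tfrac{2}{\log(\lambda/\beta)}$, using $\alpha\Gamma(\alpha)\to 1$ and $\lambda^\alpha-\beta^\alpha\sim\alpha\log(\lambda/\beta)$; (iii) $U^{(-\alpha)}_\lambda f\to U^{(0)}_\lambda f$ by hypothesis. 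Substituting these into \eqref{resolvent:al} gives \eqref{main:1-1-1}. For \eqref{main:1-1-2}, note $\widehat K_\alpha(0)=2^{\alpha-1}\Gamma(\alpha)$ from \eqref{def:hK0}, so at $z_0=0$ Proposition~\ref{prop:LTexplicit} collapses to $\tfrac{\lambda^\alpha}{\lambda^\alpha-\beta^\alpha}U^{(-\alpha)}_\lambda f(0)$; multiplying by $\Lambda_\alpha=\tfrac{2^{1-\alpha}\pi\beta^\alpha}{\Gamma(\alpha)}$ and invoking (ii)--(iii) yields the claimed limit $\tfrac{2\pi}{\log(\lambda/\beta)}U^{(0)}_\lambda f(0)$.

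For part~(2$\cc$), the right-hand sides of \eqref{main:1-2-1}--\eqref{main:1-2-2} are precisely the inverse Laplace transforms of the right-hand sides of \eqref{main:1-1-1}--\eqref{main:1-1-2}. Via the Frullani-type identity $\tfrac{1}{\log(\lambda/\beta)}=\int_0^\infty(\beta/\lambda)^u\d u$ together with $\lambda^{-u}=\int_0^\infty \e^{-\lambda t}\tfrac{t^{u-1}}{\Gamma(u)}\d t$, the factor $\tfrac{2\pi}{\log(\lambda/\beta)}$ is the Laplace transform of $2\pi\int_0^\infty\tfrac{\beta^u\tau^{u-1}}{\Gamma(u)}\d u$; convolving with the inverses $P_s(z_0)$ and $P_tf(0)$ of the two remaining Laplace factors reproduces the triple convolution on the right of \eqref{main:1-2-1}, and analogously for \eqref{main:1-2-2}. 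Part~(2$\cc$) therefore reduces to upgrading the Laplace-transform convergence established in part~(1$\cc$) to pointwise-in-$t$ convergence of $\E^{(-\alpha)}_{z_0}[\e^{\Lambda_\alpha L_t}f(Z_t)]$.

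This upgrade is the main obstacle, and I would carry it out by inverting \eqref{resolvent:al} in the time variable directly. The geometric series $\tfrac{\beta^\alpha}{\lambda^\alpha-\beta^\alpha}=\sum_{n\geq 1}\beta^{n\alpha}\lambda^{-n\alpha}$ (valid for $\lambda>\beta$) inverts to the kernel $\mathfrak s^{\beta,\alpha}(\tau)=\tfrac{2^{1-\alpha}}{\Gamma(\alpha)}\sum_{n\geq 1}\tfrac{\beta^{n\alpha}\tau^{n\alpha-1}}{\Gamma(n\alpha)}$; a direct calculation from \eqref{def:K} rewrites $\widehat K_\alpha(\sqrt{2\lambda}\lvert z_0\rvert)=\pi\lvert z_0\rvert^{2\alpha}\int_0^\infty \e^{-\lambda s}s^{-\alpha}P_s(z_0)\d s$. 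Together these produce an explicit time-domain representation of $\E^{(-\alpha)}_{z_0}[\e^{\Lambda_\alpha L_t}f(Z_t)]$ as $\E^{(-\alpha)}_{z_0}[f(Z_t)]$ plus a triple convolution. The delicate steps in letting $\alpha\searrow 0$ in this representation are then: (a) reading $\mathfrak s^{\beta,\alpha}$ as a Riemann sum in the parameter $u=n\alpha$ with mesh $\alpha$, and verifying the locally $L^1$ convergence $\mathfrak s^{\beta,\alpha}(\tau)\to 2\int_0^\infty\tfrac{\beta^u\tau^{u-1}}{\Gamma(u)}\d u$ (which, combined with the $\pi$ from $\widehat K_\alpha$, reproduces the $2\pi$ in \eqref{main:1-2-1}); and (b) uniform-in-$\alpha$ integrable domination of the triple convolution near the singularity $\tau=0$, controllable by a monotonicity or dominated-convergence argument in which the envelope is supplied by the already-established Laplace-side bound. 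The standard convergence $\BES(-\alpha)\to\BES(0)$, extended to $\{Z_t\}$ through the skew-product representation \eqref{Z:skewproduct}, then gives $\E^{(-\alpha)}_{z_0}[f(Z_t)]\to P_tf(z_0)$ for $f\in\C_b(\Bbb C)$, concluding the argument.
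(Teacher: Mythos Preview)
Your treatment of part~(1$\cc$) is correct and matches the paper's line: Proposition~\ref{prop:LTexplicit} is itself assembled from \eqref{Lap:asymp}, \eqref{LT:norm} and \eqref{F0:exp1}, so invoking it and passing $\alpha\searrow 0$ using the three asymptotics (i)--(iii) is exactly what the paper does in Steps~1--4 of its proof of (1$\cc$).

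For part~(2$\cc$) the paper takes a different route, and your sketch has a real gap in step~(b). The paper does \emph{not} invert \eqref{resolvent:al} term by term and then pass $\alpha\searrow 0$ in an explicit time-domain formula. Instead it argues by compactness: it shows that the families $\phi^{z_0}_\alpha(t)=\E^{(-\alpha)}_{z_0}[\e^{\Lambda_\alpha L_t}f(Z_t)]$ and $\psi_\alpha(t)=\Lambda_\alpha\E^{(-\alpha)}_0[\e^{\Lambda_\alpha L_t}f(Z_t)]$ are relatively compact in $C(\R_+,\R_+)$ and $C((0,\infty),\R_+)$, then identifies every subsequential limit via the Laplace transforms already computed in (1$\cc$). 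The equicontinuity is obtained from a concrete representation \eqref{der:functional} that writes $\phi^{z_0}_\alpha$ as $\E^{(-\alpha)}_{z_0}[f(Z_t)]$ plus a convolution of $\psi_\alpha$ against $s^{\alpha-1}\e^{-\lvert z_0\rvert^2/(2s)}$; for $z_0\neq 0$ the exponential kills the singularity at $s=0$, giving a uniform derivative bound, while $\psi_\alpha$ is handled by a direct H\"older-type estimate on $\alpha\int_0^t(\cdot)(t-s)^{\alpha-1}\d s$. The pointwise bound feeding these estimates, $\sup_\alpha\sup_{s\leq T}\psi_\alpha(s)<\infty$, comes from bounding $\psi_\alpha\leq\|f\|_\infty F_\alpha$ with $F_\alpha(t)=\Lambda_\alpha\E_0[\e^{\Lambda_\alpha L_t}]$ monotone in $t$, so that the Laplace-side bound \eqref{F0:exp1} upgrades to a pointwise bound via \eqref{Falpha:bdd}.

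Your plan hinges on ``an envelope supplied by the already-established Laplace-side bound,'' but a bound on $\int_0^\infty \e^{-\lambda t}g_\alpha(t)\,\d t$ gives no pointwise control on $g_\alpha(t)$ without extra structure. You mention monotonicity, which is exactly the missing ingredient, but you do not isolate the reduction $\psi_\alpha\leq\|f\|_\infty F_\alpha$ to a monotone quantity. Moreover, even granting a uniform pointwise bound on $\psi_\alpha$, your direct-inversion route still needs the kernel $\mathfrak s^{\beta,\alpha}(\tau)$ to converge to $\mathfrak s^\beta(\tau)/(2\pi)$ in a mode strong enough to pass the limit through the triple convolution; near $\tau=0$ the leading term of $\mathfrak s^{\beta,\alpha}$ behaves like $\alpha\,\tau^{\alpha-1}$, and turning the Riemann-sum heuristic into a dominated-convergence argument uniform over $\tau$ in a neighbourhood of $0$ is genuinely delicate. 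The paper sidesteps all of this by proving equicontinuity directly and letting Laplace uniqueness do the identification.
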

\mbox{}

To use this theorem, note that the right-hand sides of \eqref{main:1-2-1} and \eqref{main:1-2-2} recover the right-hand sides of \eqref{def:Pbeta2} and \eqref{def:ringP} if we replace $t$ in  \eqref{def:Pbeta2} and \eqref{def:ringP} by $t/2$. Also, in \eqref{main:1-1-2} and \eqref{main:1-2-2}, $Z_0=0$ and $\Lambda_\alpha$  enters multiplicatively in the limiting scheme. This is different from \eqref{main:1-1-1} and \eqref{main:1-2-1}. We will discuss the method of proof of Theorem~\ref{thm:3} after proving Theorem~\ref{thm:4}.

\begin{thm}\label{thm:4}
Fix $\beta\in (0,\infty)$, $z_0\in \Bbb C$ and $t\in (0,\infty)$. Recall $\widehat{K}_\alpha$ defined in \eqref{def:hK}.

\begin{enumerate}[label={\rm ({\arabic*}$\cc$)}]
\item For $\Phi\in \{F(\rho_s;s\leq t);F\in \C_b(C([0,t],\R_+))\}\cup \{f(Z_t);f\in \C_b(\Bbb C)\}$, 
\begin{align}\label{id:goal}
\begin{split}
\E^{(0),\beta\da}_{z_0}\Bigg[\frac{\e^{\beta t}\Phi}{K_0(\sqrt{2\beta}\lvert Z_t \rvert)}\Bigg]
&=\lim_{\alpha\searrow 0+}\E^{\albe}_{z_0}\Bigg[\frac{\e^{\beta t} \Phi}{\widehat{K}_\alpha (\sqrt{2\beta}\lvert Z_t \rvert)}\Bigg]\\
&=
\lim_{\alpha\searrow 0}\frac{\E^{\al}_{z_0}[\e^{\Lambda_\alpha L_t}\Phi]}{\widehat{K}_\alpha(\sqrt{2\beta}\lvert z_0 \rvert)},
\end{split}
\end{align}
\mbox{}

\item For $f\in \C_b(\Bbb C)$, \eqref{id:2-1} is implied by 
\eqref{main:1-2-1} and \eqref{id:goal} with $\Phi=f(\two Z_t)$, and 
 \eqref{id:2-2} is implied by \eqref{main:1-2-2} and \eqref{id:goal} with $\Phi=f(\two Z_t)$.
\end{enumerate}
\end{thm}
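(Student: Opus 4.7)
My plan is to deduce the two equalities in \eqref{id:goal} separately — the second from a change of measure, the first from weak convergence plus a uniform-integrability estimate — and then to extract Part~(2$\cc$) by specializing \eqref{id:goal} and invoking Theorem~\ref{thm:3}.

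The second equality is immediate from \eqref{def:BESab}: the Radon--Nikod\'ym density $\widehat K_\alpha(\sqrt{2\beta}\rho_t)\widehat K_\alpha(\sqrt{2\beta}\lvert z_0\rvert)^{-1}\e^{\Lambda_\alpha L_t - \beta t}$ is $\sigma(\rho_s;s\leq t)$-measurable, and because $\{Z_t\}$ arises from $\{\rho_t\}$ via an \emph{independent} circular Brownian motion in \eqref{Z:skewproduct}, the same density represents the change of measure on $\sigma(Z_s;s\leq t)$; inserting it and cancelling the $\widehat K_\alpha(\sqrt{2\beta}\lvert Z_t\rvert)$ factor yields the identity for both $\Phi = F(\rho_s;s\leq t)$ and $\Phi = f(Z_t)$. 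For the first equality, I would combine Theorem~\ref{thm:2}~(4$\cc$) with \eqref{Z:skewproduct}, driven by one circular Brownian motion common to all $\alpha$, to lift the weak convergence of $\{(\rho_s,\int_0^s\d u/\rho_u^2);s\leq t\}$ to weak convergence of $\{Z_s;s\leq t\}$ under $\P^{\albe}_{z_0}$ to the corresponding process under $\P^{(0),\beta\da}_{z_0}$. Since $\widehat K_\alpha$ is strictly decreasing by \eqref{hK:der}, converges pointwise to $K_0$ on $(0,\infty)$, and satisfies $1/\widehat K_\alpha(0) = 2^{1-\alpha}/\Gamma(\alpha)\to 0 = 1/K_0(0)$, on the truncated set $\{\lvert Z_t\rvert\leq M\}$ the integrand $1/\widehat K_\alpha(\sqrt{2\beta}\lvert Z_t\rvert)$ is uniformly dominated by $1/\widehat K_\alpha(\sqrt{2\beta}M)$ and Skorohod representation plus bounded convergence handle the truncated expectation. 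For the tail, the second equality together with \eqref{main:1-2-1} applied to a cutoff $\phi_M\in\C_b(\Bbb C)$ with $\phi_M(0)=0$ and $\phi_M\geq\1_{\{\lvert z\rvert>M\}}$ bounds the contribution by $\|\Phi\|_\infty\E^{\al}_{z_0}[\e^{\Lambda_\alpha L_t}\phi_M(Z_t)]/\widehat K_\alpha(\sqrt{2\beta}\lvert z_0\rvert)$, which for each fixed $M$ converges by \eqref{main:1-2-1} to $\|\Phi\|_\infty P_t\phi_M(z_0)/K_0(\sqrt{2\beta}\lvert z_0\rvert)$ (the second term in \eqref{main:1-2-1} vanishes because $\phi_M(0)=0$) and this in turn vanishes as $M\to\infty$. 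The hardest step is the bookkeeping of this convergence on $\{\lvert Z_t\rvert=0\}$, which carries positive mass under $\P^{(0),\beta\da}_{z_0}$ and where the compensation $1/\widehat K_\alpha(0)\to 0$ must be implemented inside the Portmanteau step.

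Part~(2$\cc$) then follows cleanly. Set $g(w)\defeq f(\sqrt 2 w)$. For \eqref{id:2-1} with $z_0\ne 0$, apply \eqref{id:goal} at initial point $z_0/\sqrt 2$: since $\widehat K_\alpha(\sqrt{\beta}\lvert z_0\rvert)\to K_0(\sqrt{\beta}\lvert z_0\rvert)$ and the Brownian scaling yields $P_s g(z)=P_{2s}f(\sqrt 2 z)$ and $P_s(z_0/\sqrt 2) = 2P_{2s}(z_0)$, formula \eqref{main:1-2-1} converts the right-hand side of \eqref{id:goal} into $P^\beta_t f(z_0)/K_0(\sqrt{\beta}\lvert z_0\rvert)$ by \eqref{def:Pbeta2}--\eqref{def:s}; rearranging produces \eqref{id:2-1}. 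For \eqref{id:2-2} at $z_0=0$, note $\widehat K_\alpha(0) = \pi\beta^\alpha/\Lambda_\alpha$ by \eqref{choice}, so the right-hand side of \eqref{id:goal} becomes $(\pi\beta^\alpha)^{-1}\Lambda_\alpha\E^{\al}_0[\e^{\Lambda_\alpha L_t}g(Z_t)]$; the limit via \eqref{main:1-2-2} together with the same Brownian scaling identifies it with $\mathring P^\beta_t f/(2\pi)$, and multiplying by $2\pi$ recovers \eqref{id:2-2}.
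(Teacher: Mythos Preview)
Your treatment of the second equality in \eqref{id:goal} and of Part~(2$\cc$) is the same as the paper's; the paper also simply invokes \eqref{def:BESab} for the change of measure and then combines \eqref{main:1-2-1}, \eqref{main:1-2-2} with \eqref{id:goal} via the scaling identities in \eqref{two}.

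For the first equality your route diverges from the paper's, and it contains one genuine slip. You propose to control the tail $\{\lvert Z_t\rvert>M\}$ by pushing it through the second equality and then through \eqref{main:1-2-1} with a cutoff $\phi_M$; but the parenthetical claim that ``the second term in \eqref{main:1-2-1} vanishes because $\phi_M(0)=0$'' is false: that term carries $P_{t-s-\tau}\phi_M(0)$, which is the \emph{heat semigroup} acting on $\phi_M$ evaluated at~$0$, not $\phi_M(0)$ itself, and it is strictly positive for every $M<\infty$. Your tail bound therefore does not collapse to the single term $\|\Phi\|_\infty P_t\phi_M(z_0)/K_0(\sqrt{2\beta}\lvert z_0\rvert)$. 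The argument is salvageable --- both terms on the right of \eqref{main:1-2-1} tend to zero as $M\to\infty$ by dominated convergence, and for $z_0=0$ the same works with \eqref{main:1-2-2} in place of \eqref{main:1-2-1} --- but as written the reasoning is incorrect.

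The paper handles the first equality quite differently and more self-containedly: rather than routing the tail through Theorem~\ref{thm:3}, it establishes uniform integrability directly by proving the $L^\eta$-bound
\[
\limsup_{\alpha\searrow 0}\E^{\albe}_{x_0}\bigl[\widehat{K}_\alpha(\sqrt{2\beta}\rho_t)^{-\eta}\bigr]<\infty,\qquad \eta>1,
\]
which follows from the elementary estimate $\widehat{K}_\alpha(\sqrt{2\beta}y)^{-\eta}\less \e^{C(\beta)\eta y}$ (via \eqref{asymp:Kinfty}) together with pathwise domination of $\{\rho_t^2\}$ under $\P^{\albe}$ by $\BES Q(0)$. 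It then combines this with Lemma~\ref{lem:Kunif} (uniform convergence of $\widehat{K}_\alpha^{-1}\to K_0^{-1}$ on compacts in $\R_+$, including the origin) and Theorem~\ref{thm:2}~(4$\cc$). For $\Phi=f(Z_t)$ the paper does not lift weak convergence of $\{Z_s\}$ directly but instead integrates out the angular part first, rewriting the expectation as $\E^{\albe}_{z_0}[\Psi_f(\lvert Z_t\rvert,\int_0^t\d s/\lvert Z_s\rvert^2)/\widehat{K}_\alpha(\sqrt{2\beta}\lvert Z_t\rvert)]$ for a bounded continuous $\Psi_f$ on $\R_+\times[0,\infty]$; this sidesteps the delicate Portmanteau bookkeeping you flag at $\{\lvert Z_t\rvert=0\}$. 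Your approach is legitimate once the $\phi_M$ error is fixed, but it buys the uniform integrability at the cost of invoking Theorem~\ref{thm:3}, whereas the paper's moment bound is short and independent of that theorem.
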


\begin{lem}\label{lem:Kunif}
As $\alpha\searrow0$, $\widehat{K}_\alpha(\cdot)^{-1}\to K_0(\cdot)^{-1}$ uniformly on compacts in $\R_+$, with $K_0(0)^{-1}\,\defeq\, 0$.
\end{lem}
\begin{proof}
We first show that  for all $0<x_0<x_1<\infty$, $\{\widehat{K}_\alpha(\cdot)^{-1};\alpha\in (0,1/2]\}$ is a family of Lipschitz continuous functions on $[x_0,x_1]$ such that the Lipschitz constants are uniformly bounded.
To see this property, note that for all $\alpha\in (0,1/2]$ and $0<x_0\leq x< y\leq x_1$, the mean value theorem implies that the second equality below holds for some $x'\in (x,y)$: 
\begin{align}
\lvert \widehat{K}_\alpha(x)^{-1}-\widehat{K}_\alpha(y)^{-1}\rvert&= \widehat{K}_\alpha(x)^{-1}\widehat{K}_\alpha(y)^{-1}\lvert \widehat{K}_\alpha(x)-\widehat{K}_\alpha(y)\rvert\notag\\
&= \widehat{K}_\alpha(x)^{-1}\widehat{K}_\alpha(y)^{-1} \lvert \widehat{K}_\alpha'(x')\rvert\cdot \lvert x-y\rvert \leq C(x_0,x_1)\lvert x-y\rvert . \label{hKunif1}
\end{align}
Also, the last inequality follows by using \eqref{hK:der} and \eqref{K:1}.  By the last inequality, we have proved the required form of Lipschitz continuity of $\{\widehat{K}_\alpha(\cdot)^{-1};\alpha\in (0,1/2]\}$ on $[x_0,x_1]$.  

To get the required uniform convergence, note that $\widehat{K}_\alpha(x)^{-1}\to K_0(x)^{-1} $ pointwise on $\R_+$ by \eqref{K:1} for $x\in (0,\infty)$ and by \eqref{def:hK0} and $K_0(0)^{-1}=0$ for $x=0$. Given the uniform Lipschitz continuity proven above, the convergence is also uniform on compacts in $(0,\infty)$ by the Arzel\`a--Ascoli theorem. We need
extensions to compact intervals containing $0$. 

We show $\widehat{K}_\alpha(\cdot)^{-1}\to K_0(\cdot)^{-1}$ uniformly on $[0,x_1]$
for any $x_1\in (0,\infty)$. First, given $\vep>0$, $K_0(0)^{-1}=0$ and the continuity of $K_0(\cdot)^{-1}$
allow some $x_0\in (0,x_1)$ such that $K_0(x_0)^{-1}<\vep/3$. Then, the uniform convergence proven above allows some $\alpha_0\in (0,1/2]$ such that 
\begin{align}\label{hKunif2}
\lvert \widehat{K}_\alpha(x)^{-1}-K_0(x)^{-1}\rvert<\vep,\quad\forall\;\alpha\in (0,\alpha_0],\;x\in [x_0,x_1].
\end{align}
Also, since $\widehat{K}_\alpha(\cdot)^{-1}$ and $K_0(\cdot)^{-1}$ are increasing by
\eqref{hK:der} and \eqref{K:1},
\begin{align}
\forall\;\alpha\in (0,\alpha_0],\;x\in [0,x_0),&\quad\;\lvert\widehat{K}_\alpha(x)^{-1}-K_0(x)^{-1}\rvert \leq \widehat{K}_\alpha(x_0)^{-1}+K_0(x_0)^{-1}\notag\\
&\leq \lvert\widehat{K}_\alpha(x_0)^{-1}-K_0(x_0)^{-1}\rvert+2K_0(x_0)^{-1}
\leq \vep,\label{hKunif3}
\end{align}
where the last inequality uses the choice of $x_0$ and $\alpha_0$.
Combining \eqref{hKunif2} and \eqref{hKunif3} gives 
\[
\lvert\widehat{K}_\alpha(x)^{-1}-K_0(x)^{-1}\rvert\leq \vep,\quad \forall\;\alpha\in (0,\alpha_0],\;x\in [0,x_1],
\]
which is the required uniform convergence on $[0,x_1]$. The proof is complete.
\end{proof}

Recall that by \eqref{Z:skewproduct}, $\{\dot{\gamma}_t\}$ denotes the circular Brownian motion of the angular part of $\{Z_t\}$ when $Z_0\neq 0$. For the remaining proofs, we will involve the equilibrium distribution of $\{\dot{\gamma}_t\}$, which is uniform on $[-\pi,\pi)$. 

\begin{rmk}
The rate of convergence to equilibrium of $\{\dot{\gamma}_t\}$ is simple. Specifically, let $\theta\mapsto \E_{\gamma_0}^\gamma[\delta_\theta(\dot{\gamma}_t)]$ denote the probability density of $\dot{\gamma}_t$ with respect to the Lebesgue measure on $[-\pi,\pi)$, where $\delta_\theta$ is Dirac's delta function at $\theta$. As an implication of \cite[(1.2) on p.135]{KK}, 
\begin{align}
\E_{\gamma_0}^\gamma[\delta_\theta(\dot{\gamma}_t)]=\frac{1}{2\pi}\sum_{n=-\infty}^\infty \exp\left(-\frac{n^2t}{2}-\i n\gamma_0\right),\quad \forall\;\gamma_0,\theta\in [-\pi,\pi),\;t\in (0,\infty)
\end{align}
Therefore,
$\sup_{\gamma_0,\theta}\left\lvert\E_{\gamma_0}^\gamma[\delta_\theta(\dot{\gamma}_t)]-1/(2\pi)\right\rvert\leq (1/\pi)\sum_{n=1}^\infty \e^{-n^2t/2}$.
\qed 
\end{rmk}

\begin{proof}[Proof of Theorem~\ref{thm:4}]
(1$\cc$) \; For $\Phi=F(\rho_s;s\leq t)$, we only need to show the first equality in \eqref{id:goal}, since the second equality follows immediately from the definition of $\P^{\albe}$ in \eqref{def:BESab}. By Theorem~\ref{thm:2} (4$\cc$) and Lemma~\ref{lem:Kunif}, $\Phi\widehat{K}_\alpha(\sqrt{2\beta}\rho_t)^{-1}$
under $\P^{\albe}_{x_0}$ converges in distribution to $\Phi K_0(\sqrt{2\beta}\rho_t)^{-1}$ under $\P^{\zbe}_{x_0}$ as $\alpha\searrow 0$, for fixed $x_0\in \R_+$. Hence, it remains to verify the uniform integrability of $\Phi\widehat{K}_\alpha (\sqrt{2\beta}\rho_t)^{-1}$ under $\P^{\albe}_{x_0}$ for $\alpha\in (0,1/4]$, or just show that  for all $\eta\in (1,\infty)$, 
\begin{align}\label{ui:eta}
\sup\left\{\E^{\albe}_{x_0}[\widehat{K}_\alpha (\sqrt{2\beta}\rho_t)^{-\eta}];\alpha\in (0,1/4]\right\}<\infty.
\end{align}

To prove \eqref{ui:eta}, note that $x\mapsto \widehat{K}_\nu(x)$ is decreasing by \eqref{def:hK} and \eqref{hK:der}. Also, by \eqref{def:hK} and \eqref{K:1}, $\nu\mapsto \widehat{K}_\nu(x)$ is increasing in $ [0,\infty)$ for $x\geq 1$. Hence, \eqref{asymp:Kinfty} for $\nu=0$ is enough to get
\begin{align}\label{hK:bddfinal}
\widehat{K}_\alpha (\sqrt{2\beta}y)^{-\eta}\less\e^{C(\beta)\eta y},\quad \forall\;\alpha\in (0,1/4],\;y\in \R_+,\;\eta\in (1,\infty).
\end{align}
Next, for any $\alpha\in (0,1/4]$, $\{\rho_t^2\}$ under $\P^{(-\alpha),\beta\da}_{x_0}$ can be pathwise dominated by a version of $\BES Q(0)$ starting from $x_0^2$; this domination holds by the strong existence of $\BES Q(0)$, 
Theorem~\ref{thm:2} (2$\cc$) for $\alpha=0$, and \cite[2.18 Proposition on p.293]{KS:BM}.
Also, $\E_{z_0}^B[\e^{q\lvert B_t\rvert}]<\infty$ for a planar Brownian motion $\{B_t\}$ and all $q\in (0,\infty)$ and $z_0\in \Bbb C$. Hence, \eqref{hK:bddfinal} implies \eqref{ui:eta}.

For $\Phi=f(Z_t)$ and $z_0\neq 0$, proving the first equality in \eqref{id:goal} needs to incorporate additional randomness in the angular part of $\{Z_t\}$. Recall \eqref{Z:skewproduct}, and let $\dot{\gamma}_\infty$ be uniformly distributed over $[-\pi,\pi)$ and independent of $\{(\lvert Z_t \rvert,\dot{\gamma}_t);0\leq t<\infty\}$. Then for all $\alpha\in [0,1/2)$,
\begin{align}
&\eqspace \E_{z_0}^{\albe}\Bigg[\left.\E^{\albe}_0\Bigg[\frac{f(Z_{t-s})}{\widehat{K}_\alpha(\sqrt{2\beta}\lvert Z_{t-s}\rvert )}\Bigg]\right\rvert_{s=T_0(Z)};t\geq T_0(Z)\Bigg]\notag\\
&=\E_{z_0}^{\albe}\Bigg[\left.\E^{\albe}_0\Bigg[\frac{f(\lvert Z_{t-s}\rvert \exp\{\i\dot{\gamma}_\infty\})}{\widehat{K}_\alpha(\sqrt{2\beta}\lvert Z_{t-s}\rvert ) }\Bigg]\right\rvert_{s=T_0(Z)};t\geq T_0(Z)\Bigg],\label{radialization}
\end{align}
which is implied by Erickson's characterization of the resolvents of $\{Z_t\}$~
\cite[Eq. (2.3) on p.75]{Erickson}. 
The foregoing equality, \eqref{Z:skewproduct}, and Lemma~\ref{lem:explosion} (1$\cc$) imply that 
\begin{align}
\E^{\albe}_{z_0}\Bigg[\frac{f(Z_t)}{\widehat{K}_\alpha(\sqrt{2\beta}\lvert Z_t \rvert)
}\Bigg]&=\E_{z_0}^{\albe}\Bigg[\frac{f(\lvert Z_t \rvert\exp\{\i \gamma_{\int_0^t \d s/\lvert Z_s\rvert^2}\})}{\widehat{K}_\alpha(\sqrt{2\beta}\lvert Z_t \rvert)}\Bigg]\notag\\
&=\E^{\albe}_{z_0}\left[\frac{\Psi_f(\lvert Z_t \rvert,\textstyle{\int_0^t \d s/\lvert Z_s\rvert^2})}{\widehat{K}_\alpha(\sqrt{2\beta}\lvert Z_t \rvert)}\right].\label{Psif:0}
\end{align}
Here, to get the last equality, we integrate out $\{\dot{\gamma}_t;0\leq t\leq \infty\}$ and set
\begin{align}\label{def:Psif}
\Psi_f(r, t)\,\defeq\, \E[f(r\exp\{\i\dot{\gamma}_t\})]:\R_+\times [0,\infty]\to \R.
\end{align}
Note that $\Psi_f$
is bounded continuous since $f\in \C_b(\Bbb C)$ is uniformly continuous on compacts, and as recalled above, $\dot{\gamma}_t$ converges in distribution to $\dot{\gamma}_\infty$ as $t\to\infty$. Then, the limit in \eqref{id:goal} for $\Phi=f(Z_t)$ and $z_0\neq 0$ holds by applying \eqref{ui:eta} and Theorem~\ref{thm:2} (4$\cc$) to \eqref{Psif:0}. 

For $\Phi=f(Z_t)$ and $z_0=0$, the proof is simpler since  by Erickson's characterization again,
\[
\E^{\albe}_{0}\Bigg[\frac{f(Z_t)}{\widehat{K}_\alpha(\sqrt{2\beta}\lvert Z_t \rvert)
}\Bigg]=\E^{\albe}_{0}\Bigg[\frac{\Psi_f(|Z_t|,\infty)}{\widehat{K}_\alpha(\sqrt{2\beta}\lvert Z_t \rvert)
}\Bigg].
\]
The right-hand side allows one more application of \eqref{id:goal} for $\Phi=F(\rho_s;s\leq t)$ proven above. We have completed the proof of (1$\cc$).
\medskip 

\noindent (2$\cc$)\; For $z_0\neq0$, \eqref{id:2-1} follows upon applying \eqref{id:goal} and \eqref{main:1-2-1} in the same order:
\begin{align*}
&\quad\;\E_{z_0/\two}^{\zbe}\left[\frac{\e^{\beta t}K_0(\sqrt{\beta}|z_0|)}{K_0(\sqrt{\beta}|\two Z_t|)}f(\two Z_t)\right]
=\lim_{\alpha\searrow 0}\E^{\al}_{z_0/\two}[\e^{\Lambda_\alpha L_t}f(\two Z_t)]\\
&=P_t\{\tilde{z}\mapsto f(\two \tilde{z})\}(z_0/\two)\\
&\quad\;+\int_0^t P_s(z_0/\two)\int_0^{t-s}\left(2\pi \int_0^\infty\frac{\beta^u\tau^{u-1}}{\Gamma(u)} \d u\right)P_{t-s-\tau}\{\tilde{z}\mapsto f(\two \tilde{z})\}(0)\d \tau\d s\\
&=P^\beta_tf(z_0),
\end{align*}
where the last equality holds by \eqref{def:Pbeta2} and \eqref{two}. (We have used as well  in the first equality the fact that $\widehat{K}_\alpha\to K_0$ pointwise.)
Also, to see \eqref{id:2-2}, we apply \eqref{id:goal} to get 
\begin{align*}
&\quad\; \E^{(0),\beta\da}_{0}\Bigg[\frac{\e^{\beta t}\cdot 2\pi}{K_0(\sqrt{\beta}\lvert\two Z_t \rvert)}f(\two Z_t)\Bigg]
=\lim_{\alpha\searrow 0}\frac{2\pi \E^{\al}_0[\e^{\Lambda_\alpha L_t}f(\two Z_t)]}{\widehat{K}_\alpha(0)}\\
&=\lim_{\alpha\searrow 0} 2
 \beta^{-\alpha}\Lambda_\alpha\E_0^{\al}[\e^{\Lambda_\alpha L_t}f(\two Z_t)]\\
&=2
\int_0^{t}\left(2\pi \int_0^\infty \frac{\beta^u \tau^{u-1}}{\Gamma(u)}\d u\right)  P_{t-\tau}\{\tilde{z}\mapsto f(\two \tilde{z})\}(0)\d \tau=\mathring{P}^\beta_tf,
\end{align*}
where the second equality uses \eqref{choice}, the third equality uses \eqref{main:1-2-2}, and the last equality uses \eqref{def:ringP} and \eqref{two}. The proof is complete.
\end{proof}

The proof of Theorem~\ref{thm:3} is in part similar to the method of using \eqref{def:approxH} and \eqref{def:Lambdaphi}. Both methods are at the expectation level if one views the latter in terms of Brownian motion by \eqref{FK:approx}. Another similarity is that 
we will use as essential tools the following expansions in the lower-dimensional approximations: under $\P^{\al}$ for $\alpha>0$,
\begin{align}\label{Exp0:LT}
\e^{\Lambda_\alpha L_{t}}=1+\Lambda_\alpha\int_0^{t}\e^{\Lambda_\alpha (L_{t}-L_s)} \d L_s=1+\Lambda_\alpha\int_0^{t}\e^{\Lambda_\alpha L_s} \d L_s.
\end{align}
These are comparable to some expansions in \cite[Lemma~5.3]{C:DBG3+} for using \eqref{def:approxH} and \eqref{def:Lambdaphi}. See Proposition~\ref{prop:idg} and the proof, especially
\eqref{bexp:2} and \eqref{bexp:1}, for more on \eqref{Exp0:LT}. 

On the other hand, compared to the method of using \eqref{def:approxH} and \eqref{def:Lambdaphi}, the method in the proof of Theorem~\ref{thm:3} enjoys the very different technical advantage of various exact calculations. These are due to the use of the local times of the lower-dimensional Bessel processes. Therefore, the proof of Theorem~\ref{thm:3} shows that even at the approximate level of $\alpha>0$,
the Laplace transforms in \eqref{main:1-1-1} and \eqref{main:1-1-2}
admit exact formulas expressible in the $q$-resolvents $U^{\al}_q f$, as stated in Proposition~\ref{prop:LTexplicit}. Moreover, we will show that it is enough to derive the exact formulas, stated in \eqref{F0:exp1}, of the Laplace transforms of the following functions:
\begin{align}\label{def:Falpha}
F_\alpha(t)\,\defeq\, \Lambda_\alpha\E^{\al}_0[\e^{\Lambda_\alpha L_t}],\quad \alpha\in (0,1/2).
\end{align}

Here are the main points of the remaining proof of Theorem~\ref{thm:3}: For (1$\cc$), we will use convergences of $\{Z_t\}$ under $\P^{\al}$ to planar Brownian motion as $\alpha\searrow 0$ by distributions (Lemma~\ref{lem:PBM}) and by a special form  \eqref{LT:norm} of using
the constant $C^\star_\alpha$ defined in \eqref{choice}; the special term $\log(q/\beta)$ in \eqref{main:1-1-1} and \eqref{main:1-1-2} will arise  in \eqref{F0:exp1} from the limit of the Laplace transforms of $F_\alpha(t)$. For (2$\cc$), the proof is to apply the Arzel\`a--Ascoli theorem appropriately.
 
We now begin the proof of Theorem~\ref{thm:3}. The first step is to establish convergences of $\{Z_t\}$ under $\P^{\al}$ to planar Brownian motion as $\alpha\searrow 0$. 

\begin{lem}\label{lem:PBM}
Let $x_0\in \R_+$.
For all $\alpha\in [0,1/2]$, let $\{\rho_{\alpha}(t)\}$ denote a version of $\BES(-\alpha)$ such that it is subject to initial condition $x_0$ and
a fixed driving Brownian motion as in \eqref{def:BES}. Fix $\{\alpha_n\}_{n\in \Bbb N}\subset (0,\frac{1}{2})$ such that $\alpha_n\searrow 0$, and write $\alpha_\infty\defeq\,0$. Then the following holds: 
\begin{enumerate}[label={\rm ({\arabic*}$\cc$)}]
\item With probability one, 
\[
\left(\rho_{\alpha_n}(t),\int_0^t\frac{\d s}{\rho_{\alpha_n}(s)^2}\right)\xrightarrow[n\to\infty]{}\left(\rho_{0}(t),\int_0^t \frac{\d s}{\rho_0(s)^2}\right)\mbox{ with }\rho_{\alpha_n}(t)\nearrow \rho_0(t),\quad\forall\;t\geq 0.
\]

\item Suppose $x_0\neq 0$. Let $z_0\in \Bbb C\setminus\{0\}$ with $|z_0|=x_0$. Let
$\{\dot{\gamma}_t\}$ be a circular Brownian motion (i.e. a one-dimensional standard Brownian motion mod $2\pi$) such that $z_0=x_0\e^{\i \dot{\gamma}_0}$ and $\{\dot{\gamma}_t\}\ind \{\rho_{\alpha_n}(t)\}_{n\in\Bbb N\cup\{\infty\}}$ (as processes). For all $n\in \Bbb N\cup\{\infty\}$, define 
\begin{align}
Z_{\alpha_n}(t)\,\defeq\,\rho_{\alpha_n}(t)\exp\{\i\dot{\gamma}_{\int_0^t \d s/\rho_{\alpha_n}(s)^2}\},\quad t<T_0(\rho_{\alpha_n}).
\end{align}
Then $\{Z_0(t)\}$ is a two-dimensional standard Brownian motion, and
with probability one, $Z_{\alpha_n}(t)$ converges to $Z_{0}(t)$ as $n\to\infty$ for all $t\geq 0$. 
\end{enumerate}
Thus, for all $f\in \C_b(\Bbb C)$ and $z_0\in \Bbb C$, the processes $\{Z_t\}$ under $\P_{z_0}^{\al}$ (Notation~\ref{nota:Z}) satisfy 
\begin{align}\label{U0:PBM}
\lim_{\alpha\searrow 0}\E^{\al}_{z_0}[f(Z_t)]= P_tf(z_0),
\end{align}
where $\{P_t\}$ is the probability semigroup of two-dimensional standard Brownian motion.
In particular,  for all $q\in (0,\infty)$, $f\in \C_b(\Bbb C)$ and $z\in \Bbb C$, the limit defined by \eqref{def:U0} exists in $\R$ with
\begin{align}\label{rep:U0}
U^{(0)}_q f(z)=\int_0^\infty \e^{-q t}P_{t}f(z)\d t.
\end{align}
\end{lem}
\begin{proof}
(1$\cc$) \; With probability one, $\rho_{\alpha_n}(t)$ increases to $\rho_0(t)$ for all $t\geq 0$ by applying the comparison theorem of SDEs \cite[2.18 Proposition on p.293]{KS:BM} to the squared Bessel processes $\{\rho_{\alpha_n}(t)^2\}$ and $\{\rho_0(t)^2\}$. This convergence extends to the almost-sure convergence of $\int_0^t \d s/\rho_{\alpha_n}(s)^2$ if $x_0\neq0$, since $T_0(\rho_{\alpha_n})\nearrow T_0(\rho_0)=\infty$ and dominated convergence applies when $t<T_0(\rho_0)$. If $x_0=0$, then $\int_0^t\d s/\rho_{\alpha_n}(s)^2\geq \int_0^t \d s/\rho_{0}(s)^2=\infty$ by the law of the iterated logarithm of two-dimensional Brownian motion \cite[(1.21) Exercise on p.60]{RY}. In more detail, this law entails $\rho_0(s,\omega)^2\less s\log(\log (1/s))$ for all small $s$, and we have $\int_{0+}\d s/[s\log(\log 1/s)]=\int^\infty \d v/\log v=\infty$ ($\log 1/s=v$). The proof of (1$\cc$) is complete.\medskip 

\noindent {\rm (2$\cc$)}\; The skew-product representation of planar Brownian motion \cite[(2.11) Theorem on p.193]{RY} shows that $\{Z_0(t)\}$ is one version. The required convergence of $\{Z_{\alpha_n}(t)\}$ follows immediately from (1$\cc$). \medskip  

Finally, to get \eqref{U0:PBM}, use (2$\cc$) for $z_0\neq 0$ and the argument in \eqref{Psif:0} for $z_0=0$.  
\end{proof}

The next step is to generalize Kac's moment formula. See also \cite[(3.15) on p.215]{DRVY}. 

\begin{prop}\label{prop:idg}
Fix $\alpha\in (0,\frac{1}{2})$, $t\in (0,\infty)$ and $z_0\in \Bbb C$.
\begin{enumerate}[label={\rm ({\arabic*}$\cc$)}]
\item For all nonnegative Borel measurable functions $G_s(\mathrm w)$ and $F_s(\mathrm w)$, $(s,\mathrm w)\in \R_+\times C(\R_+,\Bbb C)$, it holds that  
\begin{align}\label{eq:Kac}
\begin{split}
&\quad\; \E^{\al}_{z_0}\left[\int_0^tG_s(Z_{s\wedge r};r\geq 0) F_s(Z_{(r\vee s)\wedge t};r\geq 0) \d L_s\right]\\
&=\E^{\al}_{z_0}\left[\int_0^{t}G_s(Z_{s\wedge  r};r\geq 0)\E_0^{\al}[ F_s(Z_{r\wedge (t-s)};r\geq 0)] \d L_ s\right].
\end{split}
\end{align}
\mbox{}

\item For all  $f\in \B_+(\Bbb C)$ and $g\in \B_+(\R_+)$, it holds that, with $G(\ell)\,\defeq\,\int_0^\ell g(s)\d s$, 
\begin{align}
\E^{\al}_{z_0}[G(L_t)f(Z_{t})]&=
\E^{\al}_{z_0}\left[\int_0^t\E^{\al}_0[g(L_{t-s})f(Z_{t-s})] \d L_s\right]\label{eq:FKeq1}\\
&=\E^{\al}_{z_0}\left[\int_0^tg(L_{s})\E^{\al}_0[f(Z_{t-s})] \d L_s\right].\label{eq:FKeq2}
\end{align}
\end{enumerate}
\end{prop}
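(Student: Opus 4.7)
My plan is to prove (1$\cc$) directly using the Markov property at the deterministic time $s$, exploiting the fact that $\{L_s\}$ is supported on the random set $\{s\in[0,t]:Z_s=0\}$; then both identities in (2$\cc$) will follow by specializing $F_s$ and $G_s$ and invoking the elementary change-of-variables formulas for $G(L_t)$.

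\textbf{Proof of (1$\cc$).} Since $\{L_s\}$ is continuous, adapted and nondecreasing, and the factor $F_s(Z_{s\wedge r};r\geq 0)$ is $\F_s$-measurable, a monotone-class reduction to bounded integrands followed by the conditional form of Fubini's theorem gives
\begin{align*}
& \E^{\al}_{z_0}\Big[\int_0^t F_s\, G_s(Z_{(r\vee s)\wedge t};r\geq 0)\,\d L_s\Big]\\
& \quad=\E^{\al}_{z_0}\Big[\int_0^t F_s\,\E^{\al}_{z_0}\big[G_s(Z_{(r\vee s)\wedge t};r\geq 0)\,\big|\,\F_s\big]\,\d L_s\Big].
\end{align*}
On the support of $\d L_s$ one has $Z_s=0$, so the Markov property at $s$ identifies $(Z_{s+u})_{0\leq u\leq t-s}$ given $\F_s$ with a fresh copy of $Z$ starting from $0$ run over time $t-s$; because the two parametrizations $r\mapsto Z_{(r\vee s)\wedge t}$ and $r\mapsto Z_{r\wedge(t-s)}$ agree after a time shift on their non-constant portions (and the constant portions both equal $0$ on the support of $\d L_s$), the inner conditional expectation reduces to $\E^{\al}_0[G_s(Z_{r\wedge(t-s)};r\geq 0)]$, yielding \eqref{eq:Kac}.

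\textbf{Proof of (2$\cc$) and main obstacle.} Writing $G(\ell)=\int_0^\ell g(v)\,\d v$ and substituting $v=L_s$ (respectively, $v=L_t-L_s$) in the continuous increasing process $L$ gives the two pathwise identities
\begin{equation*}
G(L_t)=\int_0^t g(L_s)\,\d L_s=\int_0^t g(L_t-L_s)\,\d L_s.
\end{equation*}
Applying (1$\cc$) with $F_s=g(L_s)$ and $G_s(\w)=f(\w_t)$ yields \eqref{eq:FKeq2}, since $f(Z_{(t\vee s)\wedge t})=f(Z_t)$ and the Markov identification delivers $\E^{\al}_0[f(Z_{t-s})]$. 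For \eqref{eq:FKeq1} I would instead take $F_s=1$ and $G_s(\w)=g(L^{\w}_{t-s})f(\w_{t-s})$, relying on the pathwise observation that on $\{Z_s=0\}$ the increment $L_t-L_s$ coincides with the local time at $0$ of the shifted path $(Z_{s+u})_{u\geq 0}$ evaluated at time $t-s$, so the Markov identification produces $\E^{\al}_0[g(L_{t-s})f(Z_{t-s})]$. The sharpest technical obstacle is the measurability bookkeeping in (1$\cc$): justifying the conditional-Fubini exchange against the random measure $\d L_s$ (handled by continuity and adaptedness of $L$ via a monotone class reduction to bounded $G_s$), and checking that $G_s$ depends only on the non-constant portion of its input path so that the time-shift identification from the Markov property is faithful; the latter issue can, if needed, be sidestepped by dyadically approximating $L_t-L_s$ through rescaled occupation measures of small neighborhoods of $0$, whose behavior under the time shift is transparent.
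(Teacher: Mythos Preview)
Your treatment of (2$\cc$) matches the paper's: both write $G(L_t)=\int_0^t g(L_s)\,\d L_s=\int_0^t g(L_t-L_s)\,\d L_s$ and then feed the two representations into (1$\cc$) with exactly the choices of $F_s,G_s$ you describe.

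For (1$\cc$) the paper takes a cleaner route that sidesteps precisely the obstacle you single out. Instead of conditioning at the deterministic time $s$ and integrating against the random measure $\d L_s$, the paper changes variables to the inverse local time $\tau_\ell=\inf\{s:L_s>\ell\}$, turning the integral into a Lebesgue integral $\int_0^\infty(\cdots)\,\d\ell$. Ordinary Fubini then applies, and for each fixed $\ell$ the \emph{strong} Markov property at the stopping time $\tau_\ell$ (where $Z_{\tau_\ell}=0$ on $\{\tau_\ell<\infty\}$) gives the inner expectation $\E_0^{\al}[G_u(Z_{r\wedge(t-u)};r\geq 0)]\rvert_{u=\tau_\ell}$. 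A second change of variables converts back to $\d L_s$.

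Your approach is valid in principle, but the ``conditional Fubini'' step you invoke is exactly the optional projection identity: for each fixed $s$ the conditional expectation $\E[G_s\mid\F_s]$ is defined only up to a null set, and to integrate it against $\d L_s$ you need a jointly measurable version. A monotone-class reduction alone does not supply this; you would need either the optional projection theorem or to construct such a version by hand (e.g.\ first for $G_s$ depending on finitely many coordinates, where the Markov property yields a continuous function of $Z_s$). The paper's inverse-local-time trick buys you exactly this: it replaces the random measure by Lebesgue measure, so no optional projection is needed and the measurability bookkeeping disappears.
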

\begin{proof}
(1$\cc$)\; Let $\{\tau_\ell\}$ denote the inverse local time associated with $\{L_t\}$. It follows from the change of variables formula for Stieltjes integrals \cite[(4.9) Proposition on p.8]{RY} that 
\begin{align*}
&\quad\E^{\al}_{z_0}\left[\int_0^tG_s(Z_{s\wedge r};r\geq 0) F_s(Z_{(r\vee s)\wedge t};r\geq 0) \d L_s\right]\\
&=\int_0^{\infty}\E^{\al}_{z_0}\left[\1_{(\tau_\ell\leq t)}G_{\tau_\ell}(Z_{\tau_\ell\wedge r};r\geq 0) F_{\tau_\ell}(Z_{(r\vee \tau_\ell)\wedge t};r\geq 0)\right]\d \ell\\
&=\int_0^{\infty}\E^{\al}_{z_0}\left[\1_{(\tau_\ell\leq t)}G_{\tau_\ell}(Z_{\tau_\ell\wedge r};r\geq 0)\E_0^{\al}[ F_u(Z_{r\wedge (t-u)};r\geq 0)]\big|_{u=\tau_\ell}\right] \d \ell\\
&=\E^{\al}_{z_0}\left[\int_0^{t}G_s(Z_{s\wedge  r};r\geq 0)\E_0^{\al}[ F_s(Z_{r\wedge (t-s)};r\geq 0)] \d L_s\right].
\end{align*}
Here, the second equality follows from the strong Markov property of $\{Z_t\}$ at time $\tau_\ell$ and the property that $Z_{\tau_\ell}=\rho_{\tau_\ell}=0$ on $\{\tau_\ell<\infty\}$. The last equality proves \eqref{eq:Kac}. \medskip

\noindent (2$\cc$)\; By the chain rule of Stieltjes integrals \cite[(4.6) Proposition on p.6]{RY}, it holds that 
\begin{align}
G(L_t)&=G(L_t-L_s)\rvert^0_{s=t}=\int_0^t g(L_t-L_s)\d L_s,\label{bexp:2}\\
G(L_t)&=G(L_s)\rvert_{s=0}^t=\int_0^t g(L_s)\d L_s.\label{bexp:1}
\end{align}
Hence, by holding $t$ fixed, we obtain \eqref{eq:FKeq1} and \eqref{eq:FKeq2} upon applying \eqref{eq:Kac} with the following two choices:
\begin{itemize}
\item $G_s(Z_{s\wedge r};r\geq 0)=1$, $F_s(Z_{(s\vee r)\wedge t};r\geq 0)=g(L_t-L_s)f(Z_t)$, and
\item $G_s(Z_{s\wedge r};r\geq 0)=g(L_s)$, $F_s(Z_{(s\vee r)\wedge t};r\geq 0)=f(Z_{t})$,
\end{itemize}
respectively.
The proof is complete.
\end{proof}

\begin{proof}[Proof of Theorem~\ref{thm:3}]
Throughout this proof, $\alpha$ is understood to be in $(0,1/2)$, and
we suppress superscripts `$\al$' whenever there is no risk of confusion. \medskip

\noindent (1$\cc$)\;
We first prove the following expansion: for all $z_0\in \Bbb C$,
\begin{align}
&\quad \int_0^\infty \e^{-q t}\E_{z_0}[\e^{\Lambda_\alpha L_{t}}f(Z_{t})]\d t\notag\\
\begin{split}\label{Lap:asymp}
&=U_{q} f(z_0)+\left(\Lambda_\alpha\E_{z_0}\left[\int_0^\infty \e^{-q t}\d L_{t}\right]\right)U_q f(0)\\
&\quad +\left(\E_{z_0}\left[\int_0^\infty \e^{-q t}\d L_{t}\right]\right)\left(q\int_0^\infty \e^{-q t}\left(\Lambda_\alpha \E_0[\e^{\Lambda_\alpha L_{t}}]-\Lambda_\alpha\right)\d t\right)U_q f(0).
\end{split}
\end{align}
To see \eqref{Lap:asymp}, first, apply \eqref{eq:FKeq1} and then \eqref{eq:FKeq2}, both for $g(\ell)=\e^{\Lambda_\alpha\ell}\Lambda_\alpha$:
\begin{align}
\E_{z_0}[\e^{\Lambda_\alpha L_{t}}f(Z_{t})]&=\E_{z_0}[f(Z_{t})]+\E_{z_0}\left[\int_0^{t}\Lambda_\alpha \E_0[\e^{\Lambda_\alpha L_{t-s}}f(Z_{t-s})] \d L_s\right]\notag
\\
\begin{split}\label{Exp1:LT}
&=\E_{z_0}[f(Z_{t})]+\E_{z_0}\left[\int_0^{t}\Lambda_\alpha \E_0[f(Z_{t-s})] \d L_s\right]\\
&\quad +\E_{z_0}\left[\int_0^{t}  \E_0\left[\int_0^{t-s}  \Lambda_\alpha^2\e^{\Lambda_\alpha L_r}\E_0[f(Z_{t-s-r})] \d L_r\right]\d L_s\right].
\end{split}
\end{align}
Taking the Laplace transform of both sides of \eqref{Exp1:LT} yields, for all $q\in (0,\infty)$,
\begin{align}
&\quad \int_0^\infty \e^{-q t}\E_{z_0}[\e^{\Lambda_\alpha L_{t}}f(Z_{t})]\d t\notag\\
\begin{split}
&=U_{q} f(z_0)+\left(\Lambda_\alpha\E_{z_0}\left[\int_0^\infty \e^{-q t}\d L_{t}\right]\right)U_{q} f(0)\\
&\quad +\left(\E_{z_0}\left[\int_0^\infty \e^{-q t}\d L_{t}\right]\right)\left(q\int_0^\infty \e^{-q t}\E_0\left[\int_0^{t}\Lambda_\alpha^2\e^{\Lambda_\alpha L_r}\d L_r\right]\d t\right)U_{q} f(0).\label{Exp2:LT}
\end{split}
\end{align}
The last summand in \eqref{Exp2:LT} arises since, by using $\e^{-q t}=\int_t^\infty q \e^{-q r}\d r$ and Fubini's theorem,
\[
\E_0\left[\int_0^{\infty}\e^{-q t}\Lambda_\alpha^2\e^{\Lambda_\alpha L_t}\d L_t\right]=
q\int_0^\infty \e^{-q t}\E_0\left[\int_0^{t}\Lambda_\alpha^2\e^{\Lambda_\alpha L_r}\d L_r\right]\d t.
\]
The required identity in \eqref{Lap:asymp} then follows by using \eqref{Exp2:LT} and an implication of the last equality in \eqref{Exp0:LT} that
\begin{align}\label{LT:forwardexp}
 \E_{z_0}[\e^{\Lambda_\alpha L_{t}}]=1+\E_{z_0}\left[\int_0^t \Lambda_\alpha \e^{\Lambda_\alpha L_{r}}\d L_r\right].
\end{align}

As a consequence of \eqref{Lap:asymp}, proving \eqref{main:1-1-1} and \eqref{main:1-1-2} amounts to finding the asymptotic representations of the following three terms as $\alpha\searrow 0$: 
\begin{align}\label{list:3}
\begin{split}
 \E_{z_0}^{\al}\left[\int_0^\infty \e^{-q t}\d L_{t}\right],\; \Lambda_\alpha \E^{\al}_0\left[\int_0^\infty \e^{-q t}\d L_{t}\right],\; \ms LF_\alpha(q)\,\defeq\,\int_0^\infty \e^{-q t}F_\alpha(t)\d t,
 \end{split}
\end{align}
for $z_0\neq 0$, where $F_\alpha$ is defined in \eqref{def:Falpha}. We stress that \emph{the first term in \eqref{list:3} will be considered only for $z_0\neq 0$, whereas the second term there comes with an additional multiplicative scaling factor $\Lambda_\alpha$.}  In more detail, to prove \eqref{main:1-1-2}, we multiply both sides of \eqref{Lap:asymp} for $z_0=0$ by $\Lambda_\alpha$ so that the second term in \eqref{list:3} comes into play. Also, for the last term in \eqref{list:3}, 
our aim is to show that the asymptotic representation coincides with \eqref{main:1-1-2} with $f\equiv 1$. 

Below we show in Steps~\hyperlink{thm:3-1-1}{1}--\hyperlink{thm:3-1-3}{3} the asymptotic representations of the three terms in \eqref{list:3}. Step~\hyperlink{thm:3-1-4}{4} explains how \eqref{main:1-1-1} and \eqref{main:1-1-2} can be obtained accordingly. \medskip

\noindent \hypertarget{thm:3-1-1}{{\sc Step 1.}}\;
For the first term in \eqref{list:3} with $z_0\neq 0$, we show that 
\begin{align}\label{Pt:LT}
\lim_{\alpha\searrow 0+}\E^{\al}_{z_0}\left[\int_0^\infty \e^{-q t} \d L_t\right]=\int_0^\infty \e^{-q t}P_{t}(z_0)\d t.
\end{align}

With $C'_\alpha=C_\alpha^\star$ from \eqref{choice}, the case of \eqref{eq:LTLT} for $x_0=|z_0|>0$ gives
\begin{align}
\E^{\al}_{z_0}\left[\int_0^\infty \e^{-q t} \d L_t\right]&=\frac{1}{\pi \cdot \frac{2^{1-\alpha}}{\Gamma(\alpha)}\cdot q^\alpha} \frac{2^{1-\alpha}}{\Gamma(\alpha)}\widehat{K}_{\alpha}(\sqrt{2q}\lvert z_0 \rvert)
=\frac{1}{\pi q^\alpha}\widehat{K}_{\alpha}(\sqrt{2q}\lvert z_0 \rvert).\label{LT:norm}
\end{align}
Hence,
by \eqref{def:hK}, \eqref{def:K} and dominated convergence, $\widehat{K}_\alpha(\sqrt{2q}\lvert z_0 \rvert)\to K_0(\sqrt{2q}\lvert z_0 \rvert)$.  
Comparing \eqref{LT:norm0} with the limit of the right-hand side of \eqref{LT:norm} as $\alpha\searrow 0$ proves \eqref{Pt:LT}. \medskip 

\noindent \hypertarget{thm:3-1-2}{{\sc Step 2.}}\; The asymptotic representation of the second term in \eqref{list:3} can be derived similarly by using the definition of $\Lambda_\alpha$ in \eqref{choice} and the other case of $x_0=0$ in \eqref{eq:LTLT}. We have
\begin{align}
\Lambda_\alpha\E^{\al}_0\left[\int_0^\infty \e^{-q t} \d L_t\right]&=\frac{C_\alpha^\star\beta^\alpha}{C_\alpha^\star q^\alpha} =\frac{\beta^\alpha}{q^\alpha}\lra 1\quad\mbox{ as }\alpha\searrow 0.\label{Pt:LT0}
\end{align}

\noindent \hypertarget{thm:3-1-3}{{\sc Step 3.}}\; The proof of \eqref{main:1-1-2} for $f\equiv 1$ uses another expansion. By \eqref{eq:FKeq1} with $G(\ell)=\Lambda_\alpha \e^{\Lambda_\alpha \ell}$ and $f\equiv 1$, we get
\begin{align}
F_\alpha(t)&=\Lambda_\alpha  +\Lambda_\alpha \E_0\left[\int_0^t F_\alpha(t-s)\d L_s\right],\label{F0:exp}
\end{align}
so that 
\begin{align}\label{eq:LFalpha}
\ms LF_\alpha(q)&=\frac{\Lambda_\alpha}{q}  +\Lambda_\alpha\E_0\left[\int_0^\infty \e^{-q t}\d L_t\right]\ms L F_\alpha(q)
=\frac{\Lambda_\alpha}{q} +\frac{\Lambda_\alpha}{C_\alpha^\star q^\alpha}\ms L F_\alpha(q),
\end{align}
where the second equality uses the second line in \eqref{eq:LTLT}. 

To solve for $\ms L F_\alpha(q)$ from the last equality, we need the a-priori condition that 
\begin{align}\label{Falpha:finite}
\ms L F_\alpha(q)<\infty,\quad\forall\;q\in (0,\infty). 
\end{align}
Consider \eqref{char1:LT}. Then by H\"older's inequality, it is enough to show that 
\[
\E_0[\e^{q \rho_t^{2\alpha}}]<\infty\quad  \&\quad \E_0[\e^{q \int_0^t \rho_s^{2\alpha-1}\d W_s}]<\infty,\quad\forall\;q\in (0,\infty).
\]
Here, since $\{\rho_t\}$ can be pathwise dominated by a version of $\BES(0)$ and $2\alpha<1$ by assumption, we have $\E_0[\e^{q \rho_t^{2\alpha}}]<\infty$. To bound the other expectation in the foregoing display, apply the Cauchy--Schwarz inequality to get
\[
\E_0[\e^{q \int_0^t \rho_s^{2\alpha-1}\d W_s}]\leq \E_0[\e^{2q \int_0^t \rho_s^{2\alpha-1}\d W_s-2q^2\int_0^t \rho_s^{4\alpha-2}\d s}]^{1/2}\E_0[\e^{2q^2\int_0^t \rho_s^{4\alpha-2}\d s}]^{1/2}.
\]
Here, the last expectation is finite by Proposition~\ref{prop:mombdd} (2$\cc$) with $\eta=4\alpha-2$ since $\alpha>0$,  and the first expectation on the right-hand side is finite because it is the expectation of the exponential local martingale associated with $2q \int_0^t \rho_s^{2\alpha-1}\d W_s$. We have proved \eqref{Falpha:finite}.

We now conclude this step by using \eqref{eq:LFalpha} and \eqref{choice} in the same order: for all $q\in (\beta,\infty)$, 
\begin{align}
\ms LF_\alpha(q)&=\frac{\Lambda_\alpha }{q[1-\Lambda_\alpha/(C_\alpha^\star q^\alpha)]}
=\frac{\pi \cdot \frac{2^{1-\alpha}}{\Gamma(\alpha)}\cdot \beta^\alpha }{q[1-(\beta/q)^\alpha]}\lra \frac{2\pi}{q \log (q/\beta)}\quad\mbox{as }\alpha\searrow 0,\label{F0:exp1}
\end{align}
where the limit follows since $\Gamma(\alpha)\alpha=\Gamma(\alpha+1)\to \Gamma(1)=1$ and $(\d /\d \alpha)(x^\alpha)\big\rvert_{\alpha=0}=\log x$. This proves \eqref{main:1-1-2} for $f\equiv 1$. \medskip 

\noindent \hypertarget{thm:3-1-4}{{\sc Step 4.}}\; In summary, \eqref{main:1-1-1} follows upon applying \eqref{Pt:LT} and \eqref{F0:exp1} to \eqref{Lap:asymp}. Note that here we use in particular the assumption that $U_q^{(0)}f(0)$, as a limit defined by \eqref{def:U0}, exists in $\R$. Therefore, for \eqref{Lap:asymp}, the second summand on the right-hand side vanishes in the limit, and the third summand converges to the last term in \eqref{main:1-1-1}. The proof of  \eqref{main:1-1-2} is similar if we multiply both sides of \eqref{Exp2:LT} by $\Lambda_\alpha$ and then apply \eqref{Pt:LT0} and \eqref{F0:exp1}. The assumption on $U_q^{(0)}f(0)$ is applied similarly in this case. The proof of Theorem~\ref{thm:3} (1$\cc$) is complete.\medskip

\noindent (2$\cc$) \; Fix $z_0\in \Bbb C\setminus\{0\}$. For any nonnegative $f\in \C_b(\Bbb C)$, we consider
\begin{align}\label{def:phipsi}
\phi^{f}_\alpha(t)\,\defeq\, \E^{\al}_{z_0}[\e^{\Lambda_\alpha L_{t}}f(Z_{t})],\quad 
\psi^f_\alpha(t)\,\defeq\, \Lambda_\alpha\E^{\al}_0[\e^{\Lambda_\alpha L_{t}}f(Z_{t})],
\end{align} 
along with the following properties to be verified:
\begin{itemize}
\item [(\ref{def:phipsi}-1)] Every sequence $\{\phi^{f}_{\alpha_n}\}$
 with $1/4\geq \alpha_n\searrow 0$ has a subsequence that converges pointwise to a function continuous on $\R_+$.  
\item [(\ref{def:phipsi}-2)] Every sequence $\{\psi^{f}_{\alpha_n}\}$  with $1/4\geq\alpha_n\searrow 0$ has a subsequence that converges pointwise to a function continuous on $\R_+$. 
\end{itemize}
Accordingly, we will prove (2$\cc$) by showing the following properties in four steps:
\begin{itemize}
\item  [-- Step~1:] For any $f$, $\{\phi^{f}_\alpha\}_{\alpha\in (0,1/4]}$ and $\{\psi^f_\alpha\}_{\alpha\in (0,1/4]}$ are uniformly of exponential order $q$ for any $q\in (\beta,\infty)$.
\item [-- Step~2:] For any $f$ satisfying (\ref{def:phipsi}-1), the required convergence \eqref{main:1-2-1} of Theorem~\ref{thm:3} (2$\cc$) holds. Also, \eqref{main:1-2-2} holds
 for any $f$ satisfying (\ref{def:phipsi}-2).
\item [-- Step~3:] For any $f$,  (\ref{def:phipsi}-1) holds.
\item [-- Step~4:] For any $f$,  (\ref{def:phipsi}-2) holds.
\end{itemize}

\noindent \hypertarget{thm:3-2-1}{{\sc Step 1.}}\; We first show that $\{\phi^{f}_\alpha\}_{\alpha\in (0,1/4]}$ satisfies the following bound:
\begin{align}\label{phi:expbdd}
\sup_{\alpha\in (0,1/4]}\sup_{T\in [0,\infty)}\e^{-q T}\phi^{f}_\alpha(T)<\infty,\quad\forall\;q\in (\beta,\infty).
\end{align}
This property holds since
for all $\alpha\in (0,1/4]$, $T\in[0,\infty)$ and  $q\in (\beta,\infty)$, 
\begin{align}\label{Falpha:bdd}
\e^{-q T}\E^{\al}_{z_0}[\e^{\Lambda_\alpha\Lambda_T}]=\int_T^\infty q\e^{-q t}\d t\E^{\al}_{z_0}[\e^{\Lambda_\alpha\Lambda_T}]\leq \int_0^\infty q\e^{-q t}\E^{\al}_{z_0}[\e^{\Lambda_\alpha\Lambda_t}]\d t,
\end{align}
and the limit superior of the rightmost side as $\alpha\searrow 0$ is finite by \eqref{main:1-1-1} for $f\equiv\1$.

For \eqref{phi:expbdd} with $\phi_\alpha^{f}$ replaced by $\psi_\alpha^f$, the proof follows similarly if we multiply both sides of \eqref{Falpha:bdd} by $\Lambda_\alpha$, replace $z_0$ with $0$, and use \eqref{main:1-1-2}.\medskip 

\noindent  \hypertarget{thm:3-2-4}{{\sc Step 2.}}\;
We first explain why (\ref{def:phipsi}-1) implies \eqref{main:1-2-1}. It suffices to show that whenever $\phi_0^{f}$ is a continuous function given by the pointwise limit of some sequence $\{\phi^{f}_{\alpha_n}\}$ with $\alpha_n\searrow 0$,
\begin{align}\label{phiPhi:id}
\phi^{f}_0(t)=\Phi^{f}(t), \quad \forall\;t\in \R_+,
\end{align}
where $\Phi^{f}(t)$ denotes the function on the right-hand side of \eqref{main:1-2-1}. Note that \eqref{phiPhi:id} is enough to obtain \eqref{main:1-2-1} since the Laplace transform of the right-hand side of \eqref{main:1-2-1} (with respect to $\int_0^\infty \e^{-q t}(\cdot)\d t$) equals the right-hand side of \eqref{main:1-1-1} by  \eqref{Inv:sbeta}. Moreover, thanks to the continuity of $\phi^{f}_0$ and $\Phi^{f}$ on $\R_+$,
the proof of \eqref{phiPhi:id} amounts to showing that for fixed $q_0\in (\beta,\infty)$,
\begin{align}\label{Falpha:Lap1}
\int_0^T\e^{-q_0t}\phi^{f}_0(t)\d t=\int_0^T\e^{-q_0t}\Phi^{f}(t)\d t,\quad \forall\; T\in (0,\infty).
\end{align}
Note that $\int_0^T\e^{-q_0t}\phi^{f}_0(t)\d t<\infty$ by \eqref{phi:expbdd}.
To see \eqref{Falpha:Lap1}, let $q\in (0,\infty)$, and consider
 \begin{align}
&\quad \int_0^\infty \e^{-q T}\int_0^T\e^{-q_0 t}\phi^{f}_0(t)\d t\d T\notag\\
&=q^{-1}\int_0^\infty\e^{-(q+q_0) t}\phi^{f}_0(t)\d t
=\lim_{n\to\infty}q^{-1}\int_0^\infty\e^{-(q+q_0)t}\phi^{f}_{\alpha_n}(t)\d t\notag\\
&=q^{-1}\int_0^\infty \e^{-(q+q_0)t}\Phi^{f}(t)\d t
=\int_0^\infty \e^{-q t}\int_0^T \e^{-q _0t }\Phi^{f}(t)\d t \d T,\notag
\end{align}
where the second equality holds by \eqref{phi:expbdd} and dominated convergence, and the third equality uses \eqref{main:1-1-1}. By normalization, a standard result of weak convergences of probability measures implies \eqref{Falpha:Lap1}. See \cite[Lemma~4.18]{C:DBG3+} for more details. We have proved \eqref{main:1-2-1}.

The required implication that (\ref{def:phipsi}-2) gives \eqref{main:1-2-2} can be deduced by almost the same argument. Consequently, if $\Psi^f(t)$ denotes the function on the right-hand side of \eqref{main:1-2-2}, then
\begin{align}\label{def:BPsi}
\psi^f_0(t)=\Psi^f(t),\quad\forall\; t\in \R_+,
\end{align}
whenever $\psi^f_0$ is a continuous function given by the pointwise limit of some sequence $\{\psi^f_{\alpha_n}\}$ with $\alpha_n\searrow 0$. This proves \eqref{main:1-2-2} since the Laplace transform of $\Psi^f$ equals the right-hand side of \eqref{main:1-1-2} by \eqref{Inv:sbeta}. 
 \medskip

\noindent  \hypertarget{thm:3-2-2}{{\sc Step~3.}} \; To verify (\ref{def:phipsi}-1) for any nonnegative $f\in \C_b(\Bbb C)$, apply \eqref{eq:FKeq1} and \eqref{eq:LTmain1} to get the next two equalities, where $C_\alpha$ on the right-hand side of the second obeys \eqref{def:Cpalpha} for $C'_\alpha=C_\alpha^\star$:
\begin{align}
 \E_{z_0}[\e^{\Lambda_\alpha L_{t}}f(Z_{t})]
 &= \E_{z_0}[f(Z_{t})]+\E_{z_0}\left[\Lambda_\alpha\int_0^t  \E_0[\e^{L_{t-s}}f(Z_{t-s})]\d L_s\right]\notag\\
 &=\E_{z_0}[f(Z_{t})]+\frac{C_\alpha 2^\alpha}{\Gamma(1-\alpha)}\int_0^t  \Lambda_\alpha\E_0[\e^{\Lambda_\alpha L_{t-s}}f(Z_{t-s})]s^{\alpha-1}\exp\left(-\frac{\lvert z_0 \rvert^2}{2s}\right)\d s\notag\\
  &=\E_{z_0}[f(Z_{t})]+\frac{1}{\pi\cdot 2^{1-\alpha}}\int_0^t  \psi^{f}_\alpha(t-s)s^{\alpha-1}\exp\left(-\frac{\lvert z_0 \rvert^2}{2s}\right)\d s,\label{der:functional}
\end{align}
where the last equality follows from the choice of $C_\alpha^\star$ in \eqref{choice} and uses the definition of $\psi_\alpha^{f}$. 

To obtain (\ref{def:phipsi}-1) from \eqref{der:functional}, note that
by \eqref{U0:PBM} of Lemma~\ref{lem:PBM}, $t\mapsto \E_{z_0}^{\al}[f(Z_t)]$ converges pointwise to the continuous function $t\mapsto P_tf(z_0)$ as $\alpha\searrow 0$. Hence, it remains to show that the integral terms in \eqref{der:functional}, as functions of $t$ and indexed by $\alpha\in (0,1/4]$, are relatively sequentially compact in $C(\R_+,\R_+)$ with respect to the usual topology, that is, the topology of uniform convergences on compacts. Moreover, by the Arzel\`a--Ascoli theorem and Step~1, it suffices to show that these terms are equicontinuous on compacts. To this end, note that the Leibniz integral rule gives  
\begin{align*}
&\eqspace\frac{\d}{\d t}\int_0^t \psi_\alpha^{f}(s)(t-s)^{\alpha-1}\exp\left(-\frac{\lvert z_0 \rvert^2}{2(t-s)}\right)\d s\\
&=\psi_\alpha^{f}(t)\lim_{s\nearrow t}(t-s)^{\alpha-1}\exp\left(-\frac{\lvert z_0 \rvert^2}{2(t-s)}\right) +\int_0^t \psi_\alpha^{f}(s)\frac{\d}{\d t}(t-s)^{\alpha-1}\exp\left(-\frac{\lvert z_0 \rvert^2}{2(t-s)}\right)\d s\\
&=\int_0^t \psi_\alpha^{f}(s)\frac{\d}{\d t}(t-s)^{\alpha-1}\exp\left(-\frac{\lvert z_0 \rvert^2}{2(t-s)}\right)\d s,
\end{align*}
where the last equality uses the assumption $z_0\neq 0$. The foregoing integral terms for $\alpha$ ranging over $(0,1/4]$ are uniformly bounded on compacts in $\R_+$ since $\sup_{\alpha\in (0,1/4]}\psi_\alpha^{f}(T)<\infty$ for all $0<T<\infty$ by Step~1 and we have
\begin{align*}
&\eqspace \frac{\d}{\d t}(t-s)^{\alpha-1}\exp\left(-\frac{\lvert z_0 \rvert^2}{2(t-s)}\right)\\
&=(\alpha-1)(t-s)^{\alpha-2}\exp\left(-\frac{\lvert z_0 \rvert^2}{2(t-s)}\right)+(t-s)^{\alpha-1}\exp\left(-\frac{\lvert z_0 \rvert^2}{2(t-s)}\right)\frac{\lvert z_0 \rvert^2}{2(t-s)^2}.
\end{align*}
Hence, by the mean value theorem, the integral terms in \eqref{der:functional} for $\alpha$ ranging over $(0,1/4]$ are relatively sequentially compact in $C(\R_+,\R_+)$, as required.  \medskip

\noindent  \hypertarget{thm:3-2-3}{{\sc Step 4.}}\;
To verify (\ref{def:phipsi}-2) for any nonnegative $f\in \C_b(\Bbb C)$, note that by \eqref{eq:FKeq2},
\begin{align}\label{psi:dec}
 \Lambda_\alpha\E_0[\e^{\Lambda_\alpha L_{t}}f(Z_{t})]
&=\Lambda_\alpha\E_0[f(Z_{t})] +\E_0\left[\Lambda_\alpha\int_0^{t}\Lambda_\alpha \e^{\Lambda_\alpha L_s}\E_0[f(Z_{t-s})]\d L_s\right].
\end{align}
Hence, similar to the situation of \eqref{der:functional},
we need to show that
\begin{align}\label{psi:afunction}
t\mapsto \E_0\left[\Lambda_\alpha\int_0^{t}\Lambda_\alpha \e^{\Lambda_\alpha L_s}\E_0[f(Z_{t-s})]\d L_s\right],\quad \alpha\in (0,1/4],
\end{align}
are relatively sequentially compact in $C(\R_+,\R_+)$. So, we consider, for $0\leq t_1\leq t_2\leq T$, 
\begin{align}
&\quad\;\E_0\left[\Lambda_\alpha\int_0^{t_2}\Lambda_\alpha \e^{\Lambda_\alpha L_s}\E_0[f(Z_{t_2-s})]\d L_s-\Lambda_\alpha\int_0^{t_1}\Lambda_\alpha \e^{\Lambda_\alpha L_s}\E_0[f(Z_{t_1-s})]\d L_s\right]\notag\\
\begin{split}\label{psiPsi:0}
&=\E_0\left[\Lambda_\alpha\int_{t_1}^{t_2}\Lambda_\alpha \e^{\Lambda_\alpha L_s}\E_0[f(Z_{t_2-s})]\d L_s\right]\\
&\quad\;+\E_0\left[\Lambda_\alpha\int_0^{t_1}\Lambda_\alpha \e^{\Lambda_\alpha L_s}\{\E_0[f(Z_{t_2-s})]-\E_0[f(Z_{t_1-s})]\}\d L_s\right].
\end{split}
\end{align}

For the first term on the right-hand side of \eqref{psiPsi:0}, we can show that 
\begin{align}\label{psiPsi:1}
\begin{split}
&\forall\;\vep>0\;\forall\;T>0\;\exists\;\delta>0\mbox{ such that } \forall\;0\leq t_1\leq t_2\leq T,\;t_2-t_1\leq \delta,\\
&\left|\E_0\left[\Lambda_\alpha\int_{t_1}^{t_2}\Lambda_\alpha \e^{\Lambda_\alpha L_s}\E_0[f(Z_{t_2-s})]\d L_s\right]\right|\leq \vep.
\end{split}
\end{align}
To see this property, first, note that
\begin{align}
\left|\E_0\left[\Lambda_\alpha\int_{t_1}^{t_2}\Lambda_\alpha \e^{\Lambda_\alpha L_s}\E_0[f(Z_{t_2-s})]\d L_s\right]\right|&\leq \|f\|_\infty\E_0\left[\Lambda_\alpha\int_{t_1}^{t_2}\Lambda_\alpha \e^{\Lambda_\alpha L_s}\d L_s\right]\notag\\
&=\|f\|_\infty \{\Lambda_\alpha\E_0[\e^{\Lambda_\alpha L_{t_2}}]-\Lambda_\alpha\E_0[\e^{\Lambda_\alpha L_{t_1}}]\}\notag\\
&=\|f\|_\infty [F_\alpha(t_2)-F_\alpha(t_1)],
\label{correct1}
\end{align}
where we use the definition \eqref{def:Falpha} of $F_\alpha $ in \eqref{correct1}. Recall the notation $\Psi^f$ in \eqref{def:BPsi}, and note 
that $\Psi^{\1}(t)=\int_0^{t}(2\pi \int_0^\infty \beta^u \tau^{u-1}\Gamma(u)^{-1}\d u)\d\tau $ by definition. 
Then to bound the right-hand side of \eqref{correct1}, note that by \eqref{main:1-1-2} for $f\equiv \1$, Step~1, and \cite[Lemma~4.18]{C:DBG3+}, $F_\alpha(t)$ converges to $ \Psi^{\1}(t) $ pointwise in $(0,\infty)$, and hence, in $\R_+$ since $\Psi^{\1}(t)=0$ and $F_\alpha(0)=\Lambda_\alpha$ converges to $ 0$.
Moreover, since $\Psi^\1$ is continuous and each $F_\alpha$ is increasing, the convergence of $F_\alpha$ to $\Psi^\1$ is uniform on compacts in $\R_+$. By the Arzel\`a--Ascoli theorem, the family $\{F_\alpha\}_{\alpha\in (0,1/4]}$ is equicontinuous on compacts. Applying this property to \eqref{correct1} proves \eqref{psiPsi:1}.

For the second term on the right-hand side of \eqref{psiPsi:0}, we can show the following property:
\begin{align}
\begin{split}\label{psiPsi:2}
&\forall\;\vep>0\;\forall\;T>0\;\exists\;\delta>0\mbox{ such that } \forall\;0\leq t_1\leq t_2\leq T,\;t_2-t_1\leq \delta,\\
&\left|\E_0\left[\Lambda_\alpha\int_0^{t_1}\Lambda_\alpha \e^{\Lambda_\alpha L_s}\{\E_0[f(Z_{t_2-s})]-\E_0[f(Z_{t_1-s})]\}\d L_s\right]\right|\leq \vep.
\end{split}
\end{align}
To see \eqref{psiPsi:2}, we consider
\begin{align}\label{correct:2}
&\quad\;\left|\E_0\left[\Lambda_\alpha\int_0^{t_1}\Lambda_\alpha \e^{\Lambda_\alpha L_s}\{\E_0[f(Z_{t_2-s})]-\E_0[f(Z_{t_1-s})]\}\d L_s\right]\right|\notag\\
&\leq \sup_{\stackrel{\scriptstyle 0\leq s_1<s_2\leq T}{ s_2-s_1\leq t_2-t_1}}|\E_0[f(Z_{s_2})]-\E_0[f(Z_{s_1})]|\cdot [F_\alpha(t_1)-F_\alpha(0)],\quad \forall\;0\leq t_1\leq t_2\leq T.
\end{align}
For the supremum in the last inequality, we will show at the end of this proof that
\begin{align}\label{psiPsi:3}
\lim_{\delta\searrow 0}\sup_{\alpha\in(0,1/4]}\sup_{\stackrel{\scriptstyle 0\leq s_1<s_2\leq T}{ s_2-s_1\leq \delta}}|\E_0^{\al}[f(Z_{s_2})]-\E_0^{\al}[f(Z_{s_1})]|=0.
\end{align}
Also, by (\ref{def:phipsi}-1) for the case of $\psi^{\1}_{\alpha}=F_\alpha$, $\sup_{\alpha\in (0,1/4]}F_\alpha(T)<\infty$ for all $0<T<\infty$. These are enough to get \eqref{psiPsi:2}. To sum up, we obtain  (\ref{def:phipsi}-2) upon applying \eqref{psiPsi:1} and \eqref{psiPsi:2} to \eqref{psiPsi:0} and recalling \eqref{psi:dec}. 

It remains to prove \eqref{psiPsi:3}. Note that as in the proof of \eqref{radialization},
$\E_0[f(Z_{s})]=\E_0[\overline{f}(\rho_{s})]=\E_0[\overline{f}(\sqrt{s}\rho_1)]$ by the Brownian scaling property of Bessel processes, where $\overline{f}$ is defined by \eqref{def:fbar}. Then we consider $s_1,s_2$ near zero and $s_1,s_2$ away from zero separately:
\begin{itemize}
\item For $s_1,s_2$ near zero, note that for all $M>0$,
\begin{align*}
&\quad\;|\E^{\al}_0[f(Z_s)]-f(0)|\\
&\leq \E^{\al}_0[|\overline{f}(\sqrt{s}\rho_1)-f(0)|;\rho_1\leq M]+2\|\overline{f}\|_\infty \P^{\al}_0(\rho_1> M)\\
&=\int_0^M |\overline{f}(\sqrt{s}y)-f(0)|\frac{2^\alpha}{\Gamma(1-\alpha)}y^{1-2\alpha}\exp\left(-\frac{y^2}{2}\right)\d y+2\|\overline{f}\|_\infty \P^{(0)}_0(\rho_1> M).
\end{align*}
In the last equality, the first term follows from \eqref{density} for $x=0$, and the second term follows from the comparison theorem of SDEs as in the proof of Lemma~\ref{lem:PBM} (1$\cc$). Therefore, 
\begin{align}\label{bullet1}
\begin{split}
&\quad\forall\;\vep>0\;\exists\;0<\eta<T\mbox{ such that }\\
&\sup_{\alpha\in (0,1/4]}\sup_{0\leq s_1,s_2\leq \eta}|\E_0^{\al}[f(Z_{s_2})]-\E_0^{\al}[f(Z_{s_1})]|\leq \vep.
\end{split}
\end{align}
\item For $s_1,s_2$ away from zero, note that 
$s\mapsto \sup\{(\d/\d s)\E_0^{\al}[\overline{f}(\rho_s)];\alpha\in (0,1/4]\}$ is bounded on compacts in $(0,\infty)$ by \eqref{density} for $x=0$. So, by the mean value theorem,
\begin{align}
\begin{split}\label{bullet2}
&\quad\forall\;\vep>0\;\forall\;0<\eta<T\;\exists\;\delta<\eta/2\quad\mbox{such that }\\
&\sup_{\alpha\in(0,1/4]}\sup_{\stackrel{\scriptstyle 0\leq s_1<s_2\leq T}{\stackrel{\scriptstyle s_2-s_1\leq \delta}{\eta\leq s_2 }}}|\E_0^{\al}[f(Z_{s_2})]-\E_0^{\al}[f(Z_{s_1})]|\leq \vep.
\end{split}
\end{align}
\end{itemize}
Combining \eqref{bullet1} and \eqref{bullet2} proves \eqref{psiPsi:3}.\medskip
 
The proof of Theorem~\ref{thm:3} (2$\cc$) is complete.
\end{proof}

\section{The lower-dimensional Bessel processes}\label{sec:BES}
This section considers Bessel processes mainly of dimensions lower than two (i.e. $\BES(-\alpha)$ for $-\alpha<0$). We
collect the proofs of some related results applied in the earlier sections.

\subsection{Some moment bounds}\label{sec:BESmom}
Recall that $\widehat{K}_\alpha(\cdot)$ is defined in \eqref{def:hK}. 

\begin{prop}\label{prop:mombdd}
\begin{enumerate}[label={\rm ({\arabic*}$\cc$)}]
\item Given any $\alpha_0\in (0,1/2)$, $M\in(0,\infty)$, $\beta\in (0,\infty)$ and $\eta\leq 0$, 
\begin{align}\label{Lap:bdd}
\sup_{\alpha\in (0,\alpha_0]}\sup_{x_0\in [0, M]}\E^{\al}_{x_0}[\widehat{K}_\alpha(\sqrt{2\beta}\rho_t)^{\eta}]\leq C(\alpha_0,M,\beta,\eta)\e^{C(\alpha_0,M,\beta,\eta)t},\;\forall\;t\in \R_+.
\end{align}

\item Given $\alpha\in (0,1/2)$, $t\in (0,\infty)$ and $f\in \B_+(\R_+)$,
\begin{align}\label{apbdd:1}
\begin{split}
&\sup_{0\leq s\leq t}\sup_{x_0\in\R_+}\E^{\al}_{x_0}[f(\rho_s)]<\infty\\
&\quad \Longrightarrow \sup_{x_0\in \R_+}\E^{\al}_{x_0}\left[\exp \left(q \int_0^t\rho_r^{\eta}\d r\right)f(\rho_t)\right]<\infty
\end{split}
\end{align}
for all $\eta\leq 0$ with $1-2\alpha+\eta>-1$ and all $q\in \R$.
\end{enumerate}
\end{prop}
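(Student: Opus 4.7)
For Part (1$\cc$), the plan is to reduce the bound on $\widehat K_\alpha(\sqrt{2\beta}\rho_t)^\eta$ to an exponential moment of $\rho_t$ itself. Since $\eta\le 0$, the positive extension $\widehat K_\alpha(0)=2^{\alpha-1}\Gamma(\alpha)$ from \eqref{def:hK0} keeps $\widehat K_\alpha(\sqrt{2\beta}x)^\eta$ bounded near $x=0$, while \eqref{asymp:Kinfty} gives the growth $\widehat K_\alpha(\sqrt{2\beta}x)^\eta\less e^{|\eta|\sqrt{2\beta}\,x}$ at infinity; the integral representation \eqref{K:1} makes both bounds uniform over $\alpha\in(0,\alpha_0]$. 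It therefore suffices to show $\sup_{\alpha\in(0,\alpha_0]}\sup_{|x_0|\le M}\E^{\al}_{x_0}[e^{c\rho_t}]\less e^{Ct}$. I would obtain this by applying the $\BESQ$ comparison theorem \cite[2.18 Proposition on p.293]{KS:BM} to \eqref{def:BESQ}: since $2(1-\alpha)<2$ for $\alpha\in(0,\alpha_0]\subset(0,1/2)$, the squared process $\rho_t^2$ is pathwise dominated by a $\BESQ(2)$ started at $x_0^2$, namely $|B_t|^2$ for a planar Brownian motion $B$ with $|B_0|=x_0$. The Gaussian exponential moments of $|B_t|$ then deliver \eqref{Lap:bdd}.

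For Part (2$\cc$), the central estimate is the scaling bound
\[
\sup_{x_0\in\R_+}\E^{\al}_{x_0}[\rho_r^\eta]\leq C(\alpha,\eta)\,r^{\eta/2},\qquad r>0.
\]
Brownian scaling of $\BES(-\alpha)$ yields $\E^{\al}_{x_0}[\rho_r^\eta]=r^{\eta/2}\E^{\al}_{x_0/\sqrt r}[\rho_1^\eta]$, so the bound reduces to showing that $y\mapsto\E^{\al}_y[\rho_1^\eta]$ is bounded on $\R_+$. This function is continuous in $y$, tends to $0$ as $y\to\infty$ (since $\eta\le 0$ while $\rho_1\to\infty$), and is finite at $y=0$ by the explicit density $p_1^{\al}(0,z)\propto z^{1-2\alpha}e^{-z^2/2}$ \emph{exactly} under the hypothesis $1-2\alpha+\eta>-1$. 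Integrating in $r$ (this same hypothesis yields $\eta/2+1>\alpha>0$) then gives
\[
c_s\defeq \sup_{x_0\in\R_+}\E^{\al}_{x_0}\Big[\lambda\textstyle\int_0^s \rho_r^\eta\,dr\Big]\less|\lambda|\,s^{\eta/2+1}\lra 0\quad\text{as } s\searrow 0.
\]

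The final step is a Khas'minskii-type argument that absorbs the $f$-factor. For $\lambda>0$ (the case $\lambda\leq 0$ is trivial since then $e^{\lambda\int\rho^\eta}\leq 1$), set $A_s=\lambda\int_0^s\rho_r^\eta\,dr$ and expand $e^{A_s}=\sum_{k\geq 0}A_s^k/k!$; iterated use of the Markov property together with the hypothesis $M\defeq\sup_{r\leq t,\,y}\E^{\al}_y[f(\rho_r)]<\infty$ produces
\[
\E^{\al}_x[A_s^k f(\rho_s)]=k!\int_{0<r_1<\cdots<r_k<s}\!\!\E^{\al}_x\Big[\lambda\rho_{r_1}^\eta\cdots\lambda\rho_{r_k}^\eta\,\E^{\al}_{\rho_{r_k}}[f(\rho_{s-r_k})]\Big]\,d\vec r\,\leq\,M\,k!\,c_s^k,
\]
so $\sup_x\E^{\al}_x[e^{A_s}f(\rho_s)]\leq M/(1-c_s)$ whenever $s$ is small enough that $c_s<1$. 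Partitioning $[0,t]$ into $\lceil t/s\rceil$ sub-intervals of length $\leq s$ and iterating via the Markov property at each endpoint transmits the bound to $[0,t]$ with constant of order $M(1-c_s)^{-\lceil t/s\rceil}$. The main obstacle here is precisely the coupling of the exponential with the $f$-factor: the hypothesis controls only $\E^{\al}_y[f(\rho_r)]$ rather than $f$ pointwise, and Khas'minskii's expansion is what keeps the semigroup acting on $f$ inside every term of the iteration, avoiding any Cauchy--Schwarz step that would require a stronger assumption on $f$.
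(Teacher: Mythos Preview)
Your argument for (1$\cc$) is essentially the paper's: bound $\widehat K_\alpha$ from below on a bounded interval and by $C(\alpha_0)x^{-1/2}\e^{-x}$ for large $x$, uniformly in $\alpha\in(0,\alpha_0]$, then dominate $\rho_t$ by the modulus of a planar Brownian motion via the $\BESQ$ comparison. One small correction: near zero you should invoke monotonicity, $\widehat K_\alpha(x)\ge\widehat K_\alpha(2)$ on $[0,2]$, with the latter bounded below uniformly in $\alpha$ by continuity of $(\nu,x)\mapsto K_\nu(x)$; the value $\widehat K_\alpha(0)=2^{\alpha-1}\Gamma(\alpha)$ itself diverges as $\alpha\to 0$ and is not what does the work.

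For (2$\cc$) your Khas'minskii expansion is a genuinely different route from the paper's. The paper uses the same scaling input $\sup_{x_0}\E^{\al}_{x_0}[\rho_r^\eta]\le r^{\eta/2}\E^{\al}_0[\rho_1^\eta]$---obtained there directly via the comparison theorem (larger $x_0$ gives stochastically larger $\rho_1$, hence smaller $\rho_1^\eta$), which is cleaner than your continuity-plus-limit-at-infinity argument---but then truncates $\rho^\eta$ to $(\rho\vee\vep)^\eta$, derives the convolution inequality
\[
F_\vep(s)\le \sup_{x_0}\E^{\al}_{x_0}[f(\rho_s)]+\int_0^s\lambda r^{\eta/2}\E_0^{\al}[\rho_1^\eta]\,F_\vep(s-r)\,\d r,
\]
applies an $L^1$-kernel Gr\"onwall lemma \cite[Lemma~15 on p.22--23]{Dalang}, and passes $\vep\to 0$ by Fatou. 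Your direct power-series expansion avoids both the truncation and Gr\"onwall, at the cost of the time-partitioning step; both approaches rest on the same borderline condition $1-2\alpha+\eta>-1$, which is exactly what makes $\E_0^{\al}[\rho_1^\eta]$ finite and, a fortiori, makes $c_s\to 0$ (equivalently, the paper's kernel integrable).
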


The proof of this proposition requires the following bounds on negative moments. We include $\alpha=0$ here for future reference. 

\begin{lem}\label{lem:BESint}
Given any $\alpha\in [0,1)$ and $\eta\leq 0$ with $1-2\alpha+\eta>-1$, 
\begin{gather}
\sup_{x_0\in \R_+}\E^{\al}_{x_0}[ \rho_t^{\eta}]\leq t^{\eta/2}\E^{\al}_0[\rho_1^{\eta}]<\infty,\quad \forall\;t\in (0,\infty). \label{negativemom}
\end{gather}
\end{lem}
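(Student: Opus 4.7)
The plan is to reduce the bound to two facts: (i) the maximum of $x_0 \mapsto \E^{(-\alpha)}_{x_0}[\rho_t^\eta]$ is attained at $x_0=0$, and (ii) the explicit transition density of $\BES(-\alpha)$ started at $0$ gives a finite integral under the stated condition, with the correct $t$-scaling. Because there is no major geometric obstacle, the point is just to stitch together three standard tools: the Yamada--Watanabe comparison principle for $\BES Q(-\alpha)$, Brownian scaling, and the known density of $\BES(-\alpha)$ at $0$.

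First, I would couple any two solutions of the $\BES Q(-\alpha)$ SDE \eqref{def:BESQ} started at $x_0^2 \leq (x_0')^2$ through the same driving Brownian motion. Since the drift is constant and the noise coefficient is $\sqrt{\cdot}$-H\"older, the pathwise comparison theorem \cite[2.18 Proposition on p.293]{KS:BM} applies exactly as in the proof of Proposition~\ref{prop:nonnegative}, yielding $\rho_t \leq \rho'_t$ for all $t \geq 0$ almost surely. Because $\eta \leq 0$, the map $y \mapsto y^\eta$ is non-increasing on $(0,\infty)$, so $\rho_t^\eta \geq (\rho'_t)^\eta$. Taking expectations gives the monotonicity $\E^{(-\alpha)}_{x_0}[\rho_t^\eta] \geq \E^{(-\alpha)}_{x_0'}[\rho_t^\eta]$, so the supremum over $x_0 \in \R_+$ is attained at $x_0 = 0$.

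Next, Brownian scaling applied to \eqref{def:BESQ} with $\rho_0 = 0$ shows that $\{\rho_{ct}/\sqrt{c}; t \geq 0\}$ has the same law as $\{\rho_t; t \geq 0\}$ under $\P^{(-\alpha)}_0$, for every $c > 0$; in particular $\E^{(-\alpha)}_0[\rho_t^\eta] = t^{\eta/2}\E^{(-\alpha)}_0[\rho_1^\eta]$. Combined with the previous step, this yields the displayed inequality up to proving $\E^{(-\alpha)}_0[\rho_1^\eta] < \infty$. For this final step, I would invoke the explicit transition density of $\BES(-\alpha)$ from $0$ (\eqref{density} in the paper), which is of the form $p^{(-\alpha)}_1(0, y) = c_\alpha\, y^{1-2\alpha} \e^{-y^2/2}$ for $y > 0$. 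Thus
\[
\E^{(-\alpha)}_0[\rho_1^\eta] = c_\alpha \int_0^\infty y^{\eta + 1 - 2\alpha}\, \e^{-y^2/2}\, \d y,
\]
which converges at $y = \infty$ by the Gaussian factor and at $y = 0$ precisely when $\eta + 1 - 2\alpha > -1$; this is the hypothesis of the lemma. The proof is essentially complete; the only mildly delicate point is the pathwise comparison step, which is classical but must be phrased via the squared SDE so that Yamada--Watanabe applies.
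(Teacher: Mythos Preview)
Your proposal is correct and follows essentially the same approach as the paper: both use the comparison theorem for $\BES Q(-\alpha)$ to reduce to the starting point $0$, Brownian scaling to extract the factor $t^{\eta/2}$, and the explicit density \eqref{density} to verify finiteness under the condition $1-2\alpha+\eta>-1$. The only cosmetic difference is the order of the first two steps---the paper first scales ($\E^{\al}_{x_0}[\rho_t^\eta]=t^{\eta/2}\E^{\al}_{x_0/\sqrt t}[\rho_1^\eta]$) and then compares to $0$, whereas you compare first and scale afterward; both orderings are equally valid.
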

\begin{proof}
The first inequality in \eqref{negativemom} follows since
\begin{align}\label{ineq:BESint}
\quad \E_{x_0}^{\al}[\rho_t^{\eta}]= t^{\eta/2}\E^{\al}_{x_0/\sqrt{t}}[\rho_1^{\eta}]\leq t^{\eta/2}\E^{\al}_0[\rho_1^{\eta}].
\end{align}
In more detail, for \eqref{ineq:BESint}, the equality follows from the Brownian scaling property of $\BES(-\alpha)$ \cite[(1.6) Proposition on p.443]{RY}, and the inequality follows by using the comparison theorem of SDEs \cite[2.18 Proposition on p.293]{KS:BM} and the nonpositivity of $\eta$.

To see the second inequality in \eqref{negativemom}, apply the explicit formula of the transition probability density function of $\rho_1$ [cf. \eqref{density} for $x=0$ and $t=1$]. This gives the required inequality:
\begin{align}\label{alpha0:finite}
\E^{\al}_{0}[\rho_1^{\eta}]
&=\int_0^\infty \frac{2^{\alpha}}{\Gamma(1-\alpha)}\exp\left(-\frac{y^2}{2}\right) y^{1-2\alpha+\eta}\d y<\infty,
\end{align}
where the last inequality uses the assumption of $\eta$ that it satisfies $1-2\alpha+\eta>-1$.
\end{proof}

\begin{proof}[Proof of Proposition~\ref{prop:mombdd}]
($1\cc$) \; We first claim that for all $0<\alpha\leq \alpha_0<1/2$,
\begin{align}
\widehat{K}_\alpha(x)&\more 1,\quad 0\leq x\leq 2,\label{LB1:K}\\
\widehat{K}_\alpha(x) &\geq C(\alpha_0)x^{-1/2}\e^{-x},\quad 2\leq x<\infty.\label{LB2:K}
\end{align}
To get \eqref{LB1:K}, we use \eqref{Knu:int} with $\nu=\alpha$ so that for all $0<x\leq 2$ and $\alpha>0$,
\begin{align*}
\widehat{K}_\alpha(x)&\geq 2^{-1}\int_0^\infty\e^{-s}\int_{(1/s)\vee 1}^\infty \e^{-u}u^{-1}\d u\d s.
\end{align*} 
This is enough for \eqref{LB1:K} since the right-hand side is a universal constant. 
Next, for $x\geq 2$, \eqref{LB2:K} holds by using \eqref{K:3} for $\nu=\alpha$ and the integral representation of the gamma function:
\begin{align*}
\forall\;0<\alpha\leq \alpha_0<1/2,\quad
K_\alpha(x)&\more \frac{\e^{-x}}{\sqrt{x}}\cdot \frac{\int_0^\infty \e^{-t}t^{\alpha-1/2}(1+\frac{t}{4})^{\alpha-1/2}\d t}{\int_0^\infty \e^{-t}t^{\alpha-1/2}\d t}\\
&\more \frac{\e^{-x}}{\sqrt{x}}\cdot \frac{\int_0^1 \e^{-t}t^{\alpha-1/2}C(\alpha_0)\d t+\int_1^\infty \e^{-t}t^{-1}(1+\frac{t}{4})^{-1/2}\d t}{\int_0^1 \e^{-t}t^{\alpha-1/2}\d t+\int_1^\infty \e^{-t}t^{\alpha_0-1/2}\d t}\\
&\geq C(\alpha_0)x^{-1/2}\e^{-x}.
\end{align*}
The last inequality holds since $(s+a)/(s+b)\more C(a,b)$ for all $s\in\R_+$ and $a,b\in (0,\infty)$.

Now, to prove \eqref{Lap:bdd}, let $B$ denote a complex-valued planar Brownian motion with $B_0=0$ and $\E[|\Re B_1|^2]=\E[|\Im B_1|^2]=1$. Then for $\eta\leq 0$, the first inequality below holds by \eqref{LB1:K} and \eqref{LB2:K}, and the second inequality holds by the comparison theorem of SDEs:
\begin{align*}
\E^{\al}_{x_0}\big[\widehat{K}_\alpha(\sqrt{2\beta}\rho_t)^{\eta}\big]&\less 1+C(\alpha_0) \E^{\al}_{x_0}\big[(\sqrt{2\beta}\rho_t)^{-\eta/2}\e^{-\eta \sqrt{2\beta}\rho_t};\sqrt{2\beta}\rho_t\geq 2\big]\\
&\leq 1+C(\alpha_0,\beta,\eta)\E\big[\lvert x_0+B_t\rvert^{-\eta/2}\e^{-\eta\sqrt{2\beta}\lvert x_0+B_t\rvert}\big].
\end{align*}
Since $\E[\e^{q\lvert B_t\rvert}]\less \e^{q^2t}$ for all $q\in \R$, as mentioned in the proof of Lemma~\ref{lem:contexp}, the last inequality is enough to get \eqref{Lap:bdd}.\medskip 

\noindent {\rm (2$\cc$)} \; It is enough to work with the case of $q>0$. We consider the following approximations of the supremum in the necessary condition of \eqref{apbdd:1}: For all $\vep\in (0,1)$,
\begin{align*}
F_\vep(s)\,\defeq \,\sup_{x_0\in \R_+}\E^{\al}_{x_0}\left[\exp \left( \int_0^s g_\vep(\rho_r)\d r\right)f(\rho_s)\right],\quad\mbox{where}\quad
g_\vep(y)\,\defeq \,(q y^\eta)\wedge  \vep^{-1}.
\end{align*}

We first show that these functions $F_\vep$ satisfy the following inequalities:
\begin{align}\label{conv:ineq}
F_\vep(s)\leq \sup_{x_0\in\R_+}\E^{\al}_{x_0}[f(\rho_s)]+\int_0^s q r^{\eta/2}\E_0^{\al}[\rho_1^{\eta}]F_\vep(s-r)\d r,\quad 0\leq s\leq t.
\end{align}
To see \eqref{conv:ineq}, note that since $g_\vep(y)\leq q y^\eta$ and $f\geq 0$,
the elementary expansion $\e^{\int_0^s h(r)\d r}=1+\int_0^s h(r)\e^{\int_r^sh(v)\d v}\d r$ and the Markov property of the Bessel process give
\begin{align}
&\quad\; \E^{\al}_{x_0}\left[\exp \left(\int_0^s g_\vep(\rho_r)\d r\right)f(\rho_s)\right]\notag\\
&\leq \E^{\al}_{x_0}[f(\rho_s)]+\E^{\al}_{x_0}\left[\int_0^sq\rho_r^{\eta}\E^{\al}_{\rho_r}\left[\exp\left(\int_0^{s-r}g_\vep(\rho_v)\d v\right)f(\rho_{s-r})\right]\d r\right].\label{Fvep:int}
\end{align}
The last inequality leads to \eqref{conv:ineq} upon applying \eqref{negativemom}.

To complete the proof of \eqref{apbdd:1}, observe that \eqref{conv:ineq} is a convolution-type inequality where $r\mapsto q r^{\eta/2}\E_0^{\al}[\rho_1^{\eta}]$ is independent of $\vep$ and in $ L_1([0,t],\d r)$. (The $L_1$-property holds since the second inequality in \eqref{negativemom} holds and
 the assumption of $1-2\alpha+\eta>-1$ implies $\eta/2>-1+\alpha>-1$.) Also, the bounded continuity of $g_\vep$ and the sufficient condition in \eqref{apbdd:1} imply $\sup_{0\leq s\leq t}F_\vep(s)<\infty$. An extension of Gr\"onwall's lemma to convolution-type inequalities \cite[Lemma~15 on p.22--23]{Dalang} applies. We get
\begin{align}\label{Gron:bdd}
\sup_{0\leq s\leq t}F_\vep(s)\leq C(\alpha,\eta,q,f,t),\quad \forall\;\vep\in (0,1).
\end{align}
By passing $\vep\searrow 0$ and using Fatou's lemma together with $\E^{\al}_{x_0}[\int_0^\infty\1_{\{0\}}(\rho_r)\d r]=0$, \eqref{Gron:bdd} extends to the required necessary condition of \eqref{apbdd:1}. The proof is complete.
\end{proof}

\begin{rmk}
The usual methods closely related to the one used to obtain \eqref{Gron:bdd} are Khas'minskii's lemma \cite[Theorem~3]{Kha:Sol}  and Portenko's lemma~\cite[Lemma~1]{Port:Diff}. \qed 
\end{rmk}

\subsection{Explicit formulas for the local times}\label{sec:BESlt}
In this subsection, we turn to the local times of $\BES(-\alpha)$ for $\alpha\in (0,1)$.\smallskip

\noindent {1).} To characterize the local times, we first specify the speed measure $m_\alpha$ of $\BES(-\alpha)$, for $\alpha\in (0,1)$, with a normalization following the definition in \cite[Section~3 of Chapter~VII on pp.300+]{RY}. The speed measure accompanies the scale function $s_\alpha$, so we choose both of them as follows, using a fixed, but arbitrary, constant $C_\alpha\in (0,\infty)$:
\begin{align}
s_\alpha(y)&=\alpha^{-1}C_\alpha y^{2\alpha},\quad 
m_\alpha(\d y)=
C_\alpha^{-1} y^{1-2\alpha}\d y,\quad y\geq 0.\label{def:scale+speed}
\end{align}
See \cite[p.446]{RY} for the special case of $C_\alpha=\alpha$ by taking $\nu=-\alpha$ there in \cite{RY}.

With respect to $(s_\alpha,m_\alpha)$, the associated local times are given by a jointly continuous family $\{L^y_t;(y,t)\in \R^2_+\}$ such that 
\begin{itemize}[leftmargin=3\labelsep]
\item $\{L^y_t;t\geq 0\}$ is a Markovian local time at level $y$ \cite[Sections~IV.2--IV.4 on pp.105+]{Bertoin}, and
\item the occupation times formula holds:
\begin{align}\label{DLT:otf}
\int_0^t g(\rho_s)\d s=\int_0^\infty g(y)L^y_tm_\alpha(\d y),\quad\forall\; g\in \B_+(\R_+),\;t\in \R_+.
\end{align}
\end{itemize}
See It\^{o} and McKean~\cite[Section~5.4 on pp.174+]{IM:Diffusion} or Marcus and Rosen~\cite[Chapter~3 on pp.62+]{MR:Markov} for the existence of $\{L^y_t;(y,t)\in \R^2_+\}$. Note that for any $y\in \R_+$, $\{L^y_t;t\geq 0\}$ is indeed a particular case of those Markovian local times considered in \cite[Sections~IV.2--IV.4 on pp.105+]{Bertoin} since any $y\geq 0$ is regular and instantaneous under $\BES(-\alpha)$.\medskip 

\noindent {2).} The following proposition records a basic formula we have used earlier in applications of the local times.

\begin{prop}\label{prop:density}
Given any $\alpha\in(0,1)$, $C_\alpha\in(0,\infty)$ for \eqref{def:scale+speed},
 $x_0\geq 0$, $y\geq 0$, and $f\in \B_+(\R_+)$, we have
\begin{align}\label{eq:LTdensity}
\E^{\al}_{x_0}\left[\int_0^\infty f(t)\d L^y_t\right]=\int_0^\infty f(t)\bar p_t^{\al}(x_0,y)\d t.
\end{align}
Here, with $I_{-\alpha}$ denoting the modified Bessel function of the first kind of index $-\alpha$,
\begin{align}\label{def:barpt}
\begin{split}
\bar p_t^{\al} (x,y)\,&\defeq
\begin{cases}
\displaystyle \frac{C_\alpha x^{\alpha}y^{\alpha}}{t}\exp\left(-\frac{x^2+y^2}{2t}\right)I_{-\alpha}\left(\frac{xy}{t}\right),& x>0,\; y>0,\\
\vspace{-.4cm}\\
\displaystyle \frac{ C_\alpha 2^{\alpha}}{t^{1-\alpha}\Gamma(1-\alpha)}\exp\left(-\frac{x^2}{2t}\right),& x>0,\; y=0,\\
\vspace{-.4cm}\\
\displaystyle \frac{ C_\alpha 2^{\alpha}}{t^{1-\alpha}\Gamma(1-\alpha)}\exp\left(-\frac{y^2}{2t}\right),& x=0,\; y\geq 0.
\end{cases}
\end{split}
\end{align}
Moreover, it holds that $L^y_\infty=\infty$, $\P^{\al}_{x_0}$-a.s.
\end{prop}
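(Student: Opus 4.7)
The plan is to reduce \eqref{eq:LTdensity} to the identity
\[
\E^{(-\alpha)}_{x_0}[L^y_t] = \int_0^t \bar p^{(-\alpha)}_s(x_0,y)\,\d s, \quad \forall\;(x_0,y,t)\in \R_+^3,
\]
and then extend to general $f\in \B_+(\R_+)$ by a standard monotone class argument after writing $f$ as an increasing limit of linear combinations of indicators $\1_{[0,t]}$. The key observation is that, comparing the formulas in \eqref{def:barpt} with the classical Lebesgue-density transition kernel of $\BES(-\alpha)$, one verifies that $\bar p^{(-\alpha)}_t(x,y) = C_\alpha\, p^{(-\alpha)}_t(x,y)/y^{1-2\alpha}$ for $x,y>0$, where
\[
p^{(-\alpha)}_t(x,y) = \frac{y^{1-\alpha}x^\alpha}{t}\exp\left(-\frac{x^2+y^2}{2t}\right)I_{-\alpha}\!\left(\frac{xy}{t}\right),
\]
and the boundary formulas in \eqref{def:barpt} are the continuous extensions obtained from the asymptotic $I_{-\alpha}(z)\sim (z/2)^{-\alpha}/\Gamma(1-\alpha)$ as $z\searrow 0$. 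In particular $y^{-1+2\alpha}p^{(-\alpha)}_t(x,y)\cdot C_\alpha$ is jointly continuous in $(x,y,t)\in\R_+^2\times(0,\infty)$.

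To derive the reduced identity, I would fix $x_0\in\R_+$ and $t>0$, take $g\in \C_c((0,\infty))$, apply expectation to the occupation times formula \eqref{DLT:otf}, use Fubini on the right, and insert the transition density:
\begin{align*}
\int_0^\infty g(y)\,\E^{(-\alpha)}_{x_0}[L^y_t]\,m_\alpha(\d y)
&=\int_0^t\!\int_0^\infty g(y)\,p^{(-\alpha)}_s(x_0,y)\,\d y\,\d s\\
&=\int_0^\infty g(y)\left(\int_0^t \bar p^{(-\alpha)}_s(x_0,y)\,\d s\right)C_\alpha^{-1}y^{1-2\alpha}\d y.
\end{align*}
Since $m_\alpha(\d y)=C_\alpha^{-1}y^{1-2\alpha}\d y$ on $(0,\infty)$ and $g$ is arbitrary, the Lebesgue differentiation theorem yields the identity for Lebesgue-a.e.\ $y\in(0,\infty)$. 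Then the joint continuity of $(y,t)\mapsto \E^{(-\alpha)}_{x_0}[L^y_t]$ (inherited from the assumed joint continuity of $\{L^y_t\}$ together with the a-priori bound $\E^{(-\alpha)}_{x_0}[L^y_t]\le \E^{(-\alpha)}_{x_0}[L^y_\infty]$ combined with a localization, or more directly dominated convergence using the negative moment bounds in Lemma~\ref{lem:BESint}) and the joint continuity in $y$ of $\int_0^t \bar p^{(-\alpha)}_s(x_0,y)\d s$ upgrades the equality to every $y\in (0,\infty)$.

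For the endpoint $y=0$, I would pass to the limit $y\searrow 0$ on both sides of the identity. The left-hand side is continuous by the joint continuity of $L^y_t$ in $y$, and the right-hand side admits a dominated-convergence argument using the explicit asymptotic $y^\alpha I_{-\alpha}(xy/s)\to (2s/x)^\alpha/\Gamma(1-\alpha)$ as $y\searrow 0$ (with a uniform-in-$s$ local bound away from $s=0$; the $s\searrow 0$ piece is integrable thanks to the factor $s^{\alpha-1}$). The value recovered is exactly the second line of \eqref{def:barpt}. The case $x_0=0$ is treated similarly, using the symmetry $p^{(-\alpha)}_s(x,y)/y^{1-2\alpha}$ in $(x,y)$ to reduce to the previous case or by direct substitution into the occupation formula with $x_0=0$.

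The main obstacle will be the endpoint analysis at $y=0$ (and to a lesser extent at $x_0=0$): justifying the passage to the limit requires handling the singular behavior of $\bar p^{(-\alpha)}_s(x_0,y)$ as $s\searrow 0$ uniformly in $y$ near $0$, and identifying the limit of $\E^{(-\alpha)}_{x_0}[L^y_t]$ as the diffusion local time normalization at the regular point $0$. The joint continuity of $\{L^y_t\}$ provided in the setup, together with the explicit asymptotic of $I_{-\alpha}$ at the origin, is precisely what removes this obstacle.
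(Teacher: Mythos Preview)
Your route differs from the paper's. The paper does not argue via the occupation times formula at all: it invokes \cite[Theorem~3.6.3]{MR:Markov} directly for $f(t)=\e^{-\lambda t}$, checking only that the $\lambda$-potential densities $\int_0^\infty \e^{-\lambda t}\bar p_t^{\al}(x,y)\d t$ are jointly continuous and symmetric in $(x,y)$, and then extends to general $f$ by approximation with exponentials. Your approach is in effect the consistency check that the paper records in \eqref{otf:extend}, promoted to a proof; it is more self-contained, while the paper's citation buys the pointwise statement for every $y$ immediately without any limiting argument.

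That said, your justification for upgrading the a.e.\ identity to every $y$ has a gap. The bound $\E^{\al}_{x_0}[L^y_t]\le \E^{\al}_{x_0}[L^y_\infty]$ is vacuous: as the paper notes just before the proposition, every $y\in\R_+$ is recurrent for $\BES(-\alpha)$, so $L^y_\infty=\infty$ a.s.\ and the right-hand side is infinite. Likewise, Lemma~\ref{lem:BESint} controls $\E^{\al}_{x_0}[\rho_t^\eta]$ for negative $\eta$; it does not produce a dominating random variable for $\{L^{y_n}_t\}_n$ as $y_n\to y$. Pathwise continuity of $(y,t)\mapsto L^y_t$ together with Fatou gives only $\E^{\al}_{x_0}[L^{y_0}_t]\le \int_0^t \bar p^{\al}_s(x_0,y_0)\d s$, not equality. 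A correct patch is to realize the diffusion local time as the semimartingale local time of the continuous local martingale $s_\alpha(\rho_\cdot)$ at level $s_\alpha(y)$, and use Tanaka's formula to bound $\sup_{y\in K}L^y_t$ in $L^1$ for compact $K\subset\R_+$; alternatively, appeal to \cite[Theorem~3.6.3]{MR:Markov} as the paper does, which is precisely designed to deliver the pointwise identity at every $y$ (including $y=0$) without an ad hoc limit.
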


The function $\overline{p}^{\al}_t(x,y)$ defined by \eqref{def:barpt} relates to the transition probability density function $ p^{\al}_t(x,y)$ of $\BES(-\alpha)$ in the following manner:
\begin{align}\label{pbar_p}
\bar p^{\al}_t(x,y)=\lim_{\tilde{y}\searrow y} p_t^{\al}(x,\tilde{y})\cdot C_\alpha \tilde{y}^{-1+2\alpha},\quad \forall\;x,y\geq 0.
\end{align}
This relation hold since the formula of $ p^{\al}_t(x,y)$ is as follows~\cite[p.446]{RY}:
\begin{align}
\label{density}
 p_t^{\al}(x,y)&=
 \begin{cases}
\displaystyle  \frac{x^{\alpha}y^{1-\alpha}}{t}\exp\left(-\frac{x^2+y^2}{2t}\right)I_{-\alpha}\left(\frac{xy}{t}\right),& x>0,\;y>0,\;t>0,\\
\vspace{-.4cm}\\
\displaystyle \frac{2^\alpha}{t^{1-\alpha}\Gamma(1-\alpha)}y^{1-2\alpha}\exp\left(-\frac{y^2}{2t}\right),& x=0,\;y>0,\;t>0.
\end{cases}
\end{align}
See \eqref{asymp:I} for \eqref{pbar_p} when $x>0$ and $y=0$. Note that \eqref{eq:LTdensity} and \eqref{pbar_p} imply 
\begin{align}\label{otf:extend}
\begin{split}
&\int_0^\infty  \left(\int_{0}^\infty f(t) p^{\al}_t(x_0,y)\d t\right)g(y)\d y\\
&\quad =\int_0^\infty \left(\E^{\al}_{x_0}\left[\int_0^\infty f(t)\d L_t^y\right] C_\alpha^{-1}y^{1-2\alpha}\right)g(y)\d y,
\end{split}
\end{align}
which can be deduced separately by using \eqref{DLT:otf} and the formula of $m_\alpha$ in \eqref{def:scale+speed}.

\begin{proof}[Proof of Proposition~\ref{prop:density}]
First, recall the following properties of $I_{\nu}$ for all $\nu>-1$: 
\begin{align}\label{asymp:I}
I_{\nu}(x)\sim \frac{x^\nu}{2^\nu\Gamma(1+\nu)},\quad x\searrow 0;\quad I_\nu(x)\sim\frac{\e^x}{\sqrt{2\pi x}},\quad x\nearrow \infty.
\end{align}
See \cite[(5.16.4) on p.136]{Lebedev} for \eqref{asymp:I} in the case of $\nu\geq 0$. The extension to $\nu\in (-1,0)$ follows easily by using \cite[the sixth identity in (5.7.9) on p.110]{Lebedev}.

Now, for $f(t)=\e^{-q t}$ with $q\in (0,\infty)$, \eqref{eq:LTdensity} is a particular case of \cite[Theorem~3.6.3 on p.85]{MR:Markov} by taking $m=m_\alpha$ and $P_t(x,\d y)=\bar{p}^{\al}_t(x,y)m_\alpha(\d y)$ in the setting of \cite[p.75]{MR:Markov}, where $m_\alpha$ is from \eqref{def:scale+speed}. This theorem in \cite{MR:Markov} applies since it is readily checked that $\int_0^\infty \e^{-q t}\bar p_t^{\al}(x,y)\d t $ is jointly continuous in $(x,y)\in \R_+^2$ by using \eqref{asymp:I} for a dominated convergence argument and is symmetric in $x$ and $y$. For extensions to general $f\geq 0$, the standard approximations suffice. In more detail, one starts with \eqref{eq:LTdensity} for $f$ given by linear combinations of exponential functions with negative exponents and then applies the Stone--Weierstrass theorem for more general $f$. 

Finally, the almost-sure equality $L^y_\infty=\infty$ can be justified by  \cite[Theorem~8 on p.114]{Bertoin} since 
the recurrence of $y$ \cite[p.442]{RY} holds. 
\end{proof}

The next proposition specializes \eqref{eq:LTdensity} to $y=0$. 

\begin{prop}\label{prop:LTdensity}
Given any $\alpha\in(0,1)$, $C_\alpha\in(0,\infty)$ for \eqref{def:scale+speed}, $f\in \B_+(\R_+)$ and $q\in (0,\infty)$, we have
\begin{align}
\E_{x_0}^{\al}\left[\int_0^\infty f(t)\d L^0_t\right]&=\frac{C_\alpha 2^{\alpha}}{\Gamma(1-\alpha)}\int_0^\infty f(t)t^{\alpha-1}
\exp\left(-\frac{x_0^2}{2t}\right) \d t,\quad \forall\;x_0\in \R_+,\label{eq:LTmain1}\\
\E_{x_0}^{(-\alpha)}\left[\int_0^\infty \e^{-q t}\d L^0_t\right]&=
\begin{cases}
\displaystyle \frac{1}{C'_\alpha q^\alpha} \frac{2^{1-\alpha}}{\Gamma(\alpha)}\widehat{K}_{\alpha}(\sqrt{2q}x_0)&x_0> 0,\label{eq:LTLT}\\
\vspace{-.4cm}\\
\displaystyle \frac{1}{C'_\alpha q^\alpha}&x_0=0,
\end{cases}
\end{align}
where $\widehat{K}_\alpha(x)$ is defined by \eqref{def:hK}, and
\begin{align}\label{def:Cpalpha}
C'_\alpha\;\defeq \;\frac{\Gamma(1-\alpha)}{C_\alpha 2^\alpha\Gamma(\alpha)}.
\end{align}
\end{prop}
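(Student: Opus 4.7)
My plan is to derive \eqref{eq:LTmain1} directly from Proposition~\ref{prop:density} by specializing to $y=0$, and then to obtain \eqref{eq:LTLT} as a Laplace transform of \eqref{eq:LTmain1} recognised via the integral representation \eqref{def:K} of the Macdonald function.

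For \eqref{eq:LTmain1}, I would simply set $y=0$ in the occupation density formula \eqref{eq:LTdensity} and read off the middle/third cases of \eqref{def:barpt}. This gives, for every $x_0\in\R_+$,
\[
\E_{x_0}^{\al}\left[\int_0^\infty f(t)\,\d L_t\right]=\int_0^\infty f(t)\,\frac{C_\alpha 2^{\alpha}}{t^{1-\alpha}\Gamma(1-\alpha)}\exp\!\left(-\frac{x_0^2}{2t}\right)\d t,
\]
which is exactly the claimed formula. (Note that the two $y=0$ branches of \eqref{def:barpt} agree at $x=0$, so the statement covers $x_0=0$ uniformly.)

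For \eqref{eq:LTLT} with $x_0>0$, I would specialise \eqref{eq:LTmain1} to $f(t)=\e^{-\lambda t}$ and apply the change of variables $s=\lambda t$ to obtain
\[
\int_0^\infty \e^{-\lambda t}t^{\alpha-1}\exp\!\left(-\frac{x_0^2}{2t}\right)\d t
=\lambda^{-\alpha}\int_0^\infty \e^{-s-\frac{\lambda x_0^2}{2s}}s^{\alpha-1}\d s.
\]
Comparing with \eqref{def:K} taken at index $-\alpha$ and argument $x=\sqrt{2\lambda}\,x_0$ (so that $x^2/4=\lambda x_0^2/2$), and using $K_{-\alpha}=K_\alpha$ together with the definition \eqref{def:hK} of $\widehat{K}_\alpha$, the inner integral evaluates to $2^{1-\alpha}\widehat{K}_\alpha(\sqrt{2\lambda}\,x_0)$. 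Plugging this back and simplifying via \eqref{def:Cpalpha} yields the first branch of \eqref{eq:LTLT}. For $x_0=0$, the integral reduces to $\Gamma(\alpha)\lambda^{-\alpha}$, and the same use of \eqref{def:Cpalpha} gives the second branch.

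There is no genuine obstacle here: the proposition is a bookkeeping consequence of Proposition~\ref{prop:density} and the classical integral representation of $K_\alpha$. The only point that requires a little care is matching the constants $C_\alpha$ and $C_\alpha'$ through \eqref{def:Cpalpha}, and handling the $x_0=0$ case separately so that the $\widehat{K}_\alpha$-factor is replaced by the limiting constant $\widehat{K}_\alpha(0)=2^{\alpha-1}\Gamma(\alpha)$ from \eqref{def:hK0}, which is consistent with the second branch of \eqref{eq:LTLT}.
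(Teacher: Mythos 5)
Your proposal is correct and follows essentially the same route as the paper: specialize the occupation density formula \eqref{eq:LTdensity} with the kernel \eqref{def:barpt} at $y=0$ to get \eqref{eq:LTmain1}, then take $f(t)=\e^{-\lambda t}$, rescale $s=\lambda t$, and identify the resulting integral via \eqref{def:K} at index $-\alpha$ with $K_{-\alpha}=K_\alpha$, the $x_0=0$ case reducing to $\Gamma(\alpha)\lambda^{-\alpha}$. The constant bookkeeping through \eqref{def:Cpalpha} matches the paper's computation exactly.
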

\begin{proof}
Identity \eqref{eq:LTmain1} just combines \eqref{eq:LTdensity} and \eqref{def:barpt}, both for $y=0$.
To get \eqref{eq:LTLT} for the case of $x_0>0$, first, note that by \eqref{eq:LTmain1},
\begin{align}
\E_{x_0}^{\al}\left[\int_0^\infty \e^{- q t}\d L^0_t\right]&=
 \frac{C_\alpha 2^{\alpha}}{\Gamma(1-\alpha)}
\int_0^\infty \e^{- q t}t^{\alpha-1}\exp\left(-\frac{x_0^2}{2t}\right)\d t\notag\\
&= \frac{C_\alpha 2^{\alpha}}{\Gamma(1-\alpha) q^\alpha}
\int_0^\infty \e^{- t}t^{\alpha-1}\exp\left(-\frac{2 q x_0^2  }{4t}\right)\d t\label{eq:LTLT2-2}\\
&=\frac{C_\alpha 2^\alpha\Gamma(\alpha)}{\Gamma(1-\alpha) q^\alpha} \frac{2^{1-\alpha}}{\Gamma(\alpha)(\sqrt{2 q}x_0)^{-\alpha}}K_{-\alpha}(\sqrt{2 q}x_0),\notag
\end{align}
where the last equality uses \eqref{def:K}. Since $K_{-\alpha}(\sqrt{2 q}x_0)=K_{\alpha}(\sqrt{2 q}x_0)$ by \eqref{K:sym}, the last equality is enough to get \eqref{eq:LTLT} for the case of $x_0>0$. 
To obtain \eqref{eq:LTLT} for $x_0=0$, note that in this case, the integral in \eqref{eq:LTLT2-2} equals $\Gamma(\alpha)$. 
\end{proof}

In particular, the formula of $\E^{\al}_{x_0}[\int_0^\infty\e^{- q t}\d L^0_t]/\E^{\al}_{0}[\int_0^\infty\e^{- q t}\d L^0_t]$ by using \eqref{eq:LTLT} recovers the following formula from \cite[(2.4) on p.5239]{HM} with $\nu=-\alpha$:
\begin{align}\label{eq:T0alpha}
\E^{\al}_{x_0}[\e^{- q T_0(\rho)}]=\frac{2^{1-\alpha}}{\Gamma(\alpha)}\widehat{K}_\alpha(\sqrt{2 q}x_0).
\end{align}

\noindent {3).} 
Finally, we recall the following pathwise representations of the local times $\{L^0_t\}$. See \cite[Theorem~2.1 on p.210]{DRVY} and its proof.

\begin{prop}\label{prop:LTapprox}
Given any $\alpha\in(0,1)$, $C_\alpha\in(0,\infty)$ for \eqref{def:scale+speed}, and $x_0\geq 0$,
 the following holds under $\P^{\al}_{x_0}$:
\begin{gather}
\alpha^{-1}C_\alpha \rho_t^{2\alpha}=\alpha^{-1} C_\alpha x_0^{2\alpha}+2 C_\alpha \int_0^t \rho_s^{2\alpha-1}\d W_s + L^0_t,\label{char1:LT}\\
\sup_{0\leq t\leq T}\left\lvert\int_0^t \kappa_\vep(\rho_s)\d s-L^0_t\right\rvert\xrightarrow[\vep\to 0]{\rm a.s.}0,\quad \forall\; T\in (0,\infty),\label{char2:LT}
\end{gather}
where the stochastic integral in \eqref{char1:LT} is a martingale, and for \eqref{char2:LT},
\begin{align}\label{def:kappa}
\kappa_\vep(y)\;\defeq \;\frac{2C_\alpha(1-\alpha) \vep}{(\vep+y^2)^{2-\alpha}},\quad y\geq 0.
\end{align}  
\end{prop}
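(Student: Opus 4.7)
The strategy is to apply It\^o's formula to a regularized version of the scale function $s_\alpha(y)=\alpha^{-1}C_\alpha y^{2\alpha}$ evaluated along $\{\rho_t\}$, then let the regularization tend to zero and identify the emerging non-decreasing process as $L_t$ via the occupation times formula \eqref{DLT:otf}. For the identification of the drift, the reason that $\kappa_\vep$ is precisely the correct approximating kernel should be tracked carefully.

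First, I would use the $\BES Q(-\alpha)$ SDE \eqref{def:BESQ}, $\d \rho_t^2 = 2(1-\alpha)\d t + 2\rho_t\d W_t$, and apply It\^o's formula to the $C^2$ function $y\mapsto \alpha^{-1}C_\alpha (y+\vep)^\alpha$ evaluated at $Y_t=\rho_t^2$. The drift $2\alpha(1-\alpha)(Y_s+\vep)^{\alpha-1}\d s$ coming from $\d Y_s$ combines with the It\^o correction $2\alpha(\alpha-1)(Y_s+\vep)^{\alpha-2}Y_s\d s$ to produce exactly $2\alpha(1-\alpha)\vep(Y_s+\vep)^{\alpha-2}\d s$, which after multiplication by $\alpha^{-1}C_\alpha$ equals $\kappa_\vep(\rho_s)\d s$ by \eqref{def:kappa}. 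This yields the clean pre-limit identity
\begin{align}\label{plan:Itoeps}
\alpha^{-1}C_\alpha(\rho_t^2+\vep)^\alpha=\alpha^{-1}C_\alpha(x_0^2+\vep)^\alpha+\int_0^t\kappa_\vep(\rho_s)\d s+2C_\alpha\int_0^t(\rho_s^2+\vep)^{\alpha-1}\rho_s\d W_s.
\end{align}

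Next I would pass to the limit $\vep\searrow 0$ in \eqref{plan:Itoeps}. The non-stochastic terms on the left-hand side and the first summand converge pointwise to the corresponding expressions in \eqref{char1:LT}, in fact a.s.\ uniformly on compacts in $t$ by continuity and monotonicity of $\vep\mapsto (y+\vep)^\alpha$. The stochastic integral converges in $L^2$, uniformly on $[0,T]$ (via Doob's maximal inequality and dominated convergence applied to $\int_0^T[(\rho_s^2+\vep)^{\alpha-1}\rho_s-\rho_s^{2\alpha-1}]^2\d s$), to $2C_\alpha\int_0^t\rho_s^{2\alpha-1}\d W_s$; the martingale property of the limit follows from $\E^{(-\alpha)}_{x_0}\int_0^T\rho_s^{4\alpha-2}\d s<\infty$, which is Proposition~\ref{prop:mombdd}~(2$\cc$) with $\eta=4\alpha-2$ (the borderline integrability condition $1-2\alpha+\eta=2\alpha-1>-1$ is satisfied since $\alpha>0$). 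To identify the limit of $\int_0^t\kappa_\vep(\rho_s)\d s$, I would write, by the occupation times formula \eqref{DLT:otf},
\begin{align*}
\int_0^t\kappa_\vep(\rho_s)\d s=\int_0^\infty\kappa_\vep(y)L^y_tC_\alpha^{-1}y^{1-2\alpha}\d y,
\end{align*}
and verify via the substitution $u=y^2/\vep$ and the beta integral that the kernel $\kappa_\vep(y)C_\alpha^{-1}y^{1-2\alpha}=2(1-\alpha)\vep y^{1-2\alpha}/(\vep+y^2)^{2-\alpha}$ is a probability density on $\R_+$ concentrating at $0$; joint continuity of $(y,t)\mapsto L^y_t$ then gives $\int_0^t\kappa_\vep(\rho_s)\d s\to L_t^0=L_t$ as $\vep\to 0$, uniformly on compacts in $t$ since $y\mapsto L^y_t$ is continuous uniformly on compacts in $t$. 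Combining these limits with \eqref{plan:Itoeps} yields \eqref{char1:LT}, and rearranging gives \eqref{char2:LT} directly from the three convergences above.

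The main obstacle is upgrading the convergence of the martingale remainder in \eqref{plan:Itoeps} from $L^2$-uniform to a.s.\ uniform convergence, as required by \eqref{char2:LT}. I would handle this by extracting an a.s.\ convergent subsequence via the $L^2$-estimate and then using the non-decreasing nature of both $t\mapsto\int_0^t\kappa_\vep(\rho_s)\d s$ and $t\mapsto L_t$, together with continuity, to promote the subsequential a.s.\ convergence to a.s.\ convergence of the full family by a standard Dini-type argument; alternatively, one may invoke \cite[Theorem~2.1]{DRVY} directly for \eqref{char2:LT} and use the scheme above only to obtain \eqref{char1:LT}.
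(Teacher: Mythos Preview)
Your approach is correct and in fact supplies the details that the paper omits: the paper does not prove this proposition directly but cites \cite[Theorem~2.1]{DRVY}, and only adds the remark that \eqref{char2:LT} follows from the joint continuity of $\{L^y_t\}$ and the occupation times formula \eqref{DLT:otf} because $\{\kappa_\vep\}$ is an approximation to the identity under $m_\alpha(\d y)$ (the accompanying lemma verifies $\int_0^\infty \kappa_\vep\,\d m_\alpha=1$). Your occupation-times identification of the drift limit is exactly this argument, and your It\^o computation on $(\rho_t^2+\vep)^\alpha$ is the standard route to \eqref{char1:LT}.

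One clarification on the ``obstacle'' you flag at the end: it is not needed. Once you have established, via the occupation times formula and joint continuity, that $\int_0^t\kappa_\vep(\rho_s)\,\d s\to L_t$ a.s.\ uniformly on $[0,T]$, then \eqref{char2:LT} is done. For \eqref{char1:LT}, subtract in your pre-limit identity: the left-hand side, the constant term, and the drift all converge a.s.\ uniformly, so the stochastic integral term is \emph{forced} to converge a.s.\ uniformly as well, and its limit must agree with the $L^2$-limit $2C_\alpha\int_0^t\rho_s^{2\alpha-1}\,\d W_s$ that you already identified. Hence no Dini-type argument or subsequence extraction is required; the logical order is \eqref{char2:LT} first, then \eqref{char1:LT} by subtraction. (A minor point: the finiteness of $\E^{(-\alpha)}_{x_0}\int_0^T\rho_s^{4\alpha-2}\,\d s$ is more directly Lemma~\ref{lem:BESint} with $\eta=4\alpha-2$ than Proposition~\ref{prop:mombdd}~(2$\cc$).)
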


The almost-sure convergence in \eqref{char2:LT} can be viewed as a consequence of the joint continuity of $\{L^y_t;(y,t)\in\R^2_+\}$ and the occupation times formula in \eqref{DLT:otf}. This is due to the fact that $\{\kappa_\vep\}$ is an approximation to the identity under $m_\alpha(\d y)$, as the following lemma shows.

\begin{lem}
For all $\vep\in (0,\infty)$ and $\alpha\in (0,1)$, it holds that 
\[
\int_0^\infty \kappa_\vep(y)m_\alpha(\d y)=\int_{0}^\infty \frac{2(1-\alpha)\vep}{(\vep+y^2)^{2-\alpha}}  y^{1-2\alpha}\d y=1.
\]
The second equality extends to $\alpha=0$ as well.
\end{lem}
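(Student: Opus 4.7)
\smallskip

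\noindent\textbf{Proof proposal.} The claim is an elementary calculus identity, so the plan is simply to exhibit an explicit antiderivative of the integrand and read off the boundary values. Set
\[
F(y)\;\defeq\;\frac{y^{2-2\alpha}}{(\vep+y^2)^{1-\alpha}},\qquad y\geq 0.
\]
A direct differentiation using the quotient rule gives
\[
F'(y)=\frac{(2-2\alpha)y^{1-2\alpha}(\vep+y^2)-2(1-\alpha)y^{3-2\alpha}}{(\vep+y^2)^{2-\alpha}}
=\frac{2(1-\alpha)\vep\,y^{1-2\alpha}}{(\vep+y^2)^{2-\alpha}},
\]
where in the numerator the $y^{3-2\alpha}$ terms cancel and only the term containing $\vep$ survives. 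Thus $F$ is an antiderivative of the integrand $\kappa_\vep(y)\,y^{1-2\alpha}/C_\alpha$ (with $C_\alpha$ from \eqref{def:scale+speed} absorbed).

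By the fundamental theorem of calculus, the integral equals the difference of the boundary values of $F$. At $y=0$, the hypothesis $\alpha\in[0,1)$ yields $2-2\alpha>0$, so $F(0)=0$. At $y\to\infty$,
\[
F(y)=\frac{y^{2-2\alpha}}{(\vep+y^2)^{1-\alpha}}=\Bigl(\frac{y^2}{\vep+y^2}\Bigr)^{1-\alpha}\;\lra\;1.
\]
Therefore
\[
\int_0^\infty \frac{2(1-\alpha)\vep\,y^{1-2\alpha}}{(\vep+y^2)^{2-\alpha}}\,\d y
=\bigl[F(y)\bigr]_0^\infty = 1-0 = 1,
\]
which is the required identity. (As a sanity check, one may alternatively substitute $u=y^2/\vep$, which reduces the integral to $(1-\alpha)B(1-\alpha,1)=(1-\alpha)/(1-\alpha)=1$.)

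There is no substantive obstacle here; the only point to verify is the boundary behavior at the origin, which is precisely where the restriction $\alpha<1$ is used. The normalization is consistent with the interpretation of $\kappa_\vep(y)m_\alpha(\d y)$ as an approximation to the Dirac mass at $0$, which is exactly what drives \eqref{char2:LT} in Proposition~\ref{prop:LTapprox}.
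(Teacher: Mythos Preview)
Your proof is correct. The paper instead makes the substitution $z=\vep/y^2+1$ to reduce the integral to $(1-\alpha)\int_1^\infty z^{-(2-\alpha)}\d z=1$; this is essentially the same computation in different clothing, since your antiderivative satisfies $F(y)=z^{-(1-\alpha)}$ under that change of variables.
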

\begin{proof}
By changing variables with $z=\vep/y^2+1$, we get
\begin{align*}
\int_{0}^\infty \frac{2(1-\alpha)\vep}{(\vep+y^2)^{2-\alpha}}  y^{1-2\alpha}\d y =\int_0^\infty \frac{2(1-\alpha)\vep }{(\vep/y^2+1)^{2-\alpha}}y^{-3}\d y
&=(1-\alpha)\int_1^\infty \frac{\d z}{z^{2-\alpha}}=1,
\end{align*}
as required. 
\end{proof}

\section{Proofs of the other properties}\label{sec:other}
We prove Proposition~\ref{prop:esp} in Section~\ref{sec:esp}, Corollary~\ref{cor:FK} in Section~\ref{sec:FK}, and Propositions~\ref{prop:GEN}, \ref{prop:singular}, \ref{prop:DBM} in Section~\ref{sec:singular}, and Proposition~\ref{prop:osgood} in Section~\ref{sec:Osgood}. 

\subsection{Unique existence of the skew-product diffusions}\label{sec:esp}
\begin{proof}[Proof of Proposition~\ref{prop:esp}]
For the verification of \eqref{Zt:explosion},
see \cite[p.88]{Erickson} for $\BES(-\alpha)$, $\alpha\in (0,1/2)$, and Lemma~\ref{lem:explosion} for $\BES(-\alpha,\beta\da)$, $\alpha\in [0,1/2)$.

 The other conditions 
of Erickson's theorem are stated at the beginning of Section~2 in \cite{Erickson}. The radial processes are required to be conservative, or equivalently, show explosions in finite times, which obviously holds in all the cases. Hence, it remains to verify:
\begin{itemize}
\item [(a)] No sojourn at $0$: $\int_0^t \1_{\{0\}}(|Z_s|)\d s=0$ a.s. for $t>0$.
\item [(b)] Non-singular process (cf. \cite[p.111]{IM:Diffusion}): $\P_{x_0}(T_{x_0+}=0)=\P_{x_0}(T_{x_0-}=0)=1$ for $x_0>0$. 
\item [(c)] $\P_0(T_{0+}=0)=1$ and $\P_{0+}(T_0\leq t)=1$ for $t>0$. 
\end{itemize}
Here, $T_{x+}\defeq\inf\{t>0;|Z_t|>x\}$ and $T_{x-}\defeq\inf\{t>0;|Z_t|<x\}$. 

We first consider $\{|Z_t|\}\sim \BES(-\alpha)$ for $\alpha\in (0,1/2)$. For (a), the property is a standard result of Bessel processes \cite[(1.5) Proposition on p.442]{RY}. 
For (b) and (c), it suffices to note that for $q,x_0>0$, $\E_{x_0}[\e^{-qT_{x_0\pm}}]=\E_{0}[\e^{-qT_{0+}}]=1$, and $\int_0^\infty q\e^{-qt}\P_{0+}(T_0\leq t)\d t=\E_{0+}[\e^{-qT_0}]=1$, all of which can be deduced from the explicit formulas of the Laplace transforms of hitting times \cite[(2.2) and (2.5)]{HM}. Note that by the same argument, (b) holds when $\{|Z_t|\}\sim \BES(0)$. 

For $\BES (-\alpha,\beta\da)$, $\alpha\in(0,1/2)$, (a) holds automatically 
by \eqref{def:BESab}, and (b) and (c) can be verified by using the formulas of the Laplace transforms of hitting times recalled in \eqref{preGIG}. 

For $\BES(0,\beta\da)$, (a) holds because $0$ is instantaneously reflecting \cite[Theorem~2.1]{DY:Krein}. To see (b), take \eqref{T0-1} with $A=\{T_{x_0\pm}\leq t\}$ for $t>0$. The right-hand side of \eqref{T0-1} with this choice of $A$ converges to $1$ as $t\searrow 0$ because $\1_A=1$ $\P^{(0)}_{x_0}$-a.s. as observed above, whereas $\{T_{x_0\pm}\leq t\}\cap \{T_0>t\}\to \{T_{x_0\pm}=0\}$ $\P_{x_0}^{\zbe}$-a.s. To see (c), we use again the fact that 
$0$ is instantaneously reflecting to get the first part, and the formula of the Laplace transform of $T_0$, already recalled in \eqref{T0-2}, to get the second part. 
 \end{proof}

\subsection{The Feynman--Kac-type formula: the nonexistence of exponential forms}\label{sec:FK}
In this subsection, we fix $\beta\in(0,\infty)$ and prove Corollary~\ref{cor:FK}. The following lemma extends \eqref{FK:char} of Hypothesis~\ref{hyp:FK1}. The proof applies the standard Markovian iteration by starting with the case of $\Phi_t=\prod_{j=1}^n\phi_j(Z_{r_j})$ for $0<r_1<\cdots<r_n=t$. We omit the remaining details.

\begin{lem}\label{lem:FKcontra}
Under Hypothesis~\ref{hyp:FK1}, it holds that 
\begin{align}\label{FK:char1}
\begin{split}
&\E_{z_0}^\Bbb Q[\e^{A_t}\Phi_t;\zeta>t]=\E_{z_0}^{(0),\beta\da}\left[D^{z_0}_t\Phi_t\right],\\
&\hspace{2cm}\forall\; z_0\in \Bbb C\setminus\{0\},\;t\in (0,\infty),\;
\Phi\in \B_+(C([0,t],\Bbb C)),
\end{split}
\end{align}
where $\Phi_t$ is a shorthand notation for $ \Phi(Z_r;r\leq t)$. 
\end{lem}

\begin{proof}[Proof of Corollary~\ref{cor:FK}]
The aim of this proof is to obtain a contradiction to the existence of $(\Bbb Q,A,\zeta)$. To this end, the crucial fact we apply below is that every nonnegative c\`adl\`ag supermartingale $\{N_t\}$ vanishes at all finite times $t\geq \inf\{s\geq 0;N_s=0\}\wedge \inf\{s\geq 0;N_{s-}=0\}$. See \cite[Proposition~3.4 on p.70]{RY}. In the following argument, fix $z_0\in \Bbb C\setminus\{0\}$. \smallskip  

\noindent \hypertarget{cor:FK-1}{{\sc Step 1.}}\;
In this step, we show that the existence of $(\Bbb Q,A,\zeta)$ implies
\begin{align}\label{contradiction}
\E_{z_0}^{(0),\beta\da}\left[\left.\E^{(0),\beta\da}_{0}\left[\frac{1}{K_0(\sqrt{2\beta}\lvert Z_{t-s}\rvert )}\right]\right\rvert_{s=T_0(Z)};t\geq T_0(Z)\right]=0,\quad\forall\;t\in (0,\infty).
\end{align}

To see \eqref{contradiction}, first we apply the fact of supermartingales just recalled to the process
\[
M^{z_0}_t\,\defeq \,\e^{-A_t(Z_r;r\leq t)}D^{z_0}_t. 
\]
This process is indeed a supermartingale under $\P^{(0),\beta\da}_{z_0}$, since for all $\Phi\geq 0$ and $s,t\in (0,\infty)$, Lemma~\ref{lem:FKcontra} gives the equalities below:
\[
\E_{z_0}^{(0),\beta\da}[M^{z_0}_{t+s}\Phi_t]=\E_{z_0}^\Bbb Q[\Phi_t;\zeta>t+s]\leq \E_{z_0}^\Bbb Q[\Phi_t;\zeta>t]=\E_{z_0}^{(0),\beta\da }[M^{z_0}_{t}\Phi_t].
\]
Then it follows from the fact of supermartingales recalled above that 
\begin{align}\label{eq:FKcontra}
\E_{z_0}^{(0),\beta\da}[M^{z_0}_t\Phi_t]
=\E_{z_0}^{(0),\beta\da}[M^{z_0}_t\Phi_t;t<T_0(M^{z_0})]=\E_{z_0}^{(0),\beta\da}[M^{z_0}_t\Phi_t;t<T_0(Z)],
\end{align}
where the second equality uses the assumption that $A_t(\mathrm w_r;r\leq t)$ is real-valued so that 
 $T_0(M^{z_0})= T_0(D^{z_0})=T_0(Z)$. 
Next, by using the definition of $\{M^{z_0}_t\}$ and choosing $\Phi=\e^{-\beta t}\e^{A_t(Z_r;r\leq t)}/K_0(\sqrt{2\beta}|z_0|)$, the last equality of \eqref{eq:FKcontra} gives
\begin{align}\label{nonFK}
\E_{z_0}^{(0),\beta\da}\left[\frac{1}{K_0(\sqrt{2\beta}\lvert Z_t \rvert)}\right]
= \E_{z_0}^{(0),\beta\da}\left[\frac{1}{K_0(\sqrt{2\beta}\lvert Z_t \rvert)};t<T_0(Z)\right],
\end{align}
where the expectation on the left-hand side is finite by \eqref{id:2-1} and Lemma~\ref{lem:contexp}. 
Taking the difference of both sides of \eqref{nonFK} and then applying the strong Markov property at $T_0(Z)$ yield \eqref{contradiction}. (This strong Markov property holds under $\BES(0,\beta\da )$ and is extended to $\{Z_t\}$ under $\P^{(0),\beta\da}_{z_0}$ thanks to Erickson's theorem~\cite[Theorem~1 on pp.75--76]{Erickson}.) \smallskip 

\noindent {\sc Step 2.}\; Now, \eqref{contradiction} leads to the required contradiction since its left-hand side is actually strictly positive by the following two properties:
\begin{itemize}[leftmargin=3\labelsep]
\item The stopping time $T_0(Z)=T_0(\lvert Z\rvert)$ under $\P^{(0),\beta\da}_{z_0}$ is distributed as the generalized inverse Gaussian distribution ${\rm GIG}(0;\lvert z_0 \rvert,\sqrt{2\beta})$:
\begin{align}\label{def:GIG}
\P^{(0),\beta\da}_{z_0}(T_0(Z)\in \d t)=\frac{1}{2K_0(\sqrt{2\beta}\lvert z_0 \rvert)}\exp\left(-\beta t-\frac{\lvert z_0 \rvert^2}{2t}\right)\frac{\d t}{t},\quad t\in (0,\infty),
\end{align}
by \eqref{def:K} and \eqref{T0-2}.
Hence, $T_0(Z)$ under $\P^{\zbe}_{z_0}$ has a continuous, everywhere strictly positive probability density on $(0,\infty)$. 
\item For all $t\in(0,\infty)$,
$\E^{(0),\beta\da}_{0}[K_0(\sqrt{2\beta}\lvert Z_t \rvert)^{-1}]>0$ by \eqref{BESab:density}. 
\end{itemize}
The proof of Corollary~\ref{cor:FK} is complete.
\end{proof}

\begin{rmk}
It is pointed out by an anonymous referee that an alternative possible approach is to use the Revuz correspondence and compare the resolvents of $\{P^\beta_t\}$ with the resolvents from the standard Feynman--Kac formula. See, e.g., \cite{BB:09,Y:RevuzFK} for general methods of this direction. \qed 
\end{rmk}

\subsection{Stochastic relative motion: the generator, non-Gaussianity and singular drift}\label{sec:singular}
We give the proofs of Propositions~\ref{prop:GEN}, \ref{prop:singular} and \ref{prop:DBM} in the same order.
{\bf For the remaining of Section~\ref{sec:singular}, recall the convention for dot products and differentiations
stated below (\ref{def:bbeta}). The Fourier transforms below are understood in the same way.}

\begin{proof}[Proof of Proposition~\ref{prop:GEN}]
We compute $\ms A^{\beta\da}_0 f(z)$ as the derivative of the finite-variation part of $f(\two Z_t)-f(z)$ at $t=0$ under $\P^{\zbe}_{z/\two}$. Under the polar coordinates, we obtain from  \eqref{gen:BESab} and \eqref{Z:skewproduct} that 
\begin{align}\label{polar:2-1}
\ms A^{\beta\da}_0 f(z)=\frac{\partial^2 f}{\partial r^2}+\left(\frac{1}{r}-2\sqrt{\beta}\frac{K_1}{K_0}(\sqrt{\beta }r)\right)\frac{\partial f}{\partial r}+\frac{1}{r^2}\frac{\partial^2 f}{\partial \theta^2},\quad f\in \C_c^2,\;z\neq 0.
\end{align}
Since $\ms A^{\beta\da}_0 $ reduces to the Laplacian when $\beta=0$ and, for $x=r\cos \theta$ and $y=r\sin \theta$,
\begin{align*}
\frac{\partial f}{\partial r}=\frac{\partial f}{\partial x}\cos \theta+\frac{\partial f}{\partial y}\sin\theta=\frac{\partial f}{\partial x}\frac{x}{\sqrt{x^2+y^2}}+\frac{\partial f}{\partial y}\frac{y}{\sqrt{x^2+y^2}},
\end{align*}
we get \eqref{def:gen} from \eqref{polar:2-1}. The other formula of this proposition follows from  \eqref{def:gen} and \eqref{id:2-1} since $\Delta_\beta$ is a self-adjoint extension of the Laplacian restricted to $\C_0^\infty(\Bbb C\setminus\{0\})$ and is local on $\Bbb C\setminus\{0\}$ so that not just $f\in \C^\infty_0(\Bbb C\setminus\{0\})$ can be applied. See also
\cite[Theorem~2.3]{AGHH:2D} for a description of the operator $\Delta_\beta$ generating $\{P^\beta_t\}$. 
\end{proof}

\begin{proof}[Proof of Proposition~\ref{prop:singular}]
Recall that  $0$ is polar under planar Brownian motion \cite[(2.7) Proposition on p.191]{RY}: $\P^{(0)}(T^+_0<\infty)=0$ for $T_0^+\,\defeq\inf\{t>0;Z_t=0\}$. The required property for $\{Z_t\}_{t\in [0,t_1]}$ under $\P^{\zbe}$, $t_1>0$, then follows by decomposing it according to $\Omega=\{ T^+_0\leq t_1\}\cup\{T^+_0>t_1\}$, since $\P^{(0)}( T^+_0\leq  t_1)=0$ but $\P^{\zbe}(T^+_0\leq t_1)>0$ by \eqref{def:GIG}. 

The required property for $\{Z_t\}_{t\in [0,\infty)}$ under $\P^{\zbe}$ follows similarly. Here, the decomposition is according to $\Omega=\{T_0^+<\infty\}\cup \{T_0^+=\infty\}$.
\end{proof}

The next lemma prepares the proofs of Propositions~\ref{prop:DBM} and \ref{prop:osgood}.

\begin{lem}
For all $\alpha\in \R$ and $a>0$,
\begin{align}
\frac{\d}{\d a}\left(a\frac{K_{1-\alpha}}{K_\alpha}(a)\right)=-a+a\left(\frac{K_{1+\alpha}K_{1-\alpha}}{K_\alpha^2}\right)(a).\label{der:K1/K0}
\end{align}
\end{lem}
\begin{proof}
We inspect the computations in \eqref{d1logG} and \eqref{d2logG}. This shows
 $(\d/\d b)\log G_\alpha(b)=[-K_{1-\alpha}(x)/K_\alpha(x)]\big|_{x=\sqrt{b}}\cdot [1/(2\sqrt{b})]$ and, with the shorthand notation $K_\nu=K_\nu(a)$, 
\begin{align*}
\frac{\d}{\d a}\left(a\frac{K_{1-\alpha}}{K_\alpha}\right)&=\frac{K_{1-\alpha}}{K_\alpha}+\frac{(2\alpha-1)K_{1-\alpha}}{K_\alpha}-a+\frac{aK_{1-\alpha}^2}{K_\alpha^2}
=-a+\frac{(2\alpha K_\alpha+a K_{1-\alpha})K_{1-\alpha}}{K_\alpha^2},
\end{align*}
which gives \eqref{der:K1/K0} by using the identity $K_{\alpha-1}-K_{\alpha+1}=-2\alpha K_\alpha/a$ \cite[(5.7.9), p.110]{Lebedev}. Note that we have used the even parity of $K_\nu(\cdot)$ \cite[(5.7.10), p.110]{Lebedev} repeatedly here.
\end{proof}

\begin{proof}[Proof of Proposition~\ref{prop:DBM}]
\noindent {\rm (1$\cc$)}\;  First, the inequalities in \eqref{asymp0:drift} and \eqref{asymp0:drift1} are immediate consequences of \eqref{asymp:K0}. To prove the required properties of $\nabla_z\cdot b_\beta(z)$, 
by \eqref{der:K1/K0} for $\alpha=0$ and the definition \eqref{def:bbeta} of $b_\beta$,
\begin{align*}
\nabla_z \cdot b_\beta(z)
&=-2\nabla_z \left(\sqrt{\beta}|z|\frac{K_1}{K_0}(\sqrt{\beta}|z|)\right) \cdot \frac{z}{|z|^2}-2\left(\sqrt{\beta}|z|\frac{K_1}{K_0}(\sqrt{\beta}|z|)\right)\nabla_z\cdot \frac{z}{|z|^2}\\
&=2\beta-2\beta\left(\frac{K_1}{K_0}\right)^2(\sqrt{\beta}|z|),
\end{align*}
where the last equality uses \eqref{der:K1/K0}, $\nabla_z|z|=z/|z|$ and $\nabla_z\cdot (z/|z|^2)=0$. We have proved the second equality of the first line of \eqref{asymp:drift}, whereas the first equality there just uses the definition \eqref{def:bbeta} of $b_\beta$. Also, the second line of \eqref{asymp:drift} holds since
by \eqref{asymp:K0},
\begin{align}\label{asymp:K1K0}
\mbox{as } x\searrow 0,\quad
\frac{K_1}{K_0}(\sqrt{\beta}x)\sim \frac{1}{\sqrt{\beta}x[\log (\sqrt{\beta}x)^{-1}]}\sim \frac{1}{\sqrt{\beta}x(\log x^{-1})}.
\end{align}
By the second line of \eqref{asymp:drift} and the fact that $\int \d r/[r(\log r)^2]=-1/\log r+C$ for $0<r<1$,
$\nabla_z\cdot b_\beta(z)\notin L^2_{\rm loc}(\Bbb C)$. \smallskip 

\noindent {\rm (2$\cc$)}\; For the first part, by \eqref{asymp:K1K0}, it is enough to consider $p\in [1,\infty)$. 
In this case, by \eqref{asymp0:drift} and the equivalence of the $2$-norm and the $p$-norm in $\Bbb C$,
 $b_\beta\psi\in L^p$ if and only if the next integral for small enough $\delta=\delta(\psi)\in (0,1)$ converges:
\[
\int_{|z|<\delta} \frac{\d z}{||z|\log |z|^{-1}|^p}
\asymp \int_{0}^{\delta} \frac{\d r}{r^{p-1}|\log r|^p},
\]
where ``$\asymp$'' uses the polar coordinates. Since $\int \d r/(r\log^2 r)=-1/\log r+C$ in the critical case of $p=2$, the first required property of (2$\cc$) follows. The second required property can be obtained by almost the same argument, now using the fact that $\int_{0+}\d r/r^{p-1}<\infty$ if and only if $p<2$. 
\smallskip 

\noindent {\rm (3$\cc$)}\; First, for $f(r)\,\defeq\,(r\log r^{-1})^{-1}$ and  small enough $\delta=\delta(\psi)\in (0,\e^{-1})$, 
\eqref{asymp0:drift} gives 
\begin{align}\label{bdd:bbeta}
C_{\ref{bdd:bbeta}}(\beta,\psi)^{-1} f(|z|)\leq |b_\beta(z)\psi(z)|\leq C_{\ref{bdd:bbeta}}(\beta,\psi) f(|z|),\quad \forall\;0<|z|\leq \delta,
\end{align}
where $C_{\ref{bdd:bbeta}}(\beta,\psi)\geq 1$. 
Since $\supp(\psi)$ is bounded, by \eqref{bdd:bbeta}, it suffices to show, for $p\geq1$, 
\begin{align}\label{weakL}
 \sup_{\ell>0}\ell\cdot {\rm Leb}\{z\in B(0,\delta);f(|z|)>\ell\}^{1/p}<\infty\Longleftrightarrow p\in [1,2]. 
\end{align}

To get \eqref{weakL}, note that $f(r)$ is strictly decreasing on $(0,\e^{-1}]$. Hence, for all $\ell>0$, 
\begin{align}\label{weakL:aux}
\ell\cdot {\rm Leb}\{z\in B(0,\delta);f(|z|)>\ell\}^{1/p}&=\ell\cdot {\rm Leb}\{z\in B(0,\delta);|z|<f^{-1}(\ell)\}^{1/p}.
\end{align}
Applying \eqref{weakL:aux}, $f(0+)=\infty$, and the change of variables $\ell=f(r)$ in the same order, we get 
\begin{align*}
&\quad\;\lim_{\ell\to\infty}\ell\cdot {\rm Leb}\{z\in B(0,\delta);f(|z|)>\ell\}^{1/p}=\lim_{\ell\to \infty}\ell\cdot  \{\pi [f^{-1}(\ell)\wedge \delta]^2\}^{1/p}\\
&=\lim_{\ell\to \infty}\ell\cdot  [\pi f^{-1}(\ell)^2]^{1/p}
=\lim_{r\to 0}f(r)\cdot (\pi r^2) ^{1/p}=\lim_{r\to 0}\frac{\pi^{1/p}}{r^{1-2/p}\log r^{-1}} ,
\end{align*}
which is finite if and only if $p\leq 2$. We obtain the required property of (3$\cc$) by \eqref{weakL}. \smallskip 

\noindent {\rm (4$\cc$)}\; We recall the definition of the Besov space $B_{\infty,\infty}^s$ for $s\in \R$. See \cite[p.61 and Definition~2.68 on p.99]{BCD:FPDE}. Let $\chi,\varphi$ be nonnegative radial $\C_c^\infty$-functions on $\Bbb C$ satisfying:
\begin{itemize}
\item The support $\supp(\chi)$ of $\chi$ is contained in a ball centered at the origin,
and $\supp(\varphi)$ is contained in an annulus centered at the origin. 
\item $\chi(z)+\sum_{j\geq 0}\varphi(2^{-j}z)=1$ for all $z\in \Bbb C$. 
\item $\supp(\chi)\cap \supp(\varphi(2^{-j}\cdot))=\varnothing$ for all $j\geq 1$ and $\supp(\varphi(2^{-j}\cdot))\cap \supp(\varphi(2^{-k}\cdot))=\varnothing$ whenever $j,k\geq 0$ satisfy $|j-k|>1$.  
\end{itemize}
Let $\mc F $ denote the Fourier transform (in $\R^2$), and define the Littlewood--Paley blocks for $u$ by 
\begin{align}\label{def:block}
\Delta_{-1}u(z)&\,\defeq\, (\mathcal F^{-1}\chi)\star u(z),\quad
\Delta_ju(z)\,\defeq\,2^{2j}[\mathcal F^{-1}\varphi(2^j\cdot)]\star u(z),\quad j\in \Bbb Z_+.
\end{align}
Then $B^s_{\infty,\infty}$ is defined as the set of tempered distributions $u$ such that $\|u\|_{B^s_{\infty,\infty}}<\infty$, where
 \begin{align}\label{def:Besov}
\|u\|_{B^s_{\infty,\infty}}\defeq\sup_{j\geq -1}2^{js}\|\Delta_ju\|_\infty.
\end{align}

Below we take three steps to prove the required property of (4$\cc$).
Roughly speaking, the argument below for Steps~1 and~2 views $\|b_\beta\psi\|_{B^s_{\infty,\infty}}$
by $b_\beta(2^{-j}\cdot)$ for large $j$ and \eqref{asymp:K0}. \smallskip

\noindent {\sc Step 1.}\;  We first show that $\Re(b_\beta\psi)\in B^{-1}_{\infty,\infty}$, hence $\in B^{s}_{\infty,\infty}$ for all $s<-1$ as well. The case $\Im(b_\beta\psi)$ can be handled in the same way, so we omit the proof for it. 

The proof of $\Re(b_\beta\psi)\in B^{-1}_{\infty,\infty}$ uses the following preparation. Write \begin{align}\label{def:u0}
u_0\,\defeq\,\Re(b_\beta\psi).
\end{align}
Also, since the inverse Fourier transform of a radial function stays unchanged after an orthogonal transformation of frequency, by (2$\cc$) and \eqref{def:Besov}, it suffices to show the following inequality: for any radial Schwartz function $\theta$, 
\begin{align}
\infty>\sup_{j\in \Bbb Z_+} \sup_{z\in \Bbb C}2^{-j}|2^{2j}\theta(2^j\cdot)\star u_0(z)|&=\sup_{j\in \Bbb Z_+} \sup_{z\in \Bbb C}2^{-j}|2^{2j}\theta(2^j\cdot)\star u_0(2^{-j}z)|\notag\\
&=\sup_{j\in \Bbb Z_+} \sup_{z\in \Bbb C}2^{-j}|\theta\star u_0(2^{-j}\cdot)(z)|.\label{Besov:1}
\end{align}

Write $u_1\defeq\,\Re(b_\beta \1_{B(0,\delta)})$ and $u_2\defeq\,\Re(b_\beta\1_{B(0,\delta)^\complement})$
for fixed $\delta\in (0,1)$. Then
\begin{align}
2^{-j}\theta\star u_0(2^{-j}\cdot)(z)
=2^{-j}\theta\star u_1(2^{-j}\cdot)(z)+2^{-j}\theta\star u_2(2^{-j}\cdot)(z),\quad j\in \Bbb Z_+.\label{Besov:2}
\end{align}
For the right-hand side of \eqref{Besov:2}, each of the two terms is bounded by $C(\beta,\delta,\theta)$.
We can bound the first term this way since, by $|z''u_0(z'')|\leq C(\beta,\delta)$ for $|z''|\leq \delta$ due to \eqref{asymp:K0},
\begin{align*}
&\quad\; 2^{-j}\theta\star u_1(2^{-j}\cdot)(z)=
\int_{|z-z'|\leq 2^j\delta} \frac{\theta(z')}{|z-z'|} 2^{-j}|z-z'|u_0(2^{-j}(z-z'))\d z'\\
&\leq C(\beta,\delta) \int_{|z-z'|\leq 2^j\delta}\frac{|\theta(z')|}{|z-z'|}\d z'\leq C(\beta,\delta)\biggl(\int_{|z-z'|\leq 1}\frac{|\theta(z')|}{|z-z'|}\d z'+\int_{|z'-z|> 1}|\theta(z')|\d z'\biggr),
\end{align*}
and the first integral on the right-hand side is bounded by $C(\theta)$ thanks to H\"older's inequality.
We conclude that the inequality in \eqref{Besov:1} holds. \smallskip 

\noindent {\sc Step 2.}\; 
Next, we show that $\Re(b_\beta\psi),\Im(b_\beta\psi)\notin B^s_{\infty,\infty}$ for any $s>-1$. It suffices to consider $s\in (-1,0)$. Essentially, our goal is to sharpen the estimate in Step 1.

For the case of $\Re(b_\beta\psi)$, we use again the shorthand notation $u_0$ defined in \eqref{def:u0}. 
In this case, similar to \eqref{Besov:1}, we have 
\begin{align}\label{u1:main}
2^{js}\|\Delta_ju_0\|_{\infty}\geq2^{js} |\mc F^{-1}\varphi\star u_0(2^{-j}\cdot)(z)|,\quad \forall\;j\in \Bbb Z_+,\;z\in \Bbb C.
\end{align}
Below, we estimate the right-hand side as $j\to\infty$ via $u_1,u_2$ defined before \eqref{Besov:2}. 

First, we show that 
\begin{align}\label{u1:cond}
\lim_{j\to\infty}|2^{js} \mc F^{-1}\varphi\star u_1(2^{-j}\cdot)(z)|=\infty,\quad \forall\;s\in (-1,0),
\end{align}
under the assumption that
\begin{align}\label{u1:3}
\int_{\Bbb C} \mc F^{-1}\varphi(z')\Re\left(\frac{z-z'}{|z-z'|^2}\right)\d z'\neq 0\quad\mbox{for some $z\in \Bbb C$.}
\end{align}
We will verify \eqref{u1:3} in Step~3.  
To obtain \eqref{u1:cond}, with $\psi_1\defeq\,\psi\1_{B(0,\delta)}$, we compute
\begin{align}
&\quad\; 2^{js} \mc F^{-1}\varphi\star u_1(2^{-j}\cdot)(z)\notag\\
&=2^{js}\int_{\Bbb C}\mc F^{-1}\varphi(z')\Re (b_\beta)(2^{-j}z-2^{-j}z')\psi_1(2^{-j}z-2^{-j}z')\d z'\label{u1:1-0}\\
&=\frac{2^{js+j}}{j}\int_{\Bbb C}\mc F^{-1}\varphi(z')\frac{j \cdot  \Re(b_\beta)(\zeta)|\zeta|\log|\zeta|^{-1}|_{\zeta=2^{-j}(z-z')}}{ |z-z'|\log (2^j |z-z'|^{-1})}\psi_1(2^{-j}z-2^{-j}z')\d z'\notag,
\end{align}
and
\begin{align}
&\int_{\Bbb C}\mc F^{-1}\varphi(z')\frac{j \cdot  \Re(b_\beta)(\zeta)|\zeta|\log|\zeta|^{-1}|_{\zeta=2^{-j}(z-z')}}{ |z-z'|\log (2^j |z-z'|^{-1})}\psi_1(2^{-j}z-2^{-j}z')\d z'\notag\\
&\xrightarrow[j\to\infty]{}\int_{\Bbb C}\mc F^{-1}\varphi(z')\frac{-2\Re(z-z')}{|z-z'|^2}\d z',\label{u1:2}
\end{align}
so that \eqref{u1:cond} follows if we recall \eqref{u1:3} and $s\in (-1,0)$. Here, we obtain \eqref{u1:2} by dominated convergence, using \eqref{asymp:K1K0}, the assumption $\psi(0)=1$, and the fast decay of $\mathcal F^{-1}\varphi(z')$ as $|z'|\to\infty$. Also, $2^{js} \mc F^{-1}\varphi\star u_2(2^{-j}\cdot)(z)\to 0$  as $j\to\infty$ by \eqref{u1:1-0} with $(u_1,\psi_1)$ replaced by $(u_2,\psi_2)$, where $\psi_2\defeq\,\psi\1_{B(0,\delta)^\complement}$.

Up to this point, for any $s\in (-1,0)$, we have \eqref{u1:main}, \eqref{u1:cond} under the assumption of \eqref{u1:3}, the limit $2^{js} \mc F^{-1}\varphi\star u_2(2^{-j}\cdot)(z)\to 0$, and the fact $u_0=u_1+u_2$. Accordingly, under \eqref{u1:3}, 
\[
\lim_{j\to\infty}2^{js}\|\Delta_ju_0\|_\infty=\infty,\quad \forall\;s\in (-1,0),
\] 
so by \eqref{def:Besov}, $u_0\notin B^s_{\infty,\infty}$ for any $s\in (-1,0)$.  
Similarly, $\Im(b_\beta \psi)\notin B^s_{\infty,\infty}$ for any $s\in (-1,0)$ under the assumption of
\begin{align}\label{u1:4}
\int_{\Bbb C} \mc F^{-1}\varphi(z')\Im\left(\frac{\tilde{z}-z'}{|\tilde{z}-z'|^2}\right)\d z'\neq 0\quad\mbox{for some $\tilde{z}\in \Bbb C$.}
\end{align}

\noindent {\sc Step 3.}\; 
It remains to verify \eqref{u1:3} and \eqref{u1:4}. 
Note that \eqref{u1:3} and \eqref{u1:4} are equivalent since we have the following
where all the objects are viewed as their counterparts in $\R^2$:
\begin{align*}
 \int_{\Bbb C}\mc F^{-1}\varphi(z') \Re\left(\frac{z- z'}{|z-z'|^2}\right)\d z'
&= \int_{\Bbb C}\mc F^{-1}\varphi(\Im z'+\i \Re z')\Im \left(\frac{\i z- z'}{| \i z- z'|^2}\right)\d z'\\
&=\int_{\Bbb C}\mc F^{-1}\varphi( z')\Im \left(\frac{\i z- z'}{| \i z- z'|^2}\right)\d z'.
\end{align*}
Here, the first equality uses a change of variables in $\R^2$ by replacing $(\Re z',\Im z')$ with $ (\Im z',\Re z')$, which is an orthogonal transformation, and the last equality uses the radial symmetry of $\varphi$.
By their equivalence, both of \eqref{u1:3} and \eqref{u1:4} hold if we can verify that
\begin{align}\label{def:z2}
\int_{\Bbb C}\mc F^{-1}\varphi(z')\frac{z_0-z'}{|z_0-z'|^2}\d z\neq 0\quad\mbox{for some }z_0\in \Bbb C.
\end{align}

To prove \eqref{def:z2}, suppose the converse so that equality holds for all $z_0\in \Bbb C$. 
Then  
\[
0=\int_{\Bbb C} \mc F^{-1}\varphi(z')\frac{z_0-z'}{|z_0-z'|^2}\d z'=\nabla_{z_0}\int_{\Bbb C} \mathcal F^{-1}\varphi(z')\log |z_0-z'|\d z',\quad \forall\;z_0\in \Bbb C,
\]
where the second equality holds by dominated convergence since $\mc F^{-1}\varphi$ is a Schwartz function.
By the fundamental theorem of calculus, the last equality implies that
$z_0\mapsto \int \mathcal F^{-1}\varphi(z')\log |z_0-z'|\d z'$ is only a constant. Then we get a contradiction to the fact that $z_0\mapsto (2\pi)^{-1}\int \mathcal F^{-1}\varphi(z')\log |z_0-z'|\d z'$ is a solution to the two-dimensional Poisson equation $\Delta_{z'} u=\mc F^{-1}\varphi(z')$ \cite[Lemma~4.2 on p.55]{GT:PDE}, and this PDE does not allow a constant solution since $\mc F^{-1}\varphi\neq 0$. Hence,  \eqref{def:z2} must hold. The proof of (4$\cc$) is complete.
\end{proof}

\subsection{Stochastic relative position: the failing concave Osgood condition}\label{sec:Osgood}

\begin{proof}[Proof of Proposition~\ref{prop:osgood}]
We proceed with three steps.\smallskip\\
\noindent {\sc Step 1.}\; To verify the increasing monotonicity and concavity of
 of $\kappa_\alpha$ defined in \eqref{def:kappao}, it suffices to consider the case of $\alpha=0$.  First, $\kappa_0$ is increasing since $(\d/\d x)(-1/\log x)=1/(x\log^2 x)$. To see the concavity of $\kappa_0$, note that the concavity of $u\mapsto -1/\log (u\wedge \frac{\e^{-2}}{2})$ holds since $u\mapsto -1/\log u$ is increasing and concave on $[0,\e^{-2}/2]$. Here, the concavity of  $u\mapsto -1/\log u$ holds since $(\d^2/\d u^2)(-1/\log u)=-(\log u+2)/(u^2\log^3 u)$,
which is strictly negative in $(0, \e^{-2}]$.\smallskip \\
\smallskip 
\noindent {\sc Step 2.}\; To see \eqref{Osgood} for all $\alpha\in [0,1/2)$, note that 
\begin{align}\label{mubeta:scaling}
\mu^{\beta\da}_\alpha(x)=\mu_\alpha( |2\beta x|),\quad \forall\;x\in \R,
\end{align}
by \eqref{def:mu}, where $\mu_\alpha(x)\,\defeq\, \mu^{(1/2)\da}_\alpha(x)$. Since $\kappa_\alpha$ is increasing, \eqref{Osgood} holds for all $\beta>0$ a soon as we have
\begin{align}\label{Osgood0-1}
|\mu_\alpha(y)-\mu_\alpha(x)|\leq \kappa_\alpha(|y-x|),\quad \forall\;0\leq x\leq y<\infty.
\end{align}
\indent To prove \eqref{Osgood0-1}, 
first, note that 
\begin{align}
\mu_\alpha(y)-\mu_\alpha(x)=(y-x)-\int_x^y \frac{K_{1+\alpha}K_{1-\alpha}}{K_\alpha^2}(\sqrt{ a}\,)\d a,\quad \forall\; 0\leq x\leq y,\label{Osgood:diff}
\end{align}
since by \eqref{def:mu} with $\beta=1/2$
and by \eqref{der:K1/K0}, 
\begin{align*}
\forall\;a>0, \quad
\frac{\d}{\d a}\mu_\alpha(a)&=-2\biggl(-b+b\frac{K_{1+\alpha}K_{1-\alpha}}{K_\alpha^2}(b)\biggr)\biggr|_{b=\sqrt{a}}\cdot \frac{1}{2 \sqrt{a}}
=1-  \frac{K_{1+\alpha}K_{1-\alpha}}{K_\alpha^2}(\sqrt{ a}).
\end{align*}
Second, for $\alpha\in (0,1/2)$, \eqref{Osgood0-1}  follows immediately by applying \eqref{asymp:K0} and \eqref{asymp:Kinfty} to \eqref{Osgood:diff} since $\int (\sqrt{a})^{-(2-2\alpha)}\d a=a^{\alpha}/\alpha+C$. For $\alpha=0$, \eqref{Osgood0-1} can be seen by using 
again
\eqref{Osgood:diff} and the following asymptotic representations implied by \eqref{asymp:K0} and~\eqref{asymp:Kinfty}:
\begin{align}\label{K1K0:finalasymp}
(K_1/K_0)^2(x)\sim 1/(x^2\log^2 x),\quad x\to 0;\quad (K_1/K_0)^2(x)\sim 1,\quad x\to\infty,
\end{align}
where $\log^nx\,\defeq\,(\log x)^n$. In more detail, for $y-x\leq \e^{-2}/2$ and $ y\leq \e^{-2}$, \eqref{Osgood0-1} holds since
\begin{align}
\int_{x}^y  \left(\frac{K_1}{K_0}\right)^2(\sqrt{a}\,)\d a&\less \int_{x}^y\frac{\d a}{ a\log^2 a}
\leq \int_{0}^{y-x}\frac{\d a}{ a\log^2a}=-\frac{1}{\log (y-x)},\label{mumod:1}
\end{align}
where
the $\less$-inequality uses the first asymptotic representation in \eqref{K1K0:finalasymp}, and
the last inequality hold since $t\mapsto1/( t\log^2t)$ decreases on $(0, \e^{-2}]$.
To complete the proof of \eqref{Osgood0-1} with $\alpha=0$, one may consider the following cases separately: (1) $y-x\leq \e^{-2}/2$ and $y>\e^{-2}$, (2) $y-x>\e^{-2}/2$ and $x\leq \e^{-2}/8$, and (3) 
$y-x>\e^{-2}/2$ and $x>\e^{-2}/8$. We omit the details.\smallskip

\noindent {\sc Step 3.}\; Finally, we verify \eqref{Osgood1} for $\alpha\in [0,1/2)$. To get \eqref{Osgood1} for $\beta=1/2$, we use \eqref{Osgood:diff} and note that by L'H\^opital's rule and \eqref{asymp:K0},  as $x\searrow 0$, 
\begin{align*}
\begin{split}
 \int_0^x \frac{K_{1+\alpha}K_{1-\alpha}}{K_\alpha^2}(\sqrt{a}\,)\d a&\sim \int_0^x \frac{\Gamma(1+\alpha)\Gamma(1-\alpha)\sqrt{a}^{-2}}{[2^{\alpha-1}\Gamma(\alpha)\sqrt{a}^{-\alpha}]^2}\d a=
 \frac{\Gamma(1+\alpha)\Gamma(1-\alpha)x^{\alpha}}{2^{2\alpha-2}\Gamma(\alpha)^2\alpha},\\
 \int_0^x \left(\frac{K_1}{K_0}\right)^2(\sqrt{a}\,)\d a&\sim \int_0^x \frac{\d a}{a\log^2\sqrt{a}}=\frac{-4}{\log x}.
 \end{split}
\end{align*}
Then  \eqref{Osgood1} for general $\beta>0$ follows upon using \eqref{mubeta:scaling}. The proof is complete.
\end{proof}

\noindent {\bf Acknowledgments.} This paper is to appear in the \emph{Annals of Applied Probability}. The author would like to thank the anonymous Associate Editor and referees for giving constructive comments and pointing out reference \cite{Eberle}.

\end{document}